\documentclass[a4paper, 12pt]{article}
\usepackage{amssymb, amsthm,amsmath}
\usepackage{enumerate}
\usepackage{stmaryrd}
\usepackage{mathrsfs}
\PassOptionsToPackage{hyphens}{url} \usepackage{hyperref}
\usepackage[all]{xy}

\makeatletter
\def\blfootnote{\gdef\@thefnmark{}\@footnotetext}
\makeatother

\numberwithin{equation}{section}
\newtheorem{thm}[equation]{Theorem}
\newtheorem{lem}[equation]{Lemma}
\newtheorem{prop}[equation]{Proposition}
\newtheorem{cor}[equation]{Corollary}
\newtheorem{claim}[equation]{Claim}

\theoremstyle{definition}

\theoremstyle{remark}
\newtheorem{rem}[equation]{Remark}

\newtheorem{example}[equation]{Example}
\newtheorem{examples}[equation]{Examples}

\newcommand{\C}{\mathcal{C}}
\newcommand{\D}{\mathcal{D}}
\newcommand{\K}{\mathcal{K}}
\newcommand{\M}{\mathcal{M}}
\newcommand{\Q}{\mathcal{Q}}
\newcommand{\U}{\mathcal{U}}
\newcommand{\V}{\mathcal{V}}
\newcommand{\proj}{\mathcal{P}}
\newcommand{\F}{\operatorname{F}}
\newcommand{\FP}{\operatorname{FP}}
\newcommand{\id}{\operatorname{id}}
\newcommand{\coker}{\operatorname{coker}}
\newcommand{\im}{\operatorname{im}}
\newcommand{\dom}{\operatorname{dom}}
\newcommand{\Hom}{\operatorname{Hom}}
\newcommand{\End}{\operatorname{End}}
\newcommand{\colim}{\operatorname{colim}}
\newcommand{\disc}{\operatorname{disc}}
\newcommand{\eq}{\operatorname{eq}}
\newcommand{\coeq}{\operatorname{coeq}}
\newcommand{\sd}{\operatorname{sd}}
\newcommand{\Ex}{\operatorname{Ex}}
\newcommand{\Ext}{\operatorname{Ext}}
\newcommand{\Tor}{\operatorname{Tor}}
\newcommand{\ob}{\operatorname{ob}}
\newcommand{\Sing}{\operatorname{Sing}}
\newcommand{\Kan}{\operatorname{Kan}}
\newcommand{\diag}{\operatorname{diag}}

\begin{document}

%

\title{Homotopical Algebra in Categories with Enough Projectives}
\author{Ged Corob Cook\thanks{The author was supported by EPSRC grant EP/N007328/1.}}
\date{}
\renewcommand\footnotemark{}
\maketitle

\begin{abstract}
For a complete and cocomplete category $\C$ with a well-behaved class of `projectives' $\bar{\proj}$, we construct a model structure on the category $s\C$ of simplicial objects in $\C$ where the weak equivalences, fibrations and cofibrations are defined in terms of $\bar{\proj}$. This holds in particular when $\C$ is $\U$, the category of compactly generated, weakly Hausdorff spaces, and $\bar{\proj}$ is the class of compact Hausdorff spaces.

We also construct a new model structure on $\U$ itself, where the cofibrant spaces are generalisations of CW-complexes allowing spaces, rather than sets, of $n$-cells to be attached. The singular simplicial complex and geometric realisation functors give a Quillen adjunction between these model structures.

For a space in $\U$, these structures allow the definition of homotopy group objects in the exact completion of $\U$, which are invariant under weak equivalence and have a lot of the nice properties usually expected of homotopy groups. There is a long exact sequence of homotopy group objects arising from a fibre sequence in $\U$.

Working along similar lines, we study homological algebra in categories of internal modules in $\U$, getting in particular a Lyndon--Hochschild--Serre spectral sequence for extensions of topological groups in $\U$.
\end{abstract}

%

\blfootnote{2010 Mathematics Subject Classification: 18G55; 18G30; 55P65.}
\blfootnote{Keywords: model category; simplicial object; compactly generated space; topological homotopy group.}

\newpage
\tableofcontents
\newpage

\section*{Introduction}

For a few decades, there has been a good understanding of the categorical structure needed to use homological and homotopical algebra in studying algebraic objects. There are various approaches to extending this structure to study objects with extra structure, such as topological groups, but naive attempts to generalise, say, group cohomology to topological groups only obtain essential tools like long exact sequences in very limited circumstances. See \cite{Stasheff} for a summary of these attempts. In another direction, progress has been made in studying homotopical algebra (via model categories) and homological algebra (via triangulated categories) in more general category-theoretic terms. But there has been little attempt to unify these abstract and concrete ways of thinking to get results in topological algebra.

We present here a systematic framework for defining homotopy and homology group objects for a space, and homology and cohomology group objects for a topological group, which preserve topological information that is lost by existing approaches. A lot of the tools for working with them hold in this context: we obtain long exact sequences in very general circumstances, for example. The highlight of this approach in the current work is a Lyndon--Hochschild--Serre spectral sequence for topological groups, as mentioned above. Further work by the author, in \cite{Myself}, uses this framework to prove topological versions of many of the foundational theorems of algebraic topology: we prove a Seifert--van Kampen Theorem, an Excision Theorem, a Mayer--Vietoris sequence, and we construct Eilenberg--Mac Lane spaces for a class of topological groups.

We will usually work in the convenient category $\U$ of compactly generated, weakly Hausdorff spaces, which we call \emph{$k$-spaces}. Among other nice properties, this is cartesian closed, unlike the category of all topological spaces; it is regular and coregular, which turns out to be crucial to defining well-behaved homotopy group objects on simplicial objects, and well-behaved (co)homology group objects on (co)chain complexes; every space in $\U$ can be seen canonically as a quotient of a disjoint union of compact Hausdorff spaces; compact Hausdorff spaces are finite relative to closed inclusions in $\U$.

The theory of CW-complexes and Eilenberg--Mac Lane spaces is well adapted to the study of abstract groups, but to build spaces with topological information on the homotopy groups is not so easy. There has been some work on the `topological fundamental group' of a space -- though it should be noted (see \cite{Fabel}) that this construction does not always give a topological group -- but there has been no good understanding of how to build such spaces systematically. In this direction, we construct a new model structure on the category $\U$ of spaces and show that this has the potential to give more information about $k$-groups than the usual Quillen model structure. All spaces are fibrant in this structure, and the class of cofibrant spaces may be thought of as a generalisation of CW-complexes, where we may attach a set of compact Hausdorff spaces of $n$-cells, rather than a discrete set of $n$-cells, for each $n$. We call these spaces KW-complexes. This model structure is crucial for allowing the construction of Eilenberg--Mac Lane spaces for topological groups in \cite{Myself}.

The usual approach to doing homological algebra using projective resolutions is to start with a well-behaved class of epimorphisms and then specify that projectives should be objects which have a lifting property with respect to maps in this class. For example this is the way things are done in Quillen-exact categories (for which see \cite{Buhler}). We attack from the opposite direction: start with a class of projectives and take a class of morphisms to be those that satisfy a lifting property with respect to projective objects. This class is automatically well-behaved in the relevant sense. This gives rise to a concept of a category having enough projectives; it turns out that having enough projectives is enough to define derived functors in an additive context. In a general category $\C$, we show that if $\C$ has enough projectives and the projectives satisfy an appropriate smallness condition, then there is a model structure on $s\C$, the category of simplicial objects in $\C$. This generalises \cite[Theorem II.4.4]{Quillen}: crucially, it allows model structures to be cofibrantly generated by a proper class of objects satisfying an additional condition, rather than just a set of objects. This condition is satisfied by the class of compact Hausdorff spaces, giving a model structure on $s\U$, the category of simplicial spaces. There is a Quillen adjunction between $s\U$ with this structure and $\U$ with the structure mentioned above, but we show it is not a Quillen equivalence.

This paper should be considered a framework for further research, not a complete theory. With the definitions given here, there are many possible avenues for further investigation, not all of which are dealt with in the body of the paper, including: further explicit calculations of homotopy group objects and homology group objects; using these to study spaces, such as wild knots, whose usual homotopy groups are too coarse to capture local structure; studying properties of topological groups using their cohomology group objects; defining new homology theories in topological algebra, and in other categories; giving a notion of enriched $\infty$-categories; etc. A sample of the results which can be obtained within this framework can be seen in the work of \cite{Myself} -- but there is much more to be done.

We summarise here the structure of the paper. Section \ref{kspace} gives results from the literature on $k$-spaces and $k$-groups that will be needed later. In Section \ref{modelU} we construct a new model structure on $\U$ whose cofibrant objects are retracts of KW-complexes; we compare the model structure to previously known ones on $\U$ and study the properties of KW-complexes.

Section \ref{se} contains the category-theoretic results needed to make smoother the proofs of Section \ref{mct}, where we study the structure of $s\C$ for a category $\C$ with a fixed class $\bar{\proj}$ of `projective objects'. We show that the full subcategory of Kan complexes in $s\C$ is a category of fibrant objects, and construct a model structure on $s\C$ for any complete and cocomplete category $\C$ which has enough projectives satisfying a smallness condition.

In Section \ref{reghomotopy} we give some background on regular categories, in order to define, for an internal Kan complex in a regular category $\C$, homotopy group objects in the exact completion of the category. We show that, if $\C$ also has a class of projective objects, and morphisms that satisfy a lifting property with respect to projective objects are regular epimorphisms, then these homotopy group objects are invariants under weak equivalence of objects in the category of fibrant objects given by Kan complexes in $s\C$. We apply this in Section \ref{calcs} to investigate the properties of these invariants, and their relation to other definitions of homotopy groups.

In Section \ref{kmodules} we investigate the category of $k$-modules for a $k$-ring (that is, module objects for a ring object internally to $\U$) and show that it has many of the nice properties usually associated with categories of modules for an abstract ring: it is complete and cocomplete, quasi-abelian, has free modules given by a left adjoint to the forgetful functor to $\U$, has a tensor product $\otimes_R$, and has an internal $\Hom$ which is right adjoint to $\otimes_R$.

In Sections \ref{additive} and \ref{derived} we show how our theory works in the additive case, in particular for categories of $k$-modules, and derive some of the usual tools of abstract group cohomology. In particular we get a Lyndon--Hochschild--Serre spectral sequence in this context.

\section{\texorpdfstring{$k$-Spaces, $k$-Groups}{k-Spaces, k-Groups}}
\label{kspace}

The default category of spaces in this paper, unless stated otherwise, will be the category $\U$ of compactly generated weakly Hausdorff spaces, which we call $k$-spaces (though note there are different naming conventions in the literature: what we call $k$-spaces are sometimes called compactly generated, and what we call compactly generated spaces are sometimes called $k$-spaces). All first-countable spaces, and all locally compact Hausdorff spaces, are compactly generated. We give here a summary of the results we will need. A good reference for these facts is \cite{Strickland}.

Given a topological space $X$, we say a subset $U \subseteq X$ is \emph{$k$-closed} if $u^{-1}(U)$ is closed in $K$ for all compact Hausdorff $K$ and all continuous maps $u: K \to X$. We say $X$ is \emph{compactly generated} or CG if every $k$-closed subset of $X$ is closed. Given a topology $\tau$ on $X$, we can define a new topology $k(\tau)$ on $X$ by defining the closed subsets in $k(\tau)$ to be subsets which are $k$-closed in $(X,\tau)$. We write $k(X)$ for $(X,k(\tau))$.

By definition, $k(X)$ is CG, and this gives a functor $k( )$ from the category $Top$ of topological spaces to the category $\K$ of compactly generated spaces, where the morphisms in both cases are the continuous maps. $k( )$ is right adjoint to the inclusion $\K \to Top$.

Colimits of diagrams of CG spaces, calculated in $Top$, are CG; it follows that this procedure gives colimits in $\K$. The adjunction of $k( )$ implies that limits of diagrams of CG spaces can be calculated by applying $k( )$ to the limit of the diagram in $Top$. We may write the $k$-subspace topology on a subset $Y$ of $X$ for $k( )$ applied to the usual subspace topology. So $\K$ is complete and cocomplete (that is, has all small limits and colimits), because $Top$ is.

Note in particular that, for CG spaces $X,Y$, if we write $X \times Y$ for the product in $\K$ and $X \times_0 Y$ for the product in $Top$, then $X \times Y \neq X \times_0 Y$ in general. The notation $- \times -$ will always mean the product in $\K$. If $X$ is locally compact and Hausdorff then we do have $X \times Y = X \times_0 Y$.

A topological space $X$ is called \emph{weakly Hausdorff} or WH if $u(K)$ is closed in $X$ for all compact Hausdorff $K$ and all continuous maps $u: K \to X$. In terms of separation conditions, $T_1 \Rightarrow WH \Rightarrow \text{Hausdorff}$.

A CG space $X$ is WH if and only if the diagonal $$\Delta_X = \{(x,x) \in X \times X\}$$ is closed in $X \times X$. Note the similarity to the result that $X$ is Hausdorff if and only if $\Delta_X$ is closed in $X \times_0 X$. Thus given $X \in \K$, we can functorially define a WH space $(X)_{WH}$ by quotienting $X$ by the smallest closed equivalence relation: see this exists by taking the intersection of all the closed equivalence relations. The functor $( )_{WH}$ from $\K$ to the category $\U$ of $k$-spaces, which we call \emph{weak Hausdorffisation}, is left adjoint to inclusion $\U \to \K$.

Limits of diagrams of $k$-spaces, calculated in $\K$, are WH, so this procedure gives limits in $\U$. The adjunction of $( )_{WH}$ implies that colimits of diagrams of $k$-spaces can be calculated by applying $( )_{WH}$ to the colimit of the diagram in $\K$. So $\U$ is complete and cocomplete. Note these statements show that limits in $\U$ are preserved by the forgetful functor to $Set$.

The main reason for using $\K$ or $\U$ instead of $Top$ is that the \emph{compact-open topology} on spaces of maps is better-behaved. The definition is a modification of the usual definition of compact-open topology.

For $X,Y \in \K$, write $\K(X,Y)$ for the set of morphisms $X \to Y$. For every compact Hausdorff space $K$ and continuous $u: K \to X$, and every open $U \subseteq Y$, define $$W(u,U) = \{f: X \to Y : fu(K) \subseteq U\} \subseteq \K(X,Y).$$ Then we can endow $\K(X,Y)$ with the coarsest compactly generated topology in which the sets $W(u,U)$ are open; we will call this topology the \emph{$k$-compact-open topology} and write $\underline{\K}(X,Y)$ for the space. This makes $\underline{\K}(-,-)$ into a bifunctor $\K^{op} \times \K \to \K$. When $Y$ is WH, $\underline{\K}(X,Y)$ is WH too, so that $\underline{\K}(-,-)$ is also a bifunctor $\U^{op} \times \U \to \U$, for which we will write $\underline{\U}(-,-)$. We will also write $\underline{Top}(X,Y)$ for the continuous maps $X \to Y$ with the usual compact-open topology.

If $X$ is WH, any subspace $Z$ of $X$ is WH, so the diagonal is closed in $Z \times Z$. If $Z$ is compact, $Z \times Z = Z \times_0 Z$, so the closure of the diagonal shows $Z$ is Hausdorff. Hence every continuous $u: K \to X$ with $K$ compact Hausdorff factors through its compact Hausdorff image, and the $k$-compact-open topology on $\K(X,Y)$ is determined by the open sets $$W(K,U) = \{f: X \to Y : f(K) \subseteq U\}$$ with $K \subseteq X$ compact Hausdorff and $U$ open. Summarising, we have:

\begin{lem}
\label{WHhom}
If $X$ is WH, $\underline{\K}(X,Y) = k(\underline{Top}(X,Y))$.
\end{lem}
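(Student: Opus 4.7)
The plan is to show that the $k$-compact-open topology on $\underline{\K}(X,Y)$ and the topology $k(\underline{Top}(X,Y))$ agree on the underlying set $\K(X,Y)$, by combining the universal property defining $\underline{\K}(X,Y)$ with a simplification of its subbasis made possible by the WH hypothesis on $X$.

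First I would use the WH assumption to identify the defining subbasis of $\underline{\K}(X,Y)$ with the standard subbasis of the compact-open topology. The paragraph immediately preceding the lemma already observes that any continuous $u\colon K \to X$ from a compact Hausdorff $K$ factors through its image $u(K) \subseteq X$, which is compact Hausdorff because $X$ is WH. Hence the sets $W(u,U)$ depend only on the compact Hausdorff subspace $u(K)$, and as $u$, $K$, and $U$ vary the subbasis becomes $\{W(C,U) : C \subseteq X \text{ compact Hausdorff}, U \subseteq Y \text{ open}\}$. Since every compact subspace of a WH space is Hausdorff, this is exactly the standard subbasis for the compact-open topology on $\underline{Top}(X,Y)$.

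Next I would record a general monotonicity property of $k(-)$ on topologies over a fixed set: if $\tau \subseteq \tau'$ then $k(\tau) \subseteq k(\tau')$. This follows because a continuous map $K \to (X,\tau')$ is automatically continuous into $(X,\tau)$, so $k$-closedness with respect to $\tau$ is the stronger condition, and $k(\tau)$ has no more closed sets than $k(\tau')$. With this in hand, the double inclusion is straightforward. The topology $k(\underline{Top}(X,Y))$ is compactly generated and has each $W(C,U)$ open (since these are already open in $\underline{Top}(X,Y)$ and passing to $k$ only adds open sets), so by the universal property that defines $\underline{\K}(X,Y)$ as the coarsest CG topology with these subbasic open sets, $\underline{\K}(X,Y) \subseteq k(\underline{Top}(X,Y))$. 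Conversely, $\underline{\K}(X,Y)$ itself is a topology containing the subbasis of $\underline{Top}(X,Y)$ and hence contains $\underline{Top}(X,Y)$; applying monotone $k(-)$ and using that $\underline{\K}(X,Y)$ is already CG, and therefore fixed by $k$, yields $k(\underline{Top}(X,Y)) \subseteq k(\underline{\K}(X,Y)) = \underline{\K}(X,Y)$.

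I do not anticipate a genuinely hard step; the lemma is essentially a definitional unwinding, and the only point needing care is applying the monotonicity of $k$ in the correct direction and matching the universal property of $\underline{\K}(X,Y)$ against the CG topology $k(\underline{Top}(X,Y))$.
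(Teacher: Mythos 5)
Your proposal is correct and follows the same route as the paper, which establishes the lemma in the paragraph immediately preceding it: use the WH hypothesis to show every $u\colon K \to X$ factors through its compact Hausdorff image, so the defining subbasis reduces to the standard compact-open subbasis, and then identify the coarsest CG topology containing it with $k$ of the compact-open topology. You simply make explicit the final comparison of topologies (via monotonicity of $k(-)$ and idempotence on CG topologies) that the paper leaves as "summarising".
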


\begin{thm}
For $X,Y,Z \in \K$, define the maps
\begin{align*}
&ev: X \times \underline{\K}(X,Y) \to Y, && (x,f) \mapsto f(x), \\
&inj: Y \to \underline{\K}(X,X \times Y), && y \mapsto (x \mapsto (x,y)), \\
&adj: \underline{\K}(X,\underline{\K}(Y,Z)) \to \underline{\K}(X \times Y,Z), && f \mapsto ((x,y) \mapsto f(x)(y)).
\end{align*}
Then:
\begin{enumerate}[(i)]
\item $ev$ is continuous;
\item $inj$ is continuous; and
\item $adj$ is naturally a homeomorphism.
\end{enumerate}
\end{thm}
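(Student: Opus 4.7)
The plan is to establish (iii) first as a natural bijection on hom-sets, deduce (i) and (ii) as the images of canonical maps under this bijection, and finally upgrade (iii) to a homeomorphism by applying the hom-set bijection once more to $adj$ itself.

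The core step is an exponential lemma for compact Hausdorff probes: for $K$ compact Hausdorff and any $Y, Z \in \K$, the set map $f \mapsto \bigl((k, y) \mapsto f(k)(y)\bigr)$ is a bijection between $\K(K, \underline{\K}(Y, Z))$ and $\K(K \times Y, Z)$. Two preliminary observations: $K \times Y = K \times_0 Y$ because $K$ is locally compact Hausdorff, and by Lemma \ref{WHhom} together with the fact that $K$ is CG, a map $K \to \underline{\K}(Y, Z)$ is continuous iff the underlying map to $\underline{Top}(Y, Z)$ is continuous. The direction from $\K(K \times Y, Z)$ to $\K(K, \underline{\K}(Y, Z))$ is a standard tube-lemma argument on subbasic opens $W(u, U)$ of $\underline{\K}(Y, Z)$. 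In the opposite direction, given continuous $h: K \to \underline{\K}(Y, Z)$, one shows $\bar h: K \times Y \to Z$ is continuous by testing against compact Hausdorff probes $v = (v_1, v_2): L \to K \times Y$: precomposition $-\circ v_2: \underline{\K}(Y, Z) \to \underline{\K}(L, Z)$ is continuous (it sends each subbasic $W(u', U')$ to $W(v_2 \circ u', U')$), so the composite $L \xrightarrow{h \circ v_1} \underline{\K}(Y, Z) \xrightarrow{-\circ v_2} \underline{\K}(L, Z)$ is continuous; pairing this with $\id_L$ and composing with evaluation on the locally compact Hausdorff space $L$ (continuous by the classical $Top$ exponential law) recovers $\bar h \circ v$.

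The bijection extends to arbitrary $X \in \K$ by CG-probing on both sides, giving the set-level content of $adj$ and $adj^{-1}$. Parts (i) and (ii) then fall out: up to the continuous swap of factors, $ev$ is $adj(\id_{\underline{\K}(X, Y)})$ (for the choice $X' = \underline{\K}(X, Y)$, $Y' = X$, $Z' = Y$ of the variables of $adj$), and $inj$ is $adj^{-1}$ applied to the continuous swap $Y \times X \to X \times Y$. For the continuity of $adj$ and $adj^{-1}$ as maps of hom-spaces, apply the hom-set bijection once more: $adj$ is continuous iff its adjoint $\underline{\K}(X, \underline{\K}(Y, Z)) \times X \times Y \to Z$, $(f, x, y) \mapsto f(x)(y)$, is continuous, which holds by two applications of (i); similarly $adj^{-1}$ is continuous because its adjoint is the single evaluation $\underline{\K}(X \times Y, Z) \times (X \times Y) \to Z$. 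The main obstacle throughout is the hard direction of the core exponential lemma, where without local compactness of $Y$ the classical $Top$ exponential law does not apply directly; the precomposition-with-$v_2$ maneuver, reducing evaluation on $Y$ to evaluation on the locally compact Hausdorff probe $L$, is the technical heart of the proof.
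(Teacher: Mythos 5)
The paper does not prove this theorem: it is quoted as background on $k$-spaces, with \cite{Strickland} cited as the source, so there is no in-paper argument to compare against; your proposal is essentially the standard proof found in that reference. The structure is sound: the exponential bijection $\K(K,\underline{\K}(Y,Z)) \cong \K(K \times Y, Z)$ for compact Hausdorff $K$, with the easy direction by a tube-lemma argument on the subbasic sets $W(u,U)$ and the hard direction by probing $K \times Y$ with compact Hausdorff $L$ and trading evaluation on $Y$ for evaluation on the locally compact Hausdorff probe $L$ (where the classical exponential law applies and $\times = \times_0$); extension to arbitrary CG first factor by probing; and the bootstrap of continuity of $ev$, $inj$, $adj$ and $adj^{-1}$ through the set-level bijection, which contains no circularity since the hard direction only ever evaluates on the probes. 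The one blemish is your preliminary claim that a map $K \to \underline{\K}(Y,Z)$ is continuous iff the underlying map into $\underline{Top}(Y,Z)$ is: Lemma \ref{WHhom} identifies $\underline{\K}(Y,Z)$ with $k(\underline{Top}(Y,Z))$ only when the exponent's domain ($Y$ here) is weakly Hausdorff, and an arbitrary object of $\K$ need not be WH, so this equivalence is not available in the stated generality. Fortunately your argument never uses it --- both directions of the core lemma, and the precomposition map $- \circ v_2$, work directly with the subbasis $W(u,U)$ of the $k$-compact-open topology --- so the remark should simply be dropped or restricted to WH exponents. Naturality of $adj$, asserted in the statement, is not addressed but is a routine check on the underlying bijection.
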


So in particular $\K$ and $\U$ are cartesian closed. Note that these results fail in $Top$. Products in cartesian closed categories are left adjoints, so we get:

\begin{cor}
For $X \in \K$ (respectively, $\U$) the functor $X \times -$ commutes with colimits in $\K$ (respectively, $\U$).
\end{cor}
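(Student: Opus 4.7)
The plan is to deduce the corollary directly from the preceding theorem, using only the standard fact that left adjoints preserve colimits.

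First I would extract an adjunction from the natural homeomorphism $adj$ in the theorem. Applying the global-sections functor $\K(*,-)$ (equivalently, passing from the internal $\underline{\K}$ to the external hom-set) to the homeomorphism $\underline{\K}(X,\underline{\K}(Y,Z)) \cong \underline{\K}(X \times Y, Z)$, and using that $* \times A \cong A$ for the terminal object $*$, yields a natural bijection $\K(X, \underline{\K}(Y,Z)) \cong \K(X \times Y, Z)$. This exhibits $- \times Y$ as left adjoint to $\underline{\K}(Y,-)$ on $\K$. Since the cartesian product is symmetric, an analogous argument (or just swapping factors) shows that $X \times -$ is left adjoint to $\underline{\K}(X,-)$ for any $X \in \K$. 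A functor with a right adjoint preserves all colimits that exist in the source category, so $X \times -$ preserves colimits in $\K$.

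For the $\U$ version, I would observe that the preceding discussion established that $\underline{\K}(X,Y)$ is WH whenever $Y$ is WH, so that the bifunctor $\underline{\K}(-,-)$ restricts to $\underline{\U}(-,-)\colon \U^{op} \times \U \to \U$. Restricting the above adjunction to objects of $\U$, the same natural bijection $\U(X \times Y, Z) \cong \U(X, \underline{\U}(Y,Z))$ holds (for $X,Y,Z \in \U$), exhibiting $X \times -$ as left adjoint to $\underline{\U}(X,-)$ on $\U$. Hence $X \times -$ preserves colimits in $\U$ as well.

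There is essentially no obstacle here; the only mild subtlety is confirming that the adjunction descends from $\K$ to $\U$, which is immediate once one notes that the internal hom into a WH space is already WH, so no weak-Hausdorffisation is needed in computing the right adjoint.
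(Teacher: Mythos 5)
Your proof is correct and follows the same route as the paper: the theorem makes $\K$ and $\U$ cartesian closed, so $X \times -$ is a left adjoint (to $\underline{\K}(X,-)$, resp.\ $\underline{\U}(X,-)$) and therefore preserves colimits. The extra care you take in externalising the internal adjunction and checking the restriction to $\U$ is fine but is exactly what the paper's one-line justification compresses.
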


We now restrict ourselves to the consideration of $k$-spaces. A map in a category $\C$ with all finite limits and colimits is called a \emph{regular epimorphism}, written $\twoheadrightarrow$, if it is a coequaliser of a pair of maps, and a \emph{regular monomorphism}, written $\rightarrowtail$, if it is an equaliser of a pair of maps. Then $\C$ is called \emph{regular} if pull-backs preserve regular epimorphisms, and \emph{coregular} if push-outs preserve regular monomorphisms.

\begin{thm}
\phantomsection
\label{currying}
\begin{enumerate}[(i)]
\item Regular epimorphisms in $\U$ are exactly the quotient maps.
\item Regular monomorphisms in $\U$ are exactly the closed inclusions.
\item $\U$ is regular and coregular.
\end{enumerate}
\end{thm}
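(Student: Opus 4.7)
The plan is to address each part via the explicit descriptions of (co)limits in $\U$. For (i), coequalisers in $\U$ are computed by first taking the $Top$-coequaliser (which is CG as a colimit of CG spaces) and then applying weak Hausdorffisation; both operations are quotient maps, so every regular epi is a quotient. Conversely, a quotient $q\colon X \twoheadrightarrow Y$ is the coequaliser of its kernel pair $X \times_Y X \rightrightarrows X$: any $h\colon X \to W$ coequalising the pair factors set-theoretically through $q$, and the induced factorisation $\bar h\colon Y \to W$ is continuous precisely because $Y$ carries the quotient topology. For (ii), the equaliser of $f, g\colon X \to Y$ is the $k$-subspace $E = (f,g)^{-1}(\Delta_Y) \subseteq X$; since $Y$ is WH, $\Delta_Y$ is closed in $Y \times Y$, so $E$ is closed in $X$. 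Conversely, for a nonempty closed inclusion $A \hookrightarrow X$, the equaliser of the quotient $X \to X/A$ and the constant map at $[A]$ is exactly $A$ (using that $X/A \in \U$ since $A$ is closed); the case $A = \emptyset$ is handled by any pair of maps out of $X$ agreeing nowhere, e.g.\ the two inclusions $X \rightrightarrows X \sqcup X$.

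For (iii) regularity, let $q\colon X \twoheadrightarrow Y$ be a quotient and $f\colon Z \to Y$ arbitrary, with pullback $p\colon W = X \times_Y Z \to Z$. Surjectivity of $p$ is immediate. To show $Z$ carries the quotient topology, take a set map $g\colon Z \to V$ with $g p$ continuous; since $Z$ is CG, continuity of $g$ reduces to checking $g \circ u$ for each $u\colon K \to Z$ with $K$ compact Hausdorff. Pulling back further reduces the whole problem to showing that $X \times_Y K \to K$ is a quotient. Here I would invoke cartesian closedness: $- \times K$ has right adjoint $\underline{\U}(K,-)$, so it preserves the coequaliser exhibiting $q$ as the coequaliser of its kernel pair, whence $q \times \id_K\colon X \times K \to Y \times K$ is a quotient (by part (i)). The pullback $X \times_Y K$ is the preimage under $q \times \id_K$ of the graph of $K \to Y$ in $Y \times K$, and this graph is closed since $Y$ is WH; a point-set check then shows that the restriction of a quotient map to the preimage of a closed subspace is itself a quotient, finishing the argument.

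For (iii) coregularity, compute the pushout of a closed inclusion $i\colon A \hookrightarrow B$ along any $f\colon A \to C$ in $Top$ as $(B \sqcup C)/\sim$ with $i(a) \sim f(a)$. Injectivity of $i$ guarantees that the generated equivalence relation does not identify distinct points of $C$, so $C \to B \cup_A C$ is injective; its image has preimage $C \cup i(A)$ in $B \sqcup C$, closed because $i(A)$ is closed in $B$, so the image is closed in the pushout. A direct check, using closedness of $i(A)$ again, shows that for $V \subseteq C$ closed the preimage $V \cup i(f^{-1}(V))$ is closed in $B \sqcup C$, so that $C \to B \cup_A C$ is a closed embedding. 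The $Top$-pushout along a closed inclusion of $\U$-objects is itself in $\U$ (both CG and WH), so it agrees with the $\U$-pushout, giving the claim. The main obstacles I anticipate are the two point-set manipulations: the lemma that restrictions of quotients to preimages of closed subspaces are quotients (for regularity), and the weak Hausdorffness of the $Top$-pushout along a closed inclusion (for coregularity); both are standard but non-trivial.
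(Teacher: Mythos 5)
Your proposal is correct, and in fact the paper never proves Theorem \ref{currying}: it is stated as background, with \cite{Strickland} cited as the reference, so there is no internal proof to diverge from. Your route is essentially the standard one from that source: (i) coequalisers in $\U$ are WH-ifications of $Top$-quotients (a composite of quotient maps), and conversely a quotient map satisfies the universal property of the coequaliser of its kernel pair because the factorisation is automatically continuous for the quotient topology; (ii) equalisers are pulled back from the closed diagonal of a WH space, and a closed $A \subseteq X$ is recovered as the equaliser of $X \to X/A$ with the constant map; regularity by probing with compact Hausdorff spaces and using that $-\times K$ is a left adjoint; coregularity by the explicit point-set analysis of the adjunction space. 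The two lemmas you flag are indeed the only real content, and both are standard: the restriction of a quotient map to a saturated closed preimage is a quotient, and the weak Hausdorffness of $X/A$ and of $B\cup_A C$ can be checked via the closed-diagonal criterion, since $q\times q$ is again a quotient in $\K$ (cartesian closedness), so the diagonal of the quotient is closed exactly when its preimage -- the relevant closed equivalence relation -- is closed in the $k$-product. One small point worth making explicit: closed subsets of compactly generated spaces are compactly generated, so the $k$-subspace topologies you use on the equaliser in (ii) and on the pullback $X\times_Y K$ in the regularity argument coincide with the ordinary subspace topologies, which is what lets you speak of closed inclusions and apply the restriction-of-quotients lemma without further ado.
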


I do not know whether $\K$ is regular or coregular.

Now we consider algebraic objects in $\U$. The idea is to start with a Lawvere theory $T$, and then define $\C^T$, the $T$-objects in a category $\C$, to be the category of product-preserving functors $T \to \C$ and natural transformations between them. We give an explicit description of group objects in $\U$, on the understanding that `group' and `$\U$' generalise in an obvious way. We call these group objects \emph{$k$-groups}.

Write $\{\ast\}$ for the one-point space (categorically: the terminal object). Then a $k$-group is an object $G \in \U$ together with maps $$m: G^2 \to G, i: G^1 \to G, e: G^0 = \{\ast\} \to G$$ satisfying the usual relations of the multiplication, inversion and identity maps of groups:
\begin{enumerate}[(i)]
\item $m \circ (m \times \id_G) = m \circ (\id_G \times m): G \times G \times G \to G$;
\item $m \circ (\id_G \times e)$ is the canonical projection $G \times \{\ast\} \to G$; and
\item $m \circ (\id_G \times i) \circ d$ and $m \circ (i \times \id_G) \circ d$ are the trivial map $G \to \{\ast\} \xrightarrow{e} G$, where $d$ is the diagonal map $G \to G \times G$.
\end{enumerate}
Then a morphism of $k$-groups is a morphism in $\U$ which is compatible with the maps $m,i,e$, in an appropriate sense.

Alternatively, and perhaps more explicitly, we can define a $k$-group to be a group equipped with a $k$-space topology making the multiplication and inversion maps continuous: here morphisms are the group homomorphisms which are continuous.

A good reference for $k$-groups, providing proofs for the statements below, is \cite{Lamartin}.

Write $TopGrp$, $\K Grp$ and $\U Grp$ for the categories of group objects and morphisms in $Top$, $\K$ and $\U$, respectively. All three are complete and cocomplete. Given $G \in TopGrp$, $k(G)$ is automatically a CG group, though not all CG groups arise in this way. Of course, this gives a functor $TopGrp \to \K Grp$. On the other hand there is no obvious inclusion $\K Grp \to TopGrp$, as the multiplication in a CG group might not be a continuous map $G \times_0 G \to G$.

A CG group $G$ is a $k$-group if and only if the identity is closed in $G$, and the functor $$( )_{WH}: \K Grp \to \U Grp$$ is given by $(G)_{WH} = G/\bar{\{1_G\}}$, with the quotient topology. This is left adjoint to inclusion $\U Grp \to \K Grp$.

Many properties of CG groups and $k$-groups are similar to those of topological groups: given $H \leq G$ in either of these categories, $\bar{H}$ is again a subgroup of $G$, which is normal if $H$ is (normal here should be interpreted as a condition on the underlying groups); for $H \lhd G$ in either category, $G/H$ is a CG-group and $G/\bar{H}$ is a $k$-group; the quotient map $G \to G/H$ is open. LaMartin in \cite[Proposition 2.2]{Lamartin} claims that the multiplication map $G \times G \to G$ is open, though without proof, and the proof for topological groups only shows that the image of an open subset of $G \times_0 G$ is open; so the question may still be open.

On the other hand, in $TopGrp$ $T_0 \Rightarrow T_{3\frac{1}{2}}$, and in $\K Grp$ $T_0$ does not imply Hausdorff.

As in $TopGrp$, there are free group objects, that is, the forgetful functors $\K Grp \to \K$ and $\U Grp \to \U$ have left adjoints. Here, however, unlike in $TopGrp$, these left adjoints can be easily given by an explicit construction, because, as noted above, products in $\K$ and $\U$ commute with colimits. Indeed, free group objects exist very generally in cartesian closed topological categories, as shown in \cite{Seal}. The construction in \cite{Lamartin} shows that the free CG-group on a $k$-space is actually the free $k$-group on that space.

Finally, we also have free abelian group objects, constructed similarly, and the free abelian CG-group on a $k$-space is the free abelian $k$-group on that space.

\section{The model structure on \texorpdfstring{$\U$}{U}}
\label{modelU}

We use the definitions of model structures and model categories given in \cite[Section 1.1]{Hovey}. That is, a model category is a complete and cocomplete category together with a model structure, and factorisations (unless stated otherwise) are required to be functorial. The definitions of other model-theoretic terminology will also be taken from \cite{Hovey}.

Before going on to more abstract proofs in more abstract categories, we start by constructing a new model structure on $\U$. As well as being of independent interest, this argument gives an intuition for the approaches that will be used later on.

In a (locally small) category $\C$, we write $\C(X,Y)$ for the set of morphisms $X \to Y$ in $\C$. Recall, for a cardinal $\kappa$, a category $\C$ and a class $\M$ of morphisms in $\C$, that an object $X$ in $\C$ is said to be \emph{$\kappa$-small relative to $\M$} if, for every $\lambda$-sequence of morphisms $(f_i)_{i \in \lambda}$ in $\M$, where $\lambda$ is an ordinal with cofinality greater than $\kappa$, $$\C(X,\colim f_i) \cong \colim \C(X,f_i).$$ In the special case where $\kappa$ is finite, we say $X$ is finite relative to $\M$.

The following construction uses a generalisation of Quillen's small object argument, due to \cite{Chorny}. The point is simply that a model structure may look cofibrantly generated, except for the intervention of set-theoretic issues, and in well-behaved cases these may be overcome.

\begin{prop}
\label{smallobject}
\cite[Theorem 1.1]{Chorny}: Suppose $\C$ is a category containing all small colimits, and $I$ is a class of maps in $\C$ satisfying the following conditions:
\begin{enumerate}[(i)]
\item There exists a cardinal $\kappa$, such that each element $A \in \dom(I)$ is $\kappa$-small relative to $I$-cell.
\item For every map $f$ in $\C$ there exists a (functorially assigned) map $S(f) \in I$-cell, equipped with a (natural) morphism of maps $t_f: S(f) \to f$, such that any morphism of maps $i \to f$ with $i \in I$ factors through the (natural) map $t_f$.
\end{enumerate}
Then there is a (functorial) factorisation $(\alpha,\beta)$ on $\C$ such that, for all morphisms $f$ in $\C$, the map $\alpha(f)$ is in $I$-cell and the map $\beta(f)$ is in $I$-inj.
\end{prop}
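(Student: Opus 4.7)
My plan is to adapt Quillen's small object argument, with the key modification that at each transfinite stage we replace the (possibly class-sized) collection of lifting problems by the single universal problem supplied by condition (ii). That single-map substitute is what allows the class-sized $I$ to behave set-theoretically.

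Given $f: X \to Y$, I will construct by transfinite induction on ordinals $\mu \leq \lambda$ a sequence $X = X^0 \to X^1 \to \cdots \to X^\lambda$ together with compatible maps $g^\mu: X^\mu \to Y$, with $g^0 = f$. At a successor stage $\mu+1$, having constructed $g^\mu$, I invoke (ii) to obtain $S(g^\mu): A^\mu \to B^\mu$ in $I$-cell with a morphism of maps $t_{g^\mu}: S(g^\mu) \to g^\mu$ consisting of $a^\mu: A^\mu \to X^\mu$ and $b^\mu: B^\mu \to Y$; then form $X^{\mu+1}$ as the pushout of $S(g^\mu)$ along $a^\mu$, and let $g^{\mu+1}: X^{\mu+1} \to Y$ be induced by $g^\mu$ and $b^\mu$. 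At a limit stage, take the colimit of the sequence so far, with $g^\mu$ induced by the universal property. Finally, pick $\lambda$ to be any ordinal of cofinality strictly greater than $\kappa$, and set $\alpha(f) := X \to X^\lambda$ and $\beta(f) := g^\lambda$.

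Closure of $I$-cell under pushouts and transfinite composition then gives $\alpha(f) \in I$-cell essentially for free. The substantive verification is that $\beta(f) \in I$-inj. Given a lifting problem from $i: A \to B$ in $I$ to $\beta(f)$, the $\kappa$-smallness of $A$ relative to $I$-cell from (i) forces $A \to X^\lambda$ to factor through some $X^\mu$ with $\mu < \lambda$; this yields a morphism of maps $i \to g^\mu$. Condition (ii) now factors this through $t_{g^\mu}$, producing a commuting square $A \to A^\mu$, $B \to B^\mu$. Composing $B \to B^\mu$ with the pushout leg $B^\mu \to X^{\mu+1}$ and then into $X^\lambda$ produces the required diagonal; the two commutativity conditions needed to verify that this is genuinely a lift reduce to the compatibility of the morphism $i \to S(g^\mu)$ with $t_{g^\mu}$ and to the pushout square, both of which are routine diagram chases.

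The main obstacle to writing this up cleanly will be maintaining functoriality of the whole construction, since it rests on (ii) providing $S(f)$ and $t_f$ naturally in $f$; once that naturality is granted, pushouts and colimits are functorial by their universal properties and $\alpha, \beta$ inherit functoriality automatically. One must also be careful that the smallness hypothesis in (i) is applied to the class $I$-cell (not merely to $I$), since it is precisely the factoring of $A \to X^\lambda$ through some $X^\mu$ along a transfinite composite of pushouts of $I$-cell maps that needs to be justified.
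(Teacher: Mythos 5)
Your argument is correct and is essentially the argument of Chorny that the paper simply cites (the paper gives no independent proof; its accompanying remark notes precisely the modification you use, namely that $\alpha(f)$ is built as a transfinite composition of pushouts of the maps $S(g^\mu) \in I$-cell, which stays in $I$-cell, with smallness relative to $I$-cell doing the factoring of $A \to X^\lambda$ through some stage). Your handling of the single universal lifting problem via condition (ii), the choice of $\lambda$ of cofinality greater than $\kappa$, and the verification of the lift all match the cited proof.
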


We say such a class $I$ \emph{permits the} (\emph{functorial}) \emph{generalised small object argument}.

\begin{rem}
We have changed the statement from \cite{Chorny} slightly to require that each domain in $I$ is $\kappa$-small relative to $I$-cell, and that $S(f)$ be in $I$-cell, rather than $I$-cof; the proof of \cite[Theorem 1.1]{Chorny} then constructs $\alpha(f)$ as a transfinite composition of pushouts by maps in $I$-cell, which is once again in $I$-cell as required. Indeed, in our applications it seems to be much easier to prove initially that the objects we are interested in are small relative to $I$-cell than to $I$-cof.
\end{rem}

Thanks to the factorisations constructed in Proposition \ref{smallobject}, we may rewrite the statement of \cite[Theorem 2.1.19]{Hovey} to allow for classes of generating (trivial) cofibrations; the proof goes through \textit{mutatis mutandis}.

\begin{prop}
\label{cofibgen}
Suppose $\C$ is a category with all small colimits and limits. Suppose $W$ is a subcategory of $\C$, and $I$ and $J$ are classes of maps of $\C$ which permit the functorial generalised small object argument. Then there is a class-cofibrantly generated model structure on $\C$ with $I$ as the class of generating cofibrations, $J$ as the class of generating trivial cofibrations, and $W$ as the subcategory of weak equivalences if and only if the following conditions are satisfied:
\begin{enumerate}[(i)]
\item The subcategory $W$ has the two-out-of-three property and is closed under retracts.
\item $J$-cell $\subseteq W \cap I$-cof.
\item $I$-inj $\subseteq W \cap J$-inj.
\item Either $W \cap I$-cof $\subseteq J$-cof or $W \cap J$-inj $\subseteq I$-inj.
\end{enumerate}
\end{prop}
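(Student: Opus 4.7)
The plan is to follow the proof of Hovey's Theorem 2.1.19 essentially line by line, replacing every invocation of the classical (set-indexed) small object argument by Proposition \ref{smallobject}, whose hypotheses are precisely what it means for $I$ and $J$ to \emph{permit} the functorial generalised small object argument. I would declare the cofibrations to be $I$-cof, the trivial cofibrations to be $J$-cof, the fibrations to be $J$-inj, the trivial fibrations to be $I$-inj, and the weak equivalences to be $W$, and then verify the model category axioms. The ``only if'' direction is formal: in any model structure, 2-of-3 and retract-closure of weak equivalences give (i); $J$-cell consists of trivial cofibrations and $I$-inj of trivial fibrations, yielding (ii) and (iii); and since trivial (co)fibrations are exactly (co)fibrations which are weak equivalences, both alternatives in (iv) automatically hold.

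For the ``if'' direction, the lifting axioms for $I$-cof against $I$-inj and for $J$-cof against $J$-inj are tautological from the definitions of $I$-cof and $J$-cof, so the real content is to match the two candidate notions of ``trivial (co)fibration'', i.e.\ to show $W \cap I\text{-cof} = J\text{-cof}$ and $W \cap J\text{-inj} = I\text{-inj}$. Conditions (ii) and (iii) give one inclusion of each. For the factorisation axioms, apply Proposition \ref{smallobject} to $I$ to obtain a functorial factorisation $f = \beta(f)\alpha(f)$ with $\alpha(f) \in I\text{-cell} \subseteq I\text{-cof}$ and $\beta(f) \in I\text{-inj}$, which by (iii) is also a weak equivalence; apply it to $J$ to obtain $f = \beta'(f)\alpha'(f)$ with $\alpha'(f) \in J\text{-cell} \subseteq J\text{-cof}$, in $W$ by (ii), and $\beta'(f) \in J\text{-inj}$. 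This simultaneously gives both required factorisations.

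The converse inclusions are the standard retract argument, now using the two factorisations above. Suppose $f \in W \cap I\text{-cof}$; factor $f = \beta'(f)\alpha'(f)$ as above, so $\alpha'(f) \in J\text{-cof} \subseteq W$ by (ii) and hence $\beta'(f) \in W \cap J\text{-inj}$ by 2-of-3. Under the first alternative in (iv), $\beta'(f) \in I\text{-inj}$, so $f$ lifts against $\beta'(f)$; the lift exhibits $f$ as a retract of $\alpha'(f)$, forcing $f \in J\text{-cof}$. The other alternative in (iv) is handled dually, using the $I$-factorisation of a map in $W \cap J\text{-inj}$ and the retract argument on the other side. Retract-closure of $I$-cof, $J$-cof, $I$-inj, $J$-inj is formal, completing the verification.

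The main obstacle is entirely set-theoretic: the reason Hovey restricts to $I$ and $J$ being sets is to guarantee that Quillen's small object argument runs, and that is exactly what Proposition \ref{smallobject} restores for our classes. Every other step of Hovey's proof is either a tautology about lifting properties or a retract argument that is indifferent to the cardinality of $I$ and $J$, so once the factorisations are in hand the proof translates without change; I do not anticipate any step that requires genuinely new ideas beyond the careful reformulation in terms of Proposition \ref{smallobject}.
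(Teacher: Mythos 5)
Your proposal is correct and takes essentially the same route as the paper, which proves this proposition precisely by observing that Hovey's proof of Theorem 2.1.19 goes through \emph{mutatis mutandis} once the classical small object argument is replaced by the functorial generalised one of Proposition \ref{smallobject}, exactly as you do. (One harmless labelling slip: in your displayed retract argument the conclusion $\beta'(f) \in I$-inj uses the alternative $W \cap J$-inj $\subseteq I$-inj of (iv) rather than the first alternative, but since you handle the other case dually the argument stands.)
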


Write $\bar{\proj}$ for the class of compact Hausdorff spaces in $\U$, and $\proj$ for the disjoint unions of compact Hausdorff spaces. Given $X \in \U$, define $d(X)$ to be the disjoint union of the compact Hausdorff subspaces of $X$. Then any $f: Y \to X$ with $Y \in \bar{\proj}$ factors canonically (though \textit{not} uniquely) through the obvious map $d(X) \to X$, by sending $Y$ to the $\im(f)$-component of $d(X)$. If we take $Y \in \proj$, $f$ still factors through $d(X)$ but this is no longer canonical, as there is no canonical way of deciding how to decompose $Y$ as a disjoint union of compact Hausdorff spaces. On the other hand, given a map $g: X \to Z$ in $\U$, we can define $d(g):d(X) \to d(X)$ functorially, by sending each compact Hausdorff subspace $K$ of $X$ to the component of $d(Z)$ corresponding to the image of $K$ in $Z$. This makes $d$ into a functor, which we call the \emph{CH-subspaces functor}.

Though all compactly generated spaces can be written as quotients of weakly Hausdorff ones, I do not know of any way to extend $d$ functorially to $\K$.

Let $I$ be the class of maps in $\U$ of the form $\iota \times \id_K: \partial \Delta_n \times K \to \Delta_n \times K$, $K \in \bar{\proj}$, and let $J$ be the class of maps of the form $\iota' \times \id_K: \Lambda_n \times K \to \Delta_n \times K$, $K \in \bar{\proj}$, where $\Lambda_n$ is the $n$-horn, $\Delta_n$ is the $n$-simplex and $\iota, \iota'$ are the inclusion maps.

\begin{thm}
Let $W$ be the class of maps $f$ in $\U$ such that $\underline{\U}(K,f)$ is a weak homotopy equivalence for all $K \in \bar{\proj}$. In the terminology of \cite[Definition 1.3]{Chorny}, with $I$ as the generating cofibrations, $J$ as the generating trivial cofibrations, and $W$ as the weak equivalences, $\U$ is a class-cofibrantly generated right-proper symmetric monoidal model category.
\end{thm}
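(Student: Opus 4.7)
The plan is to apply Proposition \ref{cofibgen}, so the first task is to verify that $I$ and $J$ permit the functorial generalised small object argument of Proposition \ref{smallobject}. The domains of maps in $I$ and $J$ are compact Hausdorff (as products of compact Hausdorff spaces), while every map in $I$-cell and $J$-cell is a transfinite composition of pushouts of closed inclusions, hence itself a closed inclusion in $\U$; so by the finiteness property cited in the introduction, each domain is finite relative to $I$-cell and to $J$-cell. For the functorial $S(f)$, I would use the CH-subspaces functor $d$: the space $P_n(f) := \underline{\U}(\partial\Delta_n, X) \times_{\underline{\U}(\partial\Delta_n, Y)} \underline{\U}(\Delta_n, Y)$ parametrises squares from $\iota_n: \partial\Delta_n \to \Delta_n$ to $f$, and any square from $\iota_n \times \id_K$ with $K \in \bar{\proj}$ corresponds by adjunction to a map $K \to P_n(f)$ factoring through a compact Hausdorff subspace of $P_n(f)$; such subspaces are enumerated by $d(P_n(f))$ as a set-indexed coproduct. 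Pushing out in $X$ along all the lifting problems supplied by the components of $d(P_n(f))$ yields the required $S(f) \in I$-cell with a universal morphism $S(f) \to f$, functorially in $f$, and similarly for $J$ (replacing $\partial\Delta_n$ by $\Lambda_n$).

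For the axioms of Proposition \ref{cofibgen}, axiom (i) is immediate from two-out-of-three and retract closure of weak homotopy equivalences in $\U$. Axioms (iii) and (iv) follow from the cartesian-closed adjunction: $f \in I$-inj iff $\underline{\U}(K,f)$ has the right lifting property against $\partial\Delta_n \hookrightarrow \Delta_n$ for all $n$ and all $K \in \bar{\proj}$, i.e.\ is a trivial Serre fibration in the standard Quillen model structure on $\U$, giving $I$-inj $\subseteq W \cap J$-inj; conversely, if $f \in W \cap J$-inj then $\underline{\U}(K,f)$ is both a Serre fibration and a weak homotopy equivalence, hence a trivial Serre fibration, providing the second alternative of (iv). Axiom (ii) requires more care. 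The inclusion $J$-cell $\subseteq I$-cof holds because each $\Lambda_n \hookrightarrow \Delta_n$ is a pushout of $\partial\Delta_{n-1} \hookrightarrow \Delta_{n-1}$ along the missing face, and $- \times K$ preserves pushouts. For $J$-cell $\subseteq W$, each $\Lambda_n \times K \hookrightarrow \Delta_n \times K$ is a strong deformation retract inclusion, and pushouts of such maps along arbitrary maps remain strong deformation retract inclusions (since $- \times [0,1]$ preserves pushouts in the cartesian closed category $\U$); so $\underline{\U}(K',-)$ sends each successor stage of a transfinite composite to a homotopy equivalence, and the transfinite step passes through the bijections $[K' \times S^n, \colim X_\alpha] \cong \colim [K' \times S^n, X_\alpha]$ supplied by finiteness of $K' \times S^n$ relative to closed inclusions.

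Right-properness is then inherited from the Quillen model structure on $\U$, because $\underline{\U}(K,-)$ preserves pullbacks and sends $J$-inj to Serre fibrations. The symmetric monoidal structure is the cartesian one; the pushout-product axiom on generators reduces to the identity $(\partial\Delta_n \times \Delta_m) \cup (\Delta_n \times \partial\Delta_m) = \partial(\Delta_n \times \Delta_m)$ (and its horn counterpart), followed by a triangulation of the prism $\Delta_n \times \Delta_m$ which exhibits the pushout-product in $I$-cell (respectively $J$-cell). The main obstacle is the combination of axiom (ii) with the setup of the generalised small object argument: one must ensure that $d(P_n(f))$ yields a genuinely set-indexed coproduct rather than a class-indexed one, and then handle the transfinite step of $J$-cell $\subseteq W$ using only finiteness of compact Hausdorff spaces relative to closed inclusions, rather than the (false) hope that $\underline{\U}(K',-)$ commutes with transfinite composites.
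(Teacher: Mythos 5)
Your proposal is correct and takes essentially the same route as the paper: Chorny's generalised small object argument via finiteness of compact Hausdorff spaces relative to closed inclusions together with the CH-subspaces functor applied to the mapping-space pullback $\underline{\U}(\partial\Delta_n,X) \times_{\underline{\U}(\partial\Delta_n,Y)} \underline{\U}(\Delta_n,Y)$, then Proposition \ref{cofibgen} with $I$-inj $= W \cap J$-inj from cartesian closedness, $J$-cell $\subseteq W \cap I$-cof via deformation-retract pushouts plus a smallness argument at the transfinite step, and right-properness and the pushout-product axiom inherited from the Quillen structure. The only quibble is that $\Lambda_n \hookrightarrow \Delta_n$ is not a single pushout of $\partial\Delta_{n-1} \hookrightarrow \Delta_{n-1}$ but a finite composite of pushouts of boundary inclusions (attach the missing face, then the top $n$-cell), which is what the expression of maps in $J$ as relative $I$-cell complexes actually uses.
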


We will show this as a series of lemmas, using Proposition \ref{cofibgen}.

\begin{lem}
\label{IandJwork}
$I$ and $J$ permit the functorial generalised small object argument.
\end{lem}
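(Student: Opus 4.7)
The plan is to verify, for both $I$ and $J$, the two conditions of Proposition \ref{smallobject}; the arguments are parallel, so I describe only $I$. For the smallness condition, the domain of each map in $I$ is $\partial \Delta_n \times K$ with $K$ compact Hausdorff, and this is itself compact Hausdorff (in $\U$ the product of two locally compact Hausdorff spaces agrees with the product in $Top$). Each map in $I$ is a closed inclusion, since $\partial \Delta_n \hookrightarrow \Delta_n$ is a closed inclusion (a regular monomorphism) and $- \times K$ preserves regular monomorphisms. Closed inclusions in $\U$ are stable under cobase change and transfinite composition, so $I$-cell is contained in the class of closed inclusions, and the paper has already recorded that compact Hausdorff spaces are finite relative to closed inclusions. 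Hence each domain in $I$ is $\omega$-small relative to $I$-cell, and one may take $\kappa = \omega$.

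For the functorial factorisation, the main difficulty is that $I$ is a proper class, and the CH-subspaces functor $d$ is precisely the device needed to corral it. Given $f: X \to Y$, for each $n \geq 0$ form the pullback
\[
P_n^f = \underline{\U}(\partial \Delta_n, X) \times_{\underline{\U}(\partial \Delta_n, Y)} \underline{\U}(\Delta_n, Y),
\]
the space of commutative squares from $i_n$ to $f$. By cartesian closedness, a square with compact Hausdorff $K$ corresponds to a map $K \to P_n^f$, and since $K \in \bar{\proj}$ it factors canonically through $d(P_n^f) \to P_n^f$; adjoining back yields universal maps $\partial \Delta_n \times d(P_n^f) \to X$ and $\Delta_n \times d(P_n^f) \to Y$ forming a commutative square with $f$. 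Define $Z_f$ as the pushout of $\coprod_n \partial \Delta_n \times d(P_n^f) \to \coprod_n \Delta_n \times d(P_n^f)$ along $\coprod_n \partial \Delta_n \times d(P_n^f) \to X$, let $S(f): X \to Z_f$ be the resulting map, and let $t_f$ be the morphism of maps given by $\id_X$ together with the map $Z_f \to Y$ induced by the pushout. Functoriality in $f$ is inherited from the functoriality of $d$, of pullbacks, and of pushouts.

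The factorisation property is built in: any square from $i_n$ (with $K \in \bar{\proj}$) to $f$ yields a map $K \to P_n^f$, factoring through $d(P_n^f)$ by definition of $d$, hence through the universal square at index $n$, and thence through $t_f$ via the pushout. The main obstacle is showing $S(f) \in I$-cell (rather than merely $I$-cof); this is handled by the observation that $d(P_n^f)$ is, by definition, a disjoint union $\coprod_\alpha K_\alpha$ of compact Hausdorff subspaces, so the single pushout taken to form $Z_f$ decomposes as a set-indexed transfinite composition of pushouts of individual generators $\partial \Delta_n \times K_\alpha \to \Delta_n \times K_\alpha$ lying in $I$. The argument for $J$ is identical \emph{mutatis mutandis}, with $\Lambda_n$ replacing $\partial \Delta_n$ throughout.
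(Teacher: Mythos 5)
Your proof is correct and follows essentially the same route as the paper: the same smallness observation for compact Hausdorff domains relative to closed inclusions, the same pullback $\underline{\U}(\partial\Delta_n,X) \times_{\underline{\U}(\partial\Delta_n,Y)} \underline{\U}(\Delta_n,Y)$ with the CH-subspaces functor $d$, and the same evaluation maps giving the natural morphism to $f$. The only (cosmetic) difference is that you take $S(f)$ to be the pushout $X \to Z_f$ of the assembled coproduct of generators, whereas the paper takes $S(f)$ to be that coproduct of generators itself with $t_f$ given by the evaluation square; both choices lie in $I$-cell and satisfy condition (ii) of Proposition \ref{smallobject}, so either feeds into the generalised small object argument.
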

\begin{proof}
We give a proof for $I$; $J$ holds similarly.

The domains in $I$ are all compact and Hausdorff, so they are finite relative to closed inclusions in $\U$ by \cite[Proposition 2.4.2]{Hovey}. Note that every map in $I$ is a closed inclusion. Closed inclusions are closed under pushouts and transfinite composition, by the proof of \cite[Lemma 2.4.5]{Hovey}.

Suppose $f: X \to Y$ is a map in $\U$. Consider the pullback $$A_n = \underline{\U}(\partial\Delta_n,X) \times_{\underline{\U}(\partial\Delta_n,Y)} \underline{\U}(\Delta_n,Y)$$ in $\U$, and write $B_n$ for the disjoint union of the compact Hausdorff subspaces of $A_n$. Finally, let $S(f)$ to be the coproduct over $n$ of the maps $\partial\Delta_n \times B_n \to \Delta_n \times B_n$. Then $S(f)$ is in $I$-cell by \cite[Lemma 2.1.13]{Hovey}, and is functorially assigned. Now define $e_1$ to be the composite $$\bigsqcup_n\partial\Delta_n \times B_n \to \bigsqcup_n\partial\Delta_n \times A_n \to \bigsqcup_n\partial\Delta_n \times \underline{\U}(\partial\Delta_n,X) \to X,$$ where the first two maps are the canonical ones on the second factor and the identity on the first factor, and the third map is evaluation. Similarly, define $e_2$ to be the composite $$\bigsqcup_n\Delta_n \times B_n \to \bigsqcup_n\Delta_n \times A_n \to \bigsqcup_n\Delta_n \times \underline{\U}(\Delta_n,Y) \to Y.$$ Then the square
\[
\xymatrix{\bigsqcup_n\partial\Delta_n \times B_n \ar[r]^{e_1} \ar[d] & X \ar[d]^{f} \\ \bigsqcup_n\Delta_n \times B_n \ar[r]^{e_2} & Y}
\]
commutes, as required, giving the morphism of maps $t_f$. This is natural by construction.

There is a natural bijection between commutative squares of the form
\[
\xymatrix{\partial\Delta_n \times K \ar[r] \ar[d] & X \ar[d]^{f} \\ \Delta_n \times K \ar[r] & Y,}
\]
$K \in \bar{\proj}$, and elements of the set $\underline{\U}(\partial\Delta_n \times K,X) \times_{\underline{\U}(\partial\Delta_n \times K,Y)} \underline{\U}(\Delta_n \times K,Y)$; this set is naturally isomorphic to $$\underline{\U}(K,\underline{\U}(\partial\Delta_n,X) \times_{\underline{\U}(\partial\Delta_n,Y)} \underline{\U}(\Delta_n,Y)) = \underline{\U}(K,A_n),$$ using cartesian closedness and the fact that $\underline{\U}(K,-)$ commutes with limits. So each such commutative square identifies a map $K \to A_n$, which lifts canonically to $g: K \to B_n$, giving a diagram
\[
\xymatrix{\partial\Delta_n \times K \ar[r] \ar[d] & \partial\Delta_n \times B_n \ar[r] \ar[d] & \bigsqcup_n\partial\Delta_n \times B_n \ar[r]^{e_1} \ar[d] & X \ar[d]^{f} \\ \Delta_n \times K \ar[r] & \Delta_n \times B_n \ar[r] & \bigsqcup_n\Delta_n \times B_n \ar[r]^{e_2} & Y.}
\]
This diagram commutes and gives the required factorisation.
\end{proof}

\begin{lem}
$W$ has the two-out-of-three property and is closed under retracts.
\end{lem}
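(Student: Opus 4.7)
The plan is to reduce both properties to the corresponding well-known properties of weak homotopy equivalences of spaces, by exploiting that $W$ is defined by a representable-type condition: $f \in W$ iff $\underline{\U}(K,f)$ is a weak homotopy equivalence for every compact Hausdorff $K$.

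First I would observe that $\underline{\U}(K,-) : \U \to \U$ is a functor for each $K \in \bar{\proj}$, so in particular it preserves composition and identities, and preserves retract diagrams in the arrow category. So for any composable pair $f,g$ in $\U$ we have $\underline{\U}(K,gf) = \underline{\U}(K,g)\underline{\U}(K,f)$, and any retract of $f$ in the arrow category of $\U$ is carried to a retract of $\underline{\U}(K,f)$ in the arrow category of $\U$.

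For two-out-of-three, suppose two of $f,g,gf$ lie in $W$. Applying $\underline{\U}(K,-)$ for a fixed $K \in \bar{\proj}$ gives three continuous maps, two of which are weak homotopy equivalences of topological spaces. Weak homotopy equivalences in $\U$ satisfy two-out-of-three (this is a standard fact, immediate from the analogous statement for simplicial sets via the singular complex, or equivalently from two-out-of-three for isomorphisms of homotopy groups and path components); hence the third is a weak homotopy equivalence. Since $K$ was arbitrary, the third map lies in $W$. For closure under retracts, if $f'$ is a retract of $f \in W$ in the arrow category of $\U$, then $\underline{\U}(K,f')$ is a retract of $\underline{\U}(K,f)$, and weak homotopy equivalences of spaces are closed under retracts (a retract of a map inducing isomorphisms on all $\pi_n$ and $\pi_0$ again induces such isomorphisms, as $\pi_n$ is functorial and retracts of isomorphisms in $Set$ or $Grp$ are isomorphisms).

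I do not expect any real obstacle here: the lemma is a formal consequence of the functoriality of $\underline{\U}(K,-)$ together with the corresponding properties of weak homotopy equivalences, which are classical. The only small point to note is that the two-out-of-three and retract closure properties for weak homotopy equivalences in $\U$ should be cited (or deduced from the analogous properties in $Top$, since the underlying set-theoretic maps and continuous structure are the same and $\pi_n$ does not depend on the $k$-ification). The proof can thus be written in a few lines.
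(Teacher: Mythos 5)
Your proof is correct and is exactly the argument the paper intends: the paper's proof simply says the lemma is immediate from the corresponding properties of weak homotopy equivalences, which is what you spell out via functoriality of $\underline{\U}(K,-)$. No issues.
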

\begin{proof}
This is immediate from the corresponding property for weak homotopy equivalences.
\end{proof}

\begin{lem}
\phantomsection
\label{equivalences}
\begin{enumerate}[(i)]
\item Homotopy equivalences in $\U$ are weak equivalences.
\item Weak equivalences are weak homotopy equivalences.
\item If $f$ is a weak equivalence, so is $\underline{\U}(K,f)$ for $K \in \bar{\proj}$.
\end{enumerate}
\end{lem}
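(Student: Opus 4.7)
The plan is to dispatch the three parts in turn, exploiting only the cartesian closed structure of $\U$ and the definition of $W$. None of the parts should be genuinely hard, but part (i) requires a small amount of care in handling homotopies under $\underline{\U}(K,-)$, since $\underline{\U}(K, X \times I)$ is not literally $\underline{\U}(K,X) \times I$.

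For (i), let $f : X \to Y$ be a homotopy equivalence with quasi-inverse $g$ and homotopies $H : X \times I \to X$, $H' : Y \times I \to Y$. I will produce a homotopy equivalence between $\underline{\U}(K,X)$ and $\underline{\U}(K,Y)$ via $\underline{\U}(K,f)$ and $\underline{\U}(K,g)$. The natural map $I \to \underline{\U}(K,I)$ sending $t$ to the constant function at $t$, combined with the natural isomorphism $\underline{\U}(K,X \times I) \cong \underline{\U}(K,X) \times \underline{\U}(K,I)$ (from the fact that $\underline{\U}(K,-)$ preserves products as a right adjoint, Theorem 1.2), yields a map $\underline{\U}(K,X) \times I \to \underline{\U}(K, X \times I)$. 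Composing with $\underline{\U}(K,H)$ gives a homotopy on $\underline{\U}(K,X)$ from $\underline{\U}(K,g) \circ \underline{\U}(K,f)$ to the identity; similarly for the other side. Hence $\underline{\U}(K,f)$ is a homotopy equivalence, in particular a weak homotopy equivalence.

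For (ii), simply specialise to $K = \{\ast\}$: the natural isomorphism $\underline{\U}(\{\ast\}, -) \cong \id_{\U}$ shows that $\underline{\U}(\{\ast\}, f) \cong f$, so membership of $f$ in $W$ directly forces $f$ to be a weak homotopy equivalence.

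For (iii), I need to check that $\underline{\U}(L, \underline{\U}(K,f))$ is a weak homotopy equivalence for every $L \in \bar{\proj}$. Cartesian closedness (the $adj$ homeomorphism of Theorem 1.2) supplies a natural isomorphism $\underline{\U}(L, \underline{\U}(K, -)) \cong \underline{\U}(L \times K, -)$. Since $L$ and $K$ are compact Hausdorff, $L \times K$ (in $\U$, which agrees with the usual topological product because $L$ is locally compact Hausdorff) is again compact Hausdorff, hence in $\bar{\proj}$. Thus $\underline{\U}(L \times K, f)$ is a weak homotopy equivalence by the assumption $f \in W$, which gives the conclusion.

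The only mild obstacle is the bookkeeping in (i) around how a homotopy through $X \times I$ translates into a homotopy through $\underline{\U}(K,X) \times I$; the rest reduces to unwinding adjunctions and applying standard facts about products of compact Hausdorff spaces.
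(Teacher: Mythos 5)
Your proposal is correct and follows essentially the same route as the paper: parts (ii) and (iii) are identical (take $K=\{\ast\}$, and use $\underline{\U}(K',\underline{\U}(K,f)) \cong \underline{\U}(K'\times K,f)$ with $K'\times K$ compact Hausdorff), while in (i) the paper shows $\underline{\U}(K,-)$ preserves homotopies via the swap isomorphism $\underline{\U}(K,\underline{\U}([0,1],Y)) \cong \underline{\U}([0,1],\underline{\U}(K,Y))$, which is just a curried form of your product-preservation argument. The two versions of (i) are interchangeable bookkeeping with the same cartesian-closedness input, so there is nothing further to flag.
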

\begin{proof}
\begin{enumerate}[(i)]
\item For homotopic maps $f,g: X \to Y$, the induced maps $\underline{\U}(K,X) \to \underline{\U}(K,Y)$ are homotopic, using the natural isomorphism $$\underline{\U}(K,\underline{\U}([0,1],Y)) \cong \underline{\U}([0,1],\underline{\U}(K,Y)).$$ The result follows.
\item Take $K = \{\ast\}$.
\item For $K' \in \bar{\proj}$, the induced map $\underline{\U}(K',\underline{\U}(K,f))$ is a weak homotopy equivalence as required, because it is isomorphic to $\underline{\U}(K' \times K,f)$, and $K' \times K \in \bar{\proj}$.
\end{enumerate}
\end{proof}

\begin{lem}
$J$-cell $\subseteq W \cap I$-cof.
\end{lem}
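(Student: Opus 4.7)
The plan is to split the claim into the two inclusions $J\text{-cell}\subseteq I\text{-cof}$ and $J\text{-cell}\subseteq W$. For the former, since $I$-cof is closed under pushouts and transfinite compositions, it suffices to show $J\subseteq I$-cof. I would factor each generator $\Lambda_n\times K\to\Delta_n\times K$ as $\Lambda_n\times K\to\partial\Delta_n\times K\to\Delta_n\times K$; the second map belongs to $I$, and the first is a pushout of $\partial\Delta_{n-1}\times K\to\Delta_{n-1}\times K$, coming from the obvious pushout decomposition $\partial\Delta_n=\Lambda_n\cup_{\partial\Delta_{n-1}}\Delta_{n-1}$ (attaching the missing top face to the horn) after crossing with $K$. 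The functor $-\times K$ preserves pushouts by cartesian closedness, and $\partial\Delta_{n-1}\times K\to\Delta_{n-1}\times K$ lies in $I$.

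For $J\text{-cell}\subseteq W$, I would first check that each generator lies in $W$: the standard strong deformation retraction $\Delta_n\to\Lambda_n$, crossed with $\id_K$, exhibits $\Lambda_n\times K\hookrightarrow\Delta_n\times K$ as a strong deformation retract, hence a homotopy equivalence, hence in $W$ by Lemma~\ref{equivalences}(i). Moreover each generator is a closed Hurewicz cofibration (the Hurewicz property for $\Lambda_n\hookrightarrow\Delta_n$ is preserved by crossing with $\id_K$), and a classical topological argument shows that the pushout of a closed Hurewicz cofibration that is a strong deformation retract is again a strong deformation retract. So every pushout of a $J$-map is in $W$.

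For transfinite compositions, let $X_0\to X_1\to\cdots\to X_\lambda=\colim_\alpha X_\alpha$ be a $\lambda$-sequence with each $X_\alpha\to X_{\alpha+1}$ a pushout of a $J$-map. Each successor map is a closed inclusion (pushouts of closed inclusions in the coregular category $\U$ are closed inclusions, by Theorem~\ref{currying}). For fixed $K\in\bar\proj$, since compact Hausdorff spaces are finite relative to closed inclusions in $\U$, any continuous map from a compact Hausdorff space into $X_\lambda$ factors through some $X_\alpha$, and likewise for homotopies (whose domains remain compact Hausdorff after crossing with $[0,1]$). By adjunction this yields $\pi_n(\underline{\U}(K,X_\lambda),g)=\colim_\alpha\pi_n(\underline{\U}(K,X_\alpha),g)$ for each basepoint $g$, which itself factors through some finite stage. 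An induction on $\alpha$, using the pushout step at successors and this colimit identification at limits, then shows $\pi_n(\underline{\U}(K,X_0))\to\pi_n(\underline{\U}(K,X_\alpha))$ is a bijection for every $\alpha$, so $X_0\to X_\lambda$ is in $W$.

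The main obstacle is the transfinite step: one must transfer the finiteness of $K\in\bar\proj$ relative to closed inclusions into the statement that $\pi_n(\underline{\U}(K,-))$ commutes with the relevant colimit, which requires verifying that maps, homotopies, and basepoints all factor through finite stages, and then running the inductive argument uniformly across successor and limit ordinals.
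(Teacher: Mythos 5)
Your proof is correct and follows essentially the same route as the paper: $J \subseteq I$-cell via a pushout decomposition of $\Lambda_n \to \Delta_n$ together with the fact that $- \times K$ preserves pushouts, pushouts of $J$-maps are inclusions of deformation retracts and hence weak equivalences, and the transfinite step is controlled by the finiteness of compact Hausdorff spaces relative to closed inclusions. The only cosmetic difference is that where the paper applies $\underline{\U}(K,-)$ and invokes the cited results of Hovey (colimit commutation and transfinite compositions of trivial closed-inclusion weak equivalences) together with the fact that $\underline{\U}(K,-)$ preserves closed inclusions, you unpack the same smallness argument by hand at the level of homotopy groups.
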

\begin{proof}
The map $\Lambda_n \to \Delta_n$ can be written as a finite composition of pushouts by maps of the form $\partial \Delta_m \to \Delta_m$; indeed, it is a finite CW-complex. Taking products of all the maps involved with some $K \in \bar{\proj}$ expresses each map $\Lambda_n \times K \to \Delta_n \times K$ in $J$ as an element of $I$-cell (recall that $- \times K$ preserves pushouts in $\U$), so $J$-cell $\subseteq I$-cell $\subseteq I$-cof.

Pushouts by elements of $J$ are weak equivalences by Lemma \ref{equivalences}, because they are inclusions of deformation retracts and hence homotopy equivalences. So elements $f$ of $J$-cell are transfinite compositions $(f_\lambda)$ of weak equivalences that are also closed inclusions. Therefore, for every $K \in \bar{\proj}$, $(\underline{\U}(K,f_\lambda))$ is again a transfinite composition, where each map is a weak homotopy equivalence by definition, and each map is a closed inclusion by \cite[Proposition 2.37]{Strickland}. Hence the transfinite composition of $(\underline{\U}(K,f_\lambda))$ is a weak equivalence by \cite[Lemma 2.4.8]{Hovey}, and $\colim_\lambda \underline{\U}(K,f_\lambda) = \underline{\U}(K, \colim_\lambda f_\lambda)$ for limit ordinals by \cite[Proposition 2.4.2]{Hovey}, so we are done.
\end{proof}

\begin{lem}
$I$-inj $= W \cap J$-inj.
\end{lem}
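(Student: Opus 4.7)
The plan is to use the cartesian closed adjunction $K \times - \dashv \underline{\U}(K,-)$ for compact Hausdorff $K$ to translate both lifting conditions into conditions about the standard Quillen model structure on $\U$. Specifically, since $\underline{\U}(K,-)$ is right adjoint to $K \times -$, there is a natural bijection between lifting problems
\[
\xymatrix{\partial\Delta_n \times K \ar[r] \ar[d] & X \ar[d]^{f} \\ \Delta_n \times K \ar[r] & Y}
\qquad\text{and}\qquad
\xymatrix{\partial\Delta_n \ar[r] \ar[d] & \underline{\U}(K,X) \ar[d]^{\underline{\U}(K,f)} \\ \Delta_n \ar[r] & \underline{\U}(K,Y),}
\]
and similarly with $\Lambda_n$ in place of $\partial\Delta_n$. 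Hence $f \in I$-inj iff $\underline{\U}(K,f)$ has the right lifting property against every $\partial\Delta_n \hookrightarrow \Delta_n$ for every $K \in \bar{\proj}$, and $f \in J$-inj iff $\underline{\U}(K,f)$ has the right lifting property against every $\Lambda_n \hookrightarrow \Delta_n$ for every $K \in \bar{\proj}$.

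Now I would invoke the standard characterisation in the Quillen model structure on $\U$ (see \cite[\S 2.4]{Hovey}): a map of $k$-spaces has the right lifting property against all $\Lambda_n \hookrightarrow \Delta_n$ iff it is a Serre fibration, and has the right lifting property against all $\partial\Delta_n \hookrightarrow \Delta_n$ iff it is a Serre fibration that is additionally a weak homotopy equivalence (a trivial Serre fibration). Thus the lemma reduces to the obvious statement that $\underline{\U}(K,f)$ is a trivial Serre fibration for every $K \in \bar{\proj}$ if and only if $\underline{\U}(K,f)$ is a Serre fibration for every $K$ and is also a weak homotopy equivalence for every $K$ — the latter being exactly the definition of $f \in W$.

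From this, both inclusions are immediate. For $I\text{-inj} \subseteq W \cap J\text{-inj}$: if $\underline{\U}(K,f)$ is a trivial Serre fibration for all $K \in \bar{\proj}$, then in particular it is a weak homotopy equivalence, giving $f \in W$, and it is a Serre fibration, giving $f \in J$-inj by the adjunction. For $W \cap J\text{-inj} \subseteq I\text{-inj}$: if $f \in J$-inj then each $\underline{\U}(K,f)$ is a Serre fibration, and if $f \in W$ each is additionally a weak homotopy equivalence, so each is a trivial Serre fibration, and $f \in I$-inj follows by the adjunction.

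There is really no substantive obstacle here; the proof is essentially a bookkeeping exercise once the cartesian closed adjunction is applied. The only thing worth double-checking is that the appeal to the standard Quillen model structure on $\U$ is legitimate — this is well established, e.g.\ in \cite{Hovey}, so the argument is short.
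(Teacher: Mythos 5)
Your proof is correct and follows essentially the same route as the paper: translate the lifting problems across the cartesian closed adjunction $K \times - \dashv \underline{\U}(K,-)$ and then invoke the standard characterisations of Serre fibrations and trivial Serre fibrations in the Quillen model structure (\cite[Proposition 2.4.10, Theorem 2.4.12]{Hovey}). No gaps; the paper's proof is just a more compressed version of the same chain of equivalences.
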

\begin{proof}
A map $f$ is in $I$-inj if and only if $\underline{\U}(K,f)$ has the right lifting property with to the maps $\partial\Delta_n \to \Delta_n$ for all $n$ and $K \in \bar{\proj}$, if and only if $C(K,f)$ is a Serre fibration and a weak homotopy equivalence for all $K \in \bar{\proj}$ by \cite[Proposition 2.4.10, Theorem 2.4.12]{Hovey}, if and only if $f \in W \cap J$-inj.
\end{proof}

This completes the proof of the model structure. We can now prove the other claims.

\begin{lem}
Pullbacks preserve weak equivalences.
\end{lem}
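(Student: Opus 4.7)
The plan is to interpret this as right-properness of the new model structure and to reduce it to right-properness of the standard Quillen model structure on $\U$ by probing with the bifunctor $\underline{\U}(K,-)$ for $K \in \bar{\proj}$. (The literal reading must fail: $\{1\} \hookrightarrow [0,1]$ is in $W$ but its pullback along $[0,1) \hookrightarrow [0,1]$ is $\emptyset \to [0,1)$, which is not.)

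Given a pullback square
\[
\xymatrix{P \ar[r] \ar[d] & A \ar[d]^g \\ E \ar[r]^p & B}
\]
with $g \in W$ and $p$ a fibration, I would first apply $\underline{\U}(K,-)$. Because $-\times K \dashv \underline{\U}(K,-)$ via cartesian closedness, this functor preserves limits, so the image is again a pullback in $\U$.

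Next I would identify the properties of the probed maps: $\underline{\U}(K,g)$ is a weak homotopy equivalence by the very definition of $W$; and $\underline{\U}(K,p)$ is a Serre fibration, since adjoining $K$ out of the right lifting property of $p$ against $\Lambda_n \times K \to \Delta_n \times K$ (an element of $J$) yields the right lifting property of $\underline{\U}(K,p)$ against $\Lambda_n \to \Delta_n$. This is precisely the adjunction manoeuvre already used in the proof of Lemma \ref{IandJwork}.

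Finally, I would invoke the classical right-properness of the Quillen model structure on $\U$: the pullback of a weak homotopy equivalence along a Serre fibration is again a weak homotopy equivalence. Therefore $\underline{\U}(K,P) \to \underline{\U}(K,E)$ is a weak homotopy equivalence for every $K \in \bar{\proj}$, so $P \to E \in W$. The main obstacle is conceptual rather than technical: noticing that, because both $W$ and the class of fibrations have been set up via right lifting properties against objects built from $\bar{\proj}$, the bifunctor $\underline{\U}(K,-)$ is a sufficiently faithful probe to transport everything to the classical Serre/Quillen situation. Once this is noticed, the argument is adjoint bookkeeping plus a standard citation.
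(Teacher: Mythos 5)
Your argument is essentially the paper's own proof: apply $\underline{\U}(K,-)$, which preserves pullbacks, observe that $\underline{\U}(K,g)$ is a weak homotopy equivalence by the definition of $W$, and conclude by right-properness of the Quillen model structure on $\U$. You are also right that the statement must be read as right-properness (pullback along a \emph{fibration}): the paper's proof tacitly needs $\underline{\U}(K,p)$ to be a Serre fibration for the cited right-properness of $\U_Q$ to apply, and your adjunction check of that point (together with the counterexample showing the literal, unrestricted statement fails) is the only step you make explicit beyond what the paper writes.
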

\begin{proof}
Recall that $\underline{\U}(K,-)$ commutes with pullbacks for all $K$. Then given a weak equivalence $f$ in $\U$, and a pullback $g$ of $f$ by another map $h$, $\underline{\U}(K,g)$ is the pullback of $\underline{\U}(K,f)$ by $\underline{\U}(K,h)$, for all $K \in \bar{\proj}$. By definition, $\underline{\U}(K,f)$ is a weak homotopy equivalence, and the Quillen model structure is right proper by \cite[Proposition 2.4.18]{Hovey}, so $\underline{\U}(K,g)$ is a weak homotopy equivalence for all $K \in \bar{\proj}$, and hence $g$ is a weak equivalence.
\end{proof}

\begin{lem}
This model structure on $\U$ is symmetric monoidal.
\end{lem}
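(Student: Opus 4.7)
The plan is to verify the two conditions of Hovey's definition (\cite[Definition 4.2.6]{Hovey}) of a symmetric monoidal model category, with respect to the cartesian structure on $\U$. The monoidal unit is the terminal object $\{\ast\}$, and the unit axiom is immediate: taking $n = 0$ and $K = \{\ast\} \in \bar{\proj}$ in the description of $I$ shows that $\emptyset \to \{\ast\}$ is a generating cofibration, so $\{\ast\}$ is already cofibrant and no cofibrant replacement is needed.

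For the pushout-product axiom I would first invoke the standard reduction (\cite[Corollary 4.2.5]{Hovey}): since $- \times X$ preserves colimits in $\U$ by cartesian closedness, the pushout-product $- \square -$ distributes over pushouts and transfinite compositions in each variable. As cofibrations are retracts of $I$-cell maps and trivial cofibrations are retracts of $J$-cell maps, it suffices to show $f \square g \in I\text{-cof}$ for $f, g \in I$, and $f \square g \in J\text{-cof}$ whenever at least one of $f, g$ lies in $J$.

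For two generators $f = \iota \times \id_K$ at level $n$ and $g = \iota \times \id_L$ at level $m$, rearranging the product factors gives $f \square g \cong (\iota_n \square \iota_m) \times \id_{K \times L}$, where $\iota_n \square \iota_m \colon \partial(\Delta_n \times \Delta_m) \to \Delta_n \times \Delta_m$ is the classical pushout-product. Since $K, L \in \bar{\proj}$ are in particular locally compact Hausdorff, the $\U$-product $K \times L$ agrees with the usual product and remains in $\bar{\proj}$. A prismatic triangulation of $\Delta_n \times \Delta_m$ exhibits $\iota_n \square \iota_m$ as a finite composition of pushouts of the boundary inclusions $\partial\Delta_k \to \Delta_k$; applying the colimit-preserving functor $- \times (K \times L)$ then packages $f \square g$ as a finite $I$-cell map. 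The trivial case is identical with one $\iota_n$ replaced by the horn inclusion $\iota'_n$: classically $\iota'_n \square \iota_m$ is a finite composition of pushouts of horn inclusions, so $f \square g$ lies in $J$-cell.

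The main obstacle — such as it is — is the combinatorial input about $\Delta_n \times \Delta_m$ and its boundary (and the analogous statement for horns). This is a classical fact about simplicial sets, but care is needed to spell the decomposition out explicitly enough to match our generating classes. Once it is in hand, the rest is formal: the colimit-preservation of $- \times (K \times L)$ and the reduction to generators deliver the pushout-product axiom \emph{inside} the specified classes $I$ and $J$, not merely inside some ambient class of cofibrations or trivial cofibrations.
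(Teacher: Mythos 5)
Your proposal is correct and follows essentially the same route as the paper: reduce to the unit axiom (trivial since $\{\ast\}$ is cofibrant) and, via \cite[Corollary 4.2.5]{Hovey}, to the pushout-product of generating maps, then use that $- \times (K \times L)$ preserves colimits together with the classical facts about $\partial\Delta_n$, $\Lambda_n$ and products of simplices underlying the Quillen monoidal structure. The paper leaves exactly this generator computation as an exercise, and your write-up simply fills in those details (noting correctly that $K \times L$ stays in $\bar{\proj}$).
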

\begin{proof}
The symmetric monoidal structure is just the usual cartesian closedness of $\U$. Since $\{\ast\}$ is easily seen to be cofibrant, we only need to check the first condition in \cite[Definition 4.2.6]{Hovey}; by \cite[Corollary 4.2.5]{Hovey}, it suffices to check the condition on maps in our classes of generating cofibrations. Since $\U$ with the Quillen model structure is symmetric monoidal, this is an exercise in writing out definitions and commuting $- \times K$ with pushouts, which we leave for the reader.
\end{proof}

\begin{lem}
\begin{enumerate}[(i)]
\item $- \times K$ preserves cofibrations and trivial cofibrations, for all $K \in \bar{\proj}$.
\item $\underline{\U}(K,-)$ preserves fibrations and trivial fibrations, for all $K \in \bar{\proj}$.
\end{enumerate}
\end{lem}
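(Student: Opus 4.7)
The plan is to exploit the fact that the classes of generating (trivial) cofibrations $I$ and $J$ are themselves closed under $-\times K$ for $K \in \bar{\proj}$, and then to translate this closure property across the adjunction $(-\times K) \dashv \underline{\U}(K,-)$.

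For (i), I would first observe that if $K'$ is compact Hausdorff, then so is $K' \times K$: the product $K'\times K$ in $\U$ agrees with the product in $Top$ (since $K$ is locally compact Hausdorff), and the usual product of compact Hausdorff spaces is compact Hausdorff. Consequently, applying $-\times K$ to the generating cofibration $\partial\Delta_n \times K' \to \Delta_n \times K'$ produces the generating cofibration $\partial\Delta_n \times (K' \times K) \to \Delta_n \times (K' \times K)$, so $-\times K$ sends $I$ into $I$; identically it sends $J$ into $J$. Since $-\times K$ is a left adjoint (by cartesian closedness), it preserves all small colimits, hence transfinite compositions and pushouts, so it sends $I$-cell into $I$-cell and $J$-cell into $J$-cell. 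It obviously preserves retracts, so it sends $I$-cof into $I$-cof (the cofibrations) and $J$-cof into $J$-cof (the trivial cofibrations), which is exactly what (i) asserts.

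For (ii), I would pass across the adjunction $(-\times K) \dashv \underline{\U}(K,-)$: a map $\underline{\U}(K,f)$ has the right lifting property with respect to $j$ if and only if $f$ has the right lifting property with respect to $j \times K$. If $f$ is a fibration, i.e.\ $f \in J$-inj, then since every $j \times K$ with $j \in J$ again lies in $J$ by the observation above, $f$ lifts against each such $j \times K$, and hence $\underline{\U}(K,f) \in J$-inj, i.e.\ it is a fibration. The argument for trivial fibrations ($I$-inj) is word-for-word the same with $I$ in place of $J$.

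There is no real obstacle here; the lemma is essentially the unpacking of the monoidal model structure just established, applied to the cofibrant object $K$ (which is cofibrant because $\emptyset \to K$ is the map $\partial\Delta_0 \times K \to \Delta_0 \times K$ and therefore lies in $I$). The only point deserving mention is the stability of $\bar{\proj}$ under binary products in $\U$, which is where compactness and Hausdorffness of $K$ are used.
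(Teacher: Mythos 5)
Your proposal is correct and rests on the same key point as the paper's proof: $\bar{\proj}$ is closed under binary products (so $I$ and $J$ are stable under $-\times K$), combined with the adjunction $(-\times K)\dashv\underline{\U}(K,-)$; the paper simply notes (i) and (ii) are equivalent by this adjunction and proves (ii) by observing $\underline{\U}(K',\underline{\U}(K,f))\cong\underline{\U}(K'\times K,f)$, which is the same computation you make in lifting-property form. Your direct cell-and-retract argument for (i) is a minor variation, not a different method.
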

\begin{proof}
Using cartesian closedness of $\U$, these statements are equivalent, since the classes in each part are defined by lifting properties relative to classes in the other. We prove (ii). If $f$ is a fibration, $\underline{\U}(K',\underline{\U}(K,f)) = \underline{\U}(K' \times K,f)$ is a Serre fibration for all $K' \in \bar{\proj}$, as required. Similarly for trivial fibrations.
\end{proof}

We now want to relate our new model structure, which we will call the compact Hausdorff model structure, to the more common model structures on $\U$, namely the Quillen and Hurewicz model structures. We will write $\U_{CH}, \U_Q$ and $\U_H$ respectively for the three model categories.

\begin{rem}
By combining these model structures, we can make new ones, using \cite[Theorem 4.1.1]{MS}. So for example we get one with the weak equivalences of $\U_{CH}$, Hurewicz fibrations, and cofibrations given by closed Hurewicz cofibrations which are the composite of a cofibration in $\U_{CH}$ and a homotopy equivalence.
\end{rem}

\begin{prop}
The identity on $\U$ gives left Quillen functors $\U_Q \to \U_{CH} \to \U_H$.
\end{prop}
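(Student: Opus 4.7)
The plan for both claims is to exhibit the identity as a left Quillen functor by verifying it sends (generating) cofibrations to cofibrations and (generating) trivial cofibrations to trivial cofibrations in the target; closure of these classes under retracts, pushouts, and transfinite composition in the target model structure then gives the full statement. For $\U_Q \to \U_{CH}$ this is immediate: the generating cofibrations $\partial\Delta_n \to \Delta_n$ and generating trivial cofibrations $\Lambda_n \to \Delta_n$ of $\U_Q$ are exactly the elements of $I$ and $J$ corresponding to $K = \{\ast\}$, so they already lie in the cofibrations and trivial cofibrations of $\U_{CH}$.

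For $\U_{CH} \to \U_H$ I would show that each map in $I$ is a closed Hurewicz cofibration and each map in $J$ is additionally a homotopy equivalence. The ingredient maps $\partial\Delta_n \hookrightarrow \Delta_n$ and $\Lambda_n \hookrightarrow \Delta_n$ are closed Hurewicz cofibrations as CW pairs, and the second is a deformation retract inclusion, hence a homotopy equivalence. Closed inclusions and homotopy equivalences are clearly preserved by $- \times K$. To see that Hurewicz cofibrations are also preserved, I would use the characterisation of Hurewicz cofibrations as those maps with the LLP against Hurewicz fibrations: by cartesian closedness of $\U$, $i \times \id_K$ has the LLP against $p$ iff $i$ has the LLP against $\underline{\U}(K, p)$, so the claim reduces to showing that $\underline{\U}(K, p)$ is a Hurewicz fibration whenever $p$ is. This is another application of the same adjunction: $p$ has the RLP against $A \hookrightarrow A \times [0,1]$ for every $A \in \U$, in particular against $(A \times K) \hookrightarrow (A \times K) \times [0,1]$, which by adjunction is exactly the RLP of $\underline{\U}(K, p)$ against $A \hookrightarrow A \times [0,1]$.

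The main obstacle is the stability of Hurewicz (co)fibrations under $- \times K$ and $\underline{\U}(K, -)$; once this essentially formal fact is in hand, the rest of the argument is bookkeeping with closure properties of cofibrations and trivial cofibrations. No comparison of the classes of weak equivalences is needed, since the two-stage check on generators automatically packages weak equivalence information into the trivial cofibration half.
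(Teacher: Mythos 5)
Your first half is exactly the paper's argument. For the second functor you work on the cofibration side: show that each map in $I$ (resp.\ $J$) is a closed Hurewicz cofibration (resp.\ additionally a homotopy equivalence), then use closure of the cofibrations and trivial cofibrations of $\U_H$ under retracts, pushouts and transfinite composition to conclude that $I$-cof and $J$-cof land in the right classes. The paper instead argues on the adjoint side, via the characterisation of Quillen adjunctions in \cite[Lemma 1.3.4]{Hovey}: a Hurewicz fibration has the right lifting property against cylinder inclusions $X \to X \times [0,1]$, hence against the maps in $J$ (which are retracts of such inclusions), so $\id_\U: \U_H \to \U_{CH}$ preserves fibrations; it preserves trivial fibrations because homotopy equivalences are weak equivalences in $\U_{CH}$ by Lemma \ref{equivalences}. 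The paper's route needs nothing about Hurewicz cofibrations; yours needs the stability of Hurewicz cofibrations under $- \times K$, which is standard. Both are legitimate verifications, and your observation that no direct comparison of weak equivalences is needed is correct.

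One step is justified by a false statement: Hurewicz cofibrations are \emph{not} the maps with the LLP against all Hurewicz fibrations. That lifting class is precisely the class of trivial cofibrations of $\U_H$; in particular any map in it is a homotopy equivalence, which $\partial\Delta_n \to \Delta_n$ is not, so your characterisation would wrongly make every map in $I$ a trivial cofibration. The correct characterisation is the homotopy extension property: $i$ is a Hurewicz cofibration if and only if it has the LLP against the evaluation-at-$0$ maps $\underline{\U}([0,1],Y) \to Y$ for all $Y \in \U$. Your adjunction argument then goes through verbatim: these evaluation maps are Hurewicz fibrations, and by cartesian closedness $\underline{\U}(K,\underline{\U}([0,1],Y)) \cong \underline{\U}([0,1],\underline{\U}(K,Y))$ compatibly with evaluation, so applying $\underline{\U}(K,-)$ to an evaluation map yields another evaluation map, against which $i$ lifts. (Alternatively, quote the standard fact that $- \times K$ preserves NDR pairs, hence Hurewicz cofibrations, and that it preserves closed inclusions.) With that repair your argument is complete.
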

\begin{proof}
Certainly $\id_\U$ is left adjoint to itself. For the first left Quillen functor, it is trivial that the generating cofibrations and generating trivial cofibrations of the Quillen model structure are in $I$ and $J$, respectively, and it follows that cofibrations and trivial cofibrations in the Quillen model structure are in $I$-cof and $J$-cof, respectively.

For the second, we use the alternative characterisation of Quillen functors given in \cite[Lemma 1.3.4]{Hovey}. A map $f$ which is a Hurewicz fibration has the right lifting property with respect to maps of the form $X \to X \times [0,1]$, $X \in \U$, so it certainly has the right lifting property with respect to maps in $J$. So $\id_\U: \U_H \to \U_{CH}$ preserves fibrations, and it preserves weak equivalences too by Lemma \ref{equivalences}, so it preserves trivial fibrations.
\end{proof}

However, we will see later that neither of these functors is a Quillen equivalence.

\begin{lem}
All spaces in $\U_{CH}$ are fibrant.
\end{lem}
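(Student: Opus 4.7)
The plan is to verify directly that, for any $X \in \U$, the unique map $X \to \{\ast\}$ has the right lifting property with respect to every map in the class $J$ of generating trivial cofibrations. Since the model structure is class-cofibrantly generated by $J$ (as was just established), this is equivalent to being a fibration, and hence exhibits $X$ as fibrant. Concretely, given $K \in \bar{\proj}$ and a commutative square
\[
\xymatrix{\Lambda_n \times K \ar[r]^-{g} \ar[d] & X \ar[d] \\ \Delta_n \times K \ar[r] & \{\ast\},}
\]
I need to produce a continuous filler $\Delta_n \times K \to X$ extending $g$.

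The key geometric input is the standard fact that $\Lambda_n$ is a strong deformation retract of $\Delta_n$: there is a retraction $r : \Delta_n \to \Lambda_n$ with $r \circ \iota = \id_{\Lambda_n}$, where $\iota : \Lambda_n \hookrightarrow \Delta_n$. Then the composite
\[
\tilde g := g \circ (r \times \id_K) : \Delta_n \times K \to X
\]
satisfies $\tilde g \circ (\iota \times \id_K) = g \circ (r \circ \iota \times \id_K) = g$, so $\tilde g$ is the required diagonal filler. This works uniformly in $K$, so every map in $J$ admits a lift against $X \to \{\ast\}$, and $X$ is fibrant. There is no serious obstacle here; essentially this reflects the more general principle that a pushout-product-style generating trivial cofibration built from a deformation retract pair is automatically split off on the target side, so the only data we used beyond the existence of $r$ is functoriality of $- \times K$.
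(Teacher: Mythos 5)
Your proof is correct. The square you need to fill has the one-point space in the lower right corner, so the only constraint on a filler $\Delta_n \times K \to X$ is that it restrict to $g$ along $\Lambda_n \times K$, and composing $g$ with $r \times \id_K$ for a retraction $r : \Delta_n \to \Lambda_n$ (which exists, since the topological horn is a retract -- indeed a deformation retract -- of the simplex) does exactly that; since fibrations in $\U_{CH}$ are by construction the maps in $J$-inj, this shows every $X$ is fibrant. The paper takes a slightly different route: it uses the adjunction characterisation already in play (a map $f$ is a fibration iff $\underline{\U}(K,f)$ is a Serre fibration for all $K \in \bar{\proj}$) and then simply quotes the fact that all spaces are fibrant in the Quillen model structure, so that $\underline{\U}(K,X) \to \{\ast\}$ is a Serre fibration. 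The two arguments rest on the same geometric fact (the horn--simplex retraction is precisely why every space is Serre-fibrant), but yours is more self-contained and elementary -- it never passes through the mapping spaces -- while the paper's is shorter because it reuses the characterisation of fibrations and known fibrancy in $\U_Q$. Note also that your argument only uses the existence of the retraction $r$, not the full strong deformation retraction, and that it would prove fibrancy of every object for any class of generating trivial cofibrations whose underlying maps admit retractions.
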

\begin{proof}
All spaces are fibrant in the Quillen model structure, so for all $X$, $\underline{\U}(K,X) \to \underline{\U}(K,\{\ast\}) = \{\ast\}$ is a Serre fibration for all $K \in \bar{\proj}$, so $X$ is fibrant.
\end{proof}

\begin{example}
All spaces are fibrant in $\U_H$. By \cite[Proposition 1.3.13]{Hovey}, to see that the homotopy categories of $\U_{CH}$ and $\U_H$ are not equivalent it suffices to find some cofibrant $X \in \U_{CH}$ and a weak equivalence $X \to Y$ which is not a homotopy equivalence. Take $X$ to be the point $\{\ast\}$, and take $Y$ to be the long line (with your favourite map $X \to Y$, whose image we take as a basepoint): it is well-known that the long line is not contractible. But using the usual smallness results, any map $f$ from a compact Hausdorff space to $Y$ has image contained in some subspace of $Y$ which is homeomorphic to $\mathbb{R}$, so $f$ is homotopic to a constant map. Therefore $$\pi_n(\underline{\U}(K,Y)) = \underline{\U}(\partial\Delta_n,\underline{\U}(K,Y))/\sim \cong \underline{\U}(\partial\Delta_n \times K,Y)/\sim$$ is trivial for all $n$, and hence $\underline{\U}(K,Y)$ is weakly homotopic to $\underline{\U}(K,\{\ast\}) = \{\ast\}$ for all $K \in \bar{\proj}$, and $X \to Y$ is a weak equivalence.
\end{example}

We now define a special class of objects in $\U$: \emph{KW-complexes}, where the K stands roughly for \textit{kompakt}. We want to build spaces, as for CW-complexes, by adding cells in order of dimension.

A KW-complex is a space $X$ constructed in the following way: start with a space $X^0 \in \proj$, which we call the \emph{$0$-skeleton} of $X$. Now inductively define the \emph{$n$-skeleton} $X^n$ of $X$ as the pushout in the category of all topological spaces of $X^{n-1}$ by a map of the form $$\partial\Delta_n \times Y_n \to \Delta_n \times Y_n,$$ some $Y_n \in \proj$. Finally, let $X = \colim_n X^n$. Note that CW-complexes are automatically KW-complexes, but all spaces in $\proj$ are also KW-complexes, by letting $X = X^0$. For example, if $X$ is a totally disconnected, locally compact group, it is in $\proj$, and is a KW-complex, by van Dantzig's theorem.

\begin{rem}
In the expressions $\partial\Delta_n \times Y_n$ and $\Delta_n \times Y_n$, it does not matter whether we are taking the product as $k$-spaces or topological spaces: the two coincide because $\partial\Delta_n$ and $\Delta_n$ are compact Hausdorff.
\end{rem}

A priori, these KW-complexes are just topological spaces; we will see that they are in $\U$.

\begin{lem}
KW-complexes are compactly generated.
\end{lem}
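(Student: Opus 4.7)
The plan is to show by induction on $n$ that each skeleton $X^n$ is CG, and then deduce that $X = \colim_n X^n$ is CG, using the fact recalled in Section \ref{kspace} that any colimit in $Top$ of a diagram of CG spaces is again CG.

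For the base case, $X^0 \in \proj$ is by definition a disjoint union of compact Hausdorff spaces; compact Hausdorff spaces belong to $\U \subseteq \K$, and a disjoint union (being a coproduct, hence a colimit computed in $Top$) of CG spaces is CG. So $X^0$ is CG.

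For the inductive step, assume $X^{n-1}$ is CG. The pushout defining $X^n$ is taken in $Top$ along the map $\partial\Delta_n \times Y_n \to \Delta_n \times Y_n$, with $Y_n \in \proj$. By the Remark immediately preceding the statement, each of these products agrees with the $\K$-product, since $\partial\Delta_n$ and $\Delta_n$ are compact Hausdorff. Now $Y_n$ decomposes as a disjoint union of compact Hausdorff spaces $K_\alpha$; since $Top$-products distribute over coproducts and a product of compact Hausdorff spaces is compact Hausdorff, $\partial\Delta_n \times Y_n$ and $\Delta_n \times Y_n$ are themselves disjoint unions of compact Hausdorff spaces, so they lie in $\proj \subseteq \K$. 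The pushout $X^n$ is then a $Top$-colimit of a diagram consisting entirely of CG spaces, hence CG.

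Finally, $X = \colim_n X^n$ is a sequential $Top$-colimit of CG spaces, so by the same preservation result it is CG. There is no genuine obstacle here: the argument is a bookkeeping exercise, the only point requiring any care being the identification of $\partial\Delta_n \times Y_n$ and $\Delta_n \times Y_n$ as lying in $\K$, which the Remark lets us handle without worrying about which product is meant.
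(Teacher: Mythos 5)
Your proof is correct and follows essentially the same route as the paper, which simply observes that the CG spaces are closed under colimits computed in $Top$ and that $X^0$ and each $Y_n$ (hence each $\partial\Delta_n \times Y_n$ and $\Delta_n \times Y_n$) are CG. Your version merely makes the induction on skeleta and the treatment of the products explicit, which is fine.
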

\begin{proof}
The subcategory of compactly generated spaces is closed under colimits in $Top$, and $X^0$ and every $Y_n$ is compactly generated.
\end{proof}

The following is essentially a simplified version of \cite[Proposition A.3]{Hatcher}, adapted for our situation.

Let $X$ be the KW-complex described above. We may think of $\Delta_n$, after a homeomorphism, as the closed unit ball in $\mathbb{R}^n$, and denote the points of $\Delta_n \times Y_n$ by the coordinates $(r,\theta,y)$, where $(r,\theta)$ parametrises the closed unit ball in spherical coordinates and $y \in Y_n$. For each $n$, we have a map $\Phi^n: \Delta_n \times Y_n \to X^n$ which restricts to a homeomorphism from the Hausdorff space $(\Delta_n \setminus \partial\Delta_n) \times Y_n$ to $X^n \setminus X^{n-1}$, in the same way as for CW-complexes.

\begin{lem}
KW-complexes are Hausdorff.
\end{lem}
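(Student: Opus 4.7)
The plan is to adapt the standard proof of Hausdorffness for CW-complexes (Hatcher's Proposition A.3) to our slightly more general KW-complex setting. For each $x \in X$ and each $\epsilon \in (0,1)$, I would construct inductively an open neighborhood $N_\epsilon(x) \subseteq X$, and then show that any two distinct points admit disjoint such neighborhoods for a suitable choice of $\epsilon$. Let $n(x)$ denote the least $n$ with $x \in X^n$. When $n(x) = 0$, pick an open neighborhood $N_\epsilon^0(x) \subseteq X^0$ of $x$ inside a single compact Hausdorff summand of $X^0$. When $n(x) > 0$, the point $x$ corresponds uniquely to $(r_0,\theta_0,y_0)$ with $r_0 < 1$, and I take $N_\epsilon^{n(x)}(x)$ to be the image of a product $U \times V \subseteq (\Delta_{n(x)} \setminus \partial\Delta_{n(x)}) \times Y_{n(x)}$, where $U$ is a small open ball about $(r_0,\theta_0)$ avoiding $\partial\Delta_{n(x)}$, and $V$ is a Hausdorff open neighborhood of $y_0$ in $Y_{n(x)}$. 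Because $\Phi^{n(x)}$ restricts to a homeomorphism on the interior, this set is open in $X^{n(x)}$.

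Next, given $N_\epsilon^m(x)$ open in $X^m$, extend it to $N_\epsilon^{m+1}(x) \subseteq X^{m+1}$ by adjoining the image of the radial collar
$$T_{m+1} = \{(r,\theta,y) \in \Delta_{m+1} \times Y_{m+1} : 1 - \epsilon < r \leq 1 \text{ and } \Phi^{m+1}(1,\theta,y) \in N_\epsilon^m(x)\}.$$
By the pushout description of $X^{m+1}$ as a quotient of $X^m \sqcup (\Delta_{m+1} \times Y_{m+1})$, this set is open: its preimage in $X^m$ is $N_\epsilon^m(x)$ (open by induction), and its preimage in $\Delta_{m+1} \times Y_{m+1}$ is $T_{m+1} \cup \phi_{m+1}^{-1}(N_\epsilon^m(x))$, open by continuity of the attaching map $\phi_{m+1}: \partial\Delta_{m+1} \times Y_{m+1} \to X^m$. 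Setting $N_\epsilon(x) = \bigcup_m N_\epsilon^m(x)$ then yields an open subset of the colimit $X$.

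For Hausdorffness, take distinct $x, y \in X$ and assume $n(x) \leq n(y)$. If $n(x) < n(y)$, shrink the initial neighborhood of $y$ so that it avoids $X^{n(y)-1}$ entirely, which is possible since the corresponding $r_0 < 1$. If $n(x) = n(y)$, the initial neighborhoods can be taken disjoint by Hausdorffness of the product $\Delta_{n(x)} \times Y_{n(x)}$ (where $Y_{n(x)} \in \proj$ is visibly Hausdorff). An induction on $m$ then propagates disjointness upward: any overlap of the collars $T_{m+1}$ for $x$ and $y$ would force an overlap of $\phi_{m+1}^{-1}(N_\epsilon^m(x))$ and $\phi_{m+1}^{-1}(N_\epsilon^m(y))$ inside $\partial\Delta_{m+1} \times Y_{m+1}$, and hence an overlap of $N_\epsilon^m(x)$ and $N_\epsilon^m(y)$ in $X^m$, contradicting the inductive hypothesis.

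The main obstacle I expect is arranging openness and disjointness to propagate compatibly through all skeleta simultaneously: one must commit to a single $\epsilon > 0$ (or a decreasing sequence) so that the radial thickenings at every level stay inside their respective collars without eroding the disjointness established at the base case. Once the collar construction is set up, however, the argument is formally parallel to Hatcher's treatment, with the spaces $Y_n \in \proj$ playing the role of discrete indexing sets for cells; replacing discrete cell-indices by a compactly generated parameter space does not obstruct the usual radial-thickening trick, since continuity of $\phi_{m+1}$ is all that is used.
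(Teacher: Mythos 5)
Your construction is essentially the paper's own proof: disjoint open neighbourhoods at the lowest relevant skeleton (using Hausdorffness of $\Delta_n \times Y_n$ there), then inductive radial collar thickening, openness checked via the pushout/quotient description, and a union over the skeleta of the colimit. The only point to tighten is the case $n(x) < n(y)$: merely making the initial neighbourhood of $y$ avoid $X^{n(y)-1}$ does not keep it away from the collar of $x$'s neighbourhood inside the open $n(y)$-cells, so you must also confine $U$ to the region $r < 1 - \epsilon$ (equivalently, choose the collar parameter afresh at each level after the base neighbourhoods are fixed, as the paper does with a small $\delta$ at every stage --- which also removes the coordination worry you raise about committing to a single global $\epsilon$).
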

\begin{proof}
Given $x, x' \in X$, both points must be contained in some $X^n$, with $n$ minimal. We start by constructing disjoint open neighbourhoods of $x$ and $x'$ in $X^n$. There are two cases to consider:
\begin{enumerate}[(i)]
\item $x$ and $x'$ are both in $X^n \setminus X^{n-1}$.
\item $x \in X^n \setminus X^{n-1}$ (without loss of generality) and $x' \in X^{n-1}$.
\end{enumerate}
In the first case, think of $x$ and $x'$ as points in $(\Delta_n \setminus \partial\Delta_n) \times Y_n$, and take disjoint open neighbourhoods there. In the second case, we think of $x$ as a point in $(\Delta_n \setminus \partial\Delta_n) \times Y_n$, with coordinates $(r,\theta,y)$. Then $x$ is contained in the open set $B_{n,r+\varepsilon} \times Y_n$, where $B_{n,r+\varepsilon}$ is the open ball in $\mathbb{R}^n$ of radius $r+\varepsilon$, for some $0 < \varepsilon < (1-r)/2$. On the other hand, $x'$ is in the open set $$(B_{n,1} \setminus B_{n,1-\varepsilon}) \times Y_n \cup X^{n-1},$$ and these open sets have empty intersection.

Now, given disjoint open sets $U_m \ni x$, $V_m \ni x'$ in $X^m$, some $m \geq n$, think of $(\Phi^{m+1})^{-1}(U_m)$ as an open subset of $\partial\Delta_{m+1} \times Y_{m+1}$; in spherical coordinates as before, define $$U_{m+1} = (1 - \delta,1] \times (\Phi^{m+1})^{-1}(U_m) \cup U_m$$ for some small $\delta > 0$. Similarly for $V_{m+1}$. Then $U_{m+1}$ and $V_{m+1}$ are open in $X^{m+1}$ with $U_{m+1} \cap X^m = U_m$ and $V_{m+1} \cap X^m = V_m$. Applying this process inductively and taking $U = \bigcup U_m, V = \bigcup V_m$ gives the required disjoint open sets.
\end{proof}

It is now clear from the construction of KW-complexes that they are cofibrant in $\U_{CH}$. Since all spaces are fibrant, we get from \cite[Theorem 1.2.10]{Hovey} that a weak equivalence between KW-complexes is actually a homotopy equivalence. This may be seen as a generalisation of Whitehead's theorem.

With a little more care, the argument of \cite[Proposition 1.19]{Hatcher2} can be adapted to apply to KW-complexes, by extending partitions of unity to compact spaces of $n$-cells instead of single cells. One can then show:

\begin{prop}
KW-complexes are paracompact.
\end{prop}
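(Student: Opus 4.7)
The plan is to follow the inductive argument of Hatcher's Proposition 1.19 \cite{Hatcher2}, building a locally finite partition of unity subordinate to any given open cover $\U$ of $X$ one skeleton at a time. The only places in Hatcher's argument where one essentially uses that cells are attached one at a time (equivalently, indexed by a discrete set) are: (i) the base case that $X^0$ is paracompact, and (ii) the inductive step where one extends a partition of unity across each new cell $D^n$ using compactness of $D^n$. Both of these adapt to our setting, because $Y_n$ is compact Hausdorff: in place of a discrete indexing of cells we work with $\Delta_n \times Y_n$ as a single compact Hausdorff ``family of cells''.

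For the base case, $X^0 \in \proj$ is a disjoint union of compact Hausdorff (hence paracompact normal) spaces, and arbitrary disjoint unions of paracompact spaces are paracompact, so $X^0$ admits subordinate partitions of unity for any open cover. For the inductive step, suppose a locally finite partition of unity $\{\phi_\alpha\}_\alpha$ on $X^{n-1}$ subordinate to $\U$ has been constructed, and let $\varphi: \partial\Delta_n \times Y_n \to X^{n-1}$ be the attaching map. I would pull each $\phi_\alpha$ back along $\varphi$, extend it constantly in the radial coordinate to a collar $(1-\varepsilon, 1] \times \partial\Delta_n \times Y_n \subseteq \Delta_n \times Y_n$, and damp it using a cutoff $\lambda(r)$ equal to $1$ near $r=1$ and $0$ for small $r$. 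On the compact Hausdorff space $\Delta_n \times Y_n$ one may then choose a finite partition of unity subordinate to the pullback of $\U$ along the characteristic map, and combine it with the damped boundary extensions (reweighting by $1 - \lambda(r)$) so that the total sums to $1$ everywhere. The pushout description of $X^n$ then glues this to $\{\phi_\alpha\}$ on $X^{n-1}$, yielding a partition of unity on $X^n$ that restricts to the given one on $X^{n-1}$ and is subordinate to $\U$.

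The main obstacle will be the joint-continuity bookkeeping in the inductive step: the radial extension, the cutoff interpolation, and the auxiliary partition of unity on $\Delta_n \times Y_n$ must all produce functions continuous in all variables simultaneously, and re-indexing must match them cleanly to members of $\U$. Compactness of $Y_n$ (not mere discreteness of an index set, as in Hatcher) is precisely what makes this work, since then $\Delta_n \times Y_n$ is compact Hausdorff and normal, so the needed subordinate partitions of unity exist, and continuity of pullback along $\varphi$ together with continuity of the radial cutoff function on $\Delta_n$ give joint continuity on the product. Finally, taking the colimit over $n$ and using that $X$ carries the colimit topology, the combined collection on $X$ is locally finite at every point: each $x \in X^n \setminus X^{n-1}$ lies in the open stratum $(\Delta_n \setminus \partial\Delta_n) \times Y_n$, inside which only finitely many supports from stages $\leq n$ contribute and none from strictly later stages reach. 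Local finiteness together with subordination gives paracompactness by the standard criterion.
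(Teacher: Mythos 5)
Your proposal is correct and follows essentially the same route the paper indicates: adapting \cite[Proposition 1.19]{Hatcher2} by extending partitions of unity across compact Hausdorff families of $n$-cells (using normality of $\Delta_n \times K$ for $K$ compact Hausdorff) instead of single cells. The only slip is that $Y_n \in \proj$ is a \emph{disjoint union} of compact Hausdorff spaces rather than compact Hausdorff itself, so the extension step should be carried out on each compact summand $\Delta_n \times K_i$ separately, which changes nothing essential.
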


Finally, we state the main result that makes KW-complexes useful.

\begin{thm}
\label{kwwe}
For every $Y \in \U$, there is a KW-complex $X$ and a weak equivalence $X \to Y$.
\end{thm}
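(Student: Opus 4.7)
The plan is to construct $X$ as a KW-complex by inductively building skeleta that realise all relevant lifting data, then verify that $f \colon X \to Y$ is a weak equivalence by showing it lies in $I$-inj (which, by the earlier identity $I$-inj $= W \cap J$-inj, suffices).

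\emph{Construction.} Let $X^0 = d(Y)$ with the canonical map $f_0 \colon X^0 \to Y$. Inductively, given $f_{n-1} \colon X^{n-1} \to Y$, form the pullback
$$A_n := \underline{\U}(\partial\Delta_n, X^{n-1}) \times_{\underline{\U}(\partial\Delta_n, Y)} \underline{\U}(\Delta_n, Y)$$
and put $B_n := d(A_n) \in \proj$. The first projection of the pullback, adjointed, gives an attaching map $\partial\Delta_n \times B_n \to X^{n-1}$; define $X^n$ as the pushout along the inclusion $\partial\Delta_n \times B_n \hookrightarrow \Delta_n \times B_n$, and extend $f_{n-1}$ to $f_n \colon X^n \to Y$ using the evaluation $\Delta_n \times B_n \to \Delta_n \times \underline{\U}(\Delta_n, Y) \to Y$ coming from the second projection. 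Set $X = \colim_n X^n$ and $f = \colim_n f_n$. By construction $X$ is a KW-complex.

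\emph{Easy lifts.} Consider a commutative square against the generating cofibration $\partial\Delta_m \times K \hookrightarrow \Delta_m \times K$ with $K \in \bar{\proj}$. For $m = 0$ the top map has empty source, and any $K \to Y$ factors through its compact Hausdorff image, hence through $X^0 \subseteq X$. For $m \geq 1$, $\partial\Delta_m \times K$ is compact Hausdorff, so by finiteness relative to the closed inclusions $X^n \hookrightarrow X^{n+1}$ the top map factors through some $X^p$. When $p \leq m-1$, cartesian closedness turns the square into a map $K \to A_m$; since $K \in \bar{\proj}$ this factors through $B_m = d(A_m)$, and the pushout defining $X^m$ supplies the required lift.

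\emph{Main obstacle.} The case $p \geq m$ is the crux: the attaching map lands in a higher skeleton than our construction was designed for. I would handle this by establishing a KW-cellular approximation, namely that a map $\partial\Delta_m \times K \to X^p$ with a compatible filling $\Delta_m \times K \to Y$ can be replaced, through a homotopy of both sides consistent with each other, by one factoring through $X^{m-1}$. The mechanism is dimension-by-dimension: on each attached $k$-cell $\Delta_k \times B_k$ with $k > m-1$, the interior $(\Delta_k \setminus \partial\Delta_k) \times B_k$ admits a radial retraction onto $\partial\Delta_k \times B_k$ applied fibrewise over $B_k$, and the image of $\partial\Delta_m \times K$ can be pushed off a generic interior fibre by transversality applied parametrically in $B_k$, reducing the problem to $X^{k-1}$. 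Iterating from $p$ down to $m-1$ and combining the resulting homotopies with the strict lift from the previous case will show $\underline{\U}(K, f)$ is a weak homotopy equivalence for every $K \in \bar{\proj}$, so $f$ is a weak equivalence. This cellular-approximation step, made delicate by the fact that $B_k$ may have arbitrary dimension, is the principal technical hurdle.
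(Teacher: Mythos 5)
You correctly isolate the crux, but the step you defer — the ``KW-cellular approximation'' for the case $p \geq m$ — is a genuine gap, and the mechanism you sketch for it cannot work. Your single pass attaches $m$-cells only along maps into $X^{m-1}$, so lifting problems whose attaching map $\partial\Delta_m \times K \to X$ hits the newly attached cells of dimension $\geq m$ are never catered for; this is precisely why the (generalised) small object argument iterates transfinitely, and a transfinite iteration would destroy the dimension-ordered structure that makes $X$ a KW-complex. Your proposed repair by fibrewise radial retraction plus ``transversality applied parametrically in $B_k$'' is not available in this setting: $K$ and $B_k$ are arbitrary compact Hausdorff spaces, with no smooth structure and no dimension bound, so there is no dimension count that prevents $\partial\Delta_m \times K$ (e.g.\ with $K$ a Hilbert cube) from mapping onto interior fibres $(\Delta_k \setminus \partial\Delta_k) \times \{b\}$ of cells of arbitrarily large $k$; the classical argument that maps from an $(m-1)$-dimensional complex can be pushed off the interiors of higher cells uses finite-dimensionality of the source in an essential way, and it is exactly the parameter spaces $K$, $B_k$ that break it here. (Note also that aiming for $f \in I$-inj, i.e.\ a trivial fibration, is strictly stronger than what the theorem asserts and is even less plausible for a one-pass construction.)

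The paper takes a different route that avoids any cellular approximation: it proves the theorem in Section \ref{mct} by applying $L|-| \circ \Sing$, i.e.\ taking a cofibrant replacement $Q(\Sing(Y))$ in the model structure $(s\U,CH)$ and geometrically realising it. Cofibrant objects are levelwise in $\proj$ and their realisations attach cells in order of simplicial degree, so $L|\Sing(Y)|$ is a KW-complex by construction; the weak equivalence $L|\Sing(Y)| \to Y$ is Proposition \ref{geomsingwe}, whose proof leans on the (nontrivial) fact that $|\Sing(Y)| \to Y$ is a homotopy equivalence together with compactness: maps from the compact spaces $S^n \times K$ into $\Sing(Y)_0$ lift through the trivial fibration $Q(\Sing(Y)) \to \Sing(Y)$, giving surjectivity and injectivity of the relevant homotopy groups of $\underline{\U}(K,-)$. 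If you want to salvage a direct construction, you would need some substitute for that compactness-plus-trivial-fibration lifting, not a transversality argument.
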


The proof of this theorem is given in Section \ref{mct}.

We leave to the reader the exercise of defining relative KW-complexes and investigating their properties.

\section{Simplicial enrichment}
\label{se}

We now introduce some category theory. My main source for the category-theoretic ideas in this section is \cite{Riehl}, particularly Chapters 3 and 7 on enriched category theory; see there for details. To fix some notation: if $\C$ is a $\V$-category, that is, enriched over a symmetric monoidal category $\V$, write $\underline{\C}_\V(X,Y)$ for the hom-object in $\V$ (or just $\underline{\C}(X,Y)$ when there is no ambiguity).

From now on, assume that $(\V,\otimes)$ is a closed symmetric monoidal category. Recall that $\C$ is said to be \emph{tensored} (or \emph{copowered}) over $\V$ if, for all $X,Y \in \C$ and $U \in \V$, there is an object $U \odot X$ such that there is a natural isomorphism $\underline{\V}(U,\underline{\C}(X,Y)) \cong \underline{\C}(U \odot X,Y)$. Dually, $\C$ is said to be \emph{cotensored} (or \emph{powered}) over $\V$ if, for all $X,Y \in \C$ and $U \in \V$, there is an object $U \pitchfork Y$ such that there is a natural isomorphism $\underline{\V}(U,\underline{\C}(X,Y)) \cong \underline{\C}(A,U \pitchfork Y)$.

Every category $\C$ with products and coproducts is enriched, tensored and cotensored over $Set$: tensors are coproducts and cotensors are products.

We also write $\{-,-\}$ for weighted limits and $- \star -$ for weighted colimits. If $\C$ is complete, cocomplete, and tensored and cotensored over $\V$, all small weighted limits and colimits exist.

From now on, $\C$ will be a complete and cocomplete $\V$-category, and we write $\ast$ for the terminal object. Much of the work here can be done in categories with fewer limits and colimits; we leave it to the careful reader to check which are needed. However, the model category theorists assume completeness and cocompleteness, so I do not feel guilty about joining them.

Write $\Delta$ as usual for the simplex category. By a \emph{simplicial object} in $\C$ we mean an object in the functor category $\C^{\Delta^{op}}$, which we will also denote $s\V$. We will use the standard simplicial sets $\Delta^n$, $\partial\Delta^n$ and $\Lambda^n_k$; see any source on simplicial sets for definitions. We will also use the barycentric subdivision functor $\sd: sSet \to sSet$ -- details are in \cite[Section III.4]{GJ}, for example.

There is a functor $\disc: \C \to s\C$ which sends $X \in \C$ to the constant simplicial object on $X$, and a right adjoint $-_0: Y \mapsto Y_0$.

For $X \in \C$ and a set $S$, we write $S \cdot X$ or $X \cdot S$ for $\coprod_S X$ (this is the usual tensoring over $Set$). If $S'$ is a simplicial set and $X' \in s\C$, we can define an object $S' \cdot X$ in $s\C$ by $(S' \cdot X)_n = S'_n \cdot X_n$, with the obvious face and degeneracy maps; for $X \in \C$, we let $S' \cdot X = S' \cdot \disc X$.

\begin{lem}
Suppose $X \in s\C$. Then $X$ is isomorphic to the coend $\int^{n \in \Delta} X_n \cdot \Delta^n$, and to the end $\int_{n \in \Delta} \Delta^\bullet_n \pitchfork X_n$ (that is, $X_m = \int_n \Delta^m_n \pitchfork X_n$, naturally in $m$).
\end{lem}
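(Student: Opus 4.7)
Both isomorphisms are manifestations of the co-Yoneda lemma (density of the Yoneda embedding) for the $\C$-valued functor $X: \Delta^{op} \to \C$; the plan is simply to extract them from this fact, and the main work is just bookkeeping of variances.

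For the coend, I would first observe that colimits in $s\C = \C^{\Delta^{op}}$ are computed pointwise, so evaluating at $[m]$ reduces the claim to the identity
\[
\int^{n \in \Delta} X_n \cdot \Delta([m],[n]) \;\cong\; X_m
\]
in $\C$. To prove this, hom into an arbitrary $Y \in \C$: since $\C(-,Y)$ sends coends to ends and since $\C(S \cdot Z, Y) \cong \mathrm{Set}(S,\C(Z,Y))$ by the tensoring of $\C$ over $\mathrm{Set}$, the left-hand side becomes
\[
\int_{n \in \Delta} \mathrm{Set}\bigl(\Delta([m],[n]),\, \C(Y, X_n)\bigr).
\]
The functor $n \mapsto \C(Y, X_n)$ is a $\mathrm{Set}$-valued functor on $\Delta^{op}$, so the end form of the Yoneda lemma identifies this with $\C(Y, X_m)$, naturally in $Y$ and $m$. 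Yoneda then gives the desired isomorphism, and naturality in $m$ follows from naturality in the bookkeeping.

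For the end formula, the argument is essentially the same after a variance flip. Again hom from a test object $Y \in \C$: since $\C(Y,-)$ preserves ends and since $\C(Y, S \pitchfork Z) \cong \mathrm{Set}(S,\C(Y,Z))$ by the cotensoring, we obtain
\[
\C\!\left(Y,\, \int_{n} \Delta([n],[m]) \pitchfork X_n\right) \;\cong\; \int_{n} \mathrm{Set}\bigl(\Delta([n],[m]),\, \C(Y, X_n)\bigr),
\]
which the end form of Yoneda again identifies with $\C(Y, X_m)$. Yoneda in $Y$ finishes the job, and naturality in $m$ is automatic.

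I do not expect any serious obstacle: the only thing to be careful about is that $X$ is contravariant on $\Delta$, so that in the end formula the set $\Delta([n],[m])$ (contravariant in $n$) composes with the cotensor (contravariant in its first slot) and with the contravariance of $X_n$ to produce a legitimate functor $\Delta^{op} \times \Delta \to \C$ over which one may take the end. Once the variances are checked, both statements are formal consequences of the Yoneda lemma applied in its end/coend form to $\C(Y, X_{(-)}): \Delta^{op} \to \mathrm{Set}$.
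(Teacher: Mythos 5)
Your proposal is correct, and it is in essence the same Yoneda/co-Yoneda argument as the paper's, differing only in packaging: the paper never reduces to levels, but homs the coend into an arbitrary $Y \in s\C$, writes $s\C(X,Y) = \int_n \C(X_n,Y_n)$, and verifies the needed isomorphism $s\C(X_m \cdot \Delta^n, Y) \cong \C(X_m,Y_n)$ by hand (a map out of the copy of $X_m$ over the unique nondegenerate $n$-cell of $\Delta^n$ extends uniquely via the face and degeneracy maps); you instead evaluate at level $m$ (coends and ends in $s\C$ are computed pointwise) and quote the $Set$-level Yoneda end after applying the tensor/cotensor adjunctions. Your version also treats the end half explicitly, which the paper leaves as ``similar''. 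One variance slip should be corrected, though it does not affect the strategy: in the coend half, homming out of the coend into $Y \in \C$ via $\C(S \cdot Z, Y) \cong Set(S,\C(Z,Y))$ yields $\int_n Set\bigl(\Delta([m],[n]),\C(X_n,Y)\bigr)$, where $n \mapsto \C(X_n,Y)$ is a \emph{covariant} functor on $\Delta$ (not $\C(Y,X_n)$ on $\Delta^{op}$ as written); the Yoneda end then gives $\C(X_m,Y)$, naturally in $Y$, and Yoneda applied to these corepresented functors finishes the coend case. Likewise, your closing remark that both halves rest on Yoneda for $\C(Y,X_{(-)}) : \Delta^{op} \to Set$ is accurate only for the end half; the coend half uses $\C(X_{(-)},Y) : \Delta \to Set$.
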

\begin{proof}
We prove the first statement; the second can be proved similarly.

It is enough by the Yoneda lemma to show that $$s\C(X,Y) \cong s\C(\int^{n \in \Delta} X_n \cdot \Delta^n,Y)$$ naturally for all $Y$. Now $s\C(X,Y) = \int_{n \in \Delta}\C(X_n,Y_n)$ and $$s\C(\int^{n \in \Delta} X_n \cdot \Delta^n,Y) = \int_{n \in \Delta}s\C(X_n \cdot \Delta^n,Y),$$ so we just need to check that $\C(X_m,Y_n) \cong s\C(X_m \cdot \Delta^n,Y)$, naturally in $X,Y,m,n$. This can be seen by hand. Given a map $X_m \to Y_n$, think of it as a map from the component of $(X_m \cdot \Delta^n)_n$ corresponding to the unique non-degenerate cell of $\Delta^n_n$ to $Y_n$. This extends to a map $X_m \cdot \Delta^n \to Y$ by composing with face and degeneracy maps. Conversely, given $X_m \cdot \Delta^n \to Y$, we can restrict to the copy of $X_m$ given by the non-degenerate cell.
\end{proof}

My thanks go to Emily Riehl for explaining to me how to show the next two results.

\begin{prop}
\label{simpclosedmonoidal}
$s\V$ is a closed symmetric monoidal category, making the functor $\disc: \V \to s\V$ strong monoidal.
\end{prop}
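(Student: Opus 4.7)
The plan is to define the tensor product on $s\V$ pointwise, $(X \otimes Y)_n := X_n \otimes Y_n$, with simplicial structure inherited from both factors and unit $\disc(\mathbf{1})$ for $\mathbf{1}$ the unit of $\V$. The associator, unitors, symmetry and all coherence diagrams are then inherited levelwise from $\V$, so the symmetric monoidal structure is immediate, and $\disc$ is manifestly strong monoidal: $(\disc U) \otimes (\disc V) = \disc(U \otimes V)$ on the nose and the coherence isomorphisms can be taken to be identities. This part is entirely routine.

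The substantive content is constructing a right adjoint to $-\otimes Y$. The naive guess $[Y, Z]_n := [Y_n, Z_n]$ does not define a simplicial object: for $\alpha \colon [m] \to [n]$ in $\Delta$ we would need a map $[Y_n, Z_n] \to [Y_m, Z_m]$, but contravariance of both $Y$ and $Z$ only supplies $Y_n \to Y_m$ and $Z_n \to Z_m$, which do not compose to give this. Instead I would exploit the coend presentation $X \cong \int^n X_n \cdot \Delta^n$ from the previous lemma. Writing $\odot$ for the $\V$-tensoring on $s\V$, so $(U \odot W)_m := U \otimes W_m$, cocontinuity of $\otimes$ (which uses closedness of $\V$) together with the commutation $\Delta^n \cdot (U \odot W) \cong U \odot (\Delta^n \cdot W)$ of the two tensorings gives
\[
X \otimes Y \;\cong\; \int^n \Delta^n \cdot (X_n \odot Y).
\]
This suggests the definition
\[
[Y, Z]_n \;:=\; \underline{s\V}(\Delta^n \cdot Y, Z),
\]
where $\underline{s\V}(A, B) := \int_m [A_m, B_m]$ is the $\V$-enriched hom-object; the cosimplicial structure of $\Delta^\bullet$ induces the required simplicial structure on $[Y, Z]$.

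The adjunction then drops out of a routine end/coend manipulation:
\[
s\V(X \otimes Y, Z) \;\cong\; \int_n s\V(X_n \odot (\Delta^n \cdot Y), Z) \;\cong\; \int_n \V(X_n, \underline{s\V}(\Delta^n \cdot Y, Z)) \;=\; s\V(X, [Y, Z]),
\]
using the standard adjunction $s\V(U \odot W, Z) \cong \V(U, \underline{s\V}(W, Z))$ for the $\V$-tensoring in the middle step. Naturality in $X$, $Y$, $Z$ falls out of the coend calculus automatically.

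The main obstacle is the bookkeeping: verifying that $\otimes$ is cocontinuous in each variable, that the $Set$-, $sSet$- and $\V$-tensorings commute in the ways claimed, and that the various end/coend swaps are legitimate. None of this is deep, but the notational overhead from juggling three layers of enrichment calls for care.
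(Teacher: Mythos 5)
Your proposal is correct and follows essentially the same route as the paper: the pointwise tensor, the internal hom $[Y,Z]_n = \int_m \underline{\V}((\Delta^n \cdot Y)_m, Z_m)$, and the adjunction verified by expanding $X$ via the coend $\int^n \Delta^n \cdot X_n$, distributing $\otimes$ over the coend, and swapping ends. Your packaging of the calculation through the levelwise $\V$-tensoring $\odot$ on $s\V$ is only a cosmetic reorganisation of the paper's direct end/coend computation, and your remark that the naive levelwise hom fails to be simplicial is a worthwhile sanity check.
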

\begin{proof}
Suppose $U,V,W \in s\V$. The tensor $U \otimes V$ is given by $(U \otimes V)_n = U_n \otimes V_n$. We take $\underline{s\V}(V,W)$ to be given in degree $n$ by the end $\int_m \underline{\V}((\Delta^n \cdot V)_m,W_m)$. The only thing that needs checking is that $s\V(U \otimes V,W) \cong s\V(U,\underline{s\V}(V,W))$:
\begin{align*}
s\V(U \otimes V, W)_k &= \int_n \V((U \otimes V)_n,W_n) \\
&= \int_n \V((\int^m \Delta^m \cdot U_m)_n \otimes V_n,W_n) \\
&= \int_n \int_m \V((\Delta^m \cdot U_m)_n \otimes V_n,W_n) \\
&= \int_n \int_m \V(U_m \otimes (\Delta^m_n \cdot V_n),W_n) \\
&= \int_n \int_m \V(U_m, \underline{\V}((\Delta^m \cdot V)_n,W_n)) \\
&= \int_m \V(U_m, \int_n \underline{\V}((\Delta^m \cdot V)_n,W_n)) \\
&= \int_m \V(U_m,\underline{s\V}(V,W)_m) \\
&= s\V(U,\underline{s\V}(V,W)).
\end{align*}
\end{proof}

Now let $\C$ be enriched, tensored and cotensored over $\V$.

\begin{thm}
\label{tensorcotensor}
$s\C$ is complete and cocomplete, and enriched, tensored and cotensored over $s\V$.
\end{thm}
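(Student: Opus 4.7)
The plan is to mimic Proposition \ref{simpclosedmonoidal}, replacing the monoidal $\otimes$ of $\V$ with the $\V$-tensor $\odot$ of $\C$, and the $\V$-hom $\underline{\V}$ with the $\V$-hom $\underline{\C}$. Completeness and cocompleteness of $s\C$ are immediate: $s\C = \C^{\Delta^{op}}$ is a functor category whose target is complete and cocomplete, so all limits and colimits are computed levelwise in $\C$.

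For the enrichment, I would define the relevant pieces by the expected formulas: the tensor levelwise by $(U \odot X)_n := U_n \odot X_n$, with simplicial structure induced functorially from $U$ and $X$; the $s\V$-enriched hom by
\[
\underline{s\C}(X,Y)_n := \int_m \underline{\C}((\Delta^n \cdot X)_m, Y_m);
\]
and the cotensor by the dual end $(U \pitchfork Y)_n := \int_m (\Delta^n \cdot U)_m \pitchfork Y_m$, using the $\V$-cotensor of $\C$. The simplicial structure on the latter two comes from restriction along the maps of the standard simplices $\Delta^n \to \Delta^{n'}$.

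The core task is to verify the chain of natural isomorphisms
\[
s\V(U,\underline{s\C}(X,Y)) \;\cong\; s\C(U \odot X, Y) \;\cong\; s\C(X, U \pitchfork Y),
\]
which is a word-for-word translation of the computation in Proposition \ref{simpclosedmonoidal}: decompose $U \cong \int^m \Delta^m \cdot U_m$, push the resulting coend past $\C(-, Y_n)$ to convert it into an end, apply Fubini to swap the two ends, and then use the levelwise $\V$-adjunction $\C(U_m \odot Z, W) \cong \V(U_m, \underline{\C}(Z, W))$ characterising the tensor and cotensor of $\C$ over $\V$. The argument for the cotensor adjunction is symmetric, and the enrichment axioms (composition, unit, associativity) for $\underline{s\C}$ then follow formally from these adjunctions, since a closed $s\V$-module structure is determined by its tensoring.

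The main obstacle is essentially notational bookkeeping: tracking which variable each $\int$ or $\int^{}$ ranges over, and verifying that $\Delta^n \cdot (-)$ commutes with set-tensor, $\V$-tensor, and $\V$-cotensor in the required ways (all of which follow because $S \cdot (-)$ is just a coproduct and $\odot$, $\pitchfork$ preserve (co)limits in the appropriate variables). Once the formulas are laid out, the argument is a purely formal manipulation of ends and coends, identical in structure to the one carried out in the previous proposition.
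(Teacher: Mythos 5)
Your proposal is correct and follows essentially the same route as the paper: the same levelwise formulas for the tensor, the end formulas for the enriched hom and cotensor, and the same coend/Fubini/levelwise-adjunction computation of the adjunction isomorphisms. The only point the paper spells out that you compress into ``determined by its tensoring'' is the upgrade from the ordinary to the enriched isomorphisms, done there by Yoneda together with the associativity $V \odot (U \odot X) \cong (V \otimes U) \odot X$ -- the same idea you invoke, so there is no gap of substance.
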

\begin{proof}
Suppose $X,Y \in s\C$ and $U \in s\V$. The tensor $U \odot X$ is given by $(U \odot X)_n = U_n \otimes X_n$ in degree $n$, with the obvious maps. We take $\underline{s\C}_{s\V}(X,Y)$ to be given in degree $n$ by $\int_m \underline{\C}_\V((\Delta^n \cdot X)_m,Y_m)$, with maps given by the functoriality in $n$. Similarly, the cotensor $U \pitchfork Y$ is given in degree $n$ by $\int_m (\Delta^n \cdot U)_m \pitchfork Y_m$.

There are plenty of axioms to check, but the work boils down to proving two natural isomorphisms: $$\underline{s\C}(X,U \pitchfork Y) \cong \underline{s\C}(U \odot X,Y) \cong \underline{s\V}(U,\underline{s\C}(X,Y)).$$ We start by showing the unenriched versions $$s\C(X,U \pitchfork Y) \cong s\C(U \odot X,Y) \cong s\C(U,\underline{s\C}(X,Y)).$$

\begin{align*}
s\C(U \odot X, Y)_k &= \int_n \C((U \odot X)_n,Y_n) \\
&= \int_n \C((\int^m \Delta^m \cdot U_m)_n \odot X_n,Y_n) \\
&= \int_n \int_m \C((\Delta^m \cdot U_m)_n \odot X_n,Y_n) \\
&= \int_n \int_m \C(U_m \odot (\Delta^m_n \cdot X_n),Y_n) \\
&= \int_n \int_m \V(U_m, \underline{\C}((\Delta^m \cdot X)_n,Y_n)) \\
&= \int_m \V(U_m, \int_n \underline{\C}((\Delta^m \cdot X)_n,Y_n)) \\
&= \int_m \V(U_m,\underline{s\C}(X,Y)_m) \\
&= s\V(U,\underline{s\C}(X,Y)),
\end{align*}
and similarly for the cotensor.

For the enriched isomorphism, we use the Yoneda lemma, together with the fact that, for all $V \in \V$, $V \odot (U \odot X) \cong V \otimes U) \odot X$.
\begin{align*}
s\V(V,\underline{s\C}(U \odot X, Y)) &= s\C(V \odot (U \odot X),Y) \\
&= s\C((V \otimes U) \odot X,Y) \\
&= s\V(V \otimes U,\underline{s\C}(X,Y)) \\
&= s\V(V, \underline{s\V}(U,\underline{s\C}(X,Y)),
\end{align*}
as required; similarly for the cotensor.
\end{proof}

Hence $s\C$ is enriched, tensored and cotensored over $sSet$, and over $\V$ too by \cite[Theorem 3.7.11]{Riehl}. We note for later that, thanks to the adjunction between $\disc$ and $-_0$, the weighted limit $\{S,X\}$ for $S \in sSet$ and $X \in s\C$ is given by $(S \pitchfork X)_0$.

\begin{lem}
For $X \in \C$, $Y \in s\C$, $\underline{s\C}(\disc X,Y)_n = \underline{\C}(X,Y_n)$.
\end{lem}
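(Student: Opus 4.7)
The plan is to unwind the definitions from Theorem \ref{tensorcotensor} and then reduce to the coend/end decomposition lemma for simplicial objects already proved in this section. By definition,
$$\underline{s\C}(\disc X, Y)_n \;=\; \int_m \underline{\C}((\Delta^n \cdot \disc X)_m, Y_m).$$
So the first step is simply to observe that $\disc X$ is the constant simplicial object, whence $(\Delta^n \cdot \disc X)_m = \Delta^n_m \cdot X$, a coproduct of copies of $X$ in $\C$ indexed by the set $\Delta^n_m$.

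Second, I would use the standard fact that tensoring by a set is a coproduct and hom turns coproducts into products: for any $Z \in \C$ and set $S$,
$$\underline{\C}(S \cdot X, Z) \;\cong\; \prod_S \underline{\C}(X,Z) \;=\; S \pitchfork \underline{\C}(X,Z),$$
the cotensor of the object $\underline{\C}(X,Z) \in \V$ by the set $S$ (this is just the defining universal property of $S \cdot X$ as the copower, combined with the fact that $\C$ is $\V$-enriched). Substituting, we obtain
$$\underline{s\C}(\disc X, Y)_n \;\cong\; \int_m \Delta^n_m \pitchfork \underline{\C}(X, Y_m).$$

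Third, I would recognise the right-hand side as an instance of the end formula from the earlier lemma of this section. Indeed, the assignment $m \mapsto \underline{\C}(X, Y_m)$ is a simplicial object in $\V$ (functoriality comes from $Y$), and that lemma said that any simplicial object $Z$ satisfies $Z_n = \int_m \Delta^n_m \pitchfork Z_m$. Applying this with $Z_m = \underline{\C}(X, Y_m)$ gives the end displayed above equals $\underline{\C}(X, Y_n)$, which is exactly what we want.

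The only potential subtlety — and the step I would be most careful about — is matching the enrichments correctly: the cotensor $\pitchfork$ used in the earlier end/coend lemma was in $\C$, but here it is being used in $\V$ (cotensor of an object of $\V$ by a set). Both are instances of the general notion of cotensoring a $\V$-category over $Set$ (or, equivalently, powers in the sense of ordinary limits indexed by a discrete category), so the lemma applies verbatim to the $\V$-valued simplicial object $m \mapsto \underline{\C}(X,Y_m)$. No further calculation should be needed beyond checking that the naturality in $m$ used to form the end agrees with the simplicial structure on $\underline{\C}(X, Y_\bullet)$, which is immediate from functoriality of $\underline{\C}(X,-)$.
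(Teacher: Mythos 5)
Your proof is correct and is essentially the paper's own one-line argument: unwind the definition of $\underline{s\C}(\disc X,Y)_n$ from Theorem \ref{tensorcotensor}, use the set-copower/power adjunction for the enriched hom, and conclude with the end formula of the earlier lemma. The only (harmless) cosmetic difference is that you apply that end formula to the simplicial object $\underline{\C}(X,Y_\bullet)$ in $\V$, whereas the paper first pulls the end inside the hom and applies it to $Y$ itself, computing $\int_m \underline{\C}(\Delta^n_m \cdot X,Y_m) = \underline{\C}(X,\int_m \Delta^n_m \pitchfork Y_m) = \underline{\C}(X,Y_n)$.
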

\begin{proof}
$\underline{s\C}(\disc X,Y)_n = \int_m \underline{\C}(\Delta^n_m \cdot X,Y_m) = \underline{\C}(X,\int_m \Delta^n_m \pitchfork Y_m) = \underline{\C}(X,Y_n)$.
\end{proof}

\section{Model category theory}
\label{mct}

We consider the following situation. Fix a class of objects $\proj$ of $\C$, closed under retracts and coproducts; write $(\C,\proj)$ for the pair. When there is no ambiguity the $\proj$ may be suppressed. We call objects in $\proj$ \emph{projectives}. We call a morphism $f$ in $\C$ \emph{$\proj$-split}, and write $\xrightarrow{\proj}$, if $\C(P,f)$ is surjective for all $P \in \proj$. We say $\C$ \emph{has enough projectives} if, for every $X \in \C$, there is a map $u: P \xrightarrow{\proj} X$ with $P$ in $\proj$. We say $\C$ \emph{has enough functorial projectives} if we have a functor giving a choice of $u$, naturally in $X$.

\begin{rem}
\phantomsection
\label{projectivesinnature}
\begin{enumerate}[(i)]
\item Note that this situation arises quite often in nature. Suppose we have a functor $G: \C \to \D$ and a left adjoint $F: \D \to \C$. The unit of this adjunction is $X \to GF(X)$, so we get $G(X) \to GFG(X)$; the counit is $FG(X) \to X$, so we get $GFG(X) \to G(X)$; it is not hard to check that the composite $G(X) \to GFG(X) \to G(X)$ is the identity. Therefore, for $Y \in \D$, the induced map $$\D(Y,G(X)) \to \D(Y,GFG(X)) \to \D(Y,G(X))$$ is the identity, and in particular $$\C(F(Y),FG(X)) \cong \D(Y,GFG(X)) \to \D(Y,G(X) \cong \C(F(Y),X)$$ is surjective. So if $\bar{\proj}$ is the class of objects in $\C$ in the image of $F$, the counit is $\bar{\proj}$-split, so $\C$ has enough functorial projectives.

More generally, suppose $(\D,\Q)$ has enough functorial projectives. Then $(\C,F(\Q))$ does too: for $Q \in \Q$, a map $f: F(Q) \to X$ corresponds to a map $Q \to G(X)$. This factors through a functorially defined $\Q$-split map $Q \to Q' \xrightarrow{\Q} G(X)$, and by the adjunction this corresponds to maps $F(Q)\to F(Q') \to X$ whose composite is $f$.
\item Given an object $A \in \C$, there is a canonical map $\coprod_{f: P \to A} P \to A$ for each $P \in \proj$, where the coproduct is over all maps $P \to A$. If the coproduct $\coprod_{P \in \P} (\coprod_{f: P \to A} P)$ exists, this induces a canonical map $\coprod_{P \in \P} (\coprod_{f: P \to A} P) \to A$ which is necessarily $\proj$-split. So this question of existence is really the only obstacle here.
\end{enumerate}
\end{rem}

It will sometimes be convenient (for reasons that will become clear later) to consider a subclass $\bar{\proj}$ of $\proj$, such that the closure of $\bar{\proj}$ under retracts and coproducts is $\proj$. We may abuse notation by writing $(\C,\bar{\proj})$ for this pair. In this case, if $\C$ has enough (functorial) projectives we say $\bar{\proj}$ (\emph{functorially}) \emph{generates} $\C$. Clearly a morphism is $\bar{\proj}$-split, in the obvious sense, if and only if it is $\proj$-split.

From now on, to avoid a clash of notations, we will write $CH$ for the class in $\U$ of compact Hausdorff spaces, and $\sqcup CH$ for the class of disjoint unions of compact Hausdorff spaces.

The following is immediate.

\begin{lem}
Suppose we have two composeable morphisms $f$ and $g$.
\begin{enumerate}
\item If $f$ and $g$ are $\xrightarrow{\proj}$, so is the composite $gf$.
\item If $gf$ is $\xrightarrow{\proj}$, so is $g$.
\item $\xrightarrow{\proj}$ is closed under pullbacks.
\end{enumerate}
\end{lem}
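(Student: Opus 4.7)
The plan is to unravel the definition: a map $f: X \to Y$ is $\proj$-split precisely when, for every $P \in \proj$, the induced map of sets $\C(P,f): \C(P,X) \to \C(P,Y)$ is surjective. Each of the three claims will then reduce to a trivial statement about surjective maps of sets or about the universal property of pullbacks, which is presumably why the author flags the lemma as immediate.

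For (1), I would invoke functoriality of $\C(P,-)$ to write $\C(P,gf) = \C(P,g) \circ \C(P,f)$, and note that the composite of two surjections of sets is surjective; (2) comes from the same identity, combined with the set-theoretic fact that if $\C(P,g)\circ\C(P,f)$ is surjective, then the outer map $\C(P,g)$ is surjective.

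For (3), let $f: X \to Y$ be $\proj$-split and suppose we are given a pullback square
\[
\xymatrix{X \times_Y Z \ar[r]^-{p_Z} \ar[d]_-{p_X} & Z \ar[d]^-{h} \\ X \ar[r]_-{f} & Y.}
\]
Given $P \in \proj$ and a morphism $a: P \to Z$, I would use $\proj$-splitness of $f$ to lift $h \circ a$ to a morphism $b: P \to X$ with $fb = ha$, and then appeal to the universal property of the pullback to produce a unique $c: P \to X \times_Y Z$ with $p_X c = b$ and $p_Z c = a$. This shows $\C(P, p_Z)$ is surjective, so $p_Z$ is $\proj$-split.

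There is no real obstacle here; the main thing to take care of is just notational bookkeeping — in particular distinguishing the pullback leg we care about from the other leg in (3). No special property of $\proj$ beyond the definition of $\proj$-splitness is needed, and in particular closure of $\proj$ under retracts and coproducts is not used.
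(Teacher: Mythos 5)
Your proof is correct and follows exactly the intended argument: the paper states this lemma without proof (as ``immediate''), and your unravelling of the definition---composition of surjections for (1), surjectivity of the outer factor for (2), and the lift-plus-universal-property argument for the pullback leg in (3)---is precisely the routine verification being taken for granted. Your closing remark that closure of $\proj$ under retracts and coproducts plays no role here is also accurate.
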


\begin{lem}
\phantomsection
\label{regularproj}
\begin{enumerate}[(i)]
\item Every split epimorphism is $\proj$-split.
\item Suppose $\C$ has enough projectives, and that for every $X \in \C$ there is a regular epimorphism $P \xrightarrow{\proj} X$ with $P$ in $\proj$. Then every $\proj$-split map is a regular epimorphism.
\item Suppose $\C$ has enough projectives, and that for every $X \in \C$ there is an epimorphism $P \xrightarrow{\proj} X$ with $P$ in $\proj$. Then every $\proj$-split map is an epimorphism.
\end{enumerate}
\end{lem}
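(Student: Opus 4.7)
My plan is to handle the three parts in increasing order of difficulty. Part (i) is tautological: if $f: X \to Y$ has a section $s: Y \to X$, then for any $g: P \to Y$ with $P \in \proj$, the composite $sg$ lifts $g$ through $f$, so $\C(P,f)$ is surjective. Part (iii) is nearly as quick: given a $\proj$-split $f$ and morphisms $g, h: Y \to Z$ satisfying $gf = hf$, I would take an epimorphism $u: P \xrightarrow{\proj} Y$ supplied by the hypothesis (with $P \in \proj$) and lift it to $v: P \to X$ with $fv = u$; then $gu = gfv = hfv = hu$ and the fact that $u$ is epi force $g = h$.

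For (ii), the same lifting argument shows $f$ is an epimorphism, so it remains to show $f$ is the coequaliser of its kernel pair $(p_1, p_2): X \times_Y X \rightrightarrows X$ (uniqueness of any factorisation through $f$ will then be automatic from $f$ being epi). My plan is: take a $\proj$-split regular epimorphism $u: P \xrightarrow{\proj} Y$ with $P \in \proj$, lift to $v: P \to X$ with $fv = u$, and let $(\pi_1, \pi_2)$ denote the kernel pair of $u$. Given $k: X \to Z$ with $kp_1 = kp_2$, the identities $fv\pi_1 = u\pi_1 = u\pi_2 = fv\pi_2$ produce a factorisation $\phi: P \times_Y P \to X \times_Y X$ with $p_i \phi = v\pi_i$, so $kv\pi_i = kp_i\phi$ agree. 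Thus $kv$ coequalises $\pi_1, \pi_2$; since a regular epimorphism is the coequaliser of its own kernel pair, there exists a unique $\bar{k}: Y \to Z$ with $\bar{k}u = kv$, i.e., $\bar{k}fv = kv$.

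The main obstacle is the final cancellation: $v$ need not be epi, so we cannot immediately upgrade $\bar{k}fv = kv$ to $\bar{k}f = k$. The key trick is to introduce a second projective cover: take a $\proj$-split regular epi $w: P' \xrightarrow{\proj} X$ with $P' \in \proj$ (applying the hypothesis to $X$), and use $\proj$-splitness of $u$ together with $P' \in \proj$ to lift $fw$ through $u$, getting $v': P' \to P$ with $uv' = fw$. Then $f(vv') = uv' = fw$, so the pair $(vv', w): P' \to X \times X$ factors through the kernel pair $X \times_Y X$, whence $kvv' = kw$ by the hypothesis on $k$. Combining, $\bar{k}fw = \bar{k}uv' = kvv' = kw$; cancelling the epimorphism $w$ yields $\bar{k}f = k$, completing the proof that $f$ is a regular epimorphism.
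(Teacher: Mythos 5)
Your proofs of all three parts are correct. Part (i) is the paper's argument verbatim, and your direct cancellation argument for (iii) is if anything cleaner than the paper's remark that (iii) follows "by essentially the same argument" as (ii). For (ii), however, you take a genuinely different route. The paper follows Quillen's Proposition II.4.2: it first reduces to the case where the domain $X$ is itself projective by precomposing with a projective cover of $X$, then forms the pullback $X \times_Y Q$ against a regular-epi cover $Q \xrightarrow{\proj} Y$, uses the two lifts (one from projectivity of $X$, one from $\proj$-splitness of $f$) to split both projections, and concludes from standard stability facts about effective/regular epimorphisms (composition with split epimorphisms, right cancellation along a composite). You instead verify directly that $f$ has the universal property of the coequaliser of its kernel pair: you transport a test map $k$ down to $Y$ via the cover $u$ of $Y$ and its kernel pair, and then use a second cover $w$ of $X$, lifted through $u$, to check the identity $\bar{k}f = k$ after cancelling the epimorphism $w$. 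The only background fact you need is that a regular epimorphism with a kernel pair is effective, which the paper also invokes (it notes that regular and effective epimorphisms coincide here since $\C$ has kernel pairs). What each approach buys: yours is self-contained, avoiding the list of "standard facts about effective epimorphisms" imported from Quillen, at the cost of a slightly longer diagram chase; the paper's is shorter once those facts are granted and stays closer to the original source it cites.
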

\begin{proof}
\begin{enumerate}[(i)]
\item A section $g$ of $f$ induces a section $\C(P,g)$ of $\C(P,f)$.
\item The argument is the same as \cite[Proposition II.4.2]{Quillen}. We reproduce the relevant part here, \textit{mutatis mutandis}; we will assume the standard facts about effective epimorphisms stated there. Note that we use regular epimorphisms where \cite{Quillen} uses effective epimorphisms: since $\C$ has all kernel pairs, the two definitions are equivalent.

Suppose $f: X \to Y$ is $\proj$-split and choose an epimorphism $u: P \xrightarrow{\proj} X$ with $P$ in $\proj$. Then $fu$ is $\proj$-split, so we are reduced to the case where $X$ is in $\proj$. Now choose a regular epimorphism $v: Q \xrightarrow{\proj} Y$. As $X$ is projective, there is some $\alpha: X \to Q$ such that $v\alpha = f$. As $f$ is $\proj$-split, there is some $\beta: Q \to X$ such that $f\beta = v$. Then $\alpha$ and $\beta$ induce sections in the pullback
\[
\xymatrix{X \times_Y Q \ar[r]^{\pi_2} \ar[d]^{\pi_1} & Q \ar[d]^{v} \\ X \ar[r]^{f} & Y.}
\]
Then $f\pi_1 = v\pi_2$ is a regular epimorphism (because $v$ is), and hence $f$ is.
\item This holds by essentially the same argument, as the reader is invited to check.
\end{enumerate}
\end{proof}

\begin{example}
Consider $(\U,\sqcup CH)$. For $X \in \U$, apply the CH-subspaces functor $d$: the canonical map $d(X) \to X$ is a quotient map, so by Theorem \ref{currying} it is a regular epimorphism. As noted earlier, $d(X) \to X$ is $\proj$-split. So $\U$ has enough functorial projectives, and by the lemma every $\proj$-split map in $\U$ is a regular epimorphism.
\end{example}

\begin{rem}
\label{notepic}
In general, $\proj$-split maps are not epic, without any extra assumptions. To give a non-trivial example, let $\C$ be the category of groups, and $\D$ the coreflective subcategory of torsion groups (the coreflection functor takes the torsion subgroup). So $(\C,\D)$ has enough functorial projectives. The counit is the inclusion map into a group of its torsion subgroup.
\end{rem}

Now fix a subclass $\bar{\proj}$ of $\proj$ whose closure under retracts and coproducts is the whole of $\proj$.

Let $I$ be the class of maps in $s\C$ of the form $\iota \cdot \id_P: \partial \Delta^n \cdot P \to \Delta^n \cdot P$, $P \in \bar{\proj}$, and let $J$ be the class of maps of the form $\iota' \cdot \id_P: \Lambda^n_k \cdot P \to \Delta^n \cdot P$, $P \in \bar{\proj}$, where $\iota: \Lambda^n_k \to \Delta^n, \iota': \partial \Delta^n \to \Delta^n$ are the inclusion maps.

It is worth briefly considering a few equivalent characterisations for a map to have the right lifting property with respect to $I$ or $J$.

\begin{lem}
\label{rightlifting}
The following are equivalent for map $f: X \to Y$ in $s\C$:
\begin{enumerate}[(i)]
\item $f$ has the right lifting property with respect to $I$;
\item $\underline{s\C}_{sSet}(\disc P,f)$ is a trivial fibration of simplicial sets for all $P \in \bar{\proj}$;
\item the induced map $\{\Delta^n,X\} \to \{\partial \Delta^n,X\} \times_{\{\partial \Delta^n,Y\}} \{\Delta^n,Y\}$ is $\proj$-split.
\end{enumerate}
\end{lem}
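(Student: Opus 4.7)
The plan is to translate conditions (i) and (iii) into statements about lifting against or surjectivity onto the simplicial set $\underline{s\C}_{sSet}(\disc P,f)$, using the adjunctions established in Section \ref{se}.

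For (i) $\Leftrightarrow$ (ii): note that $\partial\Delta^n \cdot P = \partial\Delta^n \odot \disc P$ (and similarly with $\Delta^n$), where $\odot$ is the $sSet$-tensoring of $s\C$ proved to exist in Theorem \ref{tensorcotensor}. Hence by the two-variable adjunction for tensor and enriched hom, a commutative square from $\iota \cdot \id_P \colon \partial\Delta^n \cdot P \to \Delta^n \cdot P$ to $f$ corresponds naturally to a commutative square from $\iota \colon \partial\Delta^n \to \Delta^n$ to $\underline{s\C}(\disc P, f)$, and liftings correspond. Since $P$ ranges over $\bar{\proj}$ and $n$ over $\mathbb{N}$, this says exactly that $f$ has the right lifting property against $I$ iff $\underline{s\C}(\disc P,f)$ has the right lifting property against all $\partial\Delta^n \hookrightarrow \Delta^n$, i.e.\ is a trivial fibration of simplicial sets.

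For (ii) $\Leftrightarrow$ (iii): first derive the key formula $\C(Z,\{S,X\}) \cong sSet(S, \underline{s\C}(\disc Z, X))$. Using $\{S,X\} = (S \pitchfork X)_0$ (noted just after Theorem \ref{tensorcotensor}), the $\disc \dashv (-)_0$ adjunction, and the cotensor adjunction gives
\[
\C(Z,\{S,X\}) = s\C(\disc Z, S \pitchfork X) \cong sSet(S, \underline{s\C}(\disc Z, X)).
\]
Taking $S = \Delta^n$ and using Yoneda gives $\{\Delta^n,X\} = X_n$; taking $S = \partial\Delta^n$, and using that $\C(P,-)$ preserves pullbacks, gives
\[
\C\bigl(P, \{\partial\Delta^n,X\} \times_{\{\partial\Delta^n,Y\}} \{\Delta^n,Y\}\bigr) \cong sSet(\partial\Delta^n, A) \times_{sSet(\partial\Delta^n, B)} B_n,
\]
where $A = \underline{s\C}(\disc P, X)$ and $B = \underline{s\C}(\disc P, Y)$. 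Since $A_n = sSet(\Delta^n, A)$, the standard characterisation of trivial fibrations of simplicial sets (RLP against $\partial\Delta^n \hookrightarrow \Delta^n$ means surjectivity onto the matching pullback at level $n$) becomes exactly surjectivity of
\[
\C(P,\{\Delta^n,X\}) \to \C\bigl(P, \{\partial\Delta^n,X\} \times_{\{\partial\Delta^n,Y\}} \{\Delta^n,Y\}\bigr)
\]
for all $n$ and all $P \in \bar{\proj}$; this is precisely the statement that the induced map is $\bar{\proj}$-split, equivalently $\proj$-split since $\proj$ is the closure of $\bar{\proj}$ under retracts and coproducts.

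No step looks particularly hard, but the main care needed is bookkeeping: checking that the cotensor in $s\C$ (taken over $sSet$) genuinely computes the weighted limit $\{-,-\}$ as claimed just after Theorem \ref{tensorcotensor}, and that the representable $\C(P,-)$ commuting with the pullback lets us interchange ``$\proj$-split after applying $\C(P,-)$'' with ``surjective from the $n$-simplices of $A$ onto the matching pullback.''
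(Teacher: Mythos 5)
Your proposal is correct and follows essentially the same route as the paper: both equivalences come from the hom-tensor and tensor-cotensor adjunctions of Theorem \ref{tensorcotensor} (together with $\{S,X\} = (S \pitchfork X)_0$ and the identification of $\proj$-split with $\bar{\proj}$-split), with the paper merely routing both (ii) and (iii) through the surjectivity of $s\C(\Delta^n \cdot P,X) \to s\C(\partial\Delta^n \cdot P,X) \times_{s\C(\partial\Delta^n \cdot P,Y)} s\C(\Delta^n \cdot P,Y)$ rather than proving (i)$\Leftrightarrow$(ii) and then (ii)$\Leftrightarrow$(iii). Your write-up just spells out the bookkeeping the paper leaves implicit.
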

\begin{proof}
It is immediate from the definitions that $f$ has the right lifting property with respect to $I$ if and only if the induced map $$s\C(\Delta^n \cdot P,X) \to s\C(\partial \Delta^n \cdot P,X) \times_{s\C(\partial \Delta^n \cdot P,Y)} s\C(\Delta^n \cdot P,Y)$$ is surjective for all $n$ and $P$. This is equivalent to (ii) by the hom-tensor adjunction of Theorem \ref{tensorcotensor}; it is equivalent to (iii) by the tensor-cotensor adjunction (recall that being $\proj$-split is equivalent to being $\bar{\proj}$-split).
\end{proof}

$J$ is similar and is left to the reader.

In particular, by (iii), the classes defined by having right lifting properties with respect to $I$ and $J$ depend only on $\proj$, not on $\bar{\proj}$.

We now say a map $f$ in $s\C$ is a fibration if it has the right lifting property with respect to $J$, a weak equivalence if $\underline{s\C}_{sSet}(\disc P,f)$ is a weak equivalence in $sSet$ for all $P \in \bar{\proj}$, and a trivial fibration if it is a fibration and a weak equivalence.

\begin{lem}
\phantomsection
\label{simpfib}
\begin{enumerate}[(i)]
\item Weak equivalences satisfy the two-out-of-three property and are closed under retracts.
\item Trivial fibrations are exactly the maps that have the right lifting property with respect to $I$.
\item Fibrations, trivial fibrations and weak equivalences are preserved by pullbacks.
\end{enumerate}
\end{lem}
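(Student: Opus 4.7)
The plan is to reduce each part to the corresponding statement about simplicial sets by transporting along the family of functors $\underline{s\C}_{sSet}(\disc P, -) : s\C \to sSet$ for $P \in \bar{\proj}$; by the very definition of weak equivalence and fibration, and by Lemma \ref{rightlifting}(ii) for $I$-inj, these functors detect the three classes of interest.

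For (i), I would simply observe that the two-out-of-three and retract-closure properties in $s\C$ transfer immediately from the corresponding properties in $sSet$, since each $\underline{s\C}(\disc P, -)$ is a functor and so preserves composition, identities and retracts, and weak equivalences in $sSet$ enjoy two-out-of-three and closure under retracts.

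For (ii), I would combine Lemma \ref{rightlifting}(ii) with the analogous characterisation for $J$ that the lemma leaves to the reader, namely that $f$ is $J$-inj iff $\underline{s\C}(\disc P, f)$ is a Kan fibration for every $P \in \bar{\proj}$. The standard fact that a map of simplicial sets is a trivial Kan fibration iff it is both a Kan fibration and a weak equivalence then gives at once that $f$ is $I$-inj iff $\underline{s\C}(\disc P, f)$ is a Kan fibration and a weak equivalence for every $P$, i.e.\ iff $f$ is both a fibration and a weak equivalence.

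For (iii), the key observation is that, being right adjoint to $\disc P \odot -$ by Theorem \ref{tensorcotensor}, each functor $\underline{s\C}(\disc P, -)$ preserves pullbacks, so a pullback square in $s\C$ is carried to a pullback square in $sSet$. Since Kan fibrations and trivial Kan fibrations are closed under pullback in $sSet$, the fibration and trivial fibration assertions follow immediately. The main obstacle is the weak equivalence case, because weak equivalences in $sSet$ are not stable under arbitrary pullback: one must appeal to right properness of the Quillen model structure on $sSet$, so the claim is really that pullback of a weak equivalence along a fibration is a weak equivalence, exactly as in the corresponding argument for $\U_{CH}$ given earlier in the paper.
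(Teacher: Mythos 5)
Your proof is correct and follows essentially the same route as the paper: each part is transported along the functors $\underline{s\C}_{sSet}(\disc P,-)$ for $P \in \bar{\proj}$, with (ii) proved by exactly the paper's ``run the argument of Lemma \ref{rightlifting} with $J$ in place of $I$'' step. Your extra care in (iii) is warranted: the paper merely says the pullback statements are immediate from the corresponding properties in $sSet$, and for weak equivalences this can only mean pullback along a fibration (right properness of $sSet$), exactly as you note -- arbitrary pullbacks do not preserve weak equivalences even in $sSet$ itself, e.g.\ pulling back the vertex inclusion $\Delta^0 \to \Delta^1$ along the other vertex gives $\emptyset \to \Delta^0$.
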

\begin{proof}
\begin{enumerate}[(i)]
\item This holds because the same is true for weak equivalences in $sSet$.
\item By the same argument as the previous lemma, using $J$ instead of $I$, $f$ is a trivial fibration if and only if $\underline{s\C}_{sSet}(\disc P,f)$ is a fibration and a weak equivalence for all $P \in \bar{\proj}$, if and only if $\underline{s\C}_{sSet}(\disc P,f)$ is a trivial fibration for all $P \in \bar{\proj}$, if and only if $f$ has the right lifting property with respect to $I$.
\item This is immediate from the corresponding properties in $sSet$.
\end{enumerate}
\end{proof}

It is also worth noting the following result. We define $\diag: (s\C)^{\Delta^{op}} \to s\C$ to be the functor induced by the diagonal map $\Delta \to \Delta \times \Delta$.

\begin{lem}
Suppose $X_\bullet \in (s\C)^{\Delta^{op}}$ is a bisimplicial object such that, for all maps $[k] \to [n]$ in $\Delta$, the induced map $(X_n)_\bullet \to (X_k)_\bullet$ is a weak equivalence. Then the map $(X_0)_\bullet \to \diag X_\bullet$ is a weak equivalence.
\end{lem}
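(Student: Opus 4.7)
The plan is to reduce to a classical result about bisimplicial sets via the $sSet$-enriched hom functors that define weak equivalences in $s\C$.

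First, I would fix $P\in\bar{\proj}$ and consider $\Phi_P := \underline{s\C}_{sSet}(\disc P,-) \colon s\C \to sSet$. Specialising the $sSet$-tensor-hom adjunction (or, equivalently, the final lemma of Section~\ref{se}) gives $\Phi_P(Y)_m = \C(P,Y_m)$. Applying $\Phi_P$ entrywise to the bisimplicial object $X_\bullet \in (s\C)^{\Delta^{op}}$ produces a bisimplicial set $Y^P$ with $Y^P_{n,m}=\C(P,(X_n)_m)$, and $\Phi_P$ commutes on the nose with $\diag$ because $\Phi_P(\diag X_\bullet)_n = \C(P,(X_n)_n) = (\diag Y^P)_n$.

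Second, I would translate the hypothesis: by the definition of weak equivalence in $s\C$, the statement that $(X_n)_\bullet \to (X_k)_\bullet$ is a weak equivalence for every arrow $[k]\to[n]$ in $\Delta$ is equivalent to $Y^P_{n,\bullet} \to Y^P_{k,\bullet}$ being a weak equivalence of simplicial sets for every such arrow and every $P\in\bar{\proj}$. Thus $Y^P$ is a bisimplicial set all of whose ``vertical'' structure maps are weak equivalences.

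Third, I would invoke the classical fact (see \cite{GJ}) that for any such bisimplicial set $Z$, the canonical map $Z_{0,\bullet} \to \diag Z$, given in simplicial degree $n$ by iterated degeneracies in the first simplicial direction, is a weak equivalence of simplicial sets. Applied to $Z = Y^P$, this shows that $\Phi_P$ sends $(X_0)_\bullet \to \diag X_\bullet$ to a weak equivalence for every $P\in\bar{\proj}$; by definition this means $(X_0)_\bullet \to \diag X_\bullet$ is a weak equivalence in $s\C$.

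The only nontrivial ingredient is the bisimplicial set result itself, which is where the real content lives. A standard proof realises $\diag Z$ as a model of the homotopy colimit of $[n]\mapsto Z_{n,\bullet}$ over $\Delta^{op}$, and uses that a homotopy colimit over the contractible index category $\Delta^{op}$ of a diagram whose structure maps are all weak equivalences is weakly equivalent to any single entry, in particular to $Z_{0,\bullet}$. Everything else is formal bookkeeping with the $sSet$-enrichment established in Section~\ref{se}.
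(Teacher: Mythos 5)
Your proposal is correct and follows essentially the same route as the paper: the paper's proof likewise reduces to the bisimplicial set case by applying $\underline{s\C}_{sSet}(\disc P,-)$ for all $P \in \bar{\proj}$, and then quotes the classical statement for bisimplicial sets (cited there as \cite[Lemma 5.3.1]{Hovey} rather than \cite{GJ}). Your added bookkeeping — that this functor is computed levelwise as $\C(P,-)$ and commutes with $\diag$ — is exactly what makes the reduction work.
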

\begin{proof}
This is true for $\C = Set$, by \cite[Lemma 5.3.1]{Hovey}. Now the argument is the same as the previous lemma: just apply `$\underline{s\C}_{sSet}(\disc P,-)$ for all $P \in \bar{\proj}$'.
\end{proof}

\begin{lem}
\label{modelcotensor}
If $f: X \to Y$ is a fibration in $s\C$, and $g: K \to L$ is a cofibration of simplicial sets, the induced map $$L \pitchfork X \to (K \pitchfork X) \times_{K \pitchfork Y} (L \pitchfork Y)$$ is a fibration, which is trivial if $f$ or $g$ is.
\end{lem}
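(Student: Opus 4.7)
The plan is to run the standard pushout-product (SM7) argument, moving from $s\C$ to $sSet$ via the tensoring $-\odot-$. Using the adjunctions of Theorem \ref{tensorcotensor}, for $j: A \to B$ in $s\C$ and $g: K \to L$ in $sSet$ there is a bijection between commutative squares from $j$ to the pullback-cotensor map
\[
h := L \pitchfork X \to (K \pitchfork X) \times_{K \pitchfork Y} (L \pitchfork Y)
\]
and commutative squares from the pushout-tensor $g \widehat{\odot} j: (L \odot A) \sqcup_{K \odot A} (K \odot B) \to L \odot B$ to $f$, respecting the existence of diagonal lifts. Consequently $h$ is a fibration iff, for every $j \in J$, the map $g \widehat{\odot} j$ lifts against $f$, and $h$ is a trivial fibration iff the same holds with $I$ in place of $J$ (using Lemma \ref{simpfib}(ii)). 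The conclusion therefore reduces to three sub-statements: (a) if $g$ is a cofibration and $j \in J$ then $g \widehat{\odot} j \in J$-cof; (b) if $g$ is a trivial cofibration and $i \in I$ then $g \widehat{\odot} i \in J$-cof; (c) if $g$ is a cofibration and $i \in I$ then $g \widehat{\odot} i \in I$-cof.

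For each fixed $j$, the class of $g$'s satisfying the desired conclusion is saturated (closed under retracts, pushouts, transfinite composition), since it is characterised as the class of $g$'s with the left lifting property against a suitable collection of pullback-cotensor maps. So it suffices to verify each case when $g$ is a generating (trivial) cofibration of $sSet$, namely $\partial\Delta^n \to \Delta^n$ or $\Lambda^n_k \to \Delta^n$. For such a $g: K \to L$ and for $j: C \cdot P \to D \cdot P$ in $I$ or $J$ (so $C \to D$ is another boundary or horn inclusion in $sSet$ and $P \in \bar{\proj}$), the identity $K \odot (C \cdot P) = (K \times C) \cdot P$ unpacks the pushout-tensor as
\[
g \widehat{\odot} j = \bigl((L \times C) \sqcup_{K \times C} (K \times D)\bigr) \cdot P \longrightarrow (L \times D) \cdot P = (g \widehat{\times} (C \to D)) \cdot P,
\]
where $\widehat{\times}$ denotes the Leibniz product in $sSet$.

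Now the classical pushout-product axiom in $sSet$ tells us $g \widehat{\times} (C \to D)$ is a cofibration of simplicial sets, trivial if either factor is; in particular it can be built by pushouts and transfinite composition from $\{\partial\Delta^r \to \Delta^r\}$ in general, and from $\{\Lambda^r_k \to \Delta^r\}$ in the trivial case. Since $-\cdot P = -\odot \disc P$ is a left adjoint to $\underline{s\C}(\disc P, -)$, it preserves colimits, and therefore sends such cellular presentations to elements of $I$-cell $\subseteq$ $I$-cof or $J$-cell $\subseteq$ $J$-cof, as required for (a)--(c). The main obstacle, beyond keeping the three adjoint reformulations lined up in the correct direction, is really just confirming that the functor $-\cdot P$ is a genuine left adjoint on $sSet$; once that is in hand the argument is essentially formal, with the geometric content outsourced to SM7 for simplicial sets.
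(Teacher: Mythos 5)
Your argument is correct, but it takes a different route from the paper's. The paper's proof is a one-step reduction on the cotensor side: apply $\underline{s\C}_{sSet}(\disc P,-)$ for $P \in \bar{\proj}$, note that this functor turns $K \pitchfork X$ into $\underline{sSet}(K,\underline{s\C}_{sSet}(\disc P,X))$ and commutes with pullbacks, so the map in question becomes the usual pullback-hom in $sSet$ associated to $g$ and the Kan fibration $\underline{s\C}_{sSet}(\disc P,f)$; since fibrations, trivial fibrations and weak equivalences in $s\C$ are by definition detected by these functors, SM7 for simplicial sets finishes the proof immediately. You instead work on the tensor side: the three lifting reformulations (a)--(c), the saturation argument reducing to generating (trivial) cofibrations of $sSet$, the identity $K \odot (C \cdot P) = (K \times C)\cdot P$ exhibiting $g \mathbin{\widehat{\odot}} j$ as $(g \mathbin{\widehat{\times}} (C \to D)) \cdot P$, and then the pushout-product axiom plus cellularity of cofibrations in $sSet$. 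This is all valid; the only small imprecision is that trivial cofibrations of simplicial sets are in general only \emph{retracts} of relative horn-cell complexes, but since $- \cdot P$ preserves retracts and $I$-cof, $J$-cof are retract-closed, your conclusion is unaffected. What the two approaches buy: the paper's argument is much shorter and needs only that the detecting functors preserve cotensors and pullbacks, while yours additionally establishes the pushout-product statements (a)--(c) for the $sSet$-tensoring on $s\C$, which is the tensor-side half of the simplicial model category axiom and is reusable information, at the cost of invoking the cell-complex description of (trivial) cofibrations in $sSet$.
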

\begin{proof}
This follows from the corresponding fact for simplicial sets, using the fact that $\underline{s\C}_{sSet}(\disc P,-)$ commutes with pullbacks.
\end{proof}

Define $X \in s\C$ to be a \emph{Kan complex} (with respect to $\proj$) if the unique map $X \to \ast$ to the terminal object is a fibration, and write $\Kan(\C,\proj)$, or just $\Kan\C$, for the full subcategory of $s\C$ whose objects are Kan complexes. So $X \in \Kan(\C,\proj)$ if and only if $\{\Delta^n,X\} \to \{\Lambda^n_k,X\}$ is $\proj$-split for all $n,k$.

In particular, for $X \in \C$, the face and degeneracy maps in $\disc X$ are all $\id_X$, and it follows that $\{\Delta^n,\disc X\} \to \{\Lambda^n_k,\disc X\}$ is $\id_X$ too. So $\disc X$ is a Kan complex.

We also define a \emph{fibre sequence} to be a pair of maps $F \to X \xrightarrow{f} Y$ in the pointed category $s(\C_\ast)$ such that $f$ is a fibration and $F$ is the equaliser of $f$ and the trivial map $X \to \ast \to Y$. Of course, for any fibration $f$ in $s\C$ and any map $x:\ast \to X_0$, which we think of as a `choice of basepoint', we can use $f$, and the face maps, to get compatible maps $\ast \to X_n$, $\ast \to Y_n$ for all $n$, and think of $f$ as a fibration in the pointed category. Then we may take the \emph{fibre of $f$ at $x$}, the equaliser of $f$ and $X \to \ast \xrightarrow{fx} Y$, to construct a fibre sequence.

\begin{lem}
\label{Kanfibre}
Suppose $f: X \to Y$ is a fibration in $s\C$.
\begin{enumerate}[(i)]
\item Choose a basepoint $x \in X_0$. The fibre of $f$ at $x$ is a Kan complex.
\item If $X$ is a Kan complex and each map $X_n \to Y_n$ is $\proj$-split, then $Y$ is a Kan complex.
\item If $Y$ is a Kan complex, so is $X$.
\end{enumerate}
\end{lem}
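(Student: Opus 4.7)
Parts (i) and (iii) drop out quickly. For (i), the fibre $F$ of $f$ at $x$ is the equaliser of $f$ and the constant map $X \to \ast \xrightarrow{fx} Y$, which is identified with the pullback $X \times_Y \ast$ along $\ast \xrightarrow{fx} Y$; since pullbacks preserve fibrations by Lemma \ref{simpfib}(iii), $F \to \ast$ is a fibration, so $F$ is a Kan complex. For (iii), given a horn $\phi: \Lambda^n_k \cdot P \to X$ with $P \in \bar{\proj}$, composing with $f$ yields a horn in $Y$, which extends to some $\psi: \Delta^n \cdot P \to Y$ by $Y$'s Kan property; then the right lifting property of the fibration $f$ against the generating trivial cofibration $\Lambda^n_k \cdot P \to \Delta^n \cdot P$ in $J$ produces the required fill $\Delta^n \cdot P \to X$.

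The real work is in part (ii). Given a horn $\phi: \Lambda^n_k \cdot P \to Y$, the plan is to (a) lift $\phi$ to some $\tilde\phi: \Lambda^n_k \cdot P \to X$, (b) fill $\tilde\phi$ to $\tilde\psi: \Delta^n \cdot P \to X$ using that $X$ is Kan, and (c) push down to $f\tilde\psi: \Delta^n \cdot P \to Y$. For step (a), the vertex $k$ of the horn is a map $P \to Y_0$, which lifts to some $P \to X_0$ using that $X_0 \to Y_0$ is $\proj$-split by hypothesis; this gives a partial lift $\tilde\phi_0: \{k\} \cdot P \to X$. Now $\{k\} \hookrightarrow \Lambda^n_k$ is anodyne in $sSet$---concretely, $\Lambda^n_k$ can be assembled from $\{k\}$ by iterated pushouts along horn inclusions $\Lambda^m_j \hookrightarrow \Delta^m$---and since $(-) \cdot P: sSet \to s\C$ is left adjoint to $\underline{s\C}(\disc P, -)$ and therefore preserves pushouts and transfinite compositions, the map $\{k\} \cdot P \hookrightarrow \Lambda^n_k \cdot P$ lies in $J$-cell. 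As $f$ has the right lifting property against $J$, and hence against $J$-cell, $\tilde\phi_0$ extends to the required $\tilde\phi$.

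The main obstacle is step (a). A naive attempt to lift the non-degenerate simplices of $\Lambda^n_k$ independently, using that $X_m \to Y_m$ is $\proj$-split for each $m$, fails because the lifts need not agree on shared faces; and one cannot patch this using the fibration $f$ alone, since doing so would demand the right lifting property against boundary inclusions $\partial\Delta^m \hookrightarrow \Delta^m$, which are not in $J$. The anodyne decomposition of $\{k\} \hookrightarrow \Lambda^n_k$ is precisely what reduces the global lifting problem to a sequence of horn fills, matching the right lifting property that $f$ actually enjoys.
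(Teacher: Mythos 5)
Your proof is correct, but it takes a genuinely different route from the paper's. The paper handles all three parts uniformly by applying $\underline{s\C}_{sSet}(\disc P,-)$ for every $P \in \bar{\proj}$: this functor preserves equalisers (hence fibre sequences), sends fibrations to Kan fibrations, turns the levelwise $\proj$-split hypothesis into levelwise surjectivity, and thereby reduces each statement to its classical counterpart in $sSet$, quoted from \cite[Propositions 7.3, 7.5]{May2}. You instead argue directly in $s\C$ with lifting properties against $J$, which in effect re-proves those $sSet$ facts internally: your part (ii) is May's argument transported across the adjunction between $(-)\cdot P$ and $\underline{s\C}_{sSet}(\disc P,-)$. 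Your key combinatorial claim is correct: $\Lambda^n_k$ is the cone with apex $k$ on the boundary of the face opposite $k$, so attaching the simplices containing $k$ in order of dimension exhibits $\{k\} \hookrightarrow \Lambda^n_k$ as a finite composite of pushouts of horn inclusions, whence $\{k\}\cdot P \to \Lambda^n_k \cdot P$ lies in $J$-cell and $f$ lifts against it. What your version buys is self-containedness and a slightly sharper statement: only the $\proj$-splitness of $X_0 \to Y_0$ is used (splitness in higher degrees then follows automatically from the fibration by the same device applied to $\{0\}\hookrightarrow\Delta^n$). What the paper's reduction buys is brevity and uniformity, since the same ``apply $\underline{s\C}_{sSet}(\disc P,-)$'' device is the standing argument throughout (Lemmas \ref{simpfib}, \ref{modelcotensor}, Theorem \ref{Kan}) and no new simplicial combinatorics is needed. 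Your parts (i) and (iii) --- the fibre as the pullback along $\ast \xrightarrow{fx} Y$ together with Lemma \ref{simpfib}(iii), and closure of the right lifting class under composition --- are fine and agree with the standard arguments.
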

\begin{proof}
The proof for each part is the usual one: apply `$\underline{s\C}_{sSet}(\disc P,-)$ for all $P \in \bar{\proj}$' to reduce to proving it in $sSet$, by noting that $\underline{s\C}_{sSet}(\disc P,-)$ preserves equalisers and hence fibre sequences. The corresponding statements in $sSet$ are proved in \cite[Proposition 7.3, Proposition 7.5]{May2}.
\end{proof}

\begin{thm}
\label{Kan}
With these classes of weak equivalences and fibrations $\Kan\C$ is a category of fibrant objects, in the sense of \cite{Brown}.
\end{thm}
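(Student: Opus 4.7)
The plan is to verify Brown's axioms for $\Kan\C$ one by one, invoking the lemmas already proved in this section; most of them are essentially automatic, and the only genuine construction needed is the path object.

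First I would handle the categorical structure. The terminal object $\ast$ is trivially a Kan complex. For a fibration $f : X \to Y$ in $\Kan\C$ and any morphism $g : Z \to Y$ with $Z \in \Kan\C$, the pullback $X \times_Y Z$ formed in $s\C$ projects to $Z$ by a fibration (Lemma \ref{simpfib}(iii)); since $Z$ is Kan, Lemma \ref{Kanfibre}(iii) forces $X \times_Y Z$ to be Kan as well. Hence pullbacks of fibrations exist in $\Kan\C$ and are again fibrations, and the same argument covers trivial fibrations; specialising $Y = \ast$ gives finite products. Two-out-of-three and closure under retracts for weak equivalences is Lemma \ref{simpfib}(i). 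Fibrations and trivial fibrations are closed under composition because they are defined by the right lifting property against $J$ and $I$ respectively; isomorphisms lie in every class; every object of $\Kan\C$ is fibrant by definition.

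The only step requiring a construction is the path object. Given $X \in \Kan\C$ I would set $X^{I} := \Delta^1 \pitchfork X$ and factor the diagonal $X \to X \times X$ as
\[
X \;\cong\; \Delta^0 \pitchfork X \;\longrightarrow\; \Delta^1 \pitchfork X \;\longrightarrow\; \partial\Delta^1 \pitchfork X \;\cong\; X \times X,
\]
the two maps being induced respectively by the collapse $\Delta^1 \to \Delta^0$ and the boundary inclusion $\partial\Delta^1 \hookrightarrow \Delta^1$. Applying Lemma \ref{modelcotensor} to the fibration $X \to \ast$ and the cofibration $\partial\Delta^1 \hookrightarrow \Delta^1$ shows that the second arrow is a fibration; applying it instead to a vertex inclusion $\Delta^0 \hookrightarrow \Delta^1$, which is a trivial cofibration of simplicial sets, shows that the projection $\Delta^1 \pitchfork X \to X$ is a trivial fibration. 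The first arrow in the factorisation is a section of this projection, and is therefore a weak equivalence by two-out-of-three. Finally $X^{I} \to X \times X \to \ast$ is a composite of fibrations, so $X^{I}$ itself is Kan, as required.

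The only real obstacle is the path object construction; the rest is bookkeeping with right lifting properties and the pullback lemma. The key point is that Lemma \ref{modelcotensor} makes $\Delta^1 \pitchfork -$ behave exactly like a path-space functor: it converts cofibrations of simplicial sets on the left into fibrations on the right, and trivial cofibrations into trivial fibrations, so $\Delta^1$ serves as a simplicial interval for $\Kan\C$.
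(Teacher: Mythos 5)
Your proof is correct and follows essentially the same route as the paper: the path object is $\Delta^1 \pitchfork X$ with the same factorisation of the diagonal, and the remaining axioms are handled by Lemmas \ref{simpfib} and \ref{Kanfibre} just as in the text. The only cosmetic difference is that you obtain the weak equivalence $X \to \Delta^1 \pitchfork X$ from the trivial fibration $\Delta^1 \pitchfork X \to X$ (Lemma \ref{modelcotensor}) together with two-out-of-three, whereas the paper verifies both maps of the factorisation directly by applying $\underline{s\C}_{sSet}(\disc P,-)$ and quoting the corresponding facts for simplicial sets---the same underlying argument.
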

\begin{proof}
Thanks to Lemma \ref{simpfib}, it only remains to show that for every $X \in \Kan\C$ there is some $X^I$ and a factorisation of the diagonal map $X \to X \times X$ into a weak equivalence $X \to X^I$ and a fibration $X^I \to X \times X$.

Let $X^I$ be given (functorially) by $\Delta^1 \pitchfork X$; the factorisation is induced by $\Delta^0 \sqcup \Delta^0 \xrightarrow{\delta_0 \sqcup \delta_1} \Delta^1 \xrightarrow{\sigma_0} \Delta^0$. Then (by the corresponding results for simplicial sets) $$\underline{s\C}_{sSet}(\disc P,X) \to \underline{s\C}_{sSet}(\disc P,\Delta^1 \pitchfork X) \cong \underline{sSet}(\Delta^1,\underline{s\C}_{sSet}(\disc P,X))$$ is a weak equivalence for all $P$, so $X \to \Delta^1 \pitchfork X$ is a weak equivalence. Similarly, $$\underline{sSet}(\Delta^1,\underline{s\C}_{sSet}(\disc P,X) \cong \underline{s\C}_{sSet}(\disc P,\Delta^1 \pitchfork X) \to \underline{s\C}_{sSet}(\disc P,X \times X)$$ is a fibration for all $P$, so $\Delta^1 \pitchfork X \to X \times X$ is.
\end{proof}

\begin{rem}
This does not require that $\C$ have enough projectives; any class $\proj$ will do.
\end{rem}

Immediately from the definitions we get:

\begin{prop}
Suppose $\proj, \Q$ are classes of objects in $\C$, closed under retracts and coproducts. Suppose $\proj \subseteq \Q$. Then weak equivalences (respectively, fibrations) in $(s\C,\Q)$ are weak equivalences (respectively, fibrations) in $(s\C,\proj)$. So $\id_{s\C}$ induces a canonical inclusion functor $i: \Kan(\C,\Q) \to \Kan(\C,\proj)$ which preserves weak equivalences and fibrations.
\end{prop}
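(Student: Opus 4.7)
The plan is to reduce both claims to the trivial observation that a condition ranging over a larger class of test objects implies the same condition ranging over a smaller class. First I would normalise the data: for the purpose of this proof take the generating subclasses to be $\bar{\proj} = \proj$ and $\bar{\Q} = \Q$ themselves, so that the definitions of $I$, $J$ and of weak equivalences refer directly to $\proj$ and $\Q$.

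Next, for fibrations, let $J_{\proj}$ and $J_{\Q}$ be the corresponding classes of horn inclusions smashed with projectives. Since $\proj \subseteq \Q$, we have $J_{\proj} \subseteq J_{\Q}$. Therefore any map with the right lifting property against $J_{\Q}$ automatically has the right lifting property against $J_{\proj}$, proving that every fibration in $(s\C,\Q)$ is a fibration in $(s\C,\proj)$. For weak equivalences, if $f$ satisfies that $\underline{s\C}_{sSet}(\disc P, f)$ is a weak equivalence of simplicial sets for every $P \in \Q$, then in particular this holds for every $P \in \proj$, so $f$ is a weak equivalence in $(s\C,\proj)$.

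Finally, for the functor $i$, recall that $X$ is a Kan complex in $(s\C,\Q)$ iff $X \to \ast$ is a fibration in $(s\C,\Q)$. By the fibration part just proved, this implies $X \to \ast$ is a fibration in $(s\C,\proj)$, i.e.\ $X \in \Kan(\C,\proj)$. Thus $\id_{s\C}$ restricts to a functor $i : \Kan(\C,\Q) \to \Kan(\C,\proj)$, and the preservation of weak equivalences and fibrations is exactly what we have already verified.

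There is genuinely no obstacle here; the statement is a sanity check packaging monotonicity of the two definitions in $\proj$. The only mild subtlety is that the definitions were phrased in terms of a generating subclass $\bar{\proj}$; one should either note that by Lemma \ref{rightlifting}(iii) the classes of fibrations and trivial fibrations depend only on $\proj$ and not on $\bar{\proj}$, or simply take $\bar{\proj} = \proj$ and $\bar{\Q} = \Q$ as above, which is the cleanest route.
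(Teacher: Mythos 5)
Your proposal is correct and matches the paper, which states this result as immediate from the definitions and gives no further argument: both reduce to the monotonicity observation that lifting/weak-equivalence conditions quantified over the larger class $\Q$ imply the same conditions over the subclass $\proj$. Your remark that the fibration classes depend only on $\proj$ (Lemma \ref{rightlifting}(iii)), not on the choice of generating subclass, is the right way to handle the only subtlety.
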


The reader may entertain themselves by investigating the poclass of possible classes of projectives. Notably, it has a maximal element:

\begin{examples}
\phantomsection
\label{globalKan}
\begin{enumerate}[(i)]
\item Take $\proj = \ob \C$. Then, by the Yoneda lemma and Lemm \ref{rightlifting}, fibrations in $(s\C,\proj)$ are maps $X \to Y$ such that the induced map $\{\Delta^n,X\} \to \{\Lambda^n_k,X\} \times_{\{\Lambda^n_k,Y\}} \{\Delta^n,Y\}$ is a split epimorphism for all $n,k$. Such maps are also known as \emph{global Kan fibrations}, and objects in $\Kan(\C,\ob \C)$ are known as \emph{global Kan complexes}.
\item At the other extreme, consider the case where $\proj$ is just retracts of the initial object $\emptyset$. Then $\C(\emptyset,X) = \{\ast\}$ for all $X \in \C$, so all maps are $\emptyset$-split. It follows $(\C,\emptyset)$ has enough functorial projectives, and that all maps in $(s\C,\emptyset)$ are trivial fibrations.
\item In the case $\C = Set$, the closure of any class of objects containing a non-empty set under retracts and coproducts is $\ob Set$. So in this case $\Kan(Set,\proj)$ is independent of the choice of $\proj$, and its objects/fibrations are just the usual definition of Kan complexes/fibrations in $sSet$. We will just write $\Kan Set$ for this category.
\end{enumerate}
\end{examples}

In the factorisation of the diagonal map given by the axioms for a category of fibrant objects, we will generally label the maps $X \xrightarrow{s} X^I \rightrightarrows^{d_0}_{d_1} X$, without further comment.

The structure of a category of fibrant objects allows us to define homotopies between maps in this category, in a well-behaved way. We say $f,g:X \to Y$ are \emph{right homotopic}, and write $f \sim g$, if there is a diagram $X \xrightarrow{h} Y^I \rightrightarrows^{d_0}_{d_1} Y$ such that $d_0h = f$ and $d_1h = g$. A standard result of categories of fibrant objects is that $\sim$ gives an equivalence relation on maps $X \to Y$.

As usual, a \emph{right homotopy equivalence} between $X$ and $Y$ is a map $f:X \to Y$ such that there is some $g:Y \to X$ with $gf \sim \id_X$ and $fg \sim \id_Y$.

\begin{lem}
Right homotopy equivalences in $\Kan\C$ are weak equivalences.
\end{lem}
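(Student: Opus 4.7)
The plan is to push all homotopy data through the functor $F_P := \underline{s\C}_{sSet}(\disc P, -) : s\C \to sSet$ for each $P \in \bar{\proj}$, thereby reducing the statement to the classical fact that a right homotopy equivalence between Kan simplicial sets is a weak equivalence of simplicial sets.

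First I would check that $F_P$ preserves the path-object construction. The enriched cotensor-hom adjunction, exactly the one already exploited in the proof of Theorem \ref{Kan}, supplies a natural isomorphism
\[
F_P(\Delta^1 \pitchfork Y) \;\cong\; \underline{sSet}(\Delta^1, F_P(Y)),
\]
so $F_P$ sends the path-object factorisation $Y \xrightarrow{s} Y^I \rightrightarrows^{d_0}_{d_1} Y$ in $s\C$ to the standard path-object factorisation of $F_P(Y)$ in $sSet$. Consequently any right homotopy $h: X \to Y^I$ between $f,g:X \to Y$ in $\Kan\C$ is carried by $F_P$ to a right homotopy between $F_P(f)$ and $F_P(g)$ in $sSet$.

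Next, I would note that $F_P$ sends objects of $\Kan\C$ to Kan complexes: the characterisation analogous to Lemma \ref{rightlifting}, but for $J$ in place of $I$, shows that $F_P$ carries fibrations in $s\C$ to Kan fibrations in $sSet$, and $F_P$ preserves the terminal object. Hence, given a right homotopy equivalence $f: X \to Y$ in $\Kan\C$ with inverse $g$ and witnessing homotopies $gf \sim \id_X$ and $fg \sim \id_Y$, the map $F_P(f): F_P(X) \to F_P(Y)$ is a right homotopy equivalence between Kan simplicial sets.

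Finally, invoking the classical fact — both objects are fibrant-cofibrant in the Quillen model structure on $sSet$, so any homotopy equivalence between them is a weak equivalence — gives that $F_P(f)$ is a weak equivalence of simplicial sets for every $P \in \bar{\proj}$, which is precisely the definition of $f$ being a weak equivalence in $s\C$. The only real obstacle is the bookkeeping in the first step; once $F_P$ is seen to commute with $\Delta^1 \pitchfork -$, the rest of the argument is essentially automatic.
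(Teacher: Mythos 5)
Your proposal is correct and is essentially the paper's own argument: apply $\underline{s\C}_{sSet}(\disc P,-)$, use the cotensor-hom isomorphism $\underline{s\C}_{sSet}(\disc P,\Delta^1 \pitchfork Y) \cong \underline{sSet}(\Delta^1,\underline{s\C}_{sSet}(\disc P,Y))$ to transport the right homotopies, and conclude via the classical fact (\cite[Theorem 1.2.10]{Hovey}) that homotopy equivalences between fibrant--cofibrant simplicial sets are weak equivalences. Your extra remark that $\underline{s\C}_{sSet}(\disc P,-)$ sends objects of $\Kan\C$ to Kan complexes is a point the paper leaves implicit, but it is the same proof.
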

\begin{proof}
The right homotopy $X \xrightarrow{h} \Delta^1 \pitchfork X \rightrightarrows^{d_0}_{d_1} Y$ gives a right homotopy
\begin{align*}
\underline{s\C}_{sSet}(\disc P,X) &\xrightarrow{\underline{s\C}_{sSet}(\disc P,h)} \underline{sSet}(\Delta^1,\underline{s\C}_{sSet}(\disc P,X)) \\
&\rightrightarrows^{\underline{s\C}_{sSet}(\disc P,d_0)}_{\underline{s\C}_{sSet}(\disc P,d_1)} \underline{s\C}_{sSet}(\disc P,Y)
\end{align*}
of simplicial sets for all $P \in \bar{\proj}$. So right homotopy equivalences in $\Kan\C$ give right homotopy equivalences of simplicial sets, which are weak equivalences by \cite[Theorem 1.2.10]{Hovey}.
\end{proof}

We will also want to use \emph{strong deformation retracts}. Following \cite[Definition 1.5.3]{JT}, we say $f:X \to Y$ is a strong deformation retract if it has a retraction $r:Y \to X$ such that $rf = \id_X$, and there is a right homotopy $h: Y \to Y^I$ such that $d_0h=fr$, $d_1h=\id_Y$, and the following diagram commutes, where the map $X \to X^I$ is the canonical one:
\[
\xymatrix{X \ar[r]^{f} \ar[d] & Y \ar[d]^{h} \\
X^I \ar[r]^{f^I} & Y^I.}
\]
Clearly strong deformation retracts are right homotopy equivalences, and hence weak equivalences.

\begin{lem}
\label{factor}
Every map $f:X \to Y$ in a category of fibrant objects can be factored functorially as $f = pi$, where $p: X \times_Y Y^I \to Y$ is a fibration and $i: X \to X \times_Y Y^I$ is a strong deformation retract.
\end{lem}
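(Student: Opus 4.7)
The plan is the classical mapping path space construction. Define $P_f := X \times_Y Y^I$ as the pullback of $f$ along $d_0 \colon Y^I \to Y$, with projections $\pi_1 \colon P_f \to X$ and $\pi_2 \colon P_f \to Y^I$. Using the universal property of the pullback, set $i \colon X \to P_f$ to be the unique map with $\pi_1 i = \id_X$ and $\pi_2 i = sf$ (consistent since $d_0 s f = f$), and put $p := d_1 \pi_2 \colon P_f \to Y$; then $pi = d_1 s f = f$ is immediate. To see $p$ is a fibration, rewrite $P_f$ as the pullback of the path-object fibration $(d_0, d_1) \colon Y^I \to Y \times Y$ along $f \times \id_Y \colon X \times Y \to Y \times Y$, so the projection $P_f \to X \times Y$ is a fibration; composing with the projection $X \times Y \to Y$, which is a fibration since $X \to \ast$ is (all objects being fibrant), exhibits $p$ as a composite of fibrations.

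For the strong deformation retract structure on $i$, take $r := \pi_1$, so $ri = \id_X$. For the path object of $P_f$, use $P_f^I = \Delta^1 \pitchfork P_f$; since cotensor with $\Delta^1$ preserves limits (Theorem \ref{tensorcotensor}), this is naturally isomorphic to $X^I \times_{Y^I} (Y^I)^I$. Define $h \colon P_f \to P_f^I$ componentwise: the $X^I$-component is $s \pi_1$, and the $(Y^I)^I$-component is $m^\ast \pi_2$, where $m^\ast \colon Y^I \to (Y^I)^I$ is the cotensor of the monotone simplicial map $m \colon \Delta^1 \times \Delta^1 \to \Delta^1$ sending $(a,b) \mapsto \min(a,b)$. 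Compatibility of these two components over $Y^I$ (so that $h$ genuinely lands in the claimed pullback) holds by naturality of $s$ and $d_0$.

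Verifying the required identities is then routine cosimplicial bookkeeping: $m \circ (\delta_0 \times \id_{\Delta^1})$ is the constant map at $0 \in \Delta^1$, whence $d_0 h = ir$ after unwinding (the $X$-coordinate gives $d_0 s \pi_1 = \pi_1$ and the $Y^I$-coordinate gives $s d_0 \pi_2 = sf\pi_1$); $m \circ (\delta_1 \times \id_{\Delta^1}) = \id_{\Delta^1}$, whence $d_1 h = \id_{P_f}$; and the compatibility square $h \circ i = i^I \circ s_X$ reduces to the observation that the two maps $Y \to (Y^I)^I$ obtained as $s_Y^I \circ s_Y$ and $m^\ast \circ s_Y$ both correspond to cotensoring with the unique simplicial map $\Delta^1 \times \Delta^1 \to \Delta^0$. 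Functoriality of the factorisation in $f$ is immediate since every ingredient ($P_f$, $i$, $p$, $r$, $h$) is a universal or functorial construction. The principal obstacle is simply identifying the correct contracting simplicial map $m$ and keeping track of the cotensor adjunctions; once this is in place, all identities fall out by naturality and the cosimplicial identities of $\Delta^1 \times \Delta^1$.
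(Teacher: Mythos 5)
Your construction is correct and is essentially the paper's own route: the proof in the paper simply invokes Brown's mapping-cocylinder factorisation for the fibration part and leaves the right homotopy between $ir$ and $\id_{X \times_Y Y^I}$ to the reader ``by analogy to the topological case'', and that homotopy is exactly the min-map homotopy $m^\ast$ you write down, with the same verification that $p$ is a composite of pullbacks of the fibrations $(d_0,d_1)\colon Y^I \to Y \times Y$ and $X \to \ast$. The only caveats are minor: your homotopy uses the specific functorial path object $\Delta^1 \pitchfork -$, so strictly you prove the lemma in the simplicially enriched setting $\Kan\C$ where it is applied (and where the notion of strong deformation retract, involving $f^I$, is actually defined), rather than for an abstract category of fibrant objects; and the identification of the correct contracting map depends on the $\delta_0/\delta_1$ vertex convention (min versus max), together with the simplicial identity $m \circ (\id_{\Delta^1} \times \delta_0) = \mathrm{const}$ needed to see that $h$ lands in the pullback, which is slightly more than ``naturality of $s$ and $d_0$'' but is covered by the cosimplicial bookkeeping you indicate.
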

\begin{proof}
The factorisation is shown in \cite[p.421]{Brown}. Note that functoriality is not mentioned in the reference, but is clear from the construction. The proof given there shows that $p$ is a fibration and that $i$ has a right inverse $r$ which is a trivial fibration. Now $X \times_Y Y^I$ is the mapping cocylinder of $f$, and the required right homotopy between $ir$ and $\id_{X \times_Y Y^I}$ can be constructed by analogy to the topological case; this is left to the reader.
\end{proof}

The last tool we need to introduce for the main result is Kan's $\Ex$-functor $s\C \to s\C$, given by $\Ex(X)_n = \{\sd \Delta^n,X\}$. There is a canonical map $\sd \Delta^n \to \Delta^n$ described in \cite{Low} inducing a map $X \to \Ex(X)$ which we will refer to as $\varepsilon_X$, and we define $\Ex^\lambda$ to be the functor given by letting $\Ex^\lambda(X)$ be the composition of the transfinite sequence $$X \to \Ex(X) \to \Ex^2(X) \to \cdots$$ over all ordinals $< \lambda$. That is, if $\lambda$ is a successor, let $\Ex^\lambda(X)=\Ex(\Ex^{\lambda-1}(X))$; if $\lambda$ is a limit, let $\Ex^\lambda(X)=\colim_{\mu < \lambda} \Ex^\mu(X)$.

The following lemma is well-known in the case $\lambda = \omega$; nothing changes for larger limit ordinals.

\begin{lem}
\label{Ex}
Let $S$ be a simplicial set, and $\lambda$ a limit ordinal.
\begin{enumerate}[(i)]
\item $S \to \Ex(S)$ is a trivial cofibration.
\item $S \to \Ex^\lambda(S)$ is a trivial cofibration.
\item $\Ex^\lambda(S)$ is fibrant.
\end{enumerate}
\end{lem}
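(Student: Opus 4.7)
The plan is to reduce all three claims to the classical $\lambda = \omega$ case (treated, for example, in \cite[Section III.4]{GJ}) and then extend by transfinite induction.

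For (i), the map $S \to \Ex(S)$ is induced by precomposition with the canonical subdivision map $c : \sd \Delta^n \to \Delta^n$ of \cite{Low}. Since $c$ hits every non-degenerate simplex of $\Delta^n$ (via a last-vertex assignment on strict chains), and since simplicial maps commute with degeneracies, $c$ is surjective in every degree and hence an epimorphism of simplicial sets; this forces precomposition to be injective in each degree, so $S \to \Ex(S)$ is a monomorphism, i.e., a cofibration in $sSet$. That it is a weak equivalence is standard: $|c|$ is a homeomorphism, and one uses this to exhibit a natural homotopy inverse for $|S| \to |\Ex(S)|$.

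For (ii), I would argue by transfinite induction on $\mu \leq \lambda$. At a successor stage $\mu = \nu + 1$, the map $\Ex^\nu(S) \to \Ex^{\nu+1}(S) = \Ex(\Ex^\nu(S))$ is a trivial cofibration by (i) applied to $\Ex^\nu(S)$, and composing with the inductively given $S \to \Ex^\nu(S)$ produces the required map. At a limit stage, $\Ex^\mu(S) = \colim_{\nu < \mu}\Ex^\nu(S)$ is a transfinite composition of trivial cofibrations, and trivial cofibrations in $sSet$ are closed under transfinite composition (standard since $sSet$ is cofibrantly generated, or directly: monomorphisms are stable under filtered colimits of sets, and weak equivalences are stable under filtered colimits because geometric realization preserves them).

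For (iii), let $f : \Lambda^n_k \to \Ex^\lambda(S)$. Since $\Lambda^n_k$ is finite and hence a compact object of $sSet$, and $\Ex^\lambda(S) = \colim_{\mu<\lambda}\Ex^\mu(S)$ is a filtered colimit, $f$ factors through some $\Ex^\mu(S)$ with $\mu < \lambda$. The classical Kan lemma (the $\omega$-case of (iii)) then provides a lift $\Delta^n \to \Ex^{\mu+N}(S)$ extending $f$ for some small fixed $N$; since $\mu + N < \lambda$, composing with the transition map $\Ex^{\mu+N}(S) \to \Ex^\lambda(S)$ gives the desired filler, so $\Ex^\lambda(S)$ is Kan. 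The real content here is the $\omega$-case, which rests on Kan's combinatorial observation that subdivided horns are retracts of subdivided simplices in the appropriate sense; this is the only non-formal ingredient and I would simply cite it.
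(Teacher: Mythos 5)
Your proposal is correct and takes essentially the same route as the paper: injectivity plus the standard weak-equivalence result for $S \to \Ex(S)$ (the paper simply cites \cite[Theorem III.4.6]{GJ}), closure of trivial cofibrations under transfinite composition for (ii), and for (iii) the factorisation of $\Lambda^n_k \to \Ex^\lambda(S)$ through some stage $\mu < \lambda$ by finiteness followed by a finite-step extension (the paper uses \cite[Lemma III.4.7]{GJ}, so your ``small fixed $N$'' can be taken to be $1$), with the limit-ordinal hypothesis ensuring the extension still lands below $\lambda$.
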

\begin{proof}
\begin{enumerate}[(i)]
\item \cite[Theorem III.4.6]{GJ} says the map is a weak equivalence, and all injective maps are cofibrations in $sSet$.
\item Trivial cofibrations are closed under transfinite compositions.
\item A map to $\Ex^\lambda(S)$ from the finite simplicial set $\Lambda^n_k$ factors through $\Ex^\mu(S)$ for some $\mu < \lambda$, by the usual smallness argument. By \cite[Lemma III.4.7]{GJ} this map extends to a diagram
\[
\xymatrix{\Lambda^n_k \ar[r] \ar[d] & \Ex^\mu(S) \ar[d] \\
\Delta^n \ar[r] & \Ex^{\mu+1}(S),}
\]
so the composite of the bottom row with the canonical map $\Ex^{\mu+1}(S) \to \Ex^\lambda(S)$  gives the required lifting in
\[
\xymatrix{\Lambda^n_k \ar[r] \ar[d] & \Ex^\lambda(S) \ar[d] \\
\Delta^n \ar[r] & \ast.}
\]
\end{enumerate}
\end{proof}

The proof of the next proposition is given by exactly the same argument as \cite[Proposition II.4.3]{Quillen}. It is worth noting, for later, exactly what assumptions are needed to make the result work: it is much more general than the statement given in \cite{Quillen}.

\begin{prop}
\label{factorisation}
Suppose we have a class $\M$ of morphisms in $\C$ containing all isomorphisms, closed under composition, such that $gf \in \M$ implies $g \in \M$. Suppose for every $X \in \C$ there is a $P \in \proj$ and a map $P \to X$ in $\M$. Then there is a factorisation of any map $f:Y \to Z$ in $s\C$ into a map $i: Y \to W$ in $I$-cell followed by a map $p: W \to Z$ such that the induced map $\{\Delta^n,W\} \to \{\partial\Delta^n,W\} \times_{\{\partial\Delta^n,Z\}} \{\Delta^n,Z\}$ is in $\M$ for all $n$. If the choice of map $P \to X$ is functorial in $X$, the factorisation is functorial too.
\end{prop}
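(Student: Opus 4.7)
The plan is to mimic Quillen's classical ``filling'' argument, building $W$ by transfinite induction on simplicial dimension. Set $W_{(-1)} = Y$ with the given map to $Z$, and suppose inductively at stage $n \geq 0$ that we have constructed $W_{(n-1)}$ equipped with a map $p_{n-1}:W_{(n-1)} \to Z$ extending $f$, such that $Y \to W_{(n-1)}$ lies in $I$-cell and the matching map $\{\Delta^k,W_{(n-1)}\} \to \{\partial\Delta^k,W_{(n-1)}\} \times_{\{\partial\Delta^k,Z\}} \{\Delta^k,Z\}$ is in $\M$ for every $k < n$. Form the matching object $A_n = \{\partial\Delta^n,W_{(n-1)}\} \times_{\{\partial\Delta^n,Z\}} \{\Delta^n,Z\}$, and by the hypothesis on $(\proj,\M)$ choose $P_n \in \proj$ together with a map $\pi_n: P_n \to A_n$ in $\M$ (functorially in the data, if such choices are functorial).

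For the inductive step, the tensor-cotensor adjunction of Theorem~\ref{tensorcotensor} turns $\pi_n$ into a commuting square
\[
\xymatrix{\partial\Delta^n \cdot P_n \ar[r] \ar[d] & W_{(n-1)} \ar[d]^{p_{n-1}} \\ \Delta^n \cdot P_n \ar[r] & Z,}
\]
and I would define $W_{(n)}$ to be the pushout of $W_{(n-1)} \leftarrow \partial\Delta^n \cdot P_n \to \Delta^n \cdot P_n$, with the induced map $p_n: W_{(n)} \to Z$. The inclusion $W_{(n-1)} \to W_{(n)}$ is then a single pushout of a generator of $I$, hence in $I$-cell. Finally take $W = \colim_n W_{(n)}$, which is the transfinite composition of maps in $I$-cell and therefore in $I$-cell.

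Two verifications make the induction go through. First, the map $\partial\Delta^n \to \Delta^n$ of simplicial sets is an isomorphism in dimensions $< n$, so by inspection of the pushout formula, $W_{(n)}$ agrees with $W_{(n-1)}$ in dimensions $< n$; hence the matching maps in dimensions $< n$ are preserved and remain in $\M$, and the matching object $A_n$ itself stabilises at $W_{(n-1)}$ and is unchanged at all further stages. Second, in dimension $n$ the pushout gives $(W_{(n)})_n \cong (W_{(n-1)})_n \sqcup P_n$, and the new matching map $m: (W_{(n-1)})_n \sqcup P_n \to A_n$ restricts on the $P_n$-summand to $\pi_n$; thus the coproduct injection $P_n \hookrightarrow (W_{(n-1)})_n \sqcup P_n$ composed with $m$ equals $\pi_n \in \M$, and the hypothesis that $gf \in \M$ implies $g \in \M$ forces $m \in \M$. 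Since dimension $n$ stabilises at $W_{(n)}$, the matching map at dimension $n$ in the colimit $W$ also lies in $\M$.

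Functoriality is clear once every choice (of $P_n$, of $\pi_n$, of pushout) is made functorially, since colimits and matching objects are themselves functorial. The main obstacle I anticipate is purely bookkeeping: one must verify cleanly that attaching cells via $\partial\Delta^n \cdot P_n \to \Delta^n \cdot P_n$ neither alters $W$ in dimensions $< n$ nor affects matching data in dimension $n$ beyond adding the $P_n$-summand. This is a formal consequence of the skeleton/coskeleton structure on $s\C$ translated through the tensor-cotensor formulas of Theorem~\ref{tensorcotensor}, but it is the only place where one must be careful; everything else is a direct application of the cancellation hypothesis on $\M$.
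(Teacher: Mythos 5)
Your proposal is correct and is essentially the paper's own proof: the paper simply invokes ``the same argument as Quillen's Proposition II.4.3'', which is exactly your skeletal induction --- attach $\partial\Delta^n \cdot P_n \to \Delta^n \cdot P_n$ along a chosen map $P_n \to A_n$ in $\M$, note that degrees $<n$ and the level-$n$ matching object stabilise, and use the cancellation property $gf \in \M \Rightarrow g \in \M$ on the coproduct decomposition $(W_{(n)})_n \cong (W_{(n-1)})_n \sqcup P_n$, with functoriality inherited from a functorial choice of $P_n \to A_n$.
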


We are particularly interested in the case when the objects of $\bar{\proj}$ are `small', in the appropriate sense. This is the justification for using $\bar{\proj}$ instead of $\proj$: to use the generalised small object argument of Proposition \ref{smallobject}, our objects must be uniformly small in the sense of satisfying (i) of the proposition, which need not hold for all of $\proj$. Let $\M_\Lambda$ be the class of canonical inclusion maps $X \to X \sqcup P$ in $\C$ with $P \in \bar{\proj}$. Similarly, let $\M_{\Ex}$ be the class of split monomorphisms in $\C$. Suppose there is some cardinal $\kappa$ such that every $P \in \bar{\proj}$ is $\kappa$-small relative to maps in $\M_\Lambda$, and $\kappa$-small relative to maps in $\M_{\Ex}$. Then we say $\bar{\proj}$ is a \emph{class of $\kappa$-small} (\emph{functorial}) \emph{generators} for $\C$. Of course, in a lot of situations, such as pointed categories, the maps $X \to X \sqcup P$ are canonically split, so $\M_\Lambda \subseteq \M_{\Ex}$ in this case.

For the next theorem, we follow the approach of Quillen in \cite[Theorem II.4.4]{Quillen}. Note that Quillen requires that his $\proj$-split maps are exactly the effective epimorphisms, and in (ii) that $\C$ has a (small) set of finite projective generators, so our result is more general; in particular it allows results like Example \ref{sU}.

\begin{thm}
\label{modelcat}
Suppose $\C$ has enough functorial projectives. Let the classes of weak equivalences and fibrations in $s\C$ be as defined above, and let the cofibrations be retracts of maps in $I$-cell. Then $s\C$ is a right-proper simplicial model category if $\C$ satisfies at least one of:
\begin{enumerate}[(i)]
\item every object in $s\C$ is fibrant;
\item $\bar{\proj}$ is a class of $\kappa$-small functorial generators for $\C$.
\end{enumerate}
In the second case, $s\C$ is class-cofibrantly generated with $I$ as the generating cofibrations and $J$ as the generating trivial cofibrations.
\end{thm}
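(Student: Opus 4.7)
The plan is to apply Proposition \ref{cofibgen} in case (ii), where $\kappa$-smallness of $\bar{\proj}$ lets us run the generalised small object argument on both $I$ and $J$; case (i) is handled separately using Proposition \ref{factorisation} together with the category-of-fibrant-objects structure from Theorem \ref{Kan} and Lemma \ref{factor}. Right-properness is Lemma \ref{simpfib}(iii), the simplicial-model-category axiom is Lemma \ref{modelcotensor}, and two-out-of-three plus retract-closure for $W$ are Lemma \ref{simpfib}(i); so the substance lies in the factorisation and lifting axioms.

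For case (ii), I first verify that $I$ and $J$ permit the functorial generalised small object argument. Pushouts along the generators are built degreewise from the canonical inclusions in $\M_\Lambda$, so $\kappa$-smallness of $P \in \bar{\proj}$ relative to $\M_\Lambda$ implies $\kappa$-smallness of $\partial\Delta^n \cdot P$ and $\Lambda^n_k \cdot P$ relative to $I$-cell and $J$-cell respectively. The functorial $S(f)$ is constructed exactly as in Lemma \ref{IandJwork}: apply the functorial projective of $\C$ to the pullback $\{\partial\Delta^n, X\} \times_{\{\partial\Delta^n, Y\}} \{\Delta^n, Y\}$ and its $J$-analogue, and reassemble via the canonical evaluation and projection maps.

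The main obstacle is verifying condition (ii) of Proposition \ref{cofibgen}, namely $J$-cell $\subseteq W \cap I$-cof. The inclusion into $I$-cof is cheap: $\Lambda^n_k \hookrightarrow \Delta^n$ is a finite iterated pushout of boundary inclusions in $sSet$, and $(-) \cdot P$ is a left adjoint, so every generator of $J$ already lies in $I$-cell. For the weak-equivalence inclusion I would show that each generator $\Lambda^n_k \cdot P \to \Delta^n \cdot P$ is a simplicial strong deformation retract in $s\C$: the retraction and homotopy witnessing $\Lambda^n_k \hookrightarrow \Delta^n$ as such in $sSet$ transport through the tensor by $P$ via the natural isomorphism $\Delta^1 \cdot (K \cdot X) \cong (\Delta^1 \times K) \cdot X$. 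The $sSet$-enriched functor $\underline{s\C}_{sSet}(\disc Q, -)$ preserves $sSet$-cotensors by the hom-tensor adjunction and so preserves simplicial homotopies, sending each $J$-generator to a simplicial homotopy equivalence in $sSet$ for every $Q \in \bar{\proj}$; such maps are weak equivalences, so each element of $J$ is a weak equivalence. Closure under pushouts retains the deformation-retract data because $\Delta^1 \cdot (-)$ preserves colimits, and closure under transfinite composition follows from $\kappa$-smallness together with the argument of \cite[Lemma 2.4.8]{Hovey} applied inside $sSet$ after $\underline{s\C}_{sSet}(\disc Q, -)$. The remaining conditions of Proposition \ref{cofibgen} reduce, by Lemma \ref{simpfib}(ii) and the containment $J \subseteq I$-cell (giving $J$-inj $\supseteq I$-inj), to the identity $I$-inj $=$ trivial fibrations $= W \cap J$-inj.

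For case (i), where $s\C = \Kan\C$ is a category of fibrant objects, the small object argument for $J$ is unavailable, but both factorisations still follow. Proposition \ref{factorisation} with $\M$ the $\proj$-split maps yields the (cofibration, trivial fibration) factorisation, while the (trivial cofibration, fibration) factorisation is obtained by applying Lemma \ref{factor} to factor $f = p j$ with $p$ a fibration and $j$ a strong deformation retract, then refactoring $j$ through an $I$-cell trivial cofibration via Proposition \ref{factorisation} (using two-out-of-three). Lifting of cofibrations against trivial fibrations is immediate. For trivial cofibrations against fibrations I use the retract argument: factor the trivial cofibration $i$ via Lemma \ref{factor} as $i = p_1 j_1$ with $p_1$ a fibration and $j_1$ a strong deformation retract, note that $p_1$ is forced to be a trivial fibration by two-out-of-three, and use the previous lifting axiom to exhibit $i$ as a retract of $j_1$; the lift of $j_1$ against a fibration is then constructed by lifting its homotopy data along the trivial fibration $X^I \to X \times_Y Y^I$, which is such by Lemma \ref{modelcotensor}. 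The all-objects-fibrant hypothesis is essential for this path-object construction to produce honest lifts.
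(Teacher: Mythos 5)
Your reduction of case (ii) to $J\text{-cell} \subseteq W$ rests on a false claim: the horn inclusion $\Lambda^n_k \hookrightarrow \Delta^n$ is \emph{not} a simplicial strong deformation retract for $n \geq 2$ -- in fact there is no simplicial retraction $\Delta^n \to \Lambda^n_k$ at all, since such a retraction is an $n$-simplex of $\Lambda^n_k$ restricting to the identity on the $n$ distinct non-degenerate codimension-one faces of the horn, while every $n$-simplex of $\Lambda^n_k$ is degenerate and so has at most one non-degenerate face (already for $\Lambda^2_1 \subset \Delta^2$ one checks by hand that no such $2$-simplex exists). So there is no ``retraction and homotopy data'' to transport through $- \cdot P$, and you cannot conclude that $\underline{s\C}_{sSet}(\disc Q, -)$ sends the generators of $J$ to simplicial homotopy equivalences; nor can you argue directly that $\underline{s\C}_{sSet}(\disc Q, \Lambda^n_k\cdot P) \to \underline{s\C}_{sSet}(\disc Q,\Delta^n\cdot P)$ is anodyne, because $\C(Q,-)$ need not preserve the coproducts occurring in $(\Lambda^n_k)_m \cdot P$. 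This is exactly the point where the paper has to work: it proves $J\text{-cell} \subseteq W$ by showing that any map with the left lifting property against fibrations is a weak equivalence, using the fibrant replacement $\Ex^\lambda$. That argument needs $\underline{s\C}_{sSet}(\disc P, \Ex^\lambda(Y)) = \Ex^\lambda(\underline{s\C}_{sSet}(\disc P, Y))$, which is where the $\kappa$-smallness of $\bar{\proj}$ relative to the split monomorphisms $\M_{\Ex}$ enters (the maps $Y_n \to \Ex(Y)_n$ are split monic). Your proposal never uses the $\M_{\Ex}$ half of hypothesis (ii), which is a sign of the missing ingredient; without a fibrant-replacement device of this kind (or some genuinely new idea) the weak-equivalence part of $J\text{-cell} \subseteq W \cap I\text{-cof}$ is unproven.

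Case (i) essentially follows the paper's argument and is sound, with one repairable slip: after exhibiting the trivial cofibration $i$ as a retract of the strong deformation retract $j_1$ from Lemma \ref{factor}, you propose to lift $j_1$ itself against a fibration $g$ by lifting its homotopy data through $(d_0,g^I)\colon W^I \to W\times_Z Z^I$. That square only has a lift if the left-hand map has the left lifting property against the trivial fibration $(d_0,g^I)$, i.e.\ is a cofibration -- and $j_1$ is not known to be one. The paper instead deduces that $i$ itself is a strong deformation retract (retracts of strong deformation retracts are such) and runs the homotopy-lifting argument with $i$, which \emph{is} a cofibration, on the left; you should do the same. The remaining ingredients you cite (Proposition \ref{cofibgen}, Lemma \ref{simpfib}, Lemma \ref{modelcotensor}, Proposition \ref{factorisation} with $\M$ the $\proj$-split maps, and the smallness of the domains of $I$ and $J$ via $\M_\Lambda$) are used as in the paper.
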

\begin{proof}
We start by presenting the elements of the argument that apply to both cases, before giving the case-specific parts.

Every map factors functorially into a cofibration in $I$-cell followed by a trivial fibration, by Proposition \ref{factorisation}: take $\M$ to be the $\proj$-split epimorphisms.

Therefore, by \cite[Lemma 1.1.9]{Hovey}, the cofibrations are exactly the maps with the left lifting property with respect to the trivial fibrations.
\begin{enumerate}[(i)]
\item Here we are in the situation of Theorem \ref{Kan}, so we can apply Lemma \ref{factor}. So every map $f: A \to B$ factors functorially as $$A \xrightarrow{i} A \times_B \Delta^1 \pitchfork B \xrightarrow{p} B$$ where $i$ is a weak equivalence and $p$ is a fibration. Now factor $i$ functorially, as above, as $qj$ where $j$ is a cofibration and $q$ is a trivial fibration. By two-out-of-three, $j$ is a weak equivalence, so $f = (pq)j$ gives a functorial factorisation into a trivial cofibration followed by a fibration.

It remains to show that the trivial cofibrations are exactly the maps with the left lifting property with respect to the fibrations. If $f$ has the left lifting property with respect to fibrations, it is a cofibration; factoring $f=pi$ as before, by \cite[Lemma 1.1.9]{Hovey} again, $f$ is a retract of $i$, so it is a weak equivalence too, and hence a trivial cofibration.

Conversely, if $f$ is a trivial cofibration, $p$ is a trivial fibration, so $f$ is a retract of $i$, and hence a strong deformation retract. Write $r$ for the retraction $Y \to X$ and $h$ for the right homotopy making $f$ a strong deformation retract. Then use \cite[Lemma II.3.4]{Quillen}: given a fibration $g: W \to Z$, a lifting $u: Y \to W$ in the square
\[
\xymatrix{X \ar[r]^{\alpha} \ar[d]^{f} & W \ar[d]^{g} \\
Y \ar[r]^{\beta} & Z}
\]
is given by taking a lifting $H: Y \to W^I$ in
\[
\xymatrix{X \ar[r]^{s\alpha} \ar[d]^{f} & W^I \ar[d]^{(d_0,g^I)} \\
Y \ar[r]^{(\alpha r,h)} & W \times_{Z} Z^I,}
\]
which exists because $(d_0,g^I)$ is a fibration by Lemma \ref{modelcotensor}, and setting $u=d_1H$.

\item Thanks to Proposition \ref{cofibgen} and Lemma \ref{simpfib}, for $s\C$ to be a model category we just need to show that $J$ permits the functorial generalised small object argument and that $J$-cell $\subseteq I$-cof $\cap W$.

The idea for the functorial generalised small object argument is the same as Lemma \ref{IandJwork}.

Note that if $f$ is a pushout of a map in $J$, $f_n$ is in $\M_\Lambda$ for each $n$. If every $P \in \bar{\proj}$ is $\kappa$-small with respect to maps in $\M_\Lambda$ and $(f_\mu)_{\mu < \lambda}$ is a transfinite sequence of pushouts of maps in $J$ with $\lambda$ having cofinality greater than $\kappa$,
\begin{align*}
\underline{s\C}_{sSet}(\disc P,\colim_{\mu < \lambda} f_\mu)_m &= \C(P,(\colim_{\mu < \lambda} f_\mu)_m) \\
&= \C(P,\colim_{\mu < \lambda} (f_\mu)_m) \\
&= \colim_{\mu < \lambda} \C(P,(f_\mu)_m) \\
&= \colim_{\mu < \lambda} \underline{s\C}_{sSet}(\disc P,f_\mu)_m.
\end{align*}
Hence
\begin{align*}
s\C(\Lambda^n_k \cdot P,\colim_{\mu < \lambda} f_\mu) &= sSet(\Lambda^n_k,\underline{s\C}_{sSet}(\disc P,\colim_{\mu < \lambda} f_\mu)) \\
&= sSet(\Lambda^n_k,\colim_{\mu < \lambda} \underline{s\C}_{sSet}(\disc P,f_\mu)) \\
&= \colim_{\mu < \lambda} sSet(\Lambda^n_k,\underline{s\C}_{sSet}(\disc P,f_\mu)) \\
&= \colim_{\mu < \lambda} s\C(\Lambda^n_k \cdot P,f_\mu),
\end{align*}
because $\Lambda^n_k$ is a finite simplicial set. So $\Lambda^n_k \cdot P$ is $\kappa$-small with respect to pushouts of maps in $J$.

Now suppose $f: X \to Y$ is a map in $s\C$. Consider the pullback $$A_{n,k} = \{\Lambda^n_k,X\} \times_{\{\Lambda^n_k,Y\}} \{\Delta_n,Y\},$$ and write $B_{n,k} \xrightarrow{\proj} A_{n,k}$ for the functorially assigned $\proj$-split map with $B_{n,k} \in \proj$. Finally, define $S(f)$ to be the coproduct over $n,k$ of the maps $\Lambda^n_k \cdot B_{n,k} \to \Delta_n \cdot B_{n,k}$. Then $S(f)$ is in $J$-cell by \cite[Lemma 2.1.13]{Hovey}, and is functorially defined.

Define $e_1$ to be the composite $$\coprod_{n,k} \Lambda^n_k \cdot B_{n,k} \to \coprod_{n,k} \Lambda^n_k \cdot A_{n,k} \to \coprod_{n,k} \Lambda^n_k \cdot \{\Lambda^n_k,X\} \to X,$$ where the first two maps are the canonical ones on the second factor and the identity on the first factor, and the third map comes from the identity on $\{\Lambda^n_k,X\}$, via the adjunction $$s\C(\Lambda^n_k \cdot \{\Lambda^n_k,X\},X) \cong \C(\{\Lambda^n_k,X\},\{\Lambda^n_k,X\}).$$ Similarly, define $e_2$ to be the composite $$\coprod_{n,k} \Delta_n \cdot B_{n,k} \to \coprod_{n,k} \Delta_n \cdot A_{n,k} \to \coprod_{n,k} \Delta_n \cdot \{\Delta_n,Y\} \to Y.$$ Then the square
\[
\xymatrix{\coprod_{n,k} \Lambda^n_k \cdot B_{n,k} \ar[r]^{e_1} \ar[d] & X \ar[d]^{f} \\ \coprod_{n,k} \Delta_n \times B_{n,k} \ar[r]^{e_2} & Y}
\]
commutes, as required, giving the morphism of maps $t_f$. This is natural by construction.

There is a natural bijection between commutative squares of the form
\[
\xymatrix{\Lambda^n_k \cdot P \ar[r] \ar[d] & X \ar[d]^{f} \\ \Delta_n \cdot P \ar[r] & Y,}
\]
$P \in \bar{\proj}$, and elements of the set $$s\C(\Lambda^n_k \cdot P,X) \times_{s\C(\Lambda^n_k \cdot P,Y)} s\C(\Delta_n \cdot P,Y);$$ this set is naturally isomorphic to $$\C(P,\{\Lambda^n_k,X\}) \times_{\{\Lambda^n_k,Y\}} \{\Delta_n,Y\}) = \C(P,A_{n,k}).$$ So each such commutative square identifies a map $P \to A_{n,k}$, which lifts to $g: P \to B_{n,k}$, giving a diagram
\[
\xymatrix{\Lambda^n_k \cdot P \ar[r] \ar[d] & \Lambda^n_k \cdot B_{n,k} \ar[r] \ar[d] & \coprod_n \Lambda^n_k \cdot B_{n,k} \ar[r]^{e_1} \ar[d] & X \ar[d]^{f} \\ \Delta_n \cdot P \ar[r] & \Delta_n \cdot B_{n,k} \ar[r] & \coprod_n \Delta_n \cdot B_{n,k} \ar[r]^{e_2} & Y.}
\]
This diagram commutes and gives the required factorisation, so $J$ permits the small object argument.

A map of simplicial sets of the form $\Lambda^n_k \to \Delta^n$ may be written as a finite composition of pushouts by maps of the form $\partial \Delta^m \to \Delta^m$; indeed, any injective map of simplicial sets is a relative $I$-cell complex in the standard model structure, by \cite[Proposition 3.2.2]{Hovey}. Tensoring with $P \in \bar{\proj}$, we find that maps in $J$ are in $I$-cell. Hence $J$-cell $\subseteq I$-cell $\subseteq I$-cof.

To see $J$-cell $\subseteq W$, we follow \cite[Section II.4]{Quillen}. First, for any $X \in \C$ and $Y \in s\C$, $\underline{s\C}_{sSet}(\disc X,\Ex(Y)) = \Ex(\underline{s\C}_{sSet}(\disc X,Y)$:
\begin{align*}
\underline{s\C}_{sSet}(\disc X,\Ex(Y))_n &= \C(X,(\Ex(Y)_n) \\
&= \{\sd \Delta^n,\underline{s\C}_{sSet}(\disc X,Y)\} \\
&= \Ex(\underline{s\C}_{sSet}(\disc X,Y)_n.
\end{align*}
Note that the canonical map $\sd\Delta^n \to \Delta^n$ has a section, by the remarks at the beginning of \cite[Section 2]{Low}, so the map $Y_n \to \Ex(Y)_n = \{\sd\Delta^n,Y\}$ is split monic for each $n$. Now, since every $P \in \bar{\proj}$ is $\kappa$-small with respect to maps in $\M_{\Ex}$, we get $$\underline{s\C}_{sSet}(\disc P,\Ex^\lambda(Y)) = \Ex^\lambda(\underline{s\C}_{sSet}(\disc P,Y))$$ for any $\lambda$ with cofinality greater than $\kappa$. Thanks to Lemma \ref{Ex}, it follows that $\Ex^\lambda(Y)$ is fibrant and $Y \to \Ex^\lambda(Y)$ is a weak equivalence.

Suppose $f: X \to Y$ is in $J$-cell, so it has the left lifting property with respect to fibrations. Then we may construct a lift $u:Y \to \Ex^\lambda(X)$ in
\[
\xymatrix{X \ar[r]^{\varepsilon_X} \ar[d]^{f} & \Ex^\lambda(X) \ar[d] \\
Y \ar[r] & \ast,}
\]
and a lift $H: B \to \Delta^1 \pitchfork \Ex^\lambda(B)$ in
\[
\xymatrix{X \ar[r]^{s\varepsilon_Yf} \ar[d]^{f} & \Delta^1 \pitchfork \Ex^\lambda(Y) \ar[d]^{(d_0,d_1)} \\
Y \ar[r]^{(\varepsilon_Y,\Ex^\lambda(f)u)} & \Ex^\lambda(Y) \times \Ex^\lambda(Y),}
\]
to get $uf = \varepsilon_X$, $\Ex^\lambda(f)u \sim \varepsilon_Y$ and $\varepsilon_Yf = \Ex^\lambda(f)\varepsilon_X$. Hence, writing $\gamma$ for the canonical functor $sSet \to Ho(sSet)$ to the homotopy category, $\gamma\underline{s\C}_{sSet}(\disc P,u)$ is an isomorphism for all $P \in \bar{\proj}$, so $\underline{s\C}_{sSet}(\disc P,u)$ is a weak equivalence for all $P$ (because $sSet$ is a model category), so $u$ is a weak equivalence. Therefore $f$ is a weak equivalence.
\end{enumerate}
That the resulting model structure in both these cases is right-proper is immediate from the definitions. It is simplicial by \cite[Proposition III.2.3]{Quillen}, because it satisfies Quillen's SM7(a) axiom by Lemma \ref{modelcotensor}.
\end{proof}

\begin{rem}
If we require that $\bar{\proj}$ generates $\C$, but not functorially, then we still get a model structure on $s\C$, as long as we drop the requirement that the factorisations be functorial.
\end{rem}

We will abuse notation again by writing $(s\C,\bar{\proj})$ for the category $s\C$ equipped with the model structure arising from the class $\bar{\proj}$ of objects in $\C$, via Theorem \ref{modelcat}.

\begin{prop}
\label{monoidal}
Suppose the closed symmetric monoidal category $(\V,\bar{\proj})$ satisfies the conditions of Theorem \ref{modelcat}. Suppose the monoidal identity $1$ is in $\bar{\proj}$, and that, for $P,Q \in \bar{\proj}$, we have $P \otimes Q \in \proj$. Then $s\V$, with the model structure described above, is a monoidal model category, in the sense of \cite[Definition 4.2.6]{Hovey}.
\end{prop}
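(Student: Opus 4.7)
The plan is to verify the pushout product axiom and the unit axiom of \cite[Definition 4.2.6]{Hovey}. By an analogue of \cite[Corollary 4.2.5]{Hovey} for class-cofibrantly generated model structures (case (ii) of Theorem \ref{modelcat}), the pushout product axiom reduces to checking it on pairs of maps taken from $I$ and $J$; the unit axiom will follow from the cofibrancy of $\disc 1$.

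The key preliminary computation is that for any simplicial sets $S,T$ and objects $P,Q \in \V$,
\[
(S \cdot P) \otimes (T \cdot Q) \cong (S \times T) \cdot (P \otimes Q)
\]
naturally in $s\V$. This follows from the level-wise formula $(U \otimes V)_n = U_n \otimes V_n$ of Proposition \ref{simpclosedmonoidal} together with the fact that $\otimes$ in $\V$ preserves coproducts separately in each variable. As a consequence, the functor $- \cdot R : sSet \to s\V$ is a left adjoint (to the obvious hom construction) for any $R \in \V$, so it preserves colimits and retracts. Applied to generators, the pushout product of $\partial\Delta^n \cdot P \to \Delta^n \cdot P$ and $\partial\Delta^m \cdot Q \to \Delta^m \cdot Q$ is therefore $K \cdot (P \otimes Q)$, where $K$ denotes the standard pushout product $(\partial\Delta^n \times \Delta^m) \sqcup_{\partial\Delta^n \times \partial\Delta^m} (\Delta^n \times \partial\Delta^m) \to \Delta^n \times \Delta^m$ in $sSet$; the trivial cofibration case is identical with one boundary inclusion replaced by a horn inclusion. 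By the standard monoidal model structure on $sSet$, each such $K$ is a (trivial) cofibration of simplicial sets.

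The main work is then to show: if $K: A \to B$ is a (trivial) cofibration in $sSet$ and $R \in \proj$, then $K \cdot R$ is a (trivial) cofibration in $s\V$. For $R \in \bar{\proj}$, the images of the generating maps $\partial\Delta^k \to \Delta^k$ and $\Lambda^k_l \to \Delta^k$ under $- \cdot R$ are precisely generating cofibrations in $I$ and $J$; since every (trivial) cofibration of simplicial sets is a retract of a relative cell complex (\cite[Proposition 3.2.2]{Hovey}) and $- \cdot R$ preserves colimits and retracts, the result follows in this case. For general $R \in \proj$, write $R$ as a retract of $\coprod_i R_i$ with $R_i \in \bar{\proj}$; then $K \cdot R$ is a retract of $\coprod_i (K \cdot R_i)$, and (trivial) cofibrations are closed under coproducts and retracts. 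I expect the main obstacle here to be a bookkeeping nuisance rather than a conceptual difficulty: one must be careful that the assumption $P \otimes Q \in \proj$ (rather than merely $\bar{\proj}$) is exactly what is needed for the retract-of-coproduct step to land in $\proj$.

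Finally, for the unit axiom: since $1 \in \bar{\proj}$ by hypothesis, the initial map $\emptyset = \partial\Delta^0 \cdot 1 \to \Delta^0 \cdot 1 = \disc 1$ is a generating cofibration in $I$. Hence the monoidal unit $\disc 1$ (which is the unit of the tensor product on $s\V$ by Proposition \ref{simpclosedmonoidal}, as $\disc$ is strong monoidal) is cofibrant, and the unit axiom is automatic.
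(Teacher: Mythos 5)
Your proposal is correct, but it organises the verification differently from the paper. The paper works with the adjoint (pullback-hom) form of the pushout-product axiom: applying $\underline{s\V}_{sSet}(\disc P,-)$ for $P \in \bar{\proj}$ and using the fact that $s\V$ is a \emph{simplicial} model category (SM7, from Theorem \ref{modelcat}), it reduces the whole axiom to showing that $\disc P \otimes f$ is a (trivial) cofibration for $f$ a generating (trivial) cofibration, which is then the computation $\disc P \otimes (S \cdot Q) \cong S \cdot (P \otimes Q)$ together with the hypothesis $P \otimes Q \in \proj$. You instead perform the two-variable reduction to pairs of generators directly (the class analogue of \cite[Corollary 4.2.5]{Hovey}, which is legitimate here since the argument is purely formal: it only uses that cofibrations are by definition retracts of maps in $I$-cell, that trivial fibrations are $I$-inj, fibrations are $J$-inj, and trivial cofibrations are the maps with the left lifting property against fibrations), compute the pushout product of two generators as $K \cdot (P \otimes Q)$ with $K$ the pushout product in $sSet$, and then import the monoidal model structure on $sSet$ together with colimit/retract preservation of $- \cdot R$ and the retract-of-coproduct description of $\proj$. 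Both arguments turn on the same isomorphism $(S \cdot P) \otimes (T \cdot Q) \cong (S \times T)\cdot(P \otimes Q)$ (the paper only needs the case $S = \Delta^0$) and on handling $P \otimes Q \in \proj$ via retracts of coproducts of $\bar{\proj}$; what the paper's route buys is that it never needs the $sSet$ pushout-product combinatorics or the generator-generator reduction, leaning instead on the already-established simplicial structure, while your route is independent of the simplicial enrichment and self-contained modulo standard facts about $sSet$. One small remark: your parenthetical restriction to case (ii) of Theorem \ref{modelcat} is unnecessary — the lifting characterisations just listed hold in case (i) as well, so your reduction to generators covers both cases of the hypothesis, as it must since the proposition does not specify which condition of Theorem \ref{modelcat} is satisfied. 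Your treatment of the unit axiom (cofibrancy of $\disc 1$ via the generating cofibration $\partial\Delta^0 \cdot 1 \to \Delta^0 \cdot 1$) matches the paper's.
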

\begin{proof}
The monoidal identity in $s\V$ is $\disc 1 = \Delta^0 \pitchfork 1$, so it is cofibrant. So we need to show, for a cofibration $f:W \to X$ and a fibration $g:Y \to Z$, that the induced map $$\underline{s\V}_{s\V}(X,Y) \to \underline{s\V}_{s\V}(X,Z) \times_{\underline{s\V}_{s\V}(W,Z)} \underline{s\V}_{s\V}(W,Y)$$ is a fibration that is trivial if $f$ or $g$ is; equivalently, that
\begin{align*}
&s\V(\disc P \otimes X,Y) \cong s\V(\disc P,\underline{s\V}_{s\V}(X,Y)) \\
\to &s\V(\disc P,\underline{s\V}_{s\V}(X,Z)) \times_{s\V(\disc P,\underline{s\V}_{s\V}(W,Z))} s\V(\disc P,\underline{s\V}_{s\V}(W,Y)) \\
\cong &s\V(\disc P \otimes X,Z)) \times_{s\V(\disc P \otimes W,Z))} s\V(\disc P \otimes W,Y))
\end{align*}
is a fibration, that is trivial if $f$ or $g$ is, for all $P \in \bar{\proj}$: since $s\V$ is a simplicial model category, for this it suffices to show $\disc P \otimes f$ is a cofibration that is trivial if $f$ is.

Since $\disc P \otimes -$ commutes with retracts and colimits (because it is a left adjoint), we may reduce to the case where $f$ is in $I$. Then $$\partial \Delta^n \cdot (P \otimes Q) \cong \disc P \otimes (\partial \Delta^n \cdot Q) \to \disc P \otimes (\Delta^n \cdot Q) \cong \Delta^n \cdot (P \otimes Q)$$ is in $I$-cell, by the hypothesis. Argue similarly using $J$ for the case where $f$ is trivial.
\end{proof}

Similarly, \cite[Definition 4.2.18]{Hovey} describes $\V$-model categories, and similarly one can show:

\begin{prop}
\label{Vmod}
Suppose $(\V,\bar{\proj})$, and the tensored and cotensored $\V$-category $(\C,\bar{\Q})$, satisfy the conditions of Theorem \ref{modelcat}. Suppose the monoidal identity $1$ is in $\bar{\proj}$, and that, for $P \in \bar{\proj}, Q \in \bar{\Q}$, we have $P \odot Q \in \Q$. Then $s\C$, with the model structure described above, is an $s\V$-model category.
\end{prop}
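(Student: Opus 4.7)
The plan is to follow the proof of Proposition \ref{monoidal} very closely, with $\disc P \odot -\colon s\C \to s\C$ in place of $\disc P \otimes -\colon s\V \to s\V$. Since $1 \in \bar{\proj}$, the monoidal identity $\disc 1 \in s\V$ is already an $I$-cell complex (via the single generating cofibration $\emptyset = \partial\Delta^0 \cdot 1 \to \Delta^0 \cdot 1$), so the unit axiom of \cite[Definition 4.2.18]{Hovey} is automatic, and the real work is the pushout-product axiom.

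I would reformulate the pushout-product axiom in hom form: for $g\colon A\to B$ a cofibration in $s\C$ and $h\colon Y \to Z$ a fibration in $s\C$, the induced map
\[
\underline{s\C}_{s\V}(B, Y) \to \underline{s\C}_{s\V}(B, Z) \times_{\underline{s\C}_{s\V}(A, Z)} \underline{s\C}_{s\V}(A, Y)
\]
should be a fibration in $s\V$, trivial if either $g$ or $h$ is. By the characterisation of fibrations in $s\V$, this reduces to checking the analogous statement in $sSet$ after applying $\underline{s\V}_{sSet}(\disc P, -)$ for each $P \in \bar{\proj}$. A direct calculation using the tensor-cotensor adjunctions and the formulas of Theorem \ref{tensorcotensor} yields the natural isomorphism
\[
\underline{s\V}_{sSet}(\disc P, \underline{s\C}_{s\V}(B, Y)) \;\cong\; \underline{s\C}_{sSet}(\disc P \odot B, Y),
\]
so the map in question becomes exactly the $sSet$-enriched hom map arising from the pair $(\disc P \odot g,\, h)$. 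Since $s\C$ is simplicial by Theorem \ref{modelcat}, Quillen's SM7 then handles it, provided $\disc P \odot -\colon s\C \to s\C$ preserves cofibrations and trivial cofibrations for each $P \in \bar{\proj}$.

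This remaining one-variable statement is the exact analogue of the corresponding step in Proposition \ref{monoidal}. The functor $\disc P \odot -$ is a left adjoint (to $\disc P \pitchfork -$), so it commutes with colimits and retracts and I may reduce to generators $g \in I_\C$ or $g \in J_\C$. For $g\colon \partial\Delta^m \cdot Q \to \Delta^m \cdot Q$ with $Q \in \bar{\Q}$, cocontinuity of $\odot$ in the second variable gives the identification $\disc P \odot (\Delta^m \cdot Q) \cong \Delta^m \cdot (P \odot Q)$, and similarly with $\partial\Delta^m$ or $\Lambda^m_k$ in place of $\Delta^m$; the hypothesis $P \odot Q \in \Q$ then places the resulting map inside $I$-cof (respectively $J$-cof), after writing $P \odot Q$ as a retract of a coproduct of objects of $\bar{\Q}$. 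The main obstacle is bookkeeping rather than depth: it is the careful unwinding of the various $sSet$-, $\V$-, and $s\V$-copowers and cotensors needed to establish the displayed isomorphism above, after which everything is formal.
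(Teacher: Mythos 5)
Your proposal is correct and follows essentially the same route as the paper, which proves the statement by running the argument of Proposition \ref{monoidal} with $\disc P \odot -$ in place of $\disc P \otimes -$: reduce via the enriched-hom adjunction and the fibration criterion in $s\V$ to showing $\disc P \odot g$ is a (trivial) cofibration, then use cocontinuity to pass to generators and the hypothesis $P \odot Q \in \Q$. Your extra care in phrasing the conclusion as membership in $I$-cof (via retracts of coproducts of objects of $\bar{\Q}$) rather than $I$-cell is a minor but welcome refinement.
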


\begin{prop}
Suppose we have categories $\C,\D$ and functors $F:\C \rightleftarrows \D:G$ with $F$ left adjoint to $G$. Suppose $(\C,\bar{\proj})$ and $(\D,\bar{\Q})$ satisfy the conditions of Theorem \ref{modelcat}, and that $F(\bar{\proj}) \subseteq \Q$. Then we get a Quillen adjunction $sF: s\C \rightleftarrows s\D :sG$ between the resulting model categories.
\end{prop}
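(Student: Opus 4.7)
The adjunction $sF \dashv sG$ is induced objectwise (equivalently, by composition of functors out of $\Delta^{op}$) from $F \dashv G$, so only the Quillen condition needs verification. I shall check, via \cite[Lemma 1.3.4]{Hovey}, that $sG$ preserves fibrations and trivial fibrations.

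The central tool is the natural isomorphism
\[ \underline{s\C}_{sSet}(\disc P, sG(X)) \;\cong\; \underline{s\D}_{sSet}(\disc F(P), X), \]
for $P \in \C$ and $X \in s\D$, obtained in each degree $m$ from $\C(P, G(X_m)) \cong \D(F(P), X_m)$. Via the hom-tensor adjunction over $sSet$, this transposes a commutative square in $s\C$ from $\Lambda^n_k \cdot P \to \Delta^n \cdot P$ to $sG(f)$ into a commutative square in $s\D$ from $\Lambda^n_k \cdot F(P) \to \Delta^n \cdot F(P)$ to $f$, with lifts in natural bijection (and similarly with $\partial\Delta^n$ in place of $\Lambda^n_k$).

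Now fix a fibration $f$ in $(s\D, \bar{\Q})$ and $P \in \bar{\proj}$. By hypothesis $F(P) \in \Q$, so $F(P)$ is a retract of a coproduct $\coprod_i Q_i$ with $Q_i \in \bar{\Q}$. Because $S \cdot -$ commutes with coproducts (since $S_n \cdot -$ does) and preserves retracts, the map $\Lambda^n_k \cdot F(P) \to \Delta^n \cdot F(P)$ is a retract of $\coprod_i (\Lambda^n_k \cdot Q_i \to \Delta^n \cdot Q_i)$, each summand of which lies in the generating trivial cofibrations $J$ of $s\D$. The class of maps with the left lifting property against $f$ is closed under coproducts and retracts, so $\Lambda^n_k \cdot F(P) \to \Delta^n \cdot F(P)$ has that property; transposing via the displayed isomorphism, $sG(f)$ has the right lifting property against $\Lambda^n_k \cdot P \to \Delta^n \cdot P$, and varying $n,k,P$ shows that $sG(f)$ is a fibration in $(s\C, \bar{\proj})$. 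The argument for trivial fibrations is identical using $I$ instead of $J$.

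The only mild subtlety, not really an obstacle, is the gap between the hypothesis $F(\bar{\proj}) \subseteq \Q$ and the fact that the generators of $(s\D, \bar{\Q})$ are parametrised by $\bar{\Q}$; this is bridged by the retract/coproduct closure of $\Q$ in $\D$ and by the compatibility of $S \cdot -$ with both operations. The argument applies uniformly in cases (i) and (ii) of Theorem \ref{modelcat}, since fibrations and trivial fibrations are defined by the same lifting properties in either setting.
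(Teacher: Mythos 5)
Your proof is correct. The paper itself is terse here: it extends the adjunction degreewise and says the Quillen condition follows "by the same argument as Proposition \ref{monoidal}", i.e.\ it works on the left-adjoint side: since $sF$ preserves colimits and retracts, one reduces to showing $sF$ of a generating (trivial) cofibration is a (trivial) cofibration, which holds because $sF(\Lambda^n_k \cdot P \to \Delta^n \cdot P) = (\Lambda^n_k \cdot F(P) \to \Delta^n \cdot F(P))$ and $F(P) \in \Q$ is a retract of a coproduct of objects of $\bar{\Q}$. You run the dual argument on the right-adjoint side, checking via \cite[Lemma 1.3.4]{Hovey} that $sG$ preserves the classes defined by the right lifting property with respect to $J$ and $I$, transposing lifting problems across the adjunction. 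The key computation — $\Lambda^n_k \cdot F(P) \to \Delta^n \cdot F(P)$ is a retract of a coproduct of generating maps, and lifting classes are closed under coproducts and retracts — is the same in both versions, so this is essentially the paper's argument in adjoint form. Your formulation has the small advantage of being manifestly uniform over cases (i) and (ii) of Theorem \ref{modelcat}: fibrations and trivial fibrations are characterised by $\mathrm{RLP}(J)$ and $\mathrm{RLP}(I)$ in either case (Lemma \ref{simpfib}), whereas the reduction of arbitrary trivial cofibrations to the generators, as in Proposition \ref{monoidal}, is most transparent when the structure is class-cofibrantly generated; in case (i) the paper's phrasing implicitly relies on the same transposition you make explicit.
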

\begin{proof}
The adjunction extends to an adjunction between $s\C$ and $s\D$; the hypothesis ensures it is Quillen, by the same argument as Proposition \ref{monoidal}.
\end{proof}

The reader may also construct a true statement, along the same lines, about $s\V$-Quillen adjunctions when $\C$ and $\D$ are tensored and cotensored $\V$-categories.

In particular, suppose $\C=\D$ and $F,G$ are $\id_\C$. Suppose $\bar{\proj}$ and $\bar{\Q}$ are two classes of objects in $\C$ satisfying the conditions of Theorem \ref{modelcat}, with $\bar{\proj} \subseteq \bar{\Q}$. Then as well as a Quillen adjunction, we get a mixed model structure combining the two, by \cite[Theorem 4.1.1]{MS}.

\begin{examples}
\label{sU}
The following are all class-cofibrantly generated right-proper simplicial model categories.
\begin{enumerate}[(i)]
\item $(Set,\text{finite sets})$ satisfies condition (ii) of Theorem \ref{modelcat}, and the conditions of Proposition \ref{monoidal} (finite sets are finite in $Set$). So $sSet$ is a monoidal model category. (Unfortunately, this is circular: we assumed the result to prove the theorem.)
\item Similarly, $(\U, CH)$ makes $s\U$ a monoidal model category. We have seen that the $CH$ is finite relative to closed inclusions, and that the CH-subspaces functor gives the functorial choice of $\proj$-split map from a projective. Readers should satisfy themselves that maps in $\M_\Lambda$ and $\M_{\Ex}$ are closed inclusions: recall that split monomorphisms are reguar, and closed monomorphisms in $\U$ are exactly the closed inclusions.
\item $\U$ with finite discrete spaces also works, in the same way as (i).
\item Consider $\U Grp$, the category of group objects in $\U$, with the class $\bar{\proj}$ given by the free $k$-groups $F(X)$ on spaces $X \in CH$. The adjunction of $F$ with the forgetful functor to $\U$ shows objects in $\bar{\proj}$ are finite relative to closed inclusions, and for $G \in \U Grp$ every map $F(X) \to G$ factors through the canonical (functorial) map $\coprod_{X \subseteq G, X \in CH} F(X) \to G$. This makes $s\U Grp$ a simplicial model category. In fact, $\U Grp$ is enriched, tensored and cotensored over $\U$, by giving $\underline{\U Grp}(G,H)$ the $k$-compact-open topology, and defining, for $X \in \U$, $X \odot G = F(X) \sqcup G$ and $X \pitchfork G = \underline{\U}(X,G)$, with the group structure induced by multiplication in $G$. So by Proposition \ref{Vmod}, $\U Grp$ is an $s\U$-model category.
\item Consider $\U$ with the class of disjoint unions of at most $\kappa$ compact Hausdorff spaces, for some cardinal $\kappa$. Such spaces are $\kappa$-small relative to closed inclusions (check!), and the CH-subspaces functor gives the functorial choice of $\proj$-split map from a projective, as before. This gives the same model structure on $s\U$ as (ii), with a different class of small generators.
\item Suppose $\C$ is a cartesian closed category. Consider the category $G$-$\C$ of $G$-objects (that is, objects $X$ with a morphism $G \times X \to X$) and $G$-maps, for $G \in \C Grp$. We can alternatively think of $G$ as a one-object $\C$-category, with morphisms given by $G$, so that $G$-$\C$ is the enriched functor category $\C^G$. Just like the category of $G$-sets for an abstract group, $G$-$\C$ is a cartesian closed category and the forgetful functor to $\C$ has a left adjoint, $X \mapsto G \times X$. Suppose $\C$ has a class of $\kappa$-small generators $\bar{\proj}$. From the adjunction, it follows (see \cite[Theorem 7.8]{GJ}) that $(s(G$-$\C), G \times \bar{\proj})$ satisfies the conditions of Theorem \ref{modelcat}, where $G \times \bar{\proj}$ is the class of $G$-objects of the form $G \times X$ where $X \in \bar{\proj}$ and $G$ acts by left-multiplication on the first variable; this makes $s(G$-$\C)$ into a monoidal model category.
\end{enumerate}
\end{examples}

The reader is invited to expand this list.

\begin{rem}
Suppose $(\C,\bar{\proj})$ has enough projectives. Consider the pair $(s\C,\Q)$, where $\Q$ is the class of objects $X$ in $s\C$ such that the map $\disc \emptyset \to X$ from the initial object is in $I$-cell. The factorisation constructed in Proposition \ref{factorisation} shows that $s\C$ has enough functorial projectives. Indeed, trivial fibrations are $\Q$-split because they have the right lifting property with respect to cofibrations. But even if the objects of $\bar{\proj}$ satisfy condition (ii) of Theorem \ref{modelcat}, it is not clear how to choose a subclass of $\Q$ generating $s\C$ whose objects satisfy it.
\end{rem}

\begin{lem}
For a cofibrant object $X$ in $(s\C,\bar{\proj})$, satisfying the conditions of Theorem \ref{modelcat}, $X_n \in \proj$ for all $n$.
\end{lem}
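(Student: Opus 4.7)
The plan is to reduce to the case where $X$ arises as a relative $I$-cell complex from the initial object $\disc \emptyset$, then trace through the transfinite construction one degree at a time. By definition, $X$ is cofibrant if it is a retract of some $X'$ with $\disc \emptyset \to X'$ in $I$-cell. Since $\proj$ is closed under retracts, if I can show $X'_n \in \proj$ for all $n$, then $X_n$, being a retract of $X'_n$, also lies in $\proj$.

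Write $X'$ as a transfinite composition of pushouts: $\disc \emptyset = X^{(0)} \to X^{(1)} \to \cdots$, where at each successor ordinal $\mu+1$ we have a pushout
\[
\xymatrix{\partial\Delta^{m_\mu} \cdot P_\mu \ar[r] \ar[d] & X^{(\mu)} \ar[d] \\ \Delta^{m_\mu} \cdot P_\mu \ar[r] & X^{(\mu+1)},}
\]
with $P_\mu \in \bar{\proj}$, and at each limit $\mu$ we set $X^{(\mu)} = \colim_{\nu<\mu} X^{(\nu)}$. Since colimits in $s\C$ are computed levelwise, fixing a degree $n$ gives a transfinite sequence in $\C$ of the same shape, where the map attached at stage $\mu+1$ is obtained from the set-level injection $(\partial\Delta^{m_\mu})_n \hookrightarrow (\Delta^{m_\mu})_n$. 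The observation I want to exploit is that, writing $T_\mu$ for the complement $(\Delta^{m_\mu})_n \setminus (\partial\Delta^{m_\mu})_n$, one has a canonical splitting $(\Delta^{m_\mu})_n \cdot P_\mu \cong (\partial\Delta^{m_\mu})_n \cdot P_\mu \sqcup T_\mu \cdot P_\mu$ as objects under $(\partial\Delta^{m_\mu})_n \cdot P_\mu$. Hence the pushout in $\C$ is simply $X^{(\mu+1)}_n = X^{(\mu)}_n \sqcup (T_\mu \cdot P_\mu)$.

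From here a transfinite induction finishes the job. The base case is immediate since $X^{(0)}_n$ is initial (an empty coproduct, hence in $\proj$). At a successor stage, $T_\mu \cdot P_\mu$ is a coproduct of copies of $P_\mu \in \bar{\proj}$, so the inductive step preserves the property of being a coproduct of objects of $\bar{\proj}$, which lies in $\proj$ by the closure hypotheses. For a limit ordinal $\mu$, the transition maps in $\{X^{(\nu)}_n\}_{\nu<\mu}$ are all coproduct inclusions, so the colimit is again the coproduct of all the pieces $T_\nu \cdot P_\nu$ added along the way, and hence still a coproduct of objects in $\bar{\proj}$. Passing to the top of the tower yields $X'_n \in \proj$, and then the retract argument gives $X_n \in \proj$.

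The only step with any content is the pushout computation at the successor stage, which rests on the splitting of $\Delta^m \cdot P$ over $\partial\Delta^m \cdot P$ in each degree. This splitting is a formal consequence of the fact that coproducts in $\C$ distribute over the set-tensoring $S \cdot (-)$, applied to the injection $\partial\Delta^m \hookrightarrow \Delta^m$ of simplicial sets; nothing about enriched structure or the model-theoretic hypotheses of Theorem \ref{modelcat} is used beyond having the model structure available so that "cofibrant" is defined.
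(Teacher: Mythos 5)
Your proof is correct and follows essentially the same route as the paper's: reduce by retract-closure of $\proj$ to a relative $I$-cell complex, compute everything levelwise, and observe that in each degree the attaching map is a coproduct inclusion (since $(\partial\Delta^m)_n \hookrightarrow (\Delta^m)_n$ is a set injection), so each pushout just adjoins copies of $P$ and the transfinite composition stays a coproduct of objects of $\bar{\proj}$. Your treatment of the limit-ordinal stage and the explicit splitting are just slightly more detailed versions of what the paper leaves as "easily seen".
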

\begin{proof}
For a cofibrant object $X$, we may write $X$ as a retract of a composition of pushouts of maps $f_\lambda: W^\lambda \to W^{\lambda+1}$ in $I$, starting from $\emptyset$. We argue by induction on $\lambda$. Retracts, compositions and pushouts are calculated levelwise, so we may fix $n$ and suppose $W^\lambda_n \in \proj$. Then $W^{\lambda+1}_n$, the pushout of $W^\lambda_n$ by $(\Lambda^m_k)_n \cdot P \to (\Delta^m)_n \cdot P$, is easily seen to be the coproduct of $W^\lambda_n$ with $|(\Delta^m)_n \setminus (\Lambda^m_k)_n|$ copies of $P$, as required.
\end{proof}

We now return to considering $(s\U,CH)$ and $\U_{CH}$. There is a \emph{singular complex} functor $\Sing: \U \to s\U$, $\Sing(X) = \underline{\U}(\Delta_\bullet,X)$, where $\Delta_n$ is the topological $n$-simplex. We also have the abstract singular complex functor $\Sing^{abs}: \U \to sSet$. Thanks to the cartesian closure of $\U$, we have $$\Sing^{abs}(\underline{\U}(K,-)) = \underline{s\U}_{sSet}(\disc K,\Sing(-)),$$ so $\Sing$ preserves fibrations and trivial fibrations. There is also a \emph{geometric realisation} functor $|-|: s\U \to \U$ given by the coend $|X| = \int^n \Delta_n \times X_n$, and an adjunction
\begin{align*}
\U(|X|,Y) &= \U(\int^n \Delta_n \times X_n,Y) &&= \int_n \U(X_n,\underline{U}(\Delta_n,Y)) \\
&= \int_n \U(X_n,\Sing(Y)_n) &&= s\U(X,\Sing Y).
\end{align*}
We immediately get from the definitions that $|-|$ preserves cofibrations and trivial cofibrations, so these functors give a Quillen adjunction between $s\U$ and $\U$. In particular $\Sing(X)$ is fibrant for all $X \in \U$, and hence by Ken Brown's lemma \cite[Lemma 1.1.12]{Hovey} $\Sing$ preserves weak equivalences.

\begin{rem}
\phantomsection
\label{structuresonsU}
\begin{enumerate}[(i)]
\item In fact $\Sing(X)$ is a global Kan complex: a retraction of $\Lambda_n \to \Delta_n$ induces a section of $\underline{\U}(\Delta_n,X) \to \underline{\U}(\Lambda_n,X)$.
\item We can see $\Sing$ and $|-|$ as giving a correspondence between $(s\U,CH)$ and $\U_{CH}$ (that is, we have a Quillen adjunction and the same generating class of projectives). We have a similar correspondence between $(s\U,\text{finite discrete spaces})$ and $\U_Q$. The fibrations (respectively, weak equivalences) in the Hurewicz model structure $\U_H$ are exactly the maps $f$ in $\U$ such that $\underline{\U}(X,f)$ is a fibration (respectively, weak equivalence) in $\U_Q$ for all $X \in \U$. There ought (on aesthetic grounds) to be a model structure on $s\U$ whose fibrations and weak equivalences come from $(\U,\ob \U)$. But the existence of such a model structure is not clear.
\end{enumerate}
\end{rem}

Geometric realisation from $s\U$ seems to be rather more delicate than geometric realisation from $sSet$. A useful reference is \cite[Section 11]{May}. In particular, \cite[Theorem 11.5]{May} gives:

\begin{prop}
Geometric realisation commutes with finite products.
\end{prop}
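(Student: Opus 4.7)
The strategy is to bootstrap from Milnor's theorem for simplicial sets — the fact that $|S \times T| \cong |S| \times |T|$ for $S,T \in sSet$ when product and realisation are taken in $\U$ — using the cocontinuity of $|-|$, of $\times$ in $\U$, and of $\times$ in $s\U$. The essential use of the ambient category $\U$, rather than $Top$, enters at both the level of Milnor's theorem and the level of commuting $\times$ with colimits.

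First I would check the result on the "building blocks" $\disc A \cdot \Delta^n$ for $A \in \U$. Since $|-|$ is a left adjoint (to $\Sing$) it commutes with colimits, so $|\disc A \cdot \Delta^n| \cong A \times |\Delta^n| = A \times \Delta_n$. Using that products in $\U$ distribute over coproducts, one computes levelwise that $(\disc A \cdot \Delta^n) \times (\disc B \cdot \Delta^m) \cong \disc(A \times B) \cdot (\Delta^n \times \Delta^m)$; applying $|-|$ and Milnor's theorem then gives $(A \times B) \times \Delta_n \times \Delta_m \cong |\disc A \cdot \Delta^n| \times |\disc B \cdot \Delta^m|$.

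To extend to arbitrary $X,Y \in s\U$, I would use the coend presentations $X \cong \int^n X_n \cdot \Delta^n$ and $Y \cong \int^m Y_m \cdot \Delta^m$ established earlier. Since $s\U$ is cartesian closed by Proposition \ref{simpclosedmonoidal}, $- \times Y$ and $X \times -$ preserve colimits, giving $X \times Y \cong \int^{n,m} \disc(X_n \times Y_m) \cdot (\Delta^n \times \Delta^m)$. Applying $|-|$ and the previous step pointwise yields $|X \times Y| \cong \int^{n,m}(X_n \times Y_m) \times \Delta_n \times \Delta_m$. Finally, since $\times$ in $\U$ commutes with colimits in each variable, this regroups as $\left(\int^n X_n \times \Delta_n\right) \times \left(\int^m Y_m \times \Delta_m\right) = |X| \times |Y|$, with the comparison isomorphism being the canonical one induced by the projections.

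The main obstacle is Milnor's theorem itself: in $Top$ the analogous statement fails for general simplicial sets, and making it work requires the cartesian closed structure on $\U$. I would treat this as a black box from the literature rather than reprove it, given that the paper already cites \cite{May} Section 11 as the reference; everything else in the argument is a manipulation of coends that follows formally from cartesian closedness of $\U$ and of $s\U$.
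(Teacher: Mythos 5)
Your argument is correct, but it takes a genuinely different route from the paper: the paper offers no proof at all, it simply quotes \cite[Theorem 11.5]{May}, where the statement is proved for simplicial compactly generated spaces by a direct point-set argument in the style of Milnor (an explicit description of the points of the realisation and an explicit inverse to the canonical map $|X \times Y| \to |X| \times |Y|$). Your proof is instead a formal coend manipulation: reduce to the generators $\Delta^n \cdot \disc A$ via the density isomorphism $X \cong \int^n X_n \cdot \Delta^n$, commute products past the coends using cartesian closedness of $\U$ (and of $s\U$), and black-box only the combinatorial heart $|\Delta^n \times \Delta^m| \cong \Delta_n \times \Delta_m$ -- which for these finite simplicial sets holds already in $Top$, the spaces being compact Hausdorff, so strictly speaking you need less than the full Milnor theorem in $\U$. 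What your approach buys is transparency about where the convenient category enters (only in commuting $- \times Z$ with colimits, levelwise and in the coends); what the paper's citation buys is an explicit homeomorphism and no dependence on the formal machinery of Section \ref{se}. Two points you should tighten: the isomorphism $|\Delta^n \cdot \disc A| \cong \Delta_n \times A$ does not follow just from $|-|$ being a left adjoint -- what does the work is that $- \times A$ preserves colimits in $\U$, so it passes through the defining coend $\int^m \Delta_m \times (\Delta^n_m \cdot A)$; and at the end you should verify (or at least remark) that your composite isomorphism agrees with the canonical comparison map induced by the projections, e.g.\ by observing that both $(X,Y) \mapsto |X \times Y|$ and $(X,Y) \mapsto |X| \times |Y|$ preserve colimits in each variable, that the canonical map is natural, and that it coincides with your isomorphism on the generators $\Delta^n \cdot \disc A$; this is routine but is the step that turns a chain of abstract isomorphisms into the statement actually being claimed.
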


Hence $|-|$ is strong monoidal. Therefore, by \cite[Theorem 3.7.11]{Riehl}, $\U$ is enriched, tensored and cotensored over $s\U$. It follows that $\U$ is a monoidal $s\U$-model category in the sense of \cite[Definition 4.2.20]{Hovey}.

The analogous pair of functors, geometric realisation $|-|: sSet \to \U$ and the abstract singular complex functor $\Sing^{abs}: \U \to sSet$, are well-known to give a Quillen equivalence between the two categories. We get the following result.

\begin{prop}
The composite $|-| \circ \Sing$ is naturally homotopic to the identity, .
\end{prop}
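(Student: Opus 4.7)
The plan is to show that the counit $\varepsilon_X \colon |\Sing(X)| \to X$ of the adjunction $|{-}| \dashv \Sing$, given explicitly by $[t, f] \mapsto f(t)$ on representatives in $\Delta_n \times \underline{\U}(\Delta_n, X)$, is a natural homotopy equivalence by exhibiting an explicit natural section and a natural deformation of $|\Sing(X)|$ onto its image.

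Define a natural transformation $\eta_X \colon X \to |\Sing(X)|$ by $x \mapsto [\ast, c_x]$, where $c_x \in \underline{\U}(\Delta_0, X)$ is the constant map at $x$; this is the inclusion of the $0$-stage of the coend, manifestly continuous and natural. From the definitions one reads off $\varepsilon_X \circ \eta_X = \id_X$. For the other composite, define $H_X \colon [0,1] \times |\Sing(X)| \to |\Sing(X)|$ on representatives $(t, f) \in \Delta_n \times \underline{\U}(\Delta_n, X)$ by
\[
H_X(u, [t, f]) = [t, f \circ \ell_{t, u}], \qquad \ell_{t, u}(s) = (1-u) s + u t.
\]
At $u = 0$ this is the identity. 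At $u = 1$, the map $f \circ \ell_{t, 1}$ is the constant map $c_{f(t)} \colon \Delta_n \to X$, and the coend relation applied to the unique cosimplicial collapse $\sigma_\ast \colon \Delta_n \to \Delta_0$ identifies $[t, c_{f(t)}]$ with $[\ast, c_{f(t)}^0] = \eta_X(\varepsilon_X([t, f]))$. Thus at $u = 1$ one recovers $\eta_X \circ \varepsilon_X$.

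The main obstacle is that $H_X$ must descend to the coend: for a cosimplicial map $\phi_\ast \colon \Delta_m \to \Delta_n$ and $(t', f) \in \Delta_m \times \underline{\U}(\Delta_n, X)$, one needs $H_X(u, [\phi_\ast t', f]) = H_X(u, [t', f \circ \phi_\ast])$. Unwinding both sides via the coend identification reduces this to the equality $\ell_{\phi_\ast t', u} \circ \phi_\ast = \phi_\ast \circ \ell_{t', u}$, which holds because every cosimplicial structure map $\phi_\ast$ is affine-linear. Continuity on each stage $\Delta_n \times \underline{\U}(\Delta_n, X)$ follows from cartesian closedness of $\U$: the map $(u, t) \mapsto \ell_{t, u}$ is continuous into $\underline{\U}(\Delta_n, \Delta_n)$, and post-composition with $f$ is jointly continuous, by the theorem on evaluation and currying cited earlier. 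This descends to $|\Sing(X)|$ because $[0,1] \times -$ preserves colimits in $\U$. Naturality in $X$ is immediate, since $H$ is expressed only in terms of the cosimplex $\Delta_\bullet$ and the variable $f$, on which a morphism $g \colon X \to Y$ acts by post-composition. Together with $\varepsilon_X \circ \eta_X = \id_X$, this exhibits $\varepsilon$ and $\eta$ as inverse natural homotopy equivalences, so $|{-}| \circ \Sing$ is naturally homotopic to $\id_\U$.
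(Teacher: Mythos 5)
Your construction is correct, and it is worth noting that the paper does not actually prove this proposition itself: it simply cites Strickland's MathOverflow answer, so you are supplying the argument that the paper leaves to a reference. Your proof is the natural explicit one: the counit $\varepsilon_X([t,f])=f(t)$, the section $\eta_X(x)=[\ast,c_x]$ with $\varepsilon_X\eta_X=\id_X$, and the homotopy $H_X(u,[t,f])=[t,f\circ\ell_{t,u}]$ shrinking each simplex linearly towards the evaluation point. The two delicate points are exactly the ones you address: descent to the coend, which reduces to $\phi_*\circ\ell_{t',u}=\ell_{\phi_*t',u}\circ\phi_*$ and holds because every map $\phi_*\colon\Delta_m\to\Delta_n$ induced by $\Delta$ is the restriction of a linear map, so it commutes with affine combinations (this covers faces and degeneracies alike, and only the generating relations of the coequaliser need checking); and continuity, which uses that $\Sing(X)_n=\underline{\U}(\Delta_n,X)$ carries the $k$-compact-open topology, so $(u,t,f)\mapsto f\circ\ell_{t,u}$ is continuous by currying and continuity of composition, and the levelwise maps assemble because $[0,1]\times-$ preserves colimits in the cartesian closed category $\U$. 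This second point is precisely where the same formula fails for the discrete singular simplicial set (where $|\Sing^{abs}(X)|\to X$ is only a weak equivalence in general), so your use of the enriched $\Sing$ is essential and correctly exploited. The endpoint identification $[t,c_{f(t)}]=[\ast,c^0_{f(t)}]$ via the collapse $\sigma_*\colon\Delta_n\to\Delta_0$ and the naturality of $\varepsilon$, $\eta$ and $H$ in $X$ are also handled correctly, so your argument establishes the statement in the form the paper later uses it.
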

\begin{proof}
As far as I know, this is not in the published literature, but a proof is given in Strickland's answer to the question at \cite{Strickland2}.
\end{proof}

We should mention at this point that it is not clear whether or not $|-|$ preserves weak equivalences in $(s\U,CH)$. So in general we will want to replace it with its \emph{total left derived functor} $L|-|$ (see \cite[Definition 1.3.6]{Hovey}) to get a functor on the homotopy categories. Explicitly, $L|-| = |-| \circ Q$, where $Q$ is a cofibrant replacement functor on $(s\U,CH)$; this gives an adjunction $$L|-|: Ho(s\U,CH) \rightleftarrows Ho(\U_{CH}): Sing.$$ It is clear from the definitions that applying $L|-|$ always produces a KW-complex.

$L|-| \circ \Sing$ need not be homotopic to the identity (indeed, it is not well-defined up to homotopy). On the other hand, we do have:

\begin{prop}
\label{geomsingwe}
$L|-| \circ \Sing$ is weakly equivalent in $\U_{CH}$ to the identity.
\end{prop}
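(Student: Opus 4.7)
The plan is to factor the derived counit $L|-|(\Sing X) = |Q\Sing X| \to X$ as
\[|Q\Sing X| \xrightarrow{|f|} |\Sing X| \xrightarrow{\epsilon_X} X,\]
where $f: Q\Sing X \to \Sing X$ is the cofibrant-replacement trivial fibration in $(s\U,CH)$ and $\epsilon_X$ is the counit of the adjunction $|-| \dashv \Sing$. By the preceding proposition, $\epsilon_X$ is a natural homotopy equivalence, and hence a weak equivalence in $\U_{CH}$ by Lemma \ref{equivalences}(i). So the task reduces to showing that $|f|$ is a weak equivalence in $\U_{CH}$.

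For this, I would translate to the simplicial side. The identification $\underline{s\U}_{sSet}(\disc K,\Sing -) \cong \Sing^{abs}(\underline{\U}(K,-))$ for $K \in CH$ yields, directly from the definitions of the two classes of weak equivalences, that a map $g$ in $\U$ is a weak equivalence in $\U_{CH}$ if and only if $\Sing g$ is a weak equivalence in $(s\U,CH)$. Thus it suffices to show $\Sing(|f|)$ is a weak equivalence in $(s\U,CH)$.

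Placing $\Sing(|f|)$ in the naturality square for the unit $\eta: \id \to \Sing \circ |-|$ yields the commutative square
\[\xymatrix{Q\Sing X \ar[r]^{\eta_{Q\Sing X}} \ar[d]_{f} & \Sing|Q\Sing X| \ar[d]^{\Sing|f|} \\ \Sing X \ar[r]^{\eta_{\Sing X}} & \Sing|\Sing X|.}\]
The triangle identity gives $\Sing(\epsilon_X) \circ \eta_{\Sing X} = \id_{\Sing X}$; since all objects of $\U_{CH}$ are fibrant, the right Quillen functor $\Sing$ carries the weak equivalence $\epsilon_X$ to a weak equivalence, and two-out-of-three then forces $\eta_{\Sing X}$ to be a weak equivalence. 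Combined with $f$ being a weak equivalence, the composite $\Sing(|f|) \circ \eta_{Q\Sing X} = \eta_{\Sing X} \circ f$ is a weak equivalence, so by two-out-of-three once more the entire claim reduces to showing that $\eta_{Q\Sing X}$ is a weak equivalence in $(s\U,CH)$.

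This final step is the main obstacle, and it is delicate: it is the \emph{failing} side of what would be a Quillen equivalence (the paper explicitly notes that $|-| \dashv \Sing$ is not a Quillen equivalence), so the result cannot hold for every cofibrant object of $(s\U,CH)$ and must exploit the specific cellular structure of the functorial cofibrant replacement $Q\Sing X$. My plan is a transfinite induction along the $I$-cell decomposition of $Q\Sing X$, using the identification $|A \cdot P| \cong |A| \times P$ for $P \in CH$ (valid because $|-|$ commutes with finite products) to reduce the unit on each generating cell $\partial\Delta^n \cdot P \to \Delta^n \cdot P$ to a computation involving $\Sing$ and $|-|$ applied to compact Hausdorff input, where things are much better behaved. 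The technical heart is to verify that the class of cofibrant objects at which the unit is a weak equivalence is closed under pushouts along generating cofibrations and under transfinite composition; this should follow from right-properness of $(s\U,CH)$ together with compatibility of $\Sing$ with closed inclusions of compact Hausdorff cells.
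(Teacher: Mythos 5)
Your opening reductions are sound: factoring through $|\Sing X|\to X$, using Lemma \ref{equivalences}(i), transporting along $\underline{s\U}_{sSet}(\disc K,\Sing-)\cong\Sing^{abs}(\underline{\U}(K,-))$, and the two-out-of-three manipulation with the triangle identity are all fine. But they only relocate the entire content of the proposition into the claim that $\eta_{Q\Sing X}\colon Q\Sing X\to\Sing|Q\Sing X|$ is a weak equivalence in $(s\U,CH)$, and the strategy you propose for that claim cannot work. You assert that the class of cofibrant objects at which the unit is a weak equivalence is closed under pushouts along generating cofibrations and transfinite composition; if that were true, then starting from $\emptyset$ every $I$-cell object would lie in the class. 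However $\emptyset\to\Delta^0\cdot P=\disc P$ is itself a pushout along the generating cofibration $\partial\Delta^0\cdot P\to\Delta^0\cdot P$, and $\eta_{\disc P}\colon\disc P\to\Sing|\disc P|=\Sing P$ is \emph{not} a weak equivalence when $P$ is a path-connected compact Hausdorff space with more than one point: applying $\underline{s\U}_{sSet}(\disc\{\ast\},-)$ gives the inclusion of the constant simplicial set on the underlying set of $P$ into $\Sing^{abs}P$, whose $\pi_0$'s are the set $P$ and a single point respectively. This is exactly the phenomenon behind Proposition \ref{notwe}, which shows the adjunction is not a Quillen equivalence. So the induction fails at the very first cell attachment; the property you need is not cellular and genuinely depends on $Q\Sing X$ being a resolution of an object in the image of $\Sing$.

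For comparison, the paper avoids any analysis of the unit on cells and argues directly that $q\colon\underline{\U}(K,|Q\Sing X|)\to\underline{\U}(K,|\Sing X|)$ is a weak homotopy equivalence for each $K\in CH$: using the natural homotopy equivalence $|\Sing X|\simeq X$, a sphere $S^n\to\underline{\U}(K,|\Sing X|)$ (respectively a homotopy) is replaced by a map $S^n\times K\to X=\Sing(X)_0$ (respectively $S^n\times[0,1]\times K\to X$), and compactness of the domain allows one to lift it through the trivial fibration $Q(\Sing X)\to\Sing X$ at simplicial level $0$, then map back into $|Q\Sing X|$; this gives surjectivity and injectivity on all $\pi_n$. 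Some argument of this kind, exploiting that spheres and homotopies can be pushed into the $0$-skeleton and lifted by compactness, is what your final step is missing.
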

\begin{proof}
For $X \in \U$, we have to show that the induced map $q: \underline{\U}(K,L|Sing(X)|) \to \underline{\U}(K,|Sing(X)|)$ is a weak homotopy equivalence for all compact Hausdorff $K$.

Now we have a homotopy equivalence $\underline{\U}(K,|Sing(X)|) \to \underline{\U}(K,X)$, so given a map $f: S^n \to \underline{\U}(K,|Sing(X)|)$ (where $S^n$ is the $n$-sphere), choose a homotopic map $S^n \to \underline{\U}(K,X)$, or equivalently a map $S^n \times K \to X = Sing(X)_0$. Since $S^n \times K$ is compact, this lifts to a map $S^n \times K \to Q(Sing(X))_0$ (because $Q(Sing(X)) \to Sing(X)$ is a trivial fibration). Composing with $Q(Sing(X))_0 \to L|Sing(X)|$, we get a map $S^n \to \underline{\U}(K,L|Sing(X)|)$ whose image in $\underline{\U}(K,|Sing(X)|)$ is homotopic to $f$ -- possibly with a different basepoint, but this is dealt with by standard techniques. Therefore $$\pi_n(\underline{\U}(K,L|Sing(X)|),x) \to \pi_n(\underline{\U}(K,|Sing(X)|),q(x))$$ is surjective, for all $x$ and $n$.

To see this map is injective, suppose we have based maps $f,g: S^n \to \underline{\U}(K,L|Sing(X)|)$, for some basepoint $x$, such that $q(f), q(g)$ are homotopic. We have a commutative diagram
\[
\xymatrix{\underline{\U}(K,L|Sing(X)|) \ar[r] \ar[d]^{q} & \underline{\U}(K,L|Sing(|Sing(X)|)|) \ar[d]^{q'} \\
\underline{\U}(K,|Sing(X)|) \ar[r] & \underline{\U}(K,|Sing(|Sing(X)|)|),}
\]
where the top and bottom maps are weak equivalences by the previous proposition, so we will identify $f$ and $g$ with their images in $\underline{\U}(K,L|Sing(|Sing(X)|)|)$ and show they are homotopic there. Identifying $q(f), q(g)$ with their images in $\underline{\U}(K,|Sing(|Sing(X)|)|)$, we see that it is homotopic to a constant map there via a map $S^n \times [0,1] \times K \to |Sing(|Sing(X)|)|$ whose image is contained in the $0$-skeleton $Sing(|Sing(X)|)_0 = |Sing(X)|$. So this map lifts to one $S^n \times [0,1] \times K \to Q(Sing(|Sing(X)|))_0$, which after geometric realisation gives a homotopy between $h_0: S^n \times {0} \to \underline{\U}(K,L|Sing(|Sing(X)|)|)$ and $h_1: S^n \times {1} \to \underline{\U}(K,L|Sing(|Sing(X)|)|)$.

Now we show $f$ is homotopic to $h_0$; $g$ is homotopic to $h_1$ similarly and the result will follow. Both $f$ and $h_0$ map into the subspace $$\underline{\U}(K,Q(Sing(|Sing(X)|))_0) = \underline{s\U}(\disc K,Q(Sing(|Sing(X)|)))_0,$$ and $q(f) = q(h_0)$ in $\underline{\U}(K,Sing(|Sing(X)|)_0) = \underline{s\U}(\disc K,Sing(|Sing(X)|))_0$; therefore, since $$\underline{s\U}(\disc S^n \times K,Q(Sing(|Sing(X)|))) \to \underline{s\U}(\disc S^n \times K,Sing(|Sing(X)|))$$ is a Serre fibration, $\underline{s\U}(\disc S^n \times K,Q(Sing(|Sing(X)|)))$ has a $1$-cell giving a homotopy from $f$ to $h_0$.

\end{proof}

Theorem \ref{kwwe} now follows: given a space $X$, we can find a weakly equivalent KW-complex by applying $L|-| \circ \Sing$.

$L|-|$ also has the nice property that it sends weak equivalences $f: X \to Y$ to bona fide homotopy equivalences: $L|f|$ is a weak equivalence between cofibrant, fibrant objects, because all objects are fibrant in $\U_{CH}$, so it is a homotopy equivalence by \cite[Theorem 1.2.10]{Hovey}.

We will see later in Proposition \ref{notwe} that the pair $(|-|,\Sing)$ is not a Quillen equivalence, in contrast to the case of simplicial sets -- but see Theorem \ref{tpd} below.

\section{Homotopy groups in a regular category}
\label{reghomotopy}

The right place to study homotopy groups is in the framework of regular categories, and the right place to give more detail on these categories is here. In this section, $\C$ will be a complete and cocomplete regular category; $\twoheadrightarrow$ will denote a regular epimorphism and $\rightarrowtail$ will denote a monomorphism. A \emph{subobject} $Y$ of $X$ will mean a map $Y \rightarrowtail X$.

The structure of a regular category is enough to prove various useful diagram lemmas, as in \cite{Barr}. In particular, morphisms $f:X \to Y$ factorise uniquely, up to unique isomorphism, into $X \twoheadrightarrow Z \rightarrowtail Y$. We write $\im(f)$ for this $Z$ and call it the image of $f$. We also define preimages: given $f: X \to Y$ and a subobject of $Y$, $f^{-1}(Z)$ is the pull-back of $X \xrightarrow{f} Y \leftarrowtail Z$. Finally, intersections: if $Y,Z$ are subobjects of $X$, we write $Y \cap Z$ for the pull-back of $Y \rightarrowtail X \leftarrowtail Z$.

Given objects $X,Y$ in a regular category $C$, we define a \emph{relation} from $X$ to $Y$ to be a subobject of $X \times Y$. An important class of examples is that of the graphs of morphisms in $C$: given $f:X \to Y$, its graph is the equaliser of $X \times Y \rightrightarrows^{fp_1}_{p_2} Y$, where $p_1,p_2$ are the projection maps. We will abuse notation by identifying a morphism of $C$ with its graph in this context. Another is the trivial relation $1_X$ from $X$ to $X$, given by the diagonal map $X \to X \times X$.

Relations can be composed: given relations $R$ from $X$ to $Y$ and $S$ from $Y$ to $Z$, we define the composite $SR$ from $X$ to $Z$ to be $$\im(p^{-1}_{X \times Y}(R) \cap p^{-1}_{Y \times Z}(S) \rightarrowtail X \times Y \times Z \twoheadrightarrow X \times Z),$$ where $p_{X \times Y}$ is the projection $X \times Y \times Z \to X \times Y$, and similarly for the others.

\begin{rem}
We write the composites of relations $R$ and $S$ as $SR$ to maintain the connection to morphisms. This convention is not universally followed: \cite{SC} writes the same composite as $RS$.
\end{rem}

Given a relation $R$ from $X$ to $Y$, we write $R^\circ$ for the relation from $Y$ to $X$ given by composing $R \to X \times Y$ with the map $X \times Y \to Y \times X$ which swaps the factors. $-^\circ$ is thus an involution on the relations in $C$. Given relations $R,S$ from $X$ to $Y$, we write $R \leq S$ if $R \to X \times Y$ factors through $S \to X \times Y$. Categories of relations, equipped with $-^\circ$ and $\leq$, can be profitably studied in their own right; see \cite{SC} for details and for properties of $-^\circ$ and $\leq$ which we will use later. In particular, a relation from $X$ to $Y$ is the graph of a map $f$ if $1_X \leq f^\circ f$ and $ff^\circ \leq 1_Y$.

We now define an equivalence relation in $C$ to be a relation $R$ from an object $X$ to itself such that $1_X \leq R$, $R \leq R^\circ$ and $RR \leq R$.

\begin{rem}
These three conditions correspond to the usual requirement in $Set$ that equivalence relations be reflexive, symmetric and transitive, respectively. It does not seem to be as widely known as it should be that equivalence relations in $Set$ can alternatively be characterised by two axioms: a relation $\sim$ on a set $X$ is an equivalence relation if and only if it is reflexive and $x\sim y$, $x\sim z$ implies $y\sim z$. It does not seem to be known that the same reduction to two axioms can be done for the more general equivalence relations defined here, but the following proposition shows that it can. We prefer the two-axiom definition on the basis that $2<3$.
\end{rem}

\begin{prop}
\label{relation}
A relation $R$ from $X$ to $X$ is an equivalence relation if and only if $1_X \leq R$ and $RR^\circ \leq R$.
\end{prop}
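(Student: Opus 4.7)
The plan is to verify the two implications directly, using only the monotonicity and involutive properties of $-^\circ$ together with monotonicity of relation composition (which are noted in \cite{SC}, already invoked above), the fact that $1_X^\circ = 1_X$, and the identity $(SR)^\circ = R^\circ S^\circ$.

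For the ``only if'' direction, assume $R$ is an equivalence relation in the three-axiom sense. Then $1_X \leq R$ holds by hypothesis. Applying $-^\circ$ to $R \leq R^\circ$ and using involutivity gives $R^\circ \leq R$, so $R = R^\circ$. Hence $RR^\circ = RR \leq R$, which gives the two-axiom condition.

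For the ``if'' direction, assume $1_X \leq R$ and $RR^\circ \leq R$. The key observation is that from $1_X \leq R$, applying $-^\circ$ and using $1_X^\circ = 1_X$ yields $1_X \leq R^\circ$. Then, using monotonicity of composition on the left by $R$, together with $R^\circ = 1_X \cdot R^\circ$, we get
\[
R^\circ \;=\; 1_X \cdot R^\circ \;\leq\; R \cdot R^\circ \;\leq\; R.
\]
Applying $-^\circ$ to this inequality gives $R \leq R^\circ$, so in fact $R = R^\circ$. Combining this with the hypothesis yields $RR = RR^\circ \leq R$, so $R$ is transitive. Thus all three axioms are satisfied.

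I do not anticipate a real obstacle here: the entire argument is a short exercise in the calculus of relations. The only point worth flagging is that one should justify (or cite) the monotonicity of $-^\circ$ and of composition, and the identity $1_X^\circ = 1_X$, all of which are standard in a regular category and referenced via \cite{SC} in the paragraph preceding the proposition.
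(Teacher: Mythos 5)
Your proof is correct and follows essentially the same route as the paper: the chain $R^\circ = 1_X R^\circ \leq RR^\circ \leq R$ to get symmetry, then $RR \leq RR^\circ \leq R$ for transitivity, and the converse via $R^\circ \leq R$. (The aside deriving $1_X \leq R^\circ$ is harmless but unused, since the displayed inequality only needs $1_X \leq R$ and monotonicity of composition.)
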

\begin{proof}
Suppose $R$ is an equivalence relation. Then $R^\circ \leq R^{\circ \circ} = R$ so $RR^\circ \leq RR \leq R$. Conversely, suppose $1_X \leq R$ and $RR^\circ \leq R$. $R$ is symmetric: $R^\circ = 1_XR^\circ \leq RR^\circ \leq R$, so $R = R^{\circ \circ} \leq R^\circ$. $R$ is transitive: by symmetry, $RR \leq RR^\circ \leq R$.
\end{proof}

Given a morphism $f$ in a regular category $C$, the \emph{kernel pair} of $f$ is the pullback of $f$ with itself. Then $C$ is called \emph{Barr-exact}, or just \emph{exact} when it is clear from the context, if all equivalence relations in $C$ are kernel pairs.

\begin{lem}
\label{Lawvereregular}
Let $C$ be a regular (respectively, Barr-exact) category. For a Lawvere theory $T$, $C^T$ is a regular (respectively, Barr-exact) category.
\end{lem}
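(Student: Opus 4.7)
The plan is to reduce both regularity and Barr-exactness of $C^T$ to the corresponding properties of $C$, via the faithful evaluation functor $U: C^T \to C$ sending a product-preserving functor to its value on the generating object $[1]$ of $T$.

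First I would record that $U$ creates all limits that exist in $C$. Indeed, given a diagram $D: I \to C^T$, the pointwise limit $L: T \to C$ with $L([n]) = \lim_i D(i)([n])$ satisfies $L([n]) = \lim_i D(i)([1])^n = (\lim_i D(i)([1]))^n = L([1])^n$ because products commute with limits, so $L$ is product-preserving. Hence pullbacks, equalisers and kernel pairs in $C^T$ are computed pointwise, and $U$ reflects isomorphisms.

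Next, the main technical step: every morphism $f:X\to Y$ in $C^T$ admits an image factorisation $X \overset{p}{\twoheadrightarrow} I \overset{m}{\rightarrowtail} Y$ in $C^T$ with $Up$ the regular-epi part and $Um$ the mono part of $Uf$ in $C$. To equip $I$ with $T$-algebra structure, fix an $n$-ary operation $\omega$ giving $\omega_X:X^n\to X$ and $\omega_Y:Y^n\to Y$. Because $C$ is regular, pullback stability of regular epis along projections implies that a product of regular epis is a regular epi; thus $p^n:X^n\twoheadrightarrow I^n$ is a regular epi, hence (regular epis are epi in a regular category) an epimorphism. I then define $\omega_I:I^n\to I$ as the unique descent of $p\circ\omega_X: X^n \to I$ along $p^n$. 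To see the descent exists, note that the two composites $K^n \rightrightarrows X^n \to X \to I$ (where $K = X\times_I X$ is the kernel pair of $p$) agree after composition with the mono $m:I\rightarrowtail Y$, because $mp\circ \omega_X = \omega_Y \circ f^n$ and the two projections $K\rightrightarrows X$ agree after $f$; monicity of $m$ then forces agreement already in $I$. The $T$-axioms transfer from $X$ to $I$ since $p^n$ is epic. From this, one deduces: $f$ is a regular epi in $C^T$ iff $Uf$ is a regular epi in $C$ (the nontrivial direction uses that if $f$ is a coequaliser, it is epi, and any $X\twoheadrightarrow I \rightarrowtail Y$ factorisation with $f$ epi forces the mono part $m$ to be iso). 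Regularity of $C^T$ is then immediate: pullbacks are pointwise, so the pullback of $f$ with $Uf$ regular epi has underlying map a pullback of a regular epi in $C$, which is regular epi by regularity of $C$, hence regular epi in $C^T$.

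For Barr-exactness, suppose $R\rightrightarrows X$ is an equivalence relation in $C^T$. Since the defining conditions (reflexivity, the condition $RR^\circ\le R$ of Proposition \ref{relation}) are expressed via finite limits that are computed pointwise, the underlying pair $UR\rightrightarrows UX$ is an equivalence relation in $C$. By exactness of $C$ it is the kernel pair of some $q:UX\twoheadrightarrow UX/UR$. I would lift $UX/UR$ to $C^T$ by the same technique as above: the map $q^n:X^n\twoheadrightarrow (X/R)^n$ is a regular epi with kernel pair $R^n\rightrightarrows X^n$, and the composite $X^n\xrightarrow{\omega_X} X\xrightarrow{q} X/R$ coequalises $R^n\rightrightarrows X^n$ because the two projections $R\rightrightarrows X$ are $T$-algebra maps, so $(\omega_X\circ\pi_1^n, \omega_X\circ\pi_2^n): R^n \to X\times X$ factors through $R\rightarrowtail X\times X$ via $\omega_R$, and then vanishes after composition with $q$. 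This produces operations on $X/R$ and makes $q$ a map in $C^T$; its pointwise kernel pair is $R\rightrightarrows X$, so $R$ is effective in $C^T$.

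The step that requires the most care, and which crucially uses regularity of $C$, is the image factorisation in $C^T$: without the fact that products of regular epis remain regular epi, one cannot descend the operations of $X$ to the image object, and both the characterisation of regular epis and the construction of quotients by equivalence relations break down.
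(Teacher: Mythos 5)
Your argument is sound in outline, but note that it is genuinely different from what the paper does: the paper gives no proof at all, simply citing \cite[Theorem 5.11]{Barr}. What you supply is essentially the standard argument behind that citation -- limits in $C^T$ are created pointwise by evaluation at $[1]$; finite products of regular epimorphisms are regular (pullback-stability plus closure under composition); operations descend along $p^n$ because a regular epimorphism is the coequaliser of its kernel pair and kernel pairs commute with finite products; and quotients of equivalence relations are built by the same descent. This buys a self-contained proof that makes visible exactly where regularity of $C$ is used, at the cost of length. One step, however, is misstated: to show that a regular epimorphism $f$ of $C^T$ has $Uf$ regular epic, you claim that a factorisation $X \twoheadrightarrow I \rightarrowtail Y$ with $f$ merely \emph{epic} forces the mono part $m$ to be invertible. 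That is false even in Barr-exact categories: in monoids, $\mathbb{N}\to\mathbb{Z}$ is epic and monic, and the mono part of its image factorisation is $\mathbb{N}\to\mathbb{Z}$ itself. What you actually need -- and what your hypothesis that $f$ is a coequaliser supplies -- is that regular epimorphisms are extremal: if $f$ coequalises $(a,b)$ and $f=mp$ with $m$ monic, then $pa=pb$, so $p$ factors as $p=tf$, whence $mt=\id_Y$ and $m$, being a split epimorphism and a monomorphism, is an isomorphism. With that one-line repair the characterisation of regular epimorphisms, and hence the whole proof, goes through.

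Two smaller points to tighten in a final write-up: to see that $UR\rightrightarrows UX$ is an equivalence relation in $C$ you should use the finite-limit formulation of transitivity ($R\times_X R \to X\times X$ factors through $R$), since the paper's condition $RR^\circ \leq R$ involves an image rather than only finite limits (alternatively, note that your image construction shows $U$ preserves relational composites); and if one reads ``regular'' with the paper's convention, which presupposes all finite colimits, the cocompleteness of $C^T$ requires a separate argument, whereas under the usual definition of regular category your descent construction already produces the needed coequalisers of kernel pairs.
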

\begin{proof}
\cite[Theorem 5.11]{Barr}
\end{proof}

Suppose $\C$ is a regular category. Then it is possible to study the homotopy theory of internal Kan complexes in $\C$. This work was initiated by \cite{vO}, but for a modern treatment see \cite{Low}. Here we define a map $X \to Y$ in $s\C$ to be an internal fibration if the induced map $\{\Delta^n,X\} \to \{\Lambda^n_k,X\} \times_{\{\Lambda^n_k,Y\}} \{\Delta^n,Y\}$ is a regular epimorphism for all $n,k$, and an internal trivial fibration if the induced map $\{\Delta^n,X\} \to \{\partial\Delta^n,X\} \times_{\{\partial\Delta^n,Y\}} \{\Delta^n,Y\}$ is a regular epimorphism for all $n$. Restricting now to fibrant objects gives a category which we will call $\Kan(\C,reg)$. As before, $\disc X \in \Kan(\C,reg)$ for all $X$.

Weak equivalences in $\Kan(\C,reg)$ are given by \emph{Dugger-Isaksen} weak equivalences. Define $D^{n+1}$ to be the pushout in $sSet$ of $$\partial\Delta^n \leftarrow \partial\Delta^n \times \Delta^1 \to \Delta^n \times \Delta^1,$$ where the first map is projection to the first factor and the second is inclusion on the first factor. Write $j_0,j_1$ for the compositions of the two maps $\Delta^n \to \Delta^n \times \Delta^1$ induced by the two maps $\Delta^0 \to \Delta^1$ with the map $\Delta^n \times \Delta^1 \to D^{n+1}$. Then a morphism $f:X \to Y$ in $\Kan(\C,reg)$ is called a Dugger-Isaksen weak equivalence if the map $DI_n(f)$ in the diagram
\[
\xymatrix{\hbox to 3em{\hss$\Delta^n,X\} \times_{\{\Delta^n,Y\}} \{D^{n+1},Y\}$\hss} \ar@{}[r]^(.85){}="a"^(1.75){}="b" \ar "a";"b" \ar[dd] \ar[dr]^(.65){DI_n(f)} & & \{D^{n+1},Y\} \ar[dd]^(.65){\{j_1,Y\}} \ar[dr]^{\{j_0,Y\}} \\
& \hbox to 3em{\hss$\{\partial\Delta^n,X\} \times_{\{\partial\Delta^n,Y\}} \{\Delta^n,Y\}$\hss} \ar@{}[r]^(.85){}="a"^(1.75){}="b" \ar "a";"b" \ar[dd] & & \{\Delta^n,Y\} \ar[dd] \\
\{\Delta^n,X\} \ar[rr] \ar[dr] & & \{\Delta^n,Y\} \ar[dr] \\
& \{\partial\Delta^n,X\} \ar[rr] & & \{\partial\Delta^n,Y\}}
\]
is a regular epimorphism for all $n$. The following proposition summarises the results we will need:

\begin{prop}
\phantomsection
\label{DI}
\begin{enumerate}[(i)]
\item Internal fibrant objects in $sSet$ are exactly the Kan complexes in $sSet$, so $\Kan(Set,reg) = \Kan Set$. Dugger-Isaksen weak equivalences, internal fibrations and internal trivial fibrations in $\Kan Set$ are exactly the usual weak homotopy equivalences, Kan fibrations and trivial Kan fibrations in $\Kan Set$.
\item Internal trivial fibrations in $\Kan(\C,reg)$ are exactly the internal fibrations that are also Dugger-Isaksen weak equivalences.
\item With these classes of weak equivalences and fibrations, $\Kan(\C,reg)$ is a category of fibrant objects.
\end{enumerate}
\end{prop}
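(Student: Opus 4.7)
The plan is to handle the three parts in sequence, drawing on Low's treatment of internal Kan complexes in \cite{Low} for the technical input, and mirroring the structure of Theorem \ref{Kan} for the last part.

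For (i), the key observation is that regular epimorphisms in $Set$ (and hence, levelwise, in $sSet$) are precisely the surjections. The conditions defining internal fibrations and internal trivial fibrations in $sSet$ then become, respectively, the horn-filling and full-boundary-lifting conditions, which are the standard definitions of Kan fibrations and trivial Kan fibrations; in particular internal fibrant objects are Kan complexes, so $\Kan(Set,\mathrm{reg}) = \Kan Set$. The identification of Dugger--Isaksen weak equivalences between Kan complexes with ordinary weak homotopy equivalences is the substantive part of (i), and I would cite \cite{Low} for this.

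For (ii), I would first observe that the standard cellular filtration expresses any monomorphism of finite simplicial sets as a composite of pushouts of boundary inclusions $\partial \Delta^n \hookrightarrow \Delta^n$, so that, since $\{-, X\}$ converts colimits of simplicial sets to limits in $\C$ and regular epimorphisms in a regular category are closed under pullback and composition, the internal trivial fibration condition is equivalent to the statement that $\{T, X\} \to \{S, X\} \times_{\{S, Y\}} \{T, Y\}$ is a regular epimorphism for every monomorphism $S \hookrightarrow T$ of finite simplicial sets. Applying this to the inclusion $\Lambda^n_k \hookrightarrow \Delta^n$ immediately yields the internal fibration condition, and applying it to the inclusions appearing in the diagram defining $DI_n(f)$ yields the Dugger--Isaksen condition. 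The harder converse, that internal fibration together with the Dugger--Isaksen condition implies internal trivial fibration, is the main obstacle. I would prove it by a combinatorial argument in $sSet$ of the kind worked out in \cite{Low}: given a lifting problem against $\partial\Delta^n \hookrightarrow \Delta^n$, the partial lift supplied by internal fibrancy against $\Lambda^n_k$ together with the extra data supplied by the Dugger--Isaksen condition via $D^{n+1}$ are combined, after composing, transposing and pulling back regular epimorphisms, into the required regular epimorphism.

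Finally, for (iii), I would follow the template of Theorem \ref{Kan}. Two-out-of-three and closure under retracts for Dugger--Isaksen weak equivalences follow by direct diagram-chasing in $\C$, using the standard closure properties of regular epimorphisms. The required factorisation of the diagonal $X \to X \times X$ as a weak equivalence followed by a fibration is then produced by the path-object $X^I := \Delta^1 \pitchfork X$, with the factorisation induced by $\Delta^0 \sqcup \Delta^0 \to \Delta^1 \to \Delta^0$, exactly as in Theorem \ref{Kan}. That $X \to X^I$ is a Dugger--Isaksen weak equivalence and $X^I \to X \times X$ is an internal fibration follows, by the same sort of Lemma \ref{modelcotensor} reasoning, from the fact that $\{-, X\}$ sends appropriate simplicial-set cofibrations to regular epimorphisms in $\C$ whenever $X$ is an internal Kan complex.
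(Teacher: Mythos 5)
The paper does not actually prove this proposition: it is stated as a summary of results needed from the internal homotopy theory literature, with \cite{Low} (and \cite{vO}) as the intended source, so there is no in-paper argument to match. Measured against that, the parts of your sketch that defer to \cite{Low} -- the identification of Dugger--Isaksen weak equivalences with weak homotopy equivalences in (i), and the hard direction of (ii) -- are on exactly the same footing as the paper. Your reduction in (ii) of the internal-trivial-fibration condition to arbitrary monomorphisms of finite simplicial sets (cellular filtration, plus stability of regular epimorphisms under pullback and composition) is correct and is the standard argument; the only imprecision is that $DI_n(f)$ is \emph{not} literally a matching map $\{T,X\} \to \{S,X\} \times_{\{S,Y\}} \{T,Y\}$ for a mono $S \to T$, since its source mixes $X$ and $Y$. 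One repairs this by precomposing with the map $\{\Delta^n,X\} \to \{\Delta^n,X\} \times_{\{\Delta^n,Y\}} \{D^{n+1},Y\}$ induced by the retraction $D^{n+1} \to \Delta^n$ (the ``constant homotopy''), noting the composite is the boundary matching map, and using that regular ($=$ strong) epimorphisms cancel on the right.

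The genuine gap is in (iii), in the claim that two-out-of-three and closure under retracts for Dugger--Isaksen weak equivalences ``follow by direct diagram-chasing in $\C$, using the standard closure properties of regular epimorphisms.'' They do not: the DI conditions are lifting-type conditions, and already in $sSet$ (or for simplicial presheaves, where Dugger and Isaksen work) the two-out-of-three property for this characterisation -- especially the case where $gf$ and $g$ are weak equivalences and one must deduce that $f$ is -- requires a substantial combinatorial argument, not a formal chase with pullback- and composition-stability of regular epimorphisms. The way this is actually handled in the internal setting, and the way this paper handles analogous statements elsewhere (e.g.\ the invariance of homotopy group objects, proved by ``use the embedding theorem \cite[Theorem 1.12]{Low}''), is to invoke Barr's embedding theorem: the DI and fibration conditions are expressed entirely in terms of finite limits and regular epimorphisms, so they are preserved and reflected by a full regular embedding into a presheaf category, where limits and regular epimorphisms are computed pointwise and the statements reduce to the known simplicial-set case. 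Your path-object construction for (iii) is fine in outline, but note that it too cannot be justified by ``the same sort of Lemma \ref{modelcotensor} reasoning,'' since that lemma is proved by applying $\underline{s\C}_{sSet}(\disc P,-)$, a device unavailable for the regular structure; instead one uses that the relevant pushout-products are finite anodyne extensions together with pullback/composition stability of regular epimorphisms, and then two-out-of-three (or part (ii)) to see that the section $X \to X^I$ of the trivial fibration $d_0$ is a weak equivalence. So either supply the embedding-theorem reduction (or the Dugger--Isaksen combinatorics) for two-out-of-three, or do what the paper does and cite \cite{Low} for the whole proposition.
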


\begin{lem}
\label{DIPsplit}
A map $f$ in $\Kan(\C,\proj)$ is a weak equivalence if and only if $DI_n(f)$ is $\proj$-split for all $n$.
\end{lem}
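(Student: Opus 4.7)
The plan is to reduce the statement to the corresponding fact about Kan complexes in $sSet$, via the standard trick of testing everything against $\underline{s\C}_{sSet}(\disc P,-)$ for $P \in \bar{\proj}$. By definition, $f$ is a weak equivalence in $\Kan(\C,\proj)$ if and only if $\underline{s\C}_{sSet}(\disc P,f)$ is a weak equivalence of simplicial sets for every $P \in \bar{\proj}$. Also, from the fibration characterisation in Lemma \ref{rightlifting} and Lemma \ref{Kanfibre}, if $X \in \Kan(\C,\proj)$ then $\underline{s\C}_{sSet}(\disc P,X)$ is a Kan complex in $sSet$ (this just uses that $\{\Delta^n,X\} \to \{\Lambda^n_k,X\}$ is $\proj$-split). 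So $\underline{s\C}_{sSet}(\disc P,f)$ is a map between Kan complexes in $sSet$, and by Proposition \ref{DI}(i) it is a weak equivalence if and only if it is a Dugger--Isaksen weak equivalence, i.e. $DI_n(\underline{s\C}_{sSet}(\disc P,f))$ is a surjection for every $n$.

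The remaining content is to identify $\C(P,DI_n(f))$ with $DI_n(\underline{s\C}_{sSet}(\disc P,f))$ naturally in $P$. This is a straightforward unwinding using the hom-tensor-cotensor adjunctions of Theorem \ref{tensorcotensor}: for any simplicial set $K$ and $X \in s\C$ one has
\[
\C(P,\{K,X\}) \;=\; \C(P,(K \pitchfork X)_0) \;=\; s\C(\disc P,K \pitchfork X) \;=\; sSet(K,\underline{s\C}_{sSet}(\disc P,X)).
\]
Since $\C(P,-)$ preserves limits, it commutes with the pullbacks defining the sources and targets in the diagram defining $DI_n(f)$, so applying $\C(P,-)$ to that whole diagram yields precisely the diagram computing $DI_n(\underline{s\C}_{sSet}(\disc P,f))$.

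Putting these together: $DI_n(f)$ is $\proj$-split for all $n$ iff $\C(P,DI_n(f))$ is surjective for all $P \in \bar{\proj}$ and all $n$ iff $DI_n(\underline{s\C}_{sSet}(\disc P,f))$ is surjective for all $P \in \bar{\proj}$ and all $n$ iff $\underline{s\C}_{sSet}(\disc P,f)$ is a Dugger--Isaksen weak equivalence for every $P \in \bar{\proj}$ iff (by Proposition \ref{DI}(i)) $\underline{s\C}_{sSet}(\disc P,f)$ is a weak equivalence of simplicial sets for every $P \in \bar{\proj}$ iff $f$ is a weak equivalence in $\Kan(\C,\proj)$.

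The only mildly delicate point is checking that $\underline{s\C}_{sSet}(\disc P,-)$ really does commute with the finite limits assembling $DI_n$, and that the source and target are Kan in $sSet$ so that the Dugger--Isaksen/usual-weak-equivalence coincidence applies; both are immediate from what has been established. So no real obstacle arises.
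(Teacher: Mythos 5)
Your proof is correct and follows essentially the same route as the paper's: unwind the definition of weak equivalence, invoke Proposition \ref{DI}(i) for the Kan complexes $\underline{s\C}_{sSet}(\disc P,-)$, and use the adjunction $\C(P,\{K,X\}) \cong sSet(K,\underline{s\C}_{sSet}(\disc P,X))$ together with preservation of pullbacks to identify $\C(P,DI_n(f))$ with $DI_n(\underline{s\C}_{sSet}(\disc P,f))$. No gaps.
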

\begin{proof}
$f$ is a weak equivalence if and only if $\underline{s\C}_{sSet}(\disc P,f)$ is a weak equivalence for all $P \in \proj$, if and only if $\underline{s\C}_{sSet}(\disc P,f)$ is a Dugger-Isaksen weak equivalence for all $P$ and $n$, if and only if
\begin{align*}
\{\Delta^n,\underline{s\C}_{sSet}(\disc P,X)\} \times_{\{\Delta^n,\underline{s\C}_{sSet}(\disc P,Y)\}} \{D^{n+1},\underline{s\C}_{sSet}(\disc P,Y)\} \to \\
\{\partial\Delta^n,\underline{s\C}_{sSet}(\disc P,X)\} \times_{\{\partial\Delta^n,\underline{s\C}_{sSet}(\disc P,Y)\}} \{\Delta^n,\underline{s\C}_{sSet}(\disc P,Y)\}
\end{align*}
is a surjection for all $P$ and $n$, if and only if $$\C(P,\{\Delta^n,X\} \times_{\{\Delta^n,Y\}} \{D^{n+1},Y\}) \to \C(P,\{\partial\Delta^n,X\} \times_{\{\partial\Delta^n,Y\}} \{\Delta^n,Y\})$$ is a surjection for all $P$ and $n$, if and only if $DI(f)$ is $\proj$-split.
\end{proof}

\begin{cor}
\label{wesareregular}
Suppose $(\C,\proj)$ has enough projectives, and that for every $X \in \C$ there is a regular epimorphism $P \xrightarrow{\proj} X$ with $P \in \proj$. Then $\id_{s\C}$ induces a canonical inclusion functor $i: \Kan(\C,\proj) \to \Kan(\C,reg)$ which preserves weak equivalences and fibrations.
\end{cor}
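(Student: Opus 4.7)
The strategy is to observe that the hypothesis of the corollary is precisely what is needed to upgrade "$\proj$-split" to "regular epimorphism" via Lemma \ref{regularproj}(ii), and then to apply this levelwise to the lifting maps defining fibrations, Kan complexes, and Dugger--Isaksen weak equivalences.

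First I would verify that the inclusion $i$ is well-defined on objects: if $X \in \Kan(\C,\proj)$ then $\{\Delta^n,X\} \to \{\Lambda^n_k,X\}$ is $\proj$-split by definition, and by Lemma \ref{regularproj}(ii) (using the hypothesis that every object admits a regular epimorphism from a projective) every $\proj$-split map is a regular epimorphism. Hence this map is a regular epimorphism, i.e.\ $X$ is an internal Kan complex, so $iX \in \Kan(\C,reg)$.

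Next, for fibrations: if $f\colon X \to Y$ is a fibration in $(s\C,\proj)$, then the induced map
\[
\{\Delta^n,X\} \to \{\Lambda^n_k,X\} \times_{\{\Lambda^n_k,Y\}} \{\Delta^n,Y\}
\]
is $\proj$-split for all $n,k$ (by the same computation as in Lemma \ref{rightlifting}, applied with $J$ in place of $I$). Again by Lemma \ref{regularproj}(ii), each such map is a regular epimorphism, which is exactly the condition for $f$ to be an internal fibration.

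Finally, for weak equivalences: suppose $f$ is a weak equivalence in $\Kan(\C,\proj)$. By Lemma \ref{DIPsplit}, this means $DI_n(f)$ is $\proj$-split for every $n$. A third application of Lemma \ref{regularproj}(ii) promotes each $DI_n(f)$ to a regular epimorphism, which is precisely the definition of $f$ being a Dugger--Isaksen weak equivalence. Since the underlying functor is $\id_{s\C}$, functoriality and the fact that $i$ is an inclusion on objects is immediate. There is no real obstacle here: the work was done in setting up Lemma \ref{regularproj} and Lemma \ref{DIPsplit}, and this corollary is simply the bookkeeping that assembles those facts.
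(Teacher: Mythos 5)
Your proposal is correct and follows essentially the same route as the paper: apply Lemma \ref{regularproj} to promote the $\proj$-split maps characterising Kan complexes, fibrations, and (via Lemma \ref{DIPsplit}) Dugger--Isaksen weak equivalences to regular epimorphisms, which is exactly the paper's argument. Nothing further is needed.
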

\begin{proof}
By Lemma \ref{regularproj}, $\proj$-split maps are regular epimorphisms. In particular, if $X$ is a Kan complex in $(s\C,\proj)$, $\{\Delta^n,X\} \to \{\Lambda^n_k,X\}$ is $\proj$-split for all $n,k$, so it is regular, and $X$ is internally fibrant, and we get the inclusion functor $i$, as required. Similarly, if $f: X \to Y$ in $\Kan(\C,\proj)$ is a fibration (respectively, weak equivalence) then $\{\Delta^n,X\} \to \{\Lambda^n_k,X\} \times_{\{\Lambda^n_k,Y\}} \{\Delta^n,Y\}$ (respectively, $DI(f)$) is $\proj$-split, hence regular, so $f$ is an internal fibration (respectively, Dugger-Isaksen weak equivalence).
\end{proof}

\begin{examples}
\begin{enumerate}[(i)]
\item We may always take $\proj$ to be the maximal class $\ob \C$. Then, for every $X \in \C$, $\id_X$ is a functorial $\proj$-split regular epimorphism from an object in $\proj$.
\item Recall that $\U$ is regular. For $(\U,\sqcup CH)$, the CH-subspaces functor gives a functorial $\proj$-split regular epimorphism from an object in $\proj$.
\item By Lemma \ref{Lawvereregular}, $\U Grp$ is regular; the $\proj$-split maps from projectives constructed in $\U Grp$ in Examples \ref{sU}(iv) are regular epimorphisms.
\end{enumerate}
\end{examples}

This is a very useful property, because we can define homotopy groups for objects in $\Kan(\C,reg)$ which are invariant under weak equivalences, so under the hypotheses of Corollary \ref{wesareregular} these homotopy groups are invariant under weak equivalences in $\Kan(\C,\proj)$ too.

Explicitly: suppose $\C$ is a Barr-exact category. Then the most efficient way to define the homotopy groups of $X \in \Kan(\C,reg)$, following \cite{Low}, is to let $\pi_0(X)$ be the coequaliser of $X_1 \rightrightarrows^{d_0}_{d_1} X_0$. This makes $\pi_0: \Kan(\C,reg) \to \C$ left adjoint to $\disc: \C \to \Kan(\C,reg)$. Also $\pi_0(\disc X) = X$ for all $X \in \C$.

There are canonical maps $\Delta^0 \leftarrow \partial\Delta^n \to \Delta^n$, inducing maps $$X = \Delta^0 \pitchfork X \to \partial\Delta^n \pitchfork X \leftarrow \Delta^n \pitchfork X.$$ Write $\Omega^nX$ for the pullback, which is in $\Kan(\C,reg)$: $\Delta^n \pitchfork X \to \partial\Delta^n \pitchfork X$ is an internal fibration by \cite[Corollary 1.16]{Low}; its pullback is an internal fibration because $\C$ is regular; so $\Omega^nX \to X \to \ast$ is an internal fibration. Then let $\pi_n(X) = \pi_0(\Omega^nX)$. This construction is clearly functorial.

Because $\Omega^nX$ is constructed as a limit, and $\pi_0(X)$ is constructed as a colimit, in general $\pi_0$ will commute with colimits but $\pi_n$ will not commute with general limits or colimits.

\begin{prop}
Weak equivalences $f: X \to Y$ in $\Kan(\C,reg)$ induce isomorphisms $\pi_n(f): \pi_n(X) \to \pi_n(Y)$. Homotopic maps $f \sim g: X \to Y$ induce the same maps $\pi_n(X) \to \pi_n(Y)$.
\end{prop}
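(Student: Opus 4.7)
My plan is to prove both claims for $\pi_0$ first, and then reduce the general statement to $\pi_0$ by observing that $\Omega^n$ preserves both homotopies and weak equivalences.

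Homotopy invariance of $\pi_0$: given $h : X \to \Delta^1 \pitchfork Y$ with $d_0 h = f$ and $d_1 h = g$, observe that $(\Delta^1 \pitchfork Y)_0 = \{\Delta^1, Y\} = Y_1$ by the Yoneda lemma. Thus $h_0 : X_0 \to Y_1$ satisfies $d_0 h_0 = f_0$ and $d_1 h_0 = g_0$. Since $\pi_0(Y) = \coeq(d_0, d_1 : Y_1 \rightrightarrows Y_0)$, the compositions $X_0 \rightrightarrows Y_0 \to \pi_0(Y)$ coming from $f_0$ and $g_0$ coincide, and hence so do their factorisations through the coequaliser $X_0 \twoheadrightarrow \pi_0(X)$.

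$\pi_0$ of a weak equivalence is an isomorphism: factor $f = pi$ via Lemma \ref{factor}, where $i$ is a strong deformation retract and $p$ is an internal fibration; by two-out-of-three, $p$ is an internal trivial fibration. Since $i$ is a right homotopy equivalence, the homotopy invariance just established shows $\pi_0(i)$ is an isomorphism, reducing the problem to an internal trivial fibration $p$. The $n = 0$ piece of the internal trivial fibration condition makes $p_0 : X_0 \to Y_0$ a regular epimorphism, so $\pi_0(p)$ is a regular epimorphism. For monicity, the $n = 1$ piece provides a regular epimorphism $X_1 \twoheadrightarrow (X_0 \times X_0) \times_{Y_0 \times Y_0} Y_1$; this identifies the subobject of $X_0 \times X_0$ consisting of pairs whose images in $Y_0$ are joined by a $1$-simplex of $Y$ with the image of $(d_0, d_1) : X_1 \to X_0 \times X_0$. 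Since $X$ is a Kan complex and $\C$ is Barr-exact, this image is precisely the kernel pair of $X_0 \twoheadrightarrow \pi_0(X)$. Hence the kernel pair of $X_0 \to \pi_0(Y)$ coincides with that of $X_0 \to \pi_0(X)$, forcing $\pi_0(p)$ to be a monomorphism. A regular epimorphism that is also a monomorphism is an isomorphism in a regular category.

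Reduction to $n = 0$: the construction $\Omega^n$ is built from cotensors and pullbacks, both of which commute with the cotensor $\Delta^1 \pitchfork -$, so $\Omega^n h$ witnesses a homotopy $\Omega^n f \sim \Omega^n g$ whenever $h$ witnesses $f \sim g$. This gives the general homotopy invariance. For the weak-equivalence half, $\Omega^n$ also preserves internal trivial fibrations (the internal analogue of Lemma \ref{modelcotensor} plus stability of regular epimorphisms under pullback in a regular category) and right homotopy equivalences, and so, via Lemma \ref{factor} and two-out-of-three, $\Omega^n$ preserves weak equivalences; applying the $\pi_0$ result to $\Omega^n f$ then gives the result.

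The main obstacle is the monicity step for $\pi_0(p)$: one must carefully translate the $n = 1$ regular-epimorphism condition into an identification of subobjects of $X_0 \times X_0$ and then invoke Barr-exactness and the Kan-complex property to identify the image of $(d_0, d_1)$ with the kernel pair of the quotient $X_0 \twoheadrightarrow \pi_0(X)$. The surjectivity, homotopy-invariance, and reduction-to-$n=0$ steps are essentially formal once this is in place.
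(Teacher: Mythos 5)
Your argument is correct in outline, but it is a genuinely different route from the paper's: the paper disposes of this proposition in one line by invoking the embedding theorem \cite[Theorem 1.12]{Low}, which reduces the statement to the classical facts for simplicial sets, whereas you work internally, combining the explicit coequaliser description of $\pi_0$, Brown's factorisation (Lemma \ref{factor}), and regular/exact-category diagram chasing. Two points in your sketch need to be made explicit. First, to conclude via two-out-of-three that $p$ is an internal trivial fibration you need $i$ to be a Dugger--Isaksen weak equivalence; ``strong deformation retract, hence weak equivalence'' is only proved in the paper for the $\proj$-split structure via the simplicial enrichment, so in $\Kan(\C,reg)$ you should instead argue that $i$ is a section of the retraction $r$, which Lemma \ref{factor} exhibits as a trivial fibration, hence a weak equivalence by Proposition \ref{DI}(ii), and then apply two-out-of-three. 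Second, your reduction to $n=0$ uses an internal analogue of Lemma \ref{modelcotensor} (so that $\Omega^n\cong(\Delta^n/\partial\Delta^n)\pitchfork-$ preserves internal trivial fibrations), and your monicity step uses that for an internal Kan complex the image of $(d_0,d_1)\colon X_1\to X_0\times X_0$ is an equivalence relation (reflexivity via $s_0$, symmetry and transitivity via the regular-epimorphism $\Lambda^2_k$-horn fillers and image factorisations); both facts are true and available in \cite{Low} and \cite{vO}, and together with Barr-exactness they yield the kernel-pair identification you flag as the main obstacle, after which the regular-epi-plus-mono conclusion is standard. The trade-off is clear: the embedding argument is shorter and uniformly handles all such statements at once, while your internal proof is self-contained, avoids the embedding machinery, and makes visible exactly where exactness of $\C$ is used (effectivity of the image relation), at the cost of supplying these internal lemmas.
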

\begin{proof}
Use the embedding theorem \cite[Theorem 1.12]{Low}.
\end{proof}

We may also fix some $x: \disc T \to X$; this is `choosing a basepoint'. Define $(\Omega^nX,x)$ to be the pullback of $\disc T \xrightarrow{x} X \leftarrow \Omega^nX$ and $\pi_n(X,x) = \pi_0(\Omega^nX,x)$. In this case, $\pi_n(X,x)$ is a group object in the slice category $\C/T$ for $n \geq 1$, commutative for $n \geq 2$. By taking $T$ to be the terminal object in $\C$, we get that $\pi_n(X,x)$ is a group object in $\C$.

\begin{rem}
\cite[p.1196]{vO} suggests taking the unary map $e$ giving the identity in the definition of group object to be a map from a subobject of the terminal object.
\end{rem}

\begin{prop}
A map $f: X \to Y$ which induces isomorphisms $\pi_0(f): \pi_0(X) \to \pi_0(Y)$ and $\pi_n(f): \pi_n(X,x) \to \pi_n(Y,fx)$ for all $n$ and all basepoints $x$ is a weak equivalence in $\Kan(\C,reg)$.
\end{prop}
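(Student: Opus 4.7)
The plan is to reduce to the classical Whitehead theorem for Kan complexes in $sSet$ via an embedding theorem. By the embedding theorem underlying the proof of the previous proposition (a Barr-type embedding), there is a jointly conservative family of regular functors $F_i: \C \to Set$; each $F_i$ preserves finite limits and regular epimorphisms, and since $\C$ is Barr-exact, each $F_i$ also preserves the coequalisers of kernel pairs that compute quotients by equivalence relations. Extending levelwise, each $F_i$ gives a functor $s\C \to sSet$ that commutes with cotensors by finite simplicial sets (these being finite limits), sends internal fibrations to Kan fibrations and Dugger--Isaksen weak equivalences to weak homotopy equivalences, and thus restricts to a functor $\Kan(\C,reg) \to \Kan Set$.

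Next, I would show that each $F_i$ commutes with the $\pi_n$ construction. Since $\pi_0(X)$ is the quotient of $X_0$ by the image of $(d_0,d_1): X_1 \to X_0 \times X_0$, built from finite limits, regular images, and exact quotients by equivalence relations, we get $F_i(\pi_0(X)) \cong \pi_0(F_i(X))$. For the basepointed $\pi_n$, $F_i$ preserves the pullback defining $(\Omega^n X,x)$, and for any $T \in \C$ the induced functor $\C/T \to Set/F_i(T)$ is again regular, so $F_i(\pi_n(X,x)) \cong \pi_n(F_i(X),F_i(x))$ as objects of $Set/F_i(T)$.

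Now apply the hypothesis to the generic basepoint $x: \disc X_0 \to X$ corresponding to $\id_{X_0}$ under the adjunction $\disc \dashv (-)_0$. By assumption $\pi_n(f): \pi_n(X,x) \to \pi_n(Y,fx)$ is an isomorphism in $\C/X_0$ for every $n$, and $\pi_0(f)$ is an isomorphism. Applying $F_i$ produces an isomorphism $\pi_0(F_i(f))$, together with isomorphisms $\pi_n(F_i(X),F_i(x)) \to \pi_n(F_i(Y),F_i(fx))$ in $Set/F_i(X_0)$; the latter is equivalent to a fibrewise bijection, so $\pi_n(F_i(X),y) \to \pi_n(F_i(Y),F_i(f)(y))$ is a bijection for every $y \in F_i(X)_0$. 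By the classical Whitehead theorem for Kan complexes in $sSet$, each $F_i(f)$ is a weak equivalence of simplicial sets, so each $F_i(DI_n(f)) = DI_n(F_i(f))$ is surjective. Since the $F_i$ are jointly conservative for regular epimorphisms, $DI_n(f)$ is a regular epimorphism in $\C$ for every $n$, and $f$ is a Dugger--Isaksen weak equivalence.

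The step I expect to require most care is the bookkeeping around basepoints: ensuring that basepoints $x: \disc T \to X$ in $\C$ suffice, after embedding, to detect $\pi_n$-isomorphisms at every point of $F_i(X)_0$. The trick is the generic-basepoint construction above, which leverages that an isomorphism in a slice $Set/S$ is just a fibrewise bijection. A secondary point to verify is that $F_i$ really preserves the full $\pi_n$ machinery, even after passing to slice categories, which follows from the fact that slice functors of regular functors are themselves regular.
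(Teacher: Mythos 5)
Your argument is essentially correct, but it takes a different route from the paper: the paper disposes of this statement by citing Low's internal Whitehead theorem (\cite[Theorem 1.21]{Low}), whereas you reprove that result by pushing everything into $Set$ along a Barr-type embedding and invoking the classical Whitehead theorem for Kan complexes. The pivotal device in your write-up -- taking the generic basepoint $x\colon \disc X_0 \to X$ adjoint to $\id_{X_0}$, so that after applying a regular functor $F_i$ one obtains basepoints at \emph{every} vertex of $F_i(X)$, with ``isomorphism in $Set/F_i(X_0)$'' unwinding to fibrewise bijections -- is exactly the right way to bridge the gap between the internal hypothesis (basepoints $\disc T \to X$) and the pointwise hypotheses of the classical theorem; this is the step most likely to be fudged, and you handle it correctly. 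The remaining mechanism is sound: regular functors preserve the finite weighted limits $\{\Delta^n,-\}$, $\{\partial\Delta^n,-\}$, $\{D^{n+1},-\}$ and hence commute with $DI_n$ and with $\Omega^n$; they preserve exact sequences, and since $\C$ is Barr-exact every equivalence relation is effective, so they preserve the quotient computing $\pi_0$ of an internal Kan complex (here you are implicitly using that fibrancy makes the image of $(d_0,d_1)$ an equivalence relation -- worth saying). What your approach buys is a self-contained proof and a clear picture of which pieces of the $\pi_n$ machinery are finite-limit/exact-quotient constructions; what the paper's citation buys is avoidance of the bookkeeping below.

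Two points need to be made honest rather than gestured at. First, Barr's embedding theorem is a statement about \emph{small} exact categories, while $\C$ here is complete and cocomplete, hence large; you must either cut down to a small full exact subcategory containing all the objects occurring in the argument (the $X_n$, $Y_n$, the relevant finite limits, images and quotients, chosen so that inclusion preserves and reflects regular epimorphisms), or lean on the form of the embedding theorem the paper already invokes (\cite[Theorem 1.12]{Low}), which is designed for this use. Second, ``jointly conservative for regular epimorphisms'' is not automatic from joint conservativity alone: you should note that if each $F_i(DI_n(f))$ is surjective, then factoring $DI_n(f)$ as a regular epi followed by a mono and using that each $F_i$ preserves this factorisation, each $F_i$ of the mono is mono and epi in $Set$, hence iso, so joint conservativity makes the mono invertible and $DI_n(f)$ a regular epimorphism. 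With these two repairs the proof is complete.
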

\begin{proof}
\cite[Theorem 1.21]{Low}
\end{proof}

Either by using an embedding theorem like \cite[Theorem 1.12]{Low}, or by working internally in $\Kan(\C,reg)$, a lot of the nice properties of the usual homotopy groups of spaces can be shown here: the $\pi_n(X,x)$ become modules for $\pi_1(X,x)$, fibre sequences give long exact sequences of homotopy group objects, $\pi_n(X,x) \cong \pi_n(X,y)$ for $x,y$ `in the same path-component', in an appropriate sense, and so on. See \cite{vO} for more on this.

These definitions recover the usual definitions of homotopy groups on spaces. Given $X \in \U$ with basepoint $x$, write $\pi^{abs}_n(X,x)$ for the usual $n$th homotopy group of $(X,x)$; then $\pi^{abs}_n(X,x) = \pi_n(\Sing^{abs}(X),x)$ (where the last $x$ is the map $$x:\disc\{\ast\} \to \Sing^{abs}(X), \ast \mapsto (\Delta^n \to \{x\})).$$ We would like to define topological homotopy groups similarly by $\pi_n(X,x) = \pi_n(\Sing(X),x)$, but $\U$ is not Barr-exact.

Suppose now that $\C$ is regular. We say a sequence $Z \rightrightarrows^p_q X \xrightarrow{f} Y$ is exact if $(p,q)$ is the kernel pair of $f$ and $f$ is the coequaliser of $(p,q)$. We say a functor between regular categories is exact if it preserves finite limits and regular epimorphisms (this is stronger than preserving exact sequences; see \cite{Barr}). We define the \emph{exact completion} $\C_{ex}$ of $\C$ by the following universal property: $\C_{ex}$ is Barr-exact, there is an exact functor $\varepsilon: \C \to \C_{ex}$, and every exact functor from $\C$ to a Barr-exact category factors uniquely through $\varepsilon$.

It is possible to give an explicit construction of $\C_{ex}$ in terms of $\C$; indeed, this construction is usually given as the definition. The source for statements here is \cite{SC}. The objects of $\C_{ex}$ are pairs $(X,R)$, where $X \in \C$ and $R$ is an equivalence relation on $X$. The morphisms $(X,R) \to (Y,S)$ are relations $f$ from $X$ to $Y$ such that $Sf = f = fR$, $R \leq f^\circ f$ and $f f^\circ \leq S$, and composition is the composition of relations.

The functor $\varepsilon$ is given by $X \mapsto (X,1_X)$ on objects, and sends a map $f$ to its graph.

It is proved that the construction satisfies the universal property in \cite[Proposizione 3.15]{SC} -- note that \cite{SC} uses extremal epimorphisms where we use regular epimorphisms; in our complete category the two are equivalent. Modulo set theory, this makes the `category' of Barr-exact categories a reflective sub`category' of regular categories.

The other facts we will need from \cite[Section 3]{SC} are contained in the following proposition.

\begin{prop}
\phantomsection
\label{excompletion}
\begin{enumerate}[(i)]
\item $\varepsilon$ makes $\C$ a full subcategory of $\C_{ex}$.
\item A morphism $f: (X,R) \to (Y,S)$ is a monomorphism if and only if $f^\circ f = R$, and is a regular epimorphism if and only if $f f^\circ = S$.
\item Every relation $R$ from $X$ to $Y$ in $\C$ has a \emph{canonical decomposition} $(Z,p,q)$: there are maps $p: Z \to X$, $q: Z \to Y$ in $\C$ such that $R = qp^\circ$, and for every other decomposition $X \xrightarrow{r^\circ} W \xrightarrow{s} Y$ of $R$, there is a unique $t: W \to Z$ such that $r=pt$ and $s=qt$.
\item If $(Z,p,q)$ is the canonical decomposition in $\C$ of an equivalence relation $R$ on $X \in \C$, then $(Z,1_Z) \rightrightarrows^p_q (X,1_X) \xrightarrow{R} (X,R)$ is an exact sequence in $\C_{ex}$.
\item For a map $(X,R) \to (Y,S)$ in $\C_{ex}$, its canonical decomposition is $$(X,R) \xleftarrow{Rp} (E,T) \xrightarrow{Sq} (Y,S),$$ where $X \xleftarrow{p} E \xrightarrow{q} Y$ is the canonical decomposition in $\C$ and $T = p^\circ R p \cap q^\circ S q$, and $Rp$ is an isomorphism.
\end{enumerate}
\end{prop}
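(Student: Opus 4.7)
The plan is to tackle the five parts in the order (iii), (i), (ii), (iv), (v), since the later parts depend on the earlier ones. All five are standard facts about the exact completion collected from \cite[Section 3]{SC}, and the whole toolbox consists of the calculus of relations together with the characterisation recalled earlier in the section: a relation $f$ from $X$ to $Y$ is the graph of a morphism if and only if $1_X \leq f^\circ f$ and $ff^\circ \leq 1_Y$.

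For (iii), I would take $Z := R$ itself, viewed as a subobject of $X \times Y$, and let $p,q$ be the composites of this inclusion with the two projections; the identity $R = qp^\circ$ drops out of the pullback definition of the composite of relations. The universal property reduces to the universal property of $R \rightarrowtail X \times Y$ as a subobject: any competing decomposition $X \xleftarrow{r} W \xrightarrow{s} Y$ with $R = sr^\circ$ gives a map $(r,s): W \to X \times Y$ whose image lies in $R$, yielding the unique $t: W \to Z$ with $r = pt$ and $s = qt$. Part (i) is then immediate from the characterisation quoted above: the conditions on a relation $f: (X,1_X) \to (Y,1_Y)$ in $\C_{ex}$ collapse to exactly the graph conditions, and composition of graphs as relations is the graph of the composite, so $\varepsilon$ is fully faithful.

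For (ii), the starting observation is that $f^\circ f$ is always an equivalence relation on $X$ with $R \leq f^\circ f$, while $ff^\circ \leq S$ always. For monomorphisms, using (iii) to realise $f^\circ f$ as the pair of projections from its canonical decomposition, one produces two parallel morphisms $(W,Q) \rightrightarrows (X,R)$ becoming equal after post-composition with $f$, and the monomorphism condition forces $f^\circ f = R$; the converse is a direct computation in the calculus of relations. The regular epimorphism case is dual, identifying $ff^\circ$ with the kernel-pair-style equivalence induced by $f$ on $Y$. Part (iv) is then a direct verification: by (iii), pairs of morphisms into $(X,1_X)$ agreeing after post-composition with $R: (X,1_X) \to (X,R)$ factor uniquely through $(Z,p,q)$, so $(p,q)$ is the kernel pair of $R$; that $R$ is the coequaliser follows because $Rp = Rq$ holds automatically and the universal property can be read off from the relation criterion for morphisms out of $(X,R)$.

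The main work sits in part (v): given $f:(X,R) \to (Y,S)$, take the canonical decomposition $(E,p,q)$ of the underlying relation of $f$ via (iii), and set $T = p^\circ R p \cap q^\circ S q$. I would check first that $T$ is an equivalence relation on $E$ (intersections of equivalence relations are equivalence relations, using Proposition \ref{relation}), then that $Rp:(E,T) \to (X,R)$ and $Sq:(E,T) \to (Y,S)$ are well-defined morphisms in $\C_{ex}$, using the morphism conditions $pp^\circ \leq 1_X$, $qq^\circ \leq 1_Y$, $RR = R$, $SS = S$, and finally that $(Sq)(Rp)^\circ$ recovers $f$. The main obstacle, and the step I would attack last, is verifying that $Rp$ is an \emph{isomorphism} in $\C_{ex}$ rather than only a regular epimorphism: this requires combining the monomorphism criterion from (ii) with the precise form of $T$, exploiting the compatibility $qp^\circ R = qp^\circ = Sqp^\circ$ coming from $f$ being a morphism in $\C_{ex}$, and it is here that the bulk of the bookkeeping with $-^\circ$, $\leq$ and $\cap$ of relations lives.
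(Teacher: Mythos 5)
The paper offers no argument for this proposition at all: it is imported wholesale from \cite[Section 3]{SC} (``The other facts we will need from [SC, Section 3] are contained in the following proposition''), so there is no internal proof to compare against, and your sketch is essentially a reconstruction of the standard arguments of the cited source. Most of it is on target: (iii) via the tabulation $R\rightarrowtail X\times Y$ with its two projections, (i) from the graph criterion, and the overall plan for (v) are correct. For the final step of (v) the cleanest finish is to apply your own part (ii) twice: $(Rp)^\circ(Rp)=p^\circ Rp$, which equals $T$ because $p^\circ Rp\leq q^\circ Sq$ (indeed $q(p^\circ Rp)q^\circ=fRf^\circ=ff^\circ\leq S$ and $1_E\leq q^\circ q$), so $Rp$ is monic; and $(Rp)(Rp)^\circ=Rpp^\circ R=R$, since totality $1_X\leq f^\circ f$ forces $pp^\circ=1_X$; a monic regular epimorphism is an isomorphism, so $Rp$ is invertible and $(Sq)(Rp)^\circ=Sqp^\circ R=SfR=f$ as required.

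Two places in your outline are looser than the argument can bear as written. In (iv), the kernel-pair property of $(p,q)$ does not follow from (iii): the universal property there only quantifies over decompositions $R=sr^\circ$ by genuine maps $r,s$ of $\C$, whereas a kernel pair in $\C_{ex}$ must be tested against arbitrary objects $(W,Q)$, whose morphisms $a,b:(W,Q)\to(X,1_X)$ are relations, not maps of $\C$. You need to build the comparison morphism explicitly as a relation, e.g.\ $c=p^\circ a\cap q^\circ b$, and check the morphism axioms together with $pc=a$, $qc=b$; the coequaliser half, by contrast, does go the way you indicate (given $h$ with $hp=hq$, the same relation $h$ serves as the induced morphism out of $(X,R)$). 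Likewise, in (ii) the regular-epimorphism case is not formally dual to the monomorphism case: to show $ff^\circ=S$ implies $f$ is a regular epimorphism you must exhibit an actual coequaliser presentation (e.g.\ $f$ coequalises the canonical-decomposition pair of the equivalence relation $f^\circ f$, with the induced map out of $(Y,S)$ given by $gf^\circ$), and the converse uses the regular epi--mono factorisation through $(X,f^\circ f)$ together with the usual split-epi argument. None of this is deep --- these are exactly the computations carried out in \cite{SC} --- but as stated these two steps are gaps rather than proofs.
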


\begin{thm}
\label{delta}
$\varepsilon: \C \to \C_{ex}$ has a left adjoint $\delta: \C_{ex} \to \C$, which is exact if and only if $\C$ is Barr-exact.
\end{thm}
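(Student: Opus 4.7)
The plan is to construct $\delta(X,R)$ as the coequaliser of the canonical decomposition of $R$, establish the adjunction by unwinding what morphisms $(X,R) \to \varepsilon Y$ look like, and deduce exactness from Proposition \ref{excompletion}(iv). Concretely, for $(X,R) \in \C_{ex}$ let $(Z,p,q)$ be the canonical decomposition of $R$ in $\C$ and set $\delta(X,R) := \coeq(p,q \colon Z \rightrightarrows X)$; this exists by cocompleteness of $\C$.

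The heart of the matter is the natural bijection $\C_{ex}((X,R),\varepsilon Y) \cong \C(\delta(X,R),Y)$. A morphism on the left is a relation $f$ from $X$ to $Y$ satisfying $1_Y f = f = fR$, $R \leq f^\circ f$, and $ff^\circ \leq 1_Y$. Combining $ff^\circ \leq 1_Y$ with $1_X \leq R \leq f^\circ f$ (via reflexivity of $R$) forces $f$ to be the graph of a map $X \to Y$ in $\C$. For such $f$, writing $R = qp^\circ$, the inclusion $f \leq fR$ is automatic from $1_X \leq R$, while $fR = fqp^\circ \leq f$ unravels exactly to the condition $fp = fq$. Hence morphisms $(X,R) \to \varepsilon Y$ biject naturally with maps $X \to Y$ coequalising $p$ and $q$, which are precisely maps $\delta(X,R) \to Y$. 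This identifies $\delta$ as a left adjoint, with its action on morphisms forced by the universal property. Since $\varepsilon$ is fully faithful (Proposition \ref{excompletion}(i)), the counit $\delta\varepsilon \to \id_\C$ is automatically an isomorphism; the unit $(X,R) \to \varepsilon\delta(X,R) = (\delta(X,R),1)$ corresponds under the bijection to the quotient map $X \to \delta(X,R)$ in $\C$, and by Proposition \ref{excompletion}(ii) it is an isomorphism precisely when $R$ is the kernel pair of this quotient map.

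As a left adjoint $\delta$ preserves all colimits, and in particular regular epimorphisms, so its exactness reduces to preservation of finite limits. If $\C$ is Barr-exact, every equivalence relation is a kernel pair, so every unit is an isomorphism; together with the counit being iso, this makes $\varepsilon$ (and hence $\delta$) an equivalence of categories, which is in particular exact. Conversely, suppose $\delta$ is exact, and let $R$ be an arbitrary equivalence relation on some $X$ in $\C$. Apply $\delta$ to the exact sequence $(Z,1_Z) \rightrightarrows (X,1_X) \to (X,R)$ of Proposition \ref{excompletion}(iv) to obtain an exact sequence $Z \rightrightarrows X \to \delta(X,R)$ in $\C$; exactness says $(p,q)$ is the kernel pair of $X \to \delta(X,R)$, and since $(p,q) \colon Z \to X \times X$ is monic with image $R$ (canonical decomposition), we conclude that $R$ itself is a kernel pair. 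So every equivalence relation in $\C$ is a kernel pair, i.e., $\C$ is Barr-exact.

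The main obstacle is the calculus-of-relations bookkeeping in the second paragraph: verifying that the four axioms defining a morphism in $\C_{ex}$ reduce, in the case of target $\varepsilon Y$, to $f$ being a map satisfying $fp = fq$. Once this identification is made, the rest is formal adjunction machinery together with the structure of $\C_{ex}$ already recorded in Proposition \ref{excompletion}.
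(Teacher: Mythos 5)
Your proposal is correct, and it reaches the adjunction by a genuinely different route from the paper. Both proofs define $\delta(X,R)$ identically on objects, as the coequaliser of the span tabulating $R$, and both settle the exactness criterion the same way (forward: Barr-exactness makes the unit invertible, so $\varepsilon$ and $\delta$ are an equivalence; converse: apply $\delta$ to the exact sequence $(Z,1_Z)\rightrightarrows(X,1_X)\to(X,R)$ of Proposition \ref{excompletion}(iv) and read off that $R$ is a kernel pair). Where you diverge is the adjunction itself. The paper never unwinds $\C_{ex}((X,R),\varepsilon Y)$ by hand: it first constructs $\delta$ on morphisms explicitly, using the canonical decomposition of a morphism from Proposition \ref{excompletion}(v) and a claim that the induced map $p'\colon\delta(E,T)\to\delta(X,R)$ is invertible, and then proves that claim and the natural isomorphism $\C(\delta(X,R),-)\cong\C_{ex}((X,R),\varepsilon(-))$ simultaneously by expressing the right-hand side as an equaliser of representables via the exact sequence of (iv). You instead compute the hom-set directly in the calculus of relations --- showing every morphism $(X,R)\to\varepsilon Y$ is the graph of a map $X\to Y$ coequalising $p,q$, and conversely --- and then let the pointwise universal property produce $\delta$ on morphisms for free, so the $p'$-isomorphism claim and part (v) are not needed at all. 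What each buys: your route is more elementary and self-contained (only (i)--(iv) plus the stated graph criterion are used), at the cost of the relation-calculus bookkeeping you flag; the paper's route trades that bookkeeping for representability formalities and yields an explicit formula for $\delta$ on morphisms. Two steps you compress are routine but worth noting: in the forward direction one needs that an effective equivalence relation is the kernel pair of \emph{its own} coequaliser (standard in a regular category, via the image factorisation of the map exhibiting it as a kernel pair), and in the converse one should observe that exactness makes $(p,q)\colon Z\to X\times X$ a kernel pair, hence jointly monic, whose image as a subobject is $qp^\circ=R$, so $R$ itself is the kernel-pair subobject; neither affects the validity of your argument.
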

\begin{proof}
For $(X,R) \in \C_{ex}$, by composing $R \to X \times X$ with the two projection maps to $X$ we may think of this equivalence relation as a pair of maps $R \rightrightarrows^{d_1}_{d_2} X$. Define $\delta(X,R) = \coeq(d_1,d_2)$.

Defining the morphisms in $\C_{ex}$ is more delicate. For a map $(X,R) \to (Y,S)$, take the canonical decomposition $(X,R) \xleftarrow{Rp} (E,T) \xrightarrow{Sq} (Y,S)$. Now $qTq^\circ \leq qq^\circ S qq^\circ \leq S$, so the composite $T \to E \times E \xrightarrow{q \times q} Y \times Y$ factors as $T \xrightarrow{q} S \to Y$, and we get a commutative diagram
\[
\xymatrix{E \ar[r]^{q} \ar[d] & S \ar[d] \\
T \times T \ar[r]^{q \times q} & Y \times Y.}
\]
By the same argument we also get a commutative diagram
\[
\xymatrix{E \ar[r]^{p} \ar[d] & R \ar[d] \\
T \times T \ar[r]^{p \times p} & X \times X.}
\]
Each of these maps of diagrams functorially induces a map of the coequalisers, $$\delta(X,R) \xleftarrow{p'} \delta(E,T) \xrightarrow{q'} \delta(Y,S);$$ indeed the whole construction has been functorial. I claim $p'$ is an isomorphism, so that defining $\delta((X,R) \to (Y,S)) = q'(p')^{-1}$ makes $\delta$ into a functor.

We will prove this claim and the adjunction of $\delta$ with $\varepsilon$ simultaneously. This is a notational device: strictly speaking, we are first using the following argument to prove the claim, then using it for the adjunction. Construct for $(X,R)$ the exact sequence $(Z,1_Z) \rightrightarrows^r_s (X,1_X) \xrightarrow{R} (X,R)$ given in Proposition \ref{excompletion}(iv). It is easy to check that $\delta(X,R)$ is actually the coequaliser in $\C$ of $(r,s)$ (here we are identifying maps in $\C$ with their image in $\C_{ex}$; $r$ and $s$ are in $\C$ because it is full). Then
\begin{align*}
\C(\delta(X,R),-) &= \eq(\C(X,-) \rightrightarrows \C(Z,-)) \\
&= \eq(\C_{ex}((X,1_X),\varepsilon(-)) \rightrightarrows \C_{ex}((Z,1_Z),\varepsilon(-))) \\
&= \C_{ex}((X,R),\varepsilon(-))
\end{align*}
naturally in all the arguments. The same argument shows $\C(\delta(E,T),-) = \C_{ex}((E,T),\varepsilon(-))$, and together with the induced map $\C_{ex}((X,R),\varepsilon(-)) \to \C_{ex}((E,T),\varepsilon(-))$, which is a natural isomorphism by Proposition \ref{excompletion}(v), we get that the induced map $\C(\delta(X,R),-) \to \C(\delta(E,T),-)$ is a natural isomorphism too, so $p'$ is an isomorphism by the Yoneda lemma.

Finally, if $\C$ is Barr-exact $\varepsilon$ and $\delta$ are equivalences of categories, so $\delta$ is clearly exact. Conversely, if $(X,R)$ is an ineffective equivalence relation in $\C$, $\delta$ sends the exact sequence $(Z,1_Z) \rightrightarrows^r_s (X,1_X) \xrightarrow{R} (X,R)$ to $Z \rightrightarrows X \to \delta(X,R)$, but $X \to \delta(X,R)$ is ineffective.
\end{proof}

This makes $\C$ into a reflective subcategory of $\C_{ex}$, and we will identify objects $X \in \C$ with their images $(X,1_X) \in \C_{ex}$.

$\varepsilon: \C \to \C_{ex}$ induces a functor $s\C \to s\C_{ex}$. Because $\varepsilon$ preserves finite limits and regular epimorphisms, this restricts to a functor $\Kan(\C,reg) \to \Kan(\C_{ex},reg)$ which preserves Dugger-Isaksen weak equivalences and internal fibrations, inducing $\gamma: Ho(\Kan(\C,reg)) \to Ho(\Kan(\C_{ex},reg))$ on the homotopy categories. (It is shown in \cite{Brown} that these homotopy categories exist.)

\begin{thm}
$\gamma$ is an equivalence of categories.
\end{thm}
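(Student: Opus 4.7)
The strategy is to check that $\gamma$ is both essentially surjective and fully faithful. Because $\varepsilon:\C\to\C_{ex}$ is exact and fully faithful (Proposition \ref{excompletion}(i)), the induced functor $s\varepsilon$ preserves and reflects finite limits and regular epimorphisms level-wise; hence it preserves and reflects internal fibrations, internal trivial fibrations, and Dugger--Isaksen weak equivalences, and it preserves the path object $\Delta^1\pitchfork(-)$ constructed in Theorem \ref{Kan}. In particular $\gamma$ is well-defined, and right-homotopy of maps in $\Kan(\C,reg)$ corresponds bijectively to right-homotopy of their images in $\Kan(\C_{ex},reg)$.

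In a category of fibrant objects the hom-sets of the homotopy category can be computed as equivalence classes of zigzags $X\xleftarrow{s} W\xrightarrow{f} Y$ with $s$ a trivial fibration, modulo the homotopy relation generated by path objects. Combined with the preservation/reflection properties above, this shows at once that $\gamma$ is faithful and reduces fullness to the following claim: any zigzag in $\Kan(\C_{ex},reg)$ whose endpoints lie in $\varepsilon(\Kan(\C,reg))$ can be replaced by one whose middle vertex also lies there. Both fullness and essential surjectivity therefore reduce to proving that for every $X\in\Kan(\C_{ex},reg)$ there exist $Y\in\Kan(\C,reg)$ and a weak equivalence $\varepsilon Y\xrightarrow{\sim} X$ (possibly through an intermediate zigzag).

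To construct such a $Y$, one resolves $X$ via the canonical decompositions of Proposition \ref{excompletion}(iv): for each $n$, $X_n$ fits into an exact sequence $\varepsilon Z_n\rightrightarrows \varepsilon A_n\twoheadrightarrow X_n$ with $A_n,Z_n\in\C$. Assembling the \v Cech nerves of $\varepsilon A_n\twoheadrightarrow X_n$ into a bisimplicial object $V_{\bullet,\bullet}$ in $s\C_{ex}^{\Delta^{op}}$, the bisimplicial weak-equivalence lemma stated just after Lemma \ref{simpfib} gives that $X\to\diag V_{\bullet,\bullet}$ is a weak equivalence, provided each row augmentation $V_{n,\bullet}\to X_n$ is one: this holds because the \v Cech nerve of a regular epimorphism is contractible in the internal sense in a regular category, by Proposition \ref{DI}. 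Arranging the $A_n$ coherently, the diagonal takes the form $\varepsilon Y$ for some $Y\in s\C$; one then passes to an internal Kan complex by using Lemma \ref{Kanfibre}(iii) and the fact that $\varepsilon$ preserves and reflects internal fibrations.

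The principal obstacle is making the choices of $A_n$ compatible with the face and degeneracy maps of $X$: those maps live in $\C_{ex}$ and need not lift to $\C$ once one picks the $A_n$. I expect the cleanest way around this is to replace the ad-hoc choice of $\varepsilon A_n\twoheadrightarrow X_n$ by the unit $\eta:X\to\varepsilon\delta X$ of the adjunction of Theorem \ref{delta} applied level-wise, or equivalently by the simplicial resolution arising from the comonad $\varepsilon\delta$ on $\C_{ex}$; all entries then lie automatically in $\varepsilon(\C)$ and the structure maps come from $\C$ by construction. Checking that the resulting $Y$ is actually a Kan complex in $\C$ (rather than just a cofibrant resolution) is a subsidiary but delicate point that will also require the hypothesis, implicit in the set-up, that $\varepsilon$ reflects the simplicial lifting property against $\Lambda^n_k\to\Delta^n$.
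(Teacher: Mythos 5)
Your reduction of the theorem to the statement ``every $X \in \Kan(\C_{ex},reg)$ receives a weak equivalence from some object of $\Kan(\C,reg)$'', via Brown's zigzag description of the homotopy category, is exactly the right skeleton and matches the paper. The gap is in the construction of that resolution, and it is precisely at the point you flag as the ``principal obstacle''. Your proposed fix does not work: since $\delta$ is \emph{left} adjoint to $\varepsilon$ (Theorem \ref{delta}), the composite $\varepsilon\delta$ is a monad, not a comonad, so it has a unit $X \to \varepsilon\delta X$ rather than a counit mapping onto $X$; iterating it produces a coaugmented cosimplicial object, not a simplicial resolution of $X$ by objects of $\varepsilon(\C)$. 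Worse, the unit points the wrong way for your purposes and is far from a weak equivalence: $\delta(X,R)$ is the coequaliser of the relation, so $X \to \varepsilon\delta X$ collapses $R$ (and $\delta$ fails to be exact unless $\C$ is already Barr-exact, again Theorem \ref{delta}). The levelwise \v Cech-nerve variant has the compatibility problem you yourself identify -- the covers $(X_n,1_{X_n}) \twoheadrightarrow (X_n,R_n)$ cannot in general be chosen compatibly with the face and degeneracy maps of $X$, since those live only in $\C_{ex}$ -- and it additionally invokes a bisimplicial weak-equivalence lemma and a contractibility statement for internal \v Cech nerves in the Dugger--Isaksen setting that are not established in the paper.

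The paper circumvents exactly this difficulty by a different mechanism: apply Proposition \ref{factorisation} in $s\C_{ex}$ with $\proj = \ob\C$ and $\M$ the class of regular epimorphisms (for each $(X,R)$ one has the regular epimorphism $(X,1_X) \to (X,R)$, even though this cannot be made functorial). Factoring $\emptyset \to X$ yields $Q(X) \to X$ with $Q(X)$ an $I$-cell complex, hence levelwise built from objects of $\C$, and with the matching maps in $\M$, i.e.\ an internal trivial fibration; composing with $X \to \ast$ shows $Q(X) \in \Kan(\C,reg)$. The cell-by-cell (skeletal) nature of that factorisation is what makes the incompatible levelwise choices harmless, which is the idea missing from your argument. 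Once this replacement $Q(-)$ is available, your treatment of fullness and faithfulness (replace the middle vertex $W$ of a span by $Q(W)$, using that $\varepsilon$ preserves and reflects fibrations and weak equivalences) is the same as the paper's and is fine.
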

\begin{proof}
Let $\M$ be the class of regular epimorphisms in $\C_{ex}$, and let $\proj = \ob \C$. For $(X,R) \in \C_{ex}$, we have a regular epimorphism $(X,1_X) \to (X,R)$. Unfortunately there seems to be no good way to make a functorial choice of regular epimorphism from $\C$ (we might have $(X,R) \cong (Y,S)$ with $X \neq Y$). Now we use Proposition \ref{factorisation} to factor the map from the initial object to some $Y \in s\C_{ex}$ through an internal trivial fibration $Q(Y) \to Y$ with $Q(Y) \in s\C$. In particular, for $Y \in \Kan(\C_{ex},reg)$, the composite $Q(Y) \to Y \to \ast$ is a fibration in $s\C_{ex}$, and hence a fibration in $s\C$, so $Q(Y) \in \Kan(\C,reg)$.

Therefore $\gamma$ is essentially surjective. We will show that $\gamma$ is essentially surjective, full and faithful, using the description of the homotopy category from \cite[Theorem 1]{Brown}: any map $Y \to Z$ in the homotopy category of a category of fibrant objects can be represented by $Y \leftarrow W \to Z$ where the map to the left is a weak equivalence.

It is clear that $\gamma$ is faithful. $\gamma$ is full: suppose $Y,Z \in \Kan(\C,reg)$. Suppose $Y \xleftarrow{t} W \xrightarrow{f} Z$ represents a map $Y \to Z$ in $Ho(\Kan(\C_{ex},reg))$. Take $Q(W) \in \C_{reg}$ and a trivial fibration $q: Q(W) \to W$: then $Y \xleftarrow{tq} Q(W) \xrightarrow{fq} Z$ represents the same map in $Ho(\Kan(\C_{ex},reg))$, and is in $Ho(\Kan(\C,reg))$.
\end{proof}

Given $X \in \Kan(\C,reg)$, and a map $x: \disc T \to X$, we define $\pi_n(X) = \pi_n(\varepsilon(X))$ and $\pi_n(X,x) = \pi_n(\varepsilon(X),\varepsilon(x))$. This is well-defined because $\varepsilon$ is an equivalence when $\C$ is Barr-exact. Because $\varepsilon$ is exact, we get for example long exact sequences of homotopy groups from fibre sequences in $\Kan(\C,reg)$. Notice that these `homotopy groups' are in the exact completion of $\C$, not in $\C$ itself.

We also define $\pi^\C_n(X) = \delta(\pi_n(X))$ and $\pi^\C_n(X,x) = \delta(\pi_n(X,x))$. If $\C$ is cartesian closed, colimits commute with finite products, and it follows that $\pi^\C_n(X,x)$ is actually a group object in $\C$ for $n \geq 1$. Since these are in $\C$, they might be easier to work with; however, by Theorem \ref{delta}, if $\C$ is not Barr-exact we can no longer expect long exact sequences of these objects.

Each $\pi_n(X)$ and $\pi^\C_n(X)$ is invariant under Dugger-Isaksen weak equivalences, because these are preserved by $\Kan(\C,reg) \to \Kan(\C_{ex},reg)$. In particular, for a fixed class $\proj$ in $\ob \C$ closed under retracts and coproducts, such that for every $X \in \C$ there is a regular epimorphism $P \xrightarrow{\proj} X$ with $P \in \proj$, each $\pi_n(X)$ and $\pi^\C_n(X)$ is invariant under weak equivalences in $\Kan(\C,\proj)$, and $\pi_n,\pi^\C_n$ give functors $Ho(\Kan(\C,\proj)) \to \C_{ex}, Ho(\Kan(\C,\proj)) \to \C$, respectively.

Let us now restrict attention to the case $\C = \U$. Then we have a model structure $(s\U,CH)$ on simplicial complexes in $\U$ and a category of fibrant objects structure $\Kan(\U,reg)$ on Kan complexes in $\U$. We would like to extend the weak equivalences of $\Kan(\U,reg)$ to weak equivalences on all of $s\U$, and we do this by defining a map $X \to Y$ to be a weak equivalence in the structure $(s\U,reg)$ if $\Ex^\infty(X) \to \Ex^\infty(Y)$ is a weak equivalence in $\Kan(\U,reg)$: recall $\Ex^\infty(X)$ is in $\Kan(\U,CH)$ by the proof of Theorem \ref{modelcat}, and hence in $\Kan(\U,reg)$ by Corollary \ref{wesareregular}.

\begin{lem}
For $X \in \Kan(\U,reg)$, $X \to \Ex^\infty$ is a weak equivalence. A map $X \to Y$ between objects in $\Kan(\U,reg)$ is a weak equivalence in $\Kan(\U,reg)$ if and only if it is a weak equivalence in $(s\U,reg)$.
\end{lem}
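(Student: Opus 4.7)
The second statement reduces to the first by two-out-of-three applied to the naturality square
\[
\xymatrix{X \ar[r]^{f} \ar[d] & Y \ar[d] \\
\Ex^\infty X \ar[r]^{\Ex^\infty f} & \Ex^\infty Y,}
\]
whose horizontal maps are weak equivalences in $\Kan(\U,reg)$ by the first part. Since weak equivalences in $\Kan(\U,reg)$ have two-out-of-three, $f$ is a weak equivalence in $\Kan(\U,reg)$ iff $\Ex^\infty f$ is, which is precisely the definition of $f$ being a weak equivalence in $(s\U,reg)$. So the whole task is to prove the first statement.

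For the first statement, I would first verify that $\Ex^\infty X$ actually lies in $\Kan(\U,reg)$. The identity $\underline{s\U}_{sSet}(\disc K,\Ex^\infty X) = \Ex^\infty \underline{s\U}_{sSet}(\disc K,X)$ for $K \in CH$, established in the proof of Theorem \ref{modelcat}(ii) from the $\aleph_0$-smallness of $K$ relative to split monomorphisms, together with Lemma \ref{Ex}(iii), shows $\underline{s\U}_{sSet}(\disc K,\Ex^\infty X)$ is a Kan simplicial set, so $\Ex^\infty X \in \Kan(\U,CH)$, and thence in $\Kan(\U,reg)$ by Corollary \ref{wesareregular}. Next, I would exploit the characterisation of weak equivalences in $\Kan(\U,reg)$ by isomorphisms of homotopy group objects in $\U_{ex}$: it suffices to check $\pi_n(X,x) \to \pi_n(\Ex^\infty X,\varepsilon(x))$ is an isomorphism for every $n$ and every basepoint.

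Since $\Ex = \{\sd\Delta^\bullet,-\}$ is a weighted-limit construction, $\Ex$ commutes with the pullbacks defining $\Omega^n$; passing to the transfinite colimit and using that the relevant filtered colimit along the closed inclusions $\Ex^\mu Y \to \Ex^{\mu+1} Y$ commutes with the finite limits in $\U$ used to build $\Omega^n$, this reduces the problem to showing $\pi_0(Y) \cong \pi_0(\Ex^\infty Y)$ in $\U_{ex}$ for each $Y = \Omega^n X \in \Kan(\U,reg)$. Since $(\Ex^\infty Y)_0 = Y_0$, this is a comparison of two quotients of the same object $Y_0$: the equivalence relation generated by $Y_1 \rightrightarrows Y_0$ on one side, and the one generated by $(\Ex^\infty Y)_1 = \colim_\mu \{\sd^\mu\Delta^1,Y\} \rightrightarrows Y_0$ on the other. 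The inclusion of the former in the latter is immediate via the canonical map $\sd^\mu\Delta^1 \to \Delta^1$; the reverse direction is the classical observation that iterated horn-filling collapses a subdivided edge-path to a single edge with the same endpoints.

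The main obstacle is making this horn-filling argument precise in $\U_{ex}$. In $\Kan(\U,reg)$, the map $\{\Delta^2,Y\} \twoheadrightarrow \{\Lambda^2_1,Y\}$ is only a regular epimorphism, not a split one, so there is no composition morphism $Y_1 \times_{Y_0} Y_1 \to Y_1$ in $\U$ to use directly. The argument must instead be carried out at the level of relations: using the composition of relations in the regular category $\U$ and the canonical decomposition of Proposition \ref{excompletion}, one shows that the equivalence relation on $Y_0$ generated by $(\Ex^\infty Y)_1 \rightrightarrows Y_0$ is contained in (hence equal to) the one generated by $Y_1 \rightrightarrows Y_0$, which gives the required isomorphism in $\U_{ex}$.
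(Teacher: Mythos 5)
Your handling of the second statement (two-out-of-three in the naturality square against $\Ex^\infty$) is correct and is exactly the part the paper treats as immediate: the paper's own proof of this lemma consists of citing \cite[Proposition 3.5]{Myself} for the first statement and remarking that the rest follows easily. So the real content is your attempted proof of the first statement, where the paper gives no inline argument, and there your proposal has two genuine gaps.

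First, your reduction of $\pi_n$ to $\pi_0$ rests on commuting $\Ex^\infty$ past $\Omega^n$, and this fails on the nose. $\Ex$ preserves the pullback, but not the cotensors out of which $\Omega^n$ is built: $\Ex(\Delta^n \pitchfork X)_m = \{\sd\Delta^m \times \Delta^n, X\}$, whereas $(\Delta^n \pitchfork \Ex X)_m = \{\sd(\Delta^m \times \Delta^n), X\}$, and $\sd$ does not commute with products. So $\Ex^\infty(\Omega^n X)$ and $\Omega^n(\Ex^\infty X)$ are different objects; there is only a comparison map (of last-vertex type), and showing it induces an isomorphism on $\pi_0$ is essentially as hard as the statement being proved. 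The same difficulty recurs for the based objects $\pi_n(X,x)$, which is what the recognition theorem \cite[Theorem 1.21]{Low} that you invoke actually requires. Second, the step you describe as the classical collapsing of a subdivided edge-path by horn-filling is precisely where the work lies, and you only assert it (``one shows \ldots''). Making it precise needs at least: that the image of $(d_0,d_1)\colon Y_1 \to Y_0 \times Y_0$ is already an equivalence relation for an internal Kan complex $Y$ (a relation-calculus argument using that $\{\Delta^2,Y\} \to \{\Lambda^2_k,Y\}$ is only a regular epimorphism); that the relation coming from $\{\sd^\mu\Delta^1,Y\}$ is contained in it for every $\mu$, uniformly; and control of the transfinite colimit and image factorisations in $\U$ (the image of $(\Ex^\infty Y)_1 = \colim_\mu \{\sd^\mu\Delta^1,Y\}$ need not obviously be the union of the stagewise images as a subobject). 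As it stands the proposal is a plausible programme rather than a proof, whereas the paper discharges exactly these points by quoting \cite[Proposition 3.5]{Myself}.
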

\begin{proof}
The first statement is \cite[Proposition 3.5]{Myself}. The rest follows easily.
\end{proof}

\begin{lem}
The identity on $s\U$ induces a functor $(s\U,CH) \to (s\U,reg)$ which preserves weak equivalences, fibrations and trivial fibrations.
\end{lem}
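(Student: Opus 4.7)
The plan is to chase definitions using the $\sqcup CH$-split characterisations already available, with the key input being that in $\U$, every $\sqcup CH$-split map is a regular epimorphism. This last fact was already observed after Lemma \ref{regularproj}: the CH-subspaces functor $d$ gives, for each $X \in \U$, a regular epimorphism $d(X) \twoheadrightarrow X$ from an object of $\sqcup CH$, so Lemma \ref{regularproj}(ii) applies.

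For the fibration and trivial fibration claims, I would invoke Lemma \ref{rightlifting} (and its analogue for $J$). A fibration $f:X \to Y$ in $(s\U,CH)$ is, by the $J$-version of Lemma \ref{rightlifting}(iii), a map for which $\{\Delta^n,X\} \to \{\Lambda^n_k,X\} \times_{\{\Lambda^n_k,Y\}} \{\Delta^n,Y\}$ is $\sqcup CH$-split for all $n,k$; hence, by the observation above, it is a regular epimorphism, so $f$ is an internal fibration — that is, a fibration in $(s\U,reg)$. The argument for trivial fibrations is identical, replacing $\Lambda^n_k$ with $\partial\Delta^n$ and using Lemma \ref{rightlifting}(iii) directly.

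For weak equivalences, the plan is to reduce to $\Kan(\U,reg)$ by applying $\Ex^\infty$. The key input is the natural identification $\underline{s\U}_{sSet}(\disc K, \Ex^\infty f) \cong \Ex^\infty \underline{s\U}_{sSet}(\disc K, f)$ established inside the proof of Theorem \ref{modelcat}. If $f$ is a weak equivalence in $(s\U,CH)$ then $\underline{s\U}_{sSet}(\disc K, f)$ is a weak equivalence of simplicial sets for each $K \in CH$, and $\Ex^\infty$ preserves weak equivalences of simplicial sets by Lemma \ref{Ex}; so the same is true of $\underline{s\U}_{sSet}(\disc K, \Ex^\infty f)$, and hence $\Ex^\infty f$ is itself a weak equivalence in $(s\U,CH)$. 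Since $\Ex^\infty X, \Ex^\infty Y \in \Kan(\U,CH)$ (again from the proof of Theorem \ref{modelcat}), Lemma \ref{DIPsplit} identifies $\Ex^\infty f$ as a map in $\Kan(\U,CH)$ whose Dugger--Isaksen comparison maps $DI_n(\Ex^\infty f)$ are $\sqcup CH$-split. Applying the regular-epimorphism observation once more, these are regular epimorphisms, so $\Ex^\infty f$ is a Dugger--Isaksen weak equivalence, i.e.\ a weak equivalence in $\Kan(\U,reg)$; by definition, $f$ is then a weak equivalence in $(s\U,reg)$.

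There is no real obstacle: the entire argument is a routine unpacking that rests on the single conceptual bridge ``$\sqcup CH$-split $\Rightarrow$ regular epimorphism in $\U$''. The only thing worth being a little careful about is that the statement concerns all of $s\U$, not just the fibrant part, so for the weak equivalence clause one cannot work directly in $\Kan(\U,reg)$ but must pass through $\Ex^\infty$ — which is exactly what the definition of $(s\U,reg)$ was set up to accommodate.
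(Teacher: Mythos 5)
Your proposal is correct and follows essentially the same route as the paper: fibrations and trivial fibrations are handled by the observation that $\sqcup CH$-split maps are regular epimorphisms (Lemma \ref{regularproj}), and weak equivalences are handled by passing to $\Ex^\infty$ and invoking Lemma \ref{DIPsplit}. The only cosmetic difference is that you deduce that $\Ex^\infty f$ is a weak equivalence in $(s\U,CH)$ by commuting $\underline{s\U}_{sSet}(\disc K,-)$ past $\Ex^\infty$ directly, whereas the paper uses two-out-of-three with the weak equivalences $X \to \Ex^\infty(X)$ and $Y \to \Ex^\infty(Y)$ from the proof of Theorem \ref{modelcat}; both rest on the same identification, so this is the same argument.
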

\begin{proof}
The functor preserves fibrations and trivial fibrations because $CH$-split epimorphisms are regular by Lemma \ref{regularproj}. If $X \to Y$ is a weak equivalence in $(s\U,CH)$, we know that $X \to \Ex^\infty(X)$ and $Y \to \Ex^\infty(Y)$ are the proof of Theorem \ref{modelcat}, so $\Ex^\infty(X) \to \Ex^\infty(Y)$ is one too. This is a weak equivalence in $(s\U,reg)$ by Lemma \ref{DIPsplit}.
\end{proof}

We can now state the following result, which may be thought of as a continuous version of the Seifert--van Kampen Theorem. Recall that, as a total left derived functor, $L|-|$ preserves \emph{homotopy colimits} (see \cite{Riehl} for definitions).

\begin{thm}
\label{singhoco}
If $C$ is an open cover of $X \in \U$, write $C'$ for the poset of finite intersections of sets in $C$, ordered by inclusion. Then $\Sing(X)$ is weakly equivalent (in $(s\U,reg)$) to the homotopy colimit (in the $(s\U,CH)$) of $\{\Sing(U)\}_{U \in C'}$.
\end{thm}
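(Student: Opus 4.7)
The plan is to apply the total left derived geometric realisation $L|-|$ to transport the claim into $\U_{CH}$, then argue via a classical subdivision. Let $\phi: \operatorname{hocolim}_{U \in C'} \Sing(U) \to \Sing(X)$ denote the canonical comparison map in $s\U$ induced by the inclusions $U \hookrightarrow X$.

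First, since $L|-|$ is a left Quillen functor $(s\U,CH) \to \U_{CH}$, it preserves homotopy colimits, giving a weak equivalence
\[ L\bigl|\operatorname{hocolim}_{U \in C'} \Sing(U)\bigr| \simeq \operatorname{hocolim}_{U \in C'} L|\Sing(U)| \]
in $\U_{CH}$. By Proposition \ref{geomsingwe}, each $L|\Sing(U)|$ is weakly equivalent to $U$ in $\U_{CH}$, and likewise $L|\Sing(X)| \simeq X$, so $L|\phi|$ identifies up to weak equivalence with the canonical projection $\psi: \operatorname{hocolim}_{U \in C'} U \to X$ in $\U_{CH}$.

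The main step is to show $\psi$ is a weak equivalence in $\U_{CH}$: for each $K \in CH$, the induced map $\underline{\U}(K,\psi)$ must be a weak homotopy equivalence of spaces. I would realise the homotopy colimit by a Bousfield--Kan style two-sided bar construction over the poset $C'$, so that a typical continuous simplex $\sigma: \Delta_n \to \underline{\U}(K, \operatorname{hocolim}_{U \in C'} U)$ is parametrised by an $n$-simplex in $N(C')$ together with simplices landing in the relevant $U$'s, and a singular simplex $\sigma: \Delta_n \to \underline{\U}(K,X)$ transposes under cartesian closedness to $\tilde\sigma: \Delta_n \times K \to X$ with $\Delta_n \times K \in CH$. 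The pullback $\{\tilde\sigma^{-1}(U)\}_{U \in C}$ is then an open cover of the compact Hausdorff space $\Delta_n \times K$; applying the tube lemma fibrewise over $\Delta_n$ (thereby avoiding metrisability hypotheses on $K$), I can barycentrically subdivide $\Delta_n$ finitely many times so that for each small simplex $\tau$ of the subdivision there is a single $U_{\alpha(\tau)} \in C$ with $\tilde\sigma(\tau \times K) \subseteq U_{\alpha(\tau)}$. The assignment $\tau \mapsto U_{\alpha(\tau)}$ organises into a simplicial map from the subdivided simplex into $N(C'^{op})$, producing a lift of the subdivided $\sigma$ into a cell of the bar construction. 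This gives surjectivity of $\pi_n(\underline{\U}(K,\operatorname{hocolim}_{U \in C'} U)) \to \pi_n(\underline{\U}(K,X))$; an entirely analogous subdivision argument applied to based homotopies $\Delta_n \times [0,1] \times K \to X$ yields injectivity.

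Finally, I would invoke Theorem \ref{tpd}, which provides a Quillen equivalence $(L|-|, \Sing)$ between $(s\U,reg)$ and $\U_{CH}$; under this equivalence a map in $s\U$ is a weak equivalence in $(s\U,reg)$ if and only if its image under $L|-|$ is a weak equivalence in $\U_{CH}$, so the computation above concludes the proof. The principal obstacle is the subdivision/tube-lemma step: one must not only make the individual choices $\alpha(\tau)$ but also verify that these choices cohere on common faces and refine consistently enough that the assembled map into the nerve of $C'$ is a genuine simplicial map, and then patch the resulting lifts into a well-defined cell of the bar construction realising $\operatorname{hocolim}_{U \in C'} U$; this is the combinatorial heart of the Seifert--van Kampen argument and is where the use of finite \emph{intersections} in $C'$ (not merely the cover $C$ itself) becomes essential, since the intersection data is precisely what records the coherence of the local choices along boundaries.
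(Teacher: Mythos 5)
Your argument breaks at its final step, and this gap is not repairable within the strategy you chose. Theorem \ref{tpd} is not a Quillen equivalence between $(s\U,reg)$ and $\U_{CH}$: it only asserts that $\Sing \circ L\lvert X\rvert \simeq X$ in $(s\U,reg)$ when \emph{every level $X_n$ is totally path-disconnected}, and neither $\Sing(U)$ (whose $0$th level is $U$ itself) nor the homotopy colimit in question satisfies that hypothesis. In fact the paper proves the opposite of what you need: Proposition \ref{notwe} shows $(\lvert-\rvert,\Sing)$ is not a Quillen equivalence, and the example $X=\disc K$ with $K$ path-connected compact Hausdorff shows concretely that a map $f$ in $s\U$ can have $L\lvert f\rvert$ a weak equivalence in $\U_{CH}$ while $f$ is not a weak equivalence even in the coarser structure $(s\U,reg)$ (there $\pi^\K_0(\disc K)=K$ while $\pi^\K_0(\Sing\lvert\disc K\rvert)=\ast$). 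So $L\lvert-\rvert$ does not reflect weak equivalences, and even a correct computation that $\operatorname{hocolim}_{U\in C'}U \to X$ is a weak equivalence in $\U_{CH}$ (your use of Proposition \ref{geomsingwe} and preservation of homotopy colimits up to that point is fine) cannot be transported back to the statement about $\phi$ in $(s\U,reg)$.

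The subdivision step is also false as stated. The tube lemma produces a neighbourhood $V\ni t$ with $V\times K\subseteq W$ only when the whole slice $\{t\}\times K$ already lies in a \emph{single} open set $W$; for $\tilde\sigma:\Delta_n\times K\to X$ there is no reason any slice $\{t\}\times K$, let alone $\tau\times K$, should map into a single element of $C$. Take $n=0$, $K=X=S^1$, $\tilde\sigma$ the identity and $C$ a cover by two arcs: no subdivision of $\Delta_n$ can achieve $\tilde\sigma(\tau\times K)\subseteq U_{\alpha(\tau)}$. One would have to decompose in the $K$-direction as well, but the cells of the bar construction over $C'$ map into single elements of $C'$, so such a decomposition does not produce a lift into your model of the homotopy colimit; this is precisely the obstruction that forces the theorem to be stated only for the coarser $(s\U,reg)$-equivalences. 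Note finally that the paper does not prove this result internally either: it invokes \cite[Theorem 4.1]{Myself}, where the comparison is carried out at the level of the Dugger--Isaksen/regular structure (via $\Ex^\infty$ and fibrewise-compact data) rather than by realising to $\U_{CH}$ and trying to come back.
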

\begin{proof}
See \cite[Theorem 4.1]{Myself}.
\end{proof}

We can use this to prove the following partial converse to Proposition \ref{geomsingwe}.

\begin{thm}
\label{tpd}
Suppose $X \in s\U$ with $X_n$ totally path-disconnected for all $n$. Then $\underline{\Sing} \circ L\lvert X \rvert$ is weakly equivalent to $X$ in $(s\U,reg)$.
\end{thm}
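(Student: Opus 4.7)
\medskip

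The plan is to construct the derived unit of the Quillen adjunction $(L\lvert-\rvert, \Sing)$ between $(s\U,CH)$ and $\U_{CH}$ and show it is a weak equivalence in $(s\U,\mathrm{reg})$ under the hypothesis.

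The key observation powering the argument is that if $Y\in\U$ is totally path-disconnected, then $\Sing(Y)=\disc Y$. Indeed, $\Delta_n$ is path-connected, so every continuous map $\Delta_n\to Y$ is constant, whence $\underline{\U}(\Delta_n,Y)\cong Y$ as $k$-spaces (the basic opens $W(K,U)$ reduce to $U$), and all face and degeneracy maps in $\Sing(Y)$ act as $\id_Y$. This gives the base case $X=\disc Y$ immediately: one has $\lvert\disc Y\rvert=Y$ by direct coend computation, and by Proposition \ref{geomsingwe} applied to $Y$, $L\lvert\Sing Y\rvert=L\lvert\disc Y\rvert\simeq Y$ in $\U_{CH}$. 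Since $\Sing$ preserves weak equivalences, $\Sing L\lvert\disc Y\rvert\simeq \Sing Y=\disc Y$, and transferring along the functor $(s\U,CH)\to (s\U,\mathrm{reg})$ yields the statement for $\disc Y$.

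The general case is obtained by presenting $X$ as a homotopy colimit of the discrete pieces, $X\simeq \operatorname*{hocolim}_{[n]\in\Delta^{op}} \disc X_n$, and propagating both sides of the desired equivalence through this colimit. Since $L\lvert-\rvert$ is a total left derived functor it preserves homotopy colimits, and Theorem \ref{singhoco}, applied to an open cover of $L\lvert X\rvert$ drawn from the cellular decomposition arising from the resolution, lets us rewrite $\Sing L\lvert X\rvert$ up to weak equivalence in $(s\U,\mathrm{reg})$ as the homotopy colimit of the $\Sing L\lvert\disc X_n\rvert$. Chaining with the base case gives
\[
\Sing L\lvert X\rvert \simeq \operatorname*{hocolim}_{[n]\in\Delta^{op}}\Sing L\lvert \disc X_n\rvert \simeq \operatorname*{hocolim}_{[n]\in\Delta^{op}}\disc X_n \simeq X
\]
in $(s\U,\mathrm{reg})$.

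The main obstacle is the middle step: although $\Sing$ is merely a right adjoint and does not preserve homotopy colimits in general, here one needs to produce an open cover of $L\lvert X\rvert$ whose nerve of intersections matches (up to $(s\U,\mathrm{reg})$-equivalence) the simplicial resolution of $X$. The totally path-disconnected hypothesis is essential exactly at this point: it guarantees that the individual pieces $X_n$ satisfy $\Sing X_n=\disc X_n$, so that the homotopy colimit assembled from the cover recovers $X$ on the nose rather than a space with additional path-components hiding in the levels. Without it, the cover would detect nontrivial paths inside each $X_n$ and the identification with $X$ would fail, in line with the fact (to be established in Proposition \ref{notwe}) that $(L\lvert-\rvert,\Sing)$ is not a Quillen equivalence in general.
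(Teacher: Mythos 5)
Your base case is correct and well spotted: for totally path-disconnected $Y$ one indeed has $\Sing(Y)=\disc Y$, so Proposition \ref{geomsingwe} settles the theorem for constant simplicial objects, and this is genuinely where the hypothesis on the $X_n$ enters. Note, however, that the paper itself does not prove Theorem \ref{tpd} here at all -- it is deferred to the companion paper \cite[Theorem 6.3]{Myself} -- so the question is whether your reduction of the general case to the base case actually works, and as written it does not.

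The gap is the middle step, which is the real content of the theorem. Theorem \ref{singhoco} applies to an \emph{open} cover of a space, and its homotopy colimit is indexed by the poset $C'$ of finite intersections; but the pieces of $L\lvert X\rvert$ coming from the simplicial decomposition (the skeleta, or the images of $\Delta_n\times Q(X)_n$) are closed, not open, and your diagram $\operatorname{hocolim}_{[n]\in\Delta^{op}}\disc X_n$ is indexed by $\Delta^{op}$, which is not the intersection poset of any open cover. So ``an open cover drawn from the cellular decomposition arising from the resolution'' is not a construction, and the asserted equivalence $\Sing L\lvert X\rvert\simeq\operatorname{hocolim}_n \Sing L\lvert\disc X_n\rvert$ is precisely the statement that needs proof: $\Sing$ is a right adjoint and has no general compatibility with homotopy colimits, and making Theorem \ref{singhoco} (or a skeletal induction with thickened neighbourhoods and a careful analysis of path components) do this job is where all the work lies. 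There is a second, smaller gap: the presentation $X\simeq\operatorname{hocolim}_{[n]\in\Delta^{op}}\disc X_n$ in $(s\U,CH)$ also requires justification, since homotopy colimits there are derived with respect to the class-cofibrantly generated structure and $\disc X_n$ need not be cofibrant -- a totally path-disconnected space such as $\mathbb{Q}$ need not lie in $\sqcup CH$ -- while $(s\U,reg)$ is only a weak-equivalence structure defined via $\Ex^\infty$, not a model structure, so one cannot simply form homotopy colimits there instead. As it stands, your argument proves the constant case and correctly locates the role of the hypothesis, but asserts rather than proves the commutation of $\Sing\circ L\lvert-\rvert$ past the simplicial decomposition.
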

\begin{proof}
See \cite[Theorem 6.3]{Myself}.
\end{proof}

This is important because it allows us to build spaces in certain weak equivalence classes in $\U$, by choosing a simplicial space and taking its geometric realisation to get a KW-complex. This idea underlies the construction of Eilenberg--Mac Lane spaces for totally path-disconnected groups in $\U$ in \cite{Myself}.

\section{Some calculations}
\label{calcs}

We now restrict attention again to $\U$. Since $\Sing: \U_{CH} \to \Kan(\U,\sqcup CH)$ and $i: \Kan(\U,\sqcup CH) \to \Kan(\U,reg)$ preserve weak equivalences, we may define, for $X \in \U$, $\pi_n(X) = \pi_n(\Sing(X))$, $\pi^\U_n(X) = \pi^\U_n(\Sing(X))$, and similarly for the definitions with basepoints. It will also be useful to let $\pi^\K_n(X)$ be the coequaliser of the equivalence relation $\pi_n(X)$ in $\K$, and similarly for $\pi^\K_n(X,x)$. As well as the usual homotopy group $\pi^{abs}_n(X)$, we also define $\pi^{Top}_n(X)$ as follows: consider $S \in sTop$ given by $S_n = \underline{Top}(\Delta_n,X)$. Let $\pi^{Top}_0(X) = \coeq_{Top}(S_1 \rightrightarrows^{d_0}_{d_1} S_0)$; the same definition as for Barr-exact categories in the previous section then gives $\pi^{Top}_n(X)$ for all $n$, as well as versions with basepoints.

Now if $(\U_{CH})_\ast$ is the category of pointed spaces in $\U$ with the induced model structure (see \cite[Proposition 1.1.8]{Hovey}), we get functors
\begin{align*}
&\pi_n:Ho(\U_{CH})_\ast \to \U_{ex}Grp, \\
&\pi^\U_n:Ho(\U_{CH})_\ast \to \U Grp, \\
&\pi^\K_n:Ho(\U_{CH})_\ast \to \K Grp, \\
&\pi^{abs}_n:Ho(\U_{CH})_\ast \to Grp
\end{align*}
for $n \geq 1$.

On the other hand, $\pi^{Top}_n(X,x)$ is not in general a topological group. It is shown in \cite[Theorem 1]{Fabel} that, for $X$ the Hawaiian earring (which is in $\U$ because it is compact Hausdorff), and a point $x \in X$, the multiplication map on the underlying sets $\pi^{Top}_1(X,x) \times \pi^{Top}_1(X,x) \to \pi^{Top}_1(X,x)$ is not continuous. Specifically, what goes wrong is that $Top$ is not cartesian closed, so that the product of two quotient maps need not be a quotient. This seems a strong argument in favour of using $\pi_n$, $\pi^\K_n$ and $\pi^\U_n$ instead; though there are no spaces which I know have the same homotopy groups using $\pi^{Top}_n$, but which I know are distinguished by their homotopy groups using $\pi_n$, $\pi^\K_n$ and $\pi^\U_n$.

Unravelling the definitions of all these homotopy objects, we can give more explicit descriptions. Let $(S^n,s)$ be the $n$-sphere with a basepoint. Given the space of based maps $\underline{\U_\ast}((S^n,s),(X,x))$, we can also put a topology on the set $H$ of based homotopies between maps $(S^n,s) \to (X,x)$. $H$ is the subset of maps $S^n \times [0,1] \to X$ satisfying the relevant conditions, so we may give it the $k$-subspace topology from $\underline{\U}(S^n \times I,X)$, which is again in $\U$. The inclusion $\{0,1\} \to [0,1]$ induces $H \rightrightarrows \underline{\U_\ast}((S^n,s),(X,x))$. We can factor $$H \to \underline{\U_\ast}((S^n,s),(X,x)) \times \underline{\U_\ast}((S^n,s),(X,x))$$ through its image $H'$ with the quotient topology, and it is not hard to check that $H' \rightrightarrows \underline{\U_\ast}((S^n,s),(X,x))$ is an equivalence relation in $\U$, in the categorical sense. Write $U$ for the forgetful functor $Top \to Set$. Then
\begin{align*}
&\pi_n(X,x) = (\underline{\U_\ast}((S^n,s),(X,x)),H'), \\
&\pi^\K_n(X,x) = \coeq_\K(H \rightrightarrows \underline{\U_\ast}((S^n,s),(X,x))), \\
&\pi^\U_n(X,x) = \coeq_\U(H \rightrightarrows \underline{\U_\ast}((S^n,s),(X,x))), \text{and} \\
&\pi^{abs}_n(X,x) = \coeq_{Set}(H \rightrightarrows \underline{\U_\ast}((S^n,s),(X,x))).
\end{align*}
Of course, from this perspective the definitions of $\pi^\K_n(X,x)$ and $\pi^{Top}_n(X,x)$ make at least as much sense for $X \in \K$ as for $X \in \U$; but we will largely restrict ourselves to $\U$ for consistency with the theory of the previous sections.

The description for $\pi^{Top}_n(X,x)$ is similar, using compact-open topologies instead of $k$-compact-open ones.

Then we have the following relations:

\begin{lem}
\phantomsection
\label{pirelations}
\begin{enumerate}[(i)]
\item $\pi^\U_n(X,x) = \delta\pi_n(X,x) = \pi^\K_n(X,x)_{WH}$.
\item $\pi^{abs}_n(X,x) = U\pi^\K_n(X,x) = U\pi^{Top}_n(X,x)$.
\item The identity map on the underlying sets gives a morphism $\pi^\K_n(X,x) \to \pi^{Top}(X,x)$ in $Top$, which is a homeomorphism when $X$ is metrisable.
\item $\pi_0, \pi^\U_0, \pi^\K_0, \pi^{Top}_0$ and $\pi^{abs}_0$ all commute with coproducts in $\U$.
\end{enumerate}
\end{lem}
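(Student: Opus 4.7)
The plan is to handle each part by unpacking the explicit coequaliser descriptions just given and invoking the universal properties of the relevant adjunctions. For (i), Theorem \ref{delta} gives $\delta(X,R)$ as the coequaliser of the two projections $R \rightrightarrows X$, so $\delta \pi_n(X,x) = \coeq_\U(H' \rightrightarrows A)$ with $A = \underline{\U_\ast}((S^n,s),(X,x))$. Since $H \twoheadrightarrow H'$ is a quotient map, hence surjective, hence epic in $\U$, a morphism out of $A$ coequalises $H' \rightrightarrows A$ exactly when it coequalises $H \rightrightarrows A$, so both coequalisers equal $\pi^\U_n(X,x)$. For the second equality, weak Hausdorffification $(-)_{WH} : \K \to \U$ is left adjoint to the inclusion and so preserves colimits; since $H$ and $A$ already lie in $\U$, $\pi^\K_n(X,x)_{WH} = \coeq_\U(H \rightrightarrows A) = \pi^\U_n(X,x)$.

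For (ii), the forgetful functor $U : Top \to Set$ has the indiscrete-topology functor as right adjoint and so preserves colimits; applying it to the coequaliser describing $\pi^{Top}_n(X,x)$ and noting that $H^{Top}$ and $A^{Top}$ have the same underlying sets as $H$ and $A$ gives $U \pi^{Top}_n(X,x) = \pi^{abs}_n(X,x)$. The corresponding identity for $\pi^\K_n$ follows because, by Section \ref{kspace}, colimits in $\K$ are computed in $Top$. For (iii), Lemma \ref{WHhom} identifies $\underline{\K}(S^n,X) = k(\underline{Top}(S^n,X))$, so the identity-on-underlying-sets map $A \to A^{Top}$ is continuous (it is just a refinement of topology); the same holds for $H \to H^{Top}$, and these induce a morphism of coequalisers. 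When $X$ is metrisable, the compact-open topology on $\underline{Top}(S^n, X)$ coincides with the uniform topology (since $S^n$ is compact Hausdorff) and is therefore metrisable; both $A^{Top}$ and $H^{Top}$, as subspaces of metric spaces, are then metrisable, hence first-countable, hence CG, so $k$ does nothing and $A = A^{Top}$, $H = H^{Top}$ as topological spaces. Since the coequaliser of CG spaces computed in $Top$ remains CG and thus equals the coequaliser in $\K$, the two coequalisers coincide.

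For (iv), the key technical input is that when $K$ is compact and connected (e.g.\ $K = \Delta_n$), the canonical map $\bigsqcup_\alpha \underline{\U}(K, Y_\alpha) \to \underline{\U}(K, \bigsqcup_\alpha Y_\alpha)$ is a homeomorphism: it is bijective because every continuous $K \to \bigsqcup Y_\alpha$ lands in a single summand, and it is a homeomorphism because (a) the analogous statement holds in $Top$ with the compact-open topology, using that each $W(K,Y_\alpha)$ is clopen, and (b) the functor $k$ commutes with disjoint unions, as a subset of $\bigsqcup X_\alpha$ is $k$-closed precisely when each intersection with $X_\alpha$ is. Consequently $\Sing$, and likewise the $\K$- and $Top$-variants, commute with coproducts in $\U$; since coequalisers commute with coproducts in any cocomplete category, the formulas for $\pi^\U_0, \pi^\K_0, \pi^{Top}_0$ and $\pi^{abs}_0$ yield the desired commutation. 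For $\pi_0 : \U \to \U_{ex}$, I will check directly from the construction of $\U_{ex}$ recalled in Section \ref{reghomotopy} that the embedding $\varepsilon$ sends $\bigsqcup X_\alpha$ to $(\bigsqcup X_\alpha, 1_{\bigsqcup X_\alpha})$, which is also the coproduct of the $(X_\alpha, 1_{X_\alpha})$ in $\U_{ex}$ (since a relation out of a disjoint union decomposes as a disjoint union of relations), and then use that $\pi_0 : \Kan(\U_{ex}, reg) \to \U_{ex}$ is left adjoint to $\disc$ and so preserves all colimits. The main obstacle is the topological bookkeeping in (iv) comparing $\underline{\U}(K, \bigsqcup Y_\alpha)$ with $\bigsqcup \underline{\U}(K, Y_\alpha)$; the remaining arguments are formal manipulations of adjunctions and the explicit construction of the exact completion.
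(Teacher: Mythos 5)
Your proposal is correct and follows essentially the same route as the paper's proof: (i) from the definitions and the colimit-preservation of $(-)_{WH}$, (ii) from the forgetful functor preserving colimits in $Top$ and $\K$, (iii) from continuity of the identity plus the fact that for metrisable $X$ the compact-open topology on maps from a compact Hausdorff domain is already compactly generated (the paper cites Strickland's Proposition 2.13 where you argue via the uniform metric), and (iv) from $\Sing$ commuting with coproducts together with $\pi_0$ being a colimit construction. Your extra care in (iv) — checking that mapping out of the connected compact spaces $\Delta_n$ commutes with disjoint unions and that $\varepsilon$ preserves coproducts — just fills in steps the paper leaves implicit.
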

\begin{proof}
\begin{enumerate}[(i)]
\item This is immediate.
\item Observe that $U$ preserves colimits in $\K$ and $Top$ (though not in $\U$).
\item We have a commutative square
\[
\xymatrix{\underline{\U_\ast}((S^n,s),(X,x)) \ar[r] \ar[d] & \underline{Top_\ast}((S^n,s),(X,x)) \ar[d] \\
\pi^\K_n(X,x) \ar[r] & \pi^{Top}_n(X,x)}
\]
in which the top row is continuous and the vertical maps are quotients (by definition). It follows by chasing open sets around that the bottom row is continuous. When $X$ is metrisable, the proof of \cite[Proposition 2.13]{Strickland} shows that the top row is a homeomorphism, because $S^n$ is compact and Hausdorff; the result follows.
\item We noted earlier that $\pi_0$ commutes with colimits in $s\U$; here we also use the fact that $\Sing$ commutes with coproducts.
\end{enumerate}
\end{proof}

A space is called \emph{totally path-disconnected} if its path components are one-point sets. Totally disconnected spaces are totally path-disconnected.

\begin{lem}
\begin{enumerate}[(i)]
\item For $X \in \U$, $\pi^\K_0(X) = \pi^{Top}_0(X)$ is the set of path components of $X$, with the quotient topology (which we will call the path-component space of $X$).
\item For $X \in \U$ totally path-disconnected, $\pi_0(X) = (X,1_X)$, and for $n>0$ we have $\pi_n(X,x) = \ast$, the terminal group object in $\U_{ex}$.
\item $\pi^\K_n$ and $\pi^\U_n$ commute with finite products.
\end{enumerate}
\end{lem}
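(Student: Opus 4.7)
\medskip

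For part (i), the equivalence relation $\pi_0(\Sing X)$ in $\U_{ex}$ is generated by the pair of endpoint evaluations $\underline{\U}(\Delta_1, X) \rightrightarrows X$ (passing to the generated equivalence relation does not change the coequaliser). Since CG spaces are closed under colimits in $Top$, the coequaliser in $\K$ coincides with the one in $Top$, yielding $X$ modulo the relation ``connected by a path'' with the quotient topology, i.e.\ the path-component space. The same calculation using $\underline{Top}(\Delta_1, X) \rightrightarrows X$ gives $\pi_0^{Top}(X)$; both agree since the coequaliser topology in $Top$ depends only on the underlying sets of paths.

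For part (ii), I would first show $\Sing(X) = \disc X$ when $X$ is totally path-disconnected: any continuous map $\Delta_n \to X$ has path-connected image contained in a single path component, hence is constant, and the $k$-compact-open topology on the space of constant maps is canonically that of $X$; all face and degeneracy maps become identities. The claim $\pi_0(\Sing X) = (X, 1_X)$ is then immediate, since the coequaliser in $\U_{ex}$ of $X \rightrightarrows X$ (both identities) is the trivial equivalence relation. For $n \geq 1$, the main tool is the identity $S \pitchfork \disc X \cong \disc(X^{\pi_0 S})$, valid for any simplicial set $S$. This follows by Yoneda density -- every simplicial object is a colimit of the form $\Delta^m \cdot \disc Z$ -- after computing both sides of $\Hom_{s\U}(\Delta^m \cdot \disc Z, -)$ via the adjunctions $\pi_0 \dashv \disc \dashv (-)_0$ together with the identity $\Hom_{sSet}(S, \disc T) = \Hom_{Set}(\pi_0 S, T)$. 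Applying this to the defining pullback
\[
\Omega^n \disc X = \disc X \times_{\partial\Delta^n \pitchfork \disc X} (\Delta^n \pitchfork \disc X) = \disc\bigl(X \times_{X^{\pi_0 \partial\Delta^n}} X\bigr),
\]
where both maps to $X^{\pi_0\partial\Delta^n}$ are diagonals, yields $\Omega^n \disc X \cong \disc X$ for every $n \geq 1$. Taking the based fibre over $x: \disc T \to \disc X$ collapses this to $\disc T$, giving $\pi_n(\disc X, x) = \pi_0(\disc T) = \ast$, the terminal group object in $\U_{ex}$.

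For part (iii), I would use the explicit formula $\pi^\U_n(X, x) = \coeq_\U(H_X \rightrightarrows \underline{\U}((S^n, s), (X, x)))$ from Lemma \ref{pirelations}, and similarly for $\pi^\K_n$. The functors $\underline{\U}((S^n, s), -)$ and $\underline{\U}(S^n \times [0,1], -)$ preserve finite products (they are right adjoints), so $H_{X \times Y} = H_X \times H_Y$ and $\underline{\U}((S^n,s),(X \times Y,(x,y))) = \underline{\U}((S^n,s),(X,x)) \times \underline{\U}((S^n,s),(Y,y))$ with compatible face maps. The claim then reduces to the category-theoretic fact that in a cartesian closed category finite products commute with coequalisers, i.e.\ $\coeq(R_X \times R_Y \rightrightarrows A_X \times A_Y) \cong \coeq(R_X \rightrightarrows A_X) \times \coeq(R_Y \rightrightarrows A_Y)$, which follows by iterated application of the fact that $- \times B$, being a left adjoint, preserves coequalisers. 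Both $\K$ and $\U$ are cartesian closed, so both statements follow. The main obstacle throughout is the identity $S \pitchfork \disc X \cong \disc X^{\pi_0 S}$ used in part (ii); the remaining steps are routine applications of the adjunctions and cartesian closure.
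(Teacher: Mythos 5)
Parts (i) and (ii) of your argument are correct. For (i) you reason essentially as the paper does: coequalisers in $\K$ are computed as in $Top$ and are set-level quotients of $X$ with the quotient topology, and the underlying set of paths is the same for $\underline{\U}(\Delta_1,X)$ and $\underline{Top}(\Delta_1,X)$, so both constructions give the path-component space. For (ii) you take a genuinely different route: the paper simply notes, via the explicit description $\pi_n(X,x)=(\underline{\U_\ast}((S^n,s),(X,x)),H')$, that all maps from $S^n$ and all homotopies between them are constant, whereas you identify $\Sing(X)\cong\disc X$ and then compute $\Omega^n\disc X\cong\disc X$ from the identity $S\pitchfork\disc X\cong\disc (X^{\pi_0 S})$. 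That identity is true and your density argument for it works, so this is a valid, somewhat more structural alternative to the paper's one-line proof.

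In (iii), however, the ``category-theoretic fact'' you invoke is false as stated: in a cartesian closed category finite products do not commute with coequalisers of arbitrary parallel pairs. Already in $Set$, take $A_X=A_Y=\{a,b\}$ and $R_X=R_Y=\{r\}$ with $f(r)=a$, $g(r)=b$: both coequalisers are singletons, but the coequaliser of $R_X\times R_Y\rightrightarrows A_X\times A_Y$ (with maps $f\times f$, $g\times g$) has three elements. Iterated use of ``$-\times B$ preserves coequalisers'' only identifies $C_X\times C_Y$ with an iterated coequaliser of the double pair; to replace that by the coequaliser of the single diagonal pair $R_X\times R_Y\rightrightarrows A_X\times A_Y$ you need the pairs to be \emph{reflexive} (for reflexive pairs the diagonal coequaliser agrees with the iterated one, reflexive coequalisers being sifted). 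Fortunately this is available in your situation: sending a based map to the constant homotopy gives a common section of $H\rightrightarrows\underline{\U_\ast}((S^n,s),(X,x))$ (and the degeneracy $s_0$ does the same for $X_1\rightrightarrows X_0$), so your proof is repaired by appealing to the reflexive case; alternatively one can note that in $\K$ and $\U$ a product of two quotient maps is again a quotient map and that the kernel pair of $q_X\times q_Y$ is the product equivalence relation. The paper's own proof of (iii) is the same interchange, phrased as ``$-\times-$ commutes with the limits and colimits defining $\Sing$, $\Omega^n$ and $\pi_0$'', and tacitly relies on the same reflexivity, but your explicit formulation of the key step is the one point that, as written, would not survive scrutiny.
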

\begin{proof}
\begin{enumerate}[(i)]
\item Coequalisers in $\K$ can be calculated as the coequaliser of the underlying maps of sets, with the quotient topology.
\item Continuous maps from $S^n$ to $X$, and homotopies between them, must be constant. The rest follows.
\item In $\K$ and $\U$, $- \times -$ is both a limit and a left adjoint. Hence it commutes with limits and colimits; in particular, with $\Sing$, $\Omega^n$, $\pi^\K_0$ and $\pi^\U_0$ (where the last two are considered as functors on $\Kan(\U,reg)$).
\end{enumerate}
\end{proof}

We can now use these invariants to get some concrete results. The next two propositions fulfil earlier promises.

\begin{prop}
\label{notwe}
$(|-|,\Sing)$ is not a Quillen equivalence between $(s\U,CH)$ and $\U_{CH}$.
\end{prop}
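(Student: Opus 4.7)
My plan is to use the adjoint characterisation of a Quillen equivalence: it suffices to produce a cofibrant $X \in s\U$ and a weak equivalence $|X| \to Y$ into a fibrant $Y \in \U_{CH}$ whose adjoint $X \to \Sing(Y)$ is \emph{not} a weak equivalence in $(s\U,CH)$. Since every object of $\U_{CH}$ is fibrant, this reduces to exhibiting a cofibrant $X$ for which the unit $\eta_X : X \to \Sing(|X|)$ fails to be a weak equivalence.

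I take $X = \disc S^1$. Because $\partial\Delta^0$ is empty and $S^1 \in CH$, the generating cofibration $\partial\Delta^0 \cdot S^1 \to \Delta^0 \cdot S^1$ is precisely $\emptyset \to \disc S^1$, so $\disc S^1$ is cofibrant. Since $|-|$ preserves finite products and $|\disc\{\ast\}| = \{\ast\}$, we have $|\disc S^1| = S^1$, and $\eta_X$ sends each $p \in S^1 = (\disc S^1)_0$ to the constant $0$-simplex at $p$ in $\Sing(S^1)_0 = S^1$.

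To show $\eta_X$ is not a weak equivalence, I apply $\underline{s\U}_{sSet}(\disc\{\ast\},-)$, which must carry weak equivalences to weak equivalences of simplicial sets. Using $\underline{s\U}_{sSet}(\disc K,Y)_n = \U(K,Y_n)$ (the $sSet$-enriched analogue of the last lemma of Section \ref{se}), the source $\underline{s\U}_{sSet}(\disc\{\ast\}, \disc S^1)$ has $n$-simplices $\U(\{\ast\},S^1) = S^1$ (as a set) with all faces and degeneracies the identity, hence is the discrete simplicial set on the underlying set of $S^1$ and has $\pi_0 = S^1$. The target $\underline{s\U}_{sSet}(\disc\{\ast\}, \Sing S^1)_n = \U(\Delta_n, S^1) = \Sing^{abs}(S^1)_n$ has $\pi_0 = \pi_0(S^1) = \{\ast\}$. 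The induced map on $\pi_0$ is therefore the surjection $S^1 \to \{\ast\}$, which is not a bijection, so $\eta_X$ is not a weak equivalence in $(s\U,CH)$ and the adjunction fails to be a Quillen equivalence. There is no real obstacle in the argument: once the right example is isolated the rest is a short computation. The conceptual point is simply that $\disc$ records each point of $S^1$ as a separate $\pi_0$-class at the simplicial level while $\Sing$ sees the genuine topological $\pi_0$, and since $\disc S^1$ is already cofibrant no derived replacement can mend this mismatch.
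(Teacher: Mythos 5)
Your proposal is correct and follows essentially the same route as the paper: reduce via Hovey's criterion and the fact that every object of $\U_{CH}$ is fibrant to showing the unit $X \to \Sing|X|$ fails to be a weak equivalence for some cofibrant $X$, and take $X = \disc K$ with $K$ a path-connected compact Hausdorff space with more than one point (the paper's $K$ is arbitrary such, yours is $S^1$). The only cosmetic difference is that you verify the failure directly from the definition of weak equivalence by computing $\pi_0$ of $\underline{s\U}_{sSet}(\disc\{\ast\},-)$, whereas the paper invokes the weak-equivalence invariant $\pi^\K_0$; these amount to the same $\pi_0$ computation.
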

\begin{proof}
Unfortunately this is true for uninteresting reasons. Using the characterisation in \cite[Proposition 1.3.13(ii)]{Hovey}, and the fact that all objects in $\U_{CH}$ are fibrant, it is enough to find a cofibrant $X \in (s\U,CH)$ such that $X \to \Sing|X|$ is not a weak equivalence. Take $X = \disc K$, where $K$ is your favourite path-connected compact Hausdorff space with more than one point (or any other such space). Then $\pi^\K_0(|\disc K|) = \pi^\K_0(K)$ is a single point, but $\pi^\K_0(\disc K) = K$.
\end{proof}

\begin{prop}
$\id_\U$ is not a Quillen equivalence between $\U_Q$ and $\U_{CH}$.
\end{prop}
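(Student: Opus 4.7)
The plan is to mimic the proof of Proposition~5.26: by \cite[Proposition 1.3.13(ii)]{Hovey} and the fact that every object in $\U_{CH}$ is fibrant, the adjunction $\id_\U: \U_Q \rightleftarrows \U_{CH} :\id_\U$ is a Quillen equivalence if and only if, for every cofibrant $X \in \U_Q$ and every $Y \in \U$, the $\U_Q$- and $\U_{CH}$-weak equivalences $X \to Y$ coincide. Since $\U_{CH}$-weak equivalences are always weak homotopy equivalences (by Lemma~\ref{equivalences}, taking $K = \{\ast\}$), one direction is free, so it suffices to produce a weak homotopy equivalence $X \to Y$ with $X$ cofibrant in $\U_Q$ which fails to be a $\U_{CH}$-weak equivalence.

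First I would take $X = \{\ast\}$ (certainly cofibrant in $\U_Q$, being a CW-complex) and $Y = W$ a weakly contractible, non-contractible compact Hausdorff space. The standard textbook example here is the Warsaw circle, which lies in $\U$ by compactness and Hausdorffness, and whose underlying homotopy type is trivial (every map from $S^n$ into $W$ is null-homotopic, since its compact image lies inside a contractible sub-arc of $W$). A basepoint inclusion $\{\ast\} \hookrightarrow W$ is therefore a weak homotopy equivalence, i.e.\ a weak equivalence in $\U_Q$.

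Next I would argue that this inclusion is \emph{not} a $\U_{CH}$-weak equivalence. Were it such, then by definition $\underline{\U}(W,\{\ast\}) \to \underline{\U}(W,W)$ would be a weak homotopy equivalence. But $\underline{\U}(W,\{\ast\}) = \{\ast\}$, so $\underline{\U}(W,W)$ would be path-connected, and in particular $\id_W$ would lie in the same path-component as any constant map. By cartesian closedness of $\U$, a path in $\underline{\U}(W,W)$ from $\id_W$ to a constant corresponds to a homotopy $W \times [0,1] \to W$ exhibiting $W$ as contractible, contradicting the choice of $W$.

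The only genuine content is locating a space $W$ with the required properties; any weakly contractible, non-contractible compact Hausdorff space will do, and the Warsaw circle is perhaps the most economical choice. Everything else is a formal unpacking of the definitions together with one application of the adjunction $\underline{\U}(W \times [0,1],W) \cong \underline{\U}([0,1],\underline{\U}(W,W))$.
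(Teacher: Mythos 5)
Your proposal is correct in outline but takes a genuinely different route from the paper's. The paper also reduces the claim to exhibiting a weak homotopy equivalence out of a CW-type object that fails to be a weak equivalence in $\U_{CH}$, but its example is the topologist's sine curve: the map from the two-point discrete space hitting one point in each path component is a weak homotopy equivalence, while the invariant $\pi^\K_0$ (the path-component space, which the earlier theory shows is preserved by $\U_{CH}$-weak equivalences) distinguishes the two spaces -- a discrete two-point space versus the Sierpinski space. Your route uses the Warsaw circle $W$ and needs none of the homotopy-group-object machinery: testing the definition of a $\U_{CH}$-weak equivalence against the compact Hausdorff space $K = W$ itself, and using cartesian closedness to convert a path in $\underline{\U}(W,W)$ from $\id_W$ to a constant into a contraction of $W$, is a clean and self-contained contradiction. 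What the paper's choice buys is that every fact needed about its example is an elementary point-set check; your choice instead leans on the classical facts that $W$ is weakly contractible but not contractible, the latter being a \v{C}ech-cohomology/shape statement that you quote without proof (reasonable, as it is standard).

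One step is justified by an insufficient reason: the weak contractibility of $W$ does not follow merely because the image of a map $S^n \to W$ is compact. $W$ itself is compact and path-connected, and compact path-connected subsets of $W$ need not lie in contractible subspaces. The correct argument uses local connectedness: by the Hahn--Mazurkiewicz theorem the image of $S^n$ is a Peano continuum, and a locally connected subcontinuum of $W$ cannot contain a point of the limit segment together with sine-curve points accumulating on it (a small clopen piece of one oscillation would disconnect a connected neighbourhood), so the image lies in a contractible arc-like subspace of $W$ and the map is null-homotopic. Either insert this argument or simply cite the standard fact; with that repair your proof is complete.
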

\begin{proof}
It is enough to find a space $X$ which is not weakly equivalent in $\U_{CH}$ to a CW-approximation. The following may be the simplest possible example in some well-defined sense. Let $X$ be the topologist's sine curve: the subset of $\mathbb{R}^2$ given by $$\{(0,y):-1 \leq y \leq 1\} \cup \{(x,y):y=\sin(1/x), 0 < x \leq 1/\pi\}.$$ $X$ is in $\U$ because it is compact and Hausdorff. It is not hard to check that the map from the discrete space on two points $$\{\ast_1,\ast_2\} \to X, \ast_1 \mapsto (0,0), \ast_2 \mapsto (1/\pi,0)$$ is a weak homotopy equivalence, but $\pi^\K_0(\{\ast_1,\ast_2\}) = \{\ast_1,\ast_2\}$ and $\pi^\K_0(X)$ is the Sierpinski space: the space with two points of which exactly one is open.
\end{proof}

We will now return to investigating homotopy groups of spaces $X$ in $\U$. There is a range of results in the literature, giving results and explicit calculations of $\pi^{Top}_n(X,x)$: see \cite{GHMM} for attempts to understand the topology on $\pi^{Top}_n(X,x)$ via the local behaviour of $X$; in another direction, see \cite{Brazas}, which replaces the topology on $\pi^{Top}_n(X,x)$ with a weaker one which makes it a topological group, and proves these topological homotopy groups are well-behaved in many respects. The fact that $\pi^{Top}_n(X,x)$ need not be a topological group seems to have caused some misunderstandings in the past (for example in \cite{GHMM}), so care must be taken in reading some of these and related papers.

It would be nice to be able to more easily relate $\pi^\K_n(X,x)$ to $\pi^{Top}_n(X,x)$, for example by applying $k( )$, to allow us to use these results. But consider the following situation: we have a commutative diagram
\[
\xymatrix{\underline{\U_\ast}((S^n,s),(X,x)) \ar[r] \ar[d] & k(\underline{Top_\ast}((S^n,s),(X,x))) \ar[d] \\
\pi^\K_n(X,x) \ar[r] & k(\pi^{Top}_n(X,x)),}
\]
in which the top row is a homeomorphism by Lemma \ref{pirelations}. If the right vertical map is a quotient map, the bottom row is a homeomorphism, as desired. But it is not clear why this should be so. On the other hand, when $\underline{Top_\ast}((S^n,s),(X,x))$ is compactly generated, its quotient $\pi^{Top}_n(X,x)$ is too, and we have $$\pi^\K_n(X,x) = k(\pi^{Top}_n(X,x)) = \pi^{Top}_n(X,x).$$ As noted in Lemma \ref{pirelations}, this occurs when $n=0$ or $X$ is metrisable. Note that even in this situation, $\pi^{Top}_n(X,x)$ may fail to be a topological group, when the multiplication map uses $\underline{Top_\ast}((S^n,s),(X,x)) \times_0 \underline{Top_\ast}((S^n,s),(X,x))$: indeed, the Hawaiian earring in \cite{Fabel} provides an example of just such a failure.

In the $n=0$ case, it is worth mentioning the remarkable result \cite[Corollary 2.6]{Harris}, which for $X \in Top$ constructs a new space $S(X) \in Top$, naturally in $X$, with the property that $\pi^{Top}_0(S(X)) = X$. Unfortunately it is not clear whether $S(X)$ is in $\U$ when $X$ is, or how to modify the strategy to produce an $S'(X) \in \U$ with $\pi^\K_0(S'(X)) = X$.

See \cite{GHMM} for definitions of \emph{$n$-semilocally simply connected} and \emph{locally $n$-connected}.

\begin{lem}
\begin{enumerate}[(i)]
\item If $X$ is metrisable and $\pi^\K_n(X,x)$ is discrete for all $x \in X$, $X$ is $n$-semilocally simply connected.
\item If $X$ is metrisable and locally $n$-connected, $\pi^\K_n(X,x)$ is discrete for all $x \in X$.
\item Suppose $X$ is a locally $(n - 1)$-connected metrisable space and $x \in X$. Then the following are equivalent:
\begin{enumerate}[(a)]
\item $\pi^\K_n(X,x)$ is discrete.
\item $X$ is $n$-semilocally simply connected at $x$.
\end{enumerate}
\end{enumerate}
\end{lem}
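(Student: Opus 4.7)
The plan is to reduce each of the three statements to the corresponding statement about the topological homotopy group $\pi^{Top}_n(X,x)$, which has been established in the literature (in particular in \cite{GHMM}, which studies precisely these discreteness questions for metrisable spaces). The bridge between the two settings is already in place: by Lemma \ref{pirelations}(iii), for any metrisable $X \in \U$ and any $n \geq 0$, the canonical map $\pi^\K_n(X,x) \to \pi^{Top}_n(X,x)$ is a homeomorphism, because the hypothesis of metrisability forces the underlying space $\underline{Top_\ast}((S^n,s),(X,x))$ to be compactly generated (Strickland's \cite[Proposition 2.13]{Strickland}), so passing to $k(-)$ changes nothing. In particular, $\pi^\K_n(X,x)$ is discrete if and only if $\pi^{Top}_n(X,x)$ is.

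With this identification in hand, the proofs become applications of known results. First I would dispatch (i): assume $X$ is metrisable and $\pi^\K_n(X,x)$ is discrete at every $x$. Translating through the homeomorphism gives that $\pi^{Top}_n(X,x)$ is discrete for every $x$; one then quotes the result of \cite{GHMM} that discreteness of the topological $n$-th homotopy group at all basepoints of a metrisable space implies the $n$-semilocally simply connected condition. Part (ii) is handled symmetrically: if $X$ is metrisable and locally $n$-connected, then by the corresponding implication in \cite{GHMM} the space $\pi^{Top}_n(X,x)$ is discrete, whence $\pi^\K_n(X,x)$ is discrete.

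For part (iii), under the additional local $(n-1)$-connectedness hypothesis, the result of \cite{GHMM} asserting the equivalence of discreteness of $\pi^{Top}_n(X,x)$ with $n$-semilocally simple connectedness at $x$ combines with the homeomorphism $\pi^\K_n(X,x) \cong \pi^{Top}_n(X,x)$ to give exactly (a)~$\Leftrightarrow$~(b). Note that the local $(n-1)$-connectedness is used only for the direction (b)~$\Rightarrow$~(a), just as in the classical topological statement; the other direction already follows from (i) applied pointwise (for a single $x$ one obtains $n$-semilocally simple connectedness at $x$ directly).

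The only genuine subtlety, and the step I would double-check first, is the application of Lemma \ref{pirelations}(iii): one has to verify that the hypothesis there ($X$ metrisable) really does ensure that $\underline{\K}(S^n,X)$ already carries the compact-open topology, and that this survives the quotient by the equivalence relation $H'$ of based homotopies (which it does, because passing to the quotient commutes with the $k(-)$ functor on the appropriate spaces in this case). Once that identification is confirmed, the three parts are routine consequences of the known metric-space results for $\pi^{Top}_n$, and there is no further model-theoretic or exact-completion input required.
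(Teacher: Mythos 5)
Your proposal is correct and follows exactly the paper's route: the paper likewise uses the identification $\pi^\K_n(X,x)=\pi^{Top}_n(X,x)$ for metrisable $X$ (via Lemma \ref{pirelations}(iii) and the fact that the compactly generated mapping space has compactly generated quotient) and then quotes \cite[Theorem 3.2, Theorem 3.6, Theorem 3.7]{GHMM} for the three parts. The subtlety you flag about the quotient is handled just as you suggest, so nothing further is needed.
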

\begin{proof}
Thanks to the above observations, this is \cite[Theorem 3.2, Theorem 3.6, Theorem 3.7]{GHMM}.
\end{proof}

Suppose $X \in \U$ is \emph{locally path-connected}: that is, for every $x \in X$ and every open neighbourhood $U$ of $x$, there is a smaller open neighbourhood $V \subseteq U$ of $x$ which is path-connected. Every open subspace of a locally path-connected space is locally path-connected, and the path-components of a locally path-connected space are open. In particular $X$ is the disjoint union of its path-components, so we have seen $\pi^\K_0(X,x)$ is discrete, but we can say more. $\pi_0(X)$ is the equivalence relation $(X,H')$, in the notation from the beginning of the section.

\begin{lem}
$\pi_0(X)$ is (isomorphic in $\U_{ex}$ to the image under $\varepsilon$ of) $\pi^{abs}_0(X)$, with the discrete topology.
\end{lem}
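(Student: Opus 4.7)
Use the hypothesis to write $X \cong \bigsqcup_C C$ in $\U$ over path-components $C$, which are open (hence clopen) by local path-connectedness, and equip $\pi^{abs}_0(X)$ with the discrete topology so that the canonical projection $q: X \to \pi^{abs}_0(X)_{disc}$ lies in $\U$. Since any path $p: \Delta_1 \to X$ stays in a single path-component, $q$ coequalises $d_0,d_1: \underline{\U}(\Delta_1,X) \rightrightarrows X$ in $\U$, and hence, via $\varepsilon$, induces a morphism $\bar q: \pi_0(X) = (X,H') \to \varepsilon(\pi^{abs}_0(X)_{disc})$ in $\U_{ex}$ out of the coequaliser.

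The plan is to verify $\bar q$ is an isomorphism via Proposition \ref{excompletion}(ii): it suffices to check $\bar q \bar q^\circ = 1$ on $\pi^{abs}_0(X)_{disc}$ and $\bar q^\circ \bar q = H'$ on $X$. The first holds because $q$ is a surjective quotient map onto a discrete space, hence a regular epimorphism in $\U$ by Theorem \ref{currying}(i); exactness of $\varepsilon$ promotes this to a regular epi in $\U_{ex}$, forcing $\bar q \bar q^\circ = 1$. For the second, both relations have underlying set $\{(x,y) : q(x)=q(y)\}$: the left side is the pullback $X \times_{\pi^{abs}_0(X)_{disc}} X = \bigsqcup_C C\times C$ carrying the subspace topology from $X\times X$, while $H'$ carries the quotient topology from $\underline{\U}(\Delta_1,X)$. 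Agreement as subobjects of $X\times X$ therefore reduces to matching these two topologies.

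This topology identification is the main obstacle. Since paths stay in components, $\underline{\U}(\Delta_1,X) = \bigsqcup_C \underline{\U}(\Delta_1,C)$, reducing the problem to showing that for each path-connected open $C$ (which is itself locally path-connected) the evaluation $\mathrm{ev}: \underline{\U}(\Delta_1,C) \to C\times C$ is a quotient map. Given $V \subseteq C\times C$ with $\mathrm{ev}^{-1}(V)$ open and a point $(x,y) \in V$, choose a path $p$ from $x$ to $y$ and a basic $k$-compact-open neighbourhood $\bigcap_i W(K_i,V_i)$ of $p$ contained in $\mathrm{ev}^{-1}(V)$; by local path-connectedness take path-connected open $U_0 \ni x$ and $U_1 \ni y$ with $U_0 \subseteq \bigcap_{0 \in K_i} V_i$ and $U_1 \subseteq \bigcap_{1 \in K_i} V_i$. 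For any $(x',y') \in U_0 \times U_1$ and paths $\alpha$ in $U_0$ from $x'$ to $x$ and $\beta$ in $U_1$ from $y$ to $y'$, form the concatenation $\alpha * p * \beta$ reparametrised so that $\alpha$ and $\beta$ occupy small subintervals $[0,\varepsilon]$ and $[1-\varepsilon,1]$; for $\varepsilon$ sufficiently small, uniform continuity of $p$ on interior compacts $K_i \subset (0,1)$ ensures the reparametrised path lies in every $W(K_i,V_i)$, while for $K_i$ meeting $\{0,1\}$ the endpoint detour lies in $U_0$ or $U_1$ and hence in $V_i$. Thus $(x',y') \in V$, giving $U_0 \times U_1 \subseteq V$, so $V$ is open in the product topology; this equates the two topologies and completes the proof.
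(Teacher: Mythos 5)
Your route is essentially the paper's, repackaged. The paper reduces to path-connected $X$ (using that $\pi_0$ commutes with coproducts of the open path-components) and shows the endpoint map $H=\underline{\U}([0,1],X)\to X\times X$ is open by computing images of the sets $W(K,U)$, using local path-connectedness; you instead keep all components, build the comparison map $\bar q$ explicitly and check it is monic and regular epic via Proposition \ref{excompletion}(ii), reducing everything to the assertion that the endpoint evaluation is a quotient map onto $C\times C$ for each component $C$, which you prove by the same concatenation/local-path-connectedness device. The $\U_{ex}$ bookkeeping (that $\bar q$ is the graph of $q$, that $\bar q^\circ\bar q$ is the kernel pair with its subspace topology, that exactness of $\varepsilon$ gives the regular-epi half) is correct and, if anything, more explicit than the paper's ``effectiveness'' formulation; the open-map and quotient-map formulations of the crux are interchangeable here.

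Two points in your final step need care. The genuine one is the sentence where you ``choose a basic $k$-compact-open neighbourhood $\bigcap_i W(K_i,V_i)$ of $p$ contained in $\mathrm{ev}^{-1}(V)$'': by Lemma \ref{WHhom} the $k$-compact-open topology is the $k$-ification of the compact-open topology, so its open sets are not generated by the $W(K,U)$, and an open subset of $\underline{\U}(\Delta_1,C)$ need not contain any finite intersection $\bigcap_i W(K_i,V_i)$ around a given point. As written, your argument only shows that evaluation is a quotient map from $\underline{Top}([0,1],C)$ with the plain compact-open topology, which is a priori weaker than what is needed, since the quotient topology induced from the finer $k$-ified topology could in principle be strictly finer. (You are in good company: the paper's own proof takes exactly the same liberty when it ``takes a basic open set $W(K,U)$ of $\underline{\U}([0,1],X)$'' and only treats those sets, so this matches the paper's level of rigour rather than introducing a new error, but it is the step that does not follow from what is stated.) The second point is minor: ``uniform continuity of $p$'' is not the right justification for the reparametrised middle segment staying in the $V_i$, since there is no metric on $C$; the correct reason is that each $K_i$ is a compact subset of the open set $p^{-1}(V_i)\subseteq[0,1]$, so it has positive distance from the complement, and for $\varepsilon$ small the affine reparametrisation moves points of $K_i$ by less than that distance. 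That fix is immediate.
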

\begin{proof}
Since $\pi_0$ commutes with colimits, we may assume $X$ is path-connected, so $H' \to X \times X$ is surjective. We need to show that $(X,H')$ is effective, that is, that $H' \to X \times X$ is open. Given an open set in $H'$, by definition its preimage in $H = \underline{\U}([0,1],X)$, is open; it suffices to show $f: H \to H' \to X \times X$ is open. So take a basic open set $W(K,U)$ of $\underline{\U}([0,1],X)$. Write $\{U_i\}_{i \in I}$ for the path-components of $U$: $U$ is locally path-connected so the $U_i$ are open in $X$. We now consider several cases; checking each one is left to the reader.

If $0,1 \notin K$, then $f(W(K,U)) = X \times X$. If, without loss of generality, $0 \in K$ and $1 \notin K$, then $f(W(K,U)) = U \times X$. If $0,1 \in K$ but $K \neq [0,1]$, then $f(W(K,U)) = U \times U$. If $K = [0,1]$, then $f(W(K,U)) = \bigcup_i U_i \times U_i$.
\end{proof}

It is shown in \cite[Theorem 3]{Milnor} that, if $X$ is homotopy equivalent to a CW-complex, $\underline{Top}((S^n,s),(X,x))$ is homotopy equivalent to a CW-complex. The argument goes via replacing $X$ with a homotopy equivalent metric space before applying $\underline{Top}((S^n,s),-)$. As noted in Lemma \ref{pirelations}, on metric spaces $\underline{Top}((S^n,s),-) = \underline{\U}((S^n,s),-)$, and so \cite{Milnor} shows that $\underline{\U}((S^n,s),(X,x))$ is homotopy equivalent to a CW-complex for all $n$. CW-complexes are locally path-connected, so we get:

\begin{thm}
Let $X$ be homotopy equivalent to a CW-complex. Then $\pi_n(X,x)$ is $\pi^{abs}_n(X,x)$, with the discrete topology.
\end{thm}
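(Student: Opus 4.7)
The strategy is to reduce to the previous lemma about $\pi_0$ of a locally path-connected space, applied to the mapping space $Y := \underline{\U_\ast}((S^n,s),(X,x))$.

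The first step is to identify $\pi_n(X,x)$ with $\pi_0(Y)$ in $\U_{ex}$. The explicit description at the beginning of this section gives $\pi_n(X,x) = (Y,H')$, where $H'$ is the image in $Y\times Y$ of the subspace $H\subseteq\underline{\U}(S^n\times[0,1],X)$ of based homotopies. On the other hand, $\pi_0(Y) = \pi_0(\Sing Y)$ is by construction the pair $(Y,R)$, where $R$ is the image of the endpoint map $\underline{\U}([0,1],Y)\to Y\times Y$. Cartesian closedness of $\U$ identifies a path in $Y$ with a based homotopy, so $H \cong \underline{\U}([0,1],Y)$ and this identification commutes with the evaluation maps to $Y\times Y$; hence $H' = R$.

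Next, as the excerpt points out just before the theorem (via Milnor), $Y$ is homotopy equivalent in $\U$ to a CW-complex $Y'$. By Lemma \ref{equivalences}, a homotopy equivalence in $\U$ is a weak equivalence in $\U_{CH}$, so $\Sing$ carries it to a weak equivalence in $\Kan(\U,\sqcup CH)$, and hence to one in $\Kan(\U,reg)$ by Corollary \ref{wesareregular}. Since $\pi_0$ is invariant under weak equivalences in $\Kan(\U,reg)$, this gives $\pi_0(Y)\cong\pi_0(Y')$ in $\U_{ex}$. Now $Y'$ is a CW-complex, hence locally path-connected, so the preceding lemma identifies $\pi_0(Y')$ with $\pi^{abs}_0(Y')$ equipped with the discrete topology. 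Combined with $\pi^{abs}_0(Y')\cong\pi^{abs}_0(Y)=\pi^{abs}_n(X,x)$ (homotopy invariance of the set $\pi^{abs}_0$), this yields the theorem, together with the statement with basepoints by running the same argument in the pointed category.

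The main obstacle is really the first step: carefully unravelling the simplicial construction $\pi_n(X,x)=\pi_0(\Omega^n\Sing X,x)$ through the adjunctions and exact-completion machinery, and checking that the resulting equivalence relation on $Y$ really is the path-connection relation coming from endpoints. The remaining steps reduce to invoking homotopy-invariance results already established in the excerpt.
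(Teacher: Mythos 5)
Your proposal is correct and follows essentially the same route as the paper: identify $\pi_n(X,x)$ with $\pi_0$ of the based mapping space $\underline{\U_\ast}((S^n,s),(X,x))$ (the explicit description given at the start of the section), invoke Milnor's theorem to replace that mapping space, up to homotopy equivalence, by a CW-complex, use homotopy invariance of $\pi_0$, and then apply the preceding lemma on locally path-connected spaces. The paper leaves the homotopy-invariance step and the identification $H'=R$ implicit, so your write-up just makes the paper's intended argument explicit.
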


Since maps of spaces which induce isomorphisms of all the abstract homotopy groups are weak homotopy equivalences, we might wonder whether our new homotopy objects detect weak equivalences in $\U_{CH}$. The following example shows they do not.

\begin{example}
Consider $$X = ([0,1] \cap \mathbb{Q}) \cup [1,2] \cup ([2,3] \cap \mathbb{Q}), Y = [0,2] \cap \mathbb{Q},$$ and $f: X \to Y$ given by squeezing the $[1,2]$ segment to a point: $f(x) = x$ for $x\leq 1$, $f(x) = 1$ for $1 \leq x \leq 2$, and $f(x) = x-1$ for $2 \leq x$. As $Y$ is totally disconnected, we have already seen $\pi_0(Y) = Y$ and all higher homotopy groups are trivial; it is not hard to check that all the higher homotopy groups of $X$ are trivial too, and that the equivalence relation $\pi_0(X)$ is effective so that $\pi_0(X) = \pi^\K_0(X) = Y$.

However, $f$ is not a weak equivalence in $\U_{CH}$: let $K = \mathbb{N} \cup \infty$, the one-point compactification of the discrete space $\mathbb{N}$; we will show $\underline{\U}(K,X) \to \underline{\U}(K,Y)$ is not a weak homotopy equivalence. Indeed, since $Y$ is totally disconnected, $$\pi^{abs}_0(\underline{\U}(K,Y)) = \underline{\U}(K,Y) = \{\text{convergent sequences in Y}\}.$$ But the convergent sequence $((-1)^n/n)_{n \in \mathbb{N}}$ has no preimage in $\underline{\U}(K,X)$: that is, the preimage of the sequence $((-1)^n/n)_{n \in \mathbb{N}}$ does not converge. So $\pi^{abs}_0(\underline{\U}(K,X)) \to \pi^{abs}_0(\underline{\U}(K,Y))$ is not surjective.
\end{example}

The same example, applied to $\Sing(X) \to \Sing(Y)$, shows that weak equivalences in $(s\U,CH)$ are finer than weak equivalences in $(s\U,reg)$.

Here is a nice use of the long exact sequence of homotopy groups (which is not available for $\pi^\U_n$, $\pi^\K_n$ and $\pi^{Top}_n$). Suppose $\C$ is regular. For $G \in s\C Grp$, we can define a map $WG \to \bar{W}G$ in $s\C$ corresponding to the principal bundle over the classifying space, $EG \to BG$, for discrete groups. For background and definitions here a good source is \cite{Roberts}; we will just give the detail we need. $(WG)_n$ is given by $G_n \times \cdots \times G_0$, $G_i = G$, and $G$ acts on $WG$ by left multiplication on the first factor. The quotient by this action is written $\bar{W}G$.

We will also use the work of \cite{BS}, although note a difference in terminology: by global Kan complex, we mean an object in $\Kan(\C,\ob \C)$, whereas the definition given in \cite[Definition 2.1]{BS} is stronger. Explicitly, the maps $\lambda_{q,I}$ required by the definition there are only required to exist in our definition when $I$ contains $q$ elements. We will only need to know that global Kan complexes according to \cite[Definition 2.1]{BS} are in $\Kan(\C,\ob \C)$. In the same way, the fibrations defined in \cite[Definition 6.1]{BS} are global Kan fibrations in our sense (but not vice versa).

\begin{prop}
\begin{enumerate}[(i)]
\item $G \in \Kan(\C,\ob \C)$.
\item $WG \in \Kan(\C,\ob \C)$ and $WG$ is contractible (that is, it is homotopy equivalent to the terminal object $\disc\ast$).
\item The map $WG \to \bar{W}G$ is a global Kan fibration with fibre $G$ and $\bar{W}G \in \Kan(\C,\ob \C)$.
\end{enumerate}
\end{prop}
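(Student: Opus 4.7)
The plan is to translate the classical arguments for simplicial groups in $Set$ into the internal setting of $\C$, working throughout with the class $\bar{\proj} = \ob\C$ so that fibrations in $(s\C, \ob\C)$ are the global Kan fibrations whose horn-lifting maps are split epimorphisms in $\C$. The essential observation is that the standard constructions---Moore's horn fillers, the contracting homotopy on $W$, and the principal bundle structure on $W \to \bar{W}$---are given by polynomial formulas in the group operations, degeneracies and projections, so they translate into commutative diagrams in any finitely-complete category with group objects.

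For (i) I would invoke \cite{BS}: Moore's recursive formula expresses a horn-filler in a simplicial group as an iterated product of faces, degeneracies and inverses of given simplices, assembling into an internal section of $\{\Delta^n, G\} \to \{\Lambda^n_k, G\}$. For the Kan-complex part of (ii), I would use the d\'ecalage description of $WG$ to exhibit it as a simplicial group in its own right (or directly as an iterated semidirect product with multiplication and degeneracies built from those of $G$), and then (i) applies. The contraction is then witnessed by the extra degeneracy: it furnishes a right homotopy $h: WG \to \Delta^1 \pitchfork WG$ whose two endpoints are $\id_{WG}$ and the constant map factoring through $\disc \ast$, so $WG$ is homotopy equivalent to the terminal object.

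For (iii) the key structural fact is that there is a canonical isomorphism $(WG)_n \cong G_n \times (\bar{W}G)_n$ under which the quotient $WG \to \bar{W}G$ becomes the second projection, and under which the $G_n$-action on $(WG)_n$ is left multiplication on the first factor. This gives a canonical section of $(WG)_n \to (\bar{W}G)_n$ in each degree, so the map is a split epimorphism in $\C$. For the Kan fibration condition, I would split the relative horn-lifting map $\{\Delta^n, WG\} \to \{\Lambda^n_k, WG\} \times_{\{\Lambda^n_k, \bar{W}G\}} \{\Delta^n, \bar{W}G\}$ by first lifting the $\bar{W}G$-simplex through the degreewise product splitting, and then correcting using a horn-filler in $G$ provided by (i). Once $WG \to \bar{W}G$ is known to be a fibration, Lemma \ref{Kanfibre}(ii) (using that $WG$ is Kan and the map is degreewise split epic) gives $\bar{W}G \in \Kan(\C, \ob\C)$, and the fibre at the canonical basepoint is computed degreewise from the product decomposition as $G_n$, assembling to $G$. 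The main obstacle is arranging the product decomposition and its $G$-equivariance coherently with the simplicial structure so that the horn-filling section combines correctly with the fibrewise filler; everything else is a bookkeeping exercise translating classical formulas into internal diagrams.
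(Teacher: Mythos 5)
Your argument is correct and follows essentially the same route as the paper: internal Moore fillers (via \cite{BS}) for (i), the simplicial-group structure on $WG$ with its extra-degeneracy contraction for (ii), and the degreewise product (twisted cartesian) decomposition together with Lemma \ref{Kanfibre} for (iii). The only real difference is that where you sketch the principal-twisted-cartesian-product horn-lifting argument by hand, the paper simply cites \cite[Lemma 19]{RS} and \cite[Lemma 6.7]{BS}, which carry out exactly the coherence bookkeeping you identify as the main obstacle.
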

\begin{proof}
\begin{enumerate}[(i)]
\item Objects in $s\C Grp$ are global Kan complexes in $s\C$ by \cite[Theorem 3.8]{BS}, where the proof is given for simplicial topological groups, but holds in any category (with the relevant limits). Alternatively, we can make $\underline{s\C}_{sSet}(\disc P,G)$ into a simplicial group for all $P \in \C$, and observe the well-known fact that simplicial groups are fibrant.
\item $WG$ is a simplicial group object in $\C$, by \cite[Theorem]{Roberts}, so it is a global Kan complex. A contracting homotopy is given in \cite[Section 4]{Roberts}.
\item By \cite[Lemma 19]{RS} the map $WG \to \bar{W}G$ is a twisted cartesian product with fibre $G$; then \cite[Lemma 6.7]{BS} shows that this map is a global Kan fibration. Now each $(WG)_n \to (\bar{W}G)_n$ is clearly split epic, that is, $\ob\C$-split, so by Lemma \ref{Kanfibre} $\bar{W}G$ is a global Kan complex.
\end{enumerate}
\end{proof}

This fibre sequence gives a long exact sequence of homotopy groups (in the exact completion of $\C$). We will suppress the basepoints in the notation for these homotopy groups; asssume a basepoint has been chosen. Since $WG$ is contractible, $\pi_n(WG) = \pi_n(\ast) = \ast$ for all $n$. Therefore $\pi_0(\bar{W}G) = \ast$ and $\pi_n(\bar{W}G) = \pi_{n-1}(G)$ for $n>0$. In particular, for $G' \in \C Grp$ and $G = \disc G'$, the only non-zero homotopy group of $\bar{W}G$ is $\pi_1(\bar{W}G) = G'$. In this sense, $\bar{W}G$ is a \emph{simplicial Eilenberg--Mac Lane space} $K(G',1)$ for $G'$.

With a little more thought, when $G'$ is abelian we may construct $K(G',n)$ by noting that in this case $G$ is a normal subgroup object of $WG$, so $\bar{W}G$ has the structure of an $s\C$ group object. Then we let $K(G',n) = \bar{W}K(G',n-1)$. This is well-known in the case of abstract groups, and the strategy here is the same.

There are some important differences between these and the Eilenberg--Mac Lane spaces for abstract groups. First, it is hard to say anything in general about the homotopy groups of $|\bar{W}G|$ or $L|\bar{W}G|$ for $G' \in \U Grp$. However, we do have the following result:

\begin{thm}
For $G'$ a totally path-disconnected group, $L|\bar{W}G|$ is an Eilenberg--Mac Lane space for $G'$; that is, $\pi_1(L|\bar{W}G|) = G'$, with all other homotopy groups trivial.
\end{thm}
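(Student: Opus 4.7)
The plan is to combine Theorem \ref{tpd} with the simplicial Eilenberg--Mac Lane computation recorded just before the theorem (namely that $\pi_1(\bar{W}G)=G'$ and all other $\pi_n(\bar{W}G)$ are trivial). If we can identify $\mathrm{Sing}(L|\bar{W}G|)$ with $\bar{W}G$ up to weak equivalence in $(s\U,\mathrm{reg})$, the computation of the homotopy groups of $L|\bar{W}G|$ reduces to the simplicial one.

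The first, and really the only substantive, step is to verify that $\bar{W}G$ satisfies the hypothesis of Theorem \ref{tpd}, i.e.\ that each $(\bar{W}G)_n$ is totally path-disconnected. Recall $(WG)_n = G_n \times \cdots \times G_0$ with each $G_i = G = \disc G'$, and that the simplicial $G$-action is by left multiplication on the top factor; taking the quotient gives $(\bar{W}G)_n \cong (G')^n$ in $\U$. A continuous path $[0,1]\to (G')^n$ projects to a path in each factor $G'$, which must be constant by hypothesis, so paths in $(G')^n$ are constant too. Hence $(\bar{W}G)_n$ is totally path-disconnected for every $n$.

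With the hypothesis verified, Theorem \ref{tpd} produces a weak equivalence between $\mathrm{Sing}(L|\bar{W}G|)$ and $\bar{W}G$ in $(s\U,\mathrm{reg})$. Both simplicial objects lie in $\mathrm{Kan}(\U,\mathrm{reg})$: $\mathrm{Sing}$ of any space is a global Kan complex by Remark \ref{structuresonsU}(i), and $\bar{W}G$ is a global Kan complex by the proposition preceding the theorem. By the lemma identifying weak equivalences in $(s\U,\mathrm{reg})$ with weak equivalences in $\mathrm{Kan}(\U,\mathrm{reg})$ on fibrant objects (using the $\mathrm{Ex}^{\infty}$ replacement), and by the fact that the $\pi_n$ and $\pi_n(-,x)$ are invariants under weak equivalence in $\mathrm{Kan}(\U,\mathrm{reg})$, we obtain
\[
\pi_n(L|\bar{W}G|) \;=\; \pi_n(\mathrm{Sing}(L|\bar{W}G|)) \;\cong\; \pi_n(\bar{W}G)
\]
(naturally in the chosen basepoint, which after a standard translation may be assumed to lie in the image of $\disc\{\ast\}\hookrightarrow\bar{W}G$). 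Substituting $\pi_1(\bar{W}G)=G'$, $\pi_0(\bar{W}G)=\ast$, and $\pi_n(\bar{W}G)=\ast$ for $n\ge 2$ yields the theorem.

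The only potential obstacle is the hypothesis check: one must be sure that the quotient $(WG)_n\to(\bar{W}G)_n$ really produces $(G')^n$ as a space in $\U$ (and not merely as a set), so that total path-disconnectedness is inherited from $G'$; this is immediate from the standard explicit description of $W$ and $\bar{W}$ and the fact that products and quotients in $\U$ agree with the set-theoretic ones in this situation. Everything else is bookkeeping with results already in place.
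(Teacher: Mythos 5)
Your argument is correct and is essentially the proof the paper intends: the paper's own ``proof'' is a citation to \cite[Theorem 6.4]{Myself}, which is deduced there from \cite[Theorem 6.3]{Myself} (quoted here as Theorem \ref{tpd}) in just the way you describe, namely by checking that each $(\bar{W}G)_n \cong (G')^n$ is totally path-disconnected and then transporting the simplicial computation $\pi_1(\bar{W}G)=G'$, $\pi_n(\bar{W}G)=\ast$ otherwise, across the weak equivalence in $(s\U,\mathrm{reg})$. Your hypothesis check and the invariance bookkeeping (global Kan-ness of $\Sing(L|\bar{W}G|)$ and $\bar{W}G$, $\Ex^\infty$ comparison, basepoints) are all sound.
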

\begin{proof}
See \cite[Theorem 6.4]{Myself}.
\end{proof}

Similarly, when $G$ is totally path-disconnected and abelian, $L\lvert \bar{W}^nG\rvert$ is a $K(G,n)$.

\begin{rem}
It follows that for totally path-disconnected groups we also have $\pi^\K_1(L|\bar{W}G|) = G'$ and $\pi^\U_1(L|\bar{W}G|) = G'$ with all other homotopy groups trivial.
\end{rem}

Second, we know that the homotopy groups classify the objects of $\Kan(\C,reg)$ up to Dugger-Isaksen weak equivalence, in the sense of \cite[Theorem 1.21]{Low}, but for general $(\C,\proj)$ the weak equivalences in $\Kan(\C,\proj)$ may be finer (or coarser; see Lemma \ref{DIPsplit}).

At any rate, we may use $WG$ to define some homotopical finiteness conditions on $G'$. Suppose $\C$ is cartesian closed and has a class of $\kappa$-small generators $\bar{\proj}$, and consider $(s(G'$-$\C),G' \times \bar{\proj})$ with the model structure described in Examples \ref{sU}(vi): the $G'$-action on $WG$. By adding objects if necessary, we may assume $\bar{\proj}$ is closed under retracts and finite coproducts. We say $G'$ is \emph{of type $\bar{\proj}\F_n$} if $WG$ is weakly equivalent in $s(G'$-$\C)$ to some cofibrant $X \in I$-cell such that $X_0,\ldots,X_n$ are in $G' \times \bar{\proj}$, for $n \leq \infty$. We say $G'$ has \emph{simplicial dimension $n$}, written $\sd(G')=n$, $n \leq \infty$, if $WG$ is weakly equivalent in $s(G'$-$\C)$ to some $X \in I$-cell which has dimension $n$ (that is, $X$ can be written as a composition of pushouts by maps $\partial\Delta^m \pitchfork (G' \times P) \to \Delta^m \pitchfork (G' \times P), P \in \bar{\proj}$ in $I$ with $m \leq n$), and that is the smallest $n$ for which such an $X$ exists. We say $G'$ is \emph{of type $\bar{\proj}\F$} if $WG$ is weakly equivalent in $s(G'$-$\C)$ to some cofibrant $X \in I$-cell which has dimension $n$ for some finite $n$, such that $X_0,\ldots,X_n$ are in $G' \times \bar{\proj}$.

We observe that these definitions are analogous to type $F_n$, geometric dimension $n$, and type $F$ for abstract groups; studying the properties of these finiteness conditions is beyond the scope of the present work, but this seems to be the correct generalisation.

\section{\texorpdfstring{$R$-$k$}{R-k}-modules}
\label{kmodules}

When we deal with additive categories, of course, all the results above stated for general categories are still true. But there are some aspects of the theory that hold here but fail in the general case, or just get easier, and we will investigate these differences. First we will look at categories of $R$-module objects in $\U$. The results in this section may be known to experts, but I have been unable to find a source for them.

As for $k$-groups, we can define a category of $k$-rings, which we call $\U Ring$. Explicitly, objects are rings $R$ with a CGWH topology making $R$ an abelian $k$-group and making multiplication $R \times R \to R$ continuous; morphisms are continuous ring homomorphisms. Similarly, for $R \in \U Ring$, we write $R$-$\U Mod$ for the category of left $R$-module objects in $\U$, which we call $R$-$k$-modules (and $\U Mod$-$R$ for right $R$-module objects; modules will be left modules unless stated otherwise). We will also write $R$-$Mod$ for the category of abstract left $R$-modules (on the underlying ring of $R$), and $R$-$\K Mod$ for $R$-module objects in $\K$, which we call $R$-CG-modules. As for $\U$, we will show $R$-$\U Mod$ is regular and coregular. All this includes as a special case $\U Ab$, the category of abelian group objects in $\U$, which is $\mathbb{Z}$-$\U Mod$ where $\mathbb{Z}$ is given the discrete topology.

There are various forgetful functors, such as $R$-$\U Mod \to R$-$Mod$, $R$-$\U Mod \to \U$, $R$-$\U Mod \to \U Grp$, and so on. We will abuse notation by calling all these forgetful functors $U$, and by identifying a morphism in $R$-$\U Mod$ with its image under any of these functors.

For the rest of the section, fix a commutative $k$-ring $Q$ and a $Q$-$k$-algebra $R$, that is, $R$ is a $k$-ring together with a map of $k$-rings $Q \to R$, satisfying the usual properties. Note that (as usual for commutative rings) left $Q$-$k$-modules are the same thing as right $Q$-$k$-modules, and there is no need to differentiate.

As for $k$-groups, we have: an $R$-CG-module $A$ is an $R$-$k$-module if and only if the identity is closed in $A$, and the functor $$( )_{WH}: R\text{-}\K Mod \to R\text{-}\U Mod$$ is given by $(A)_{WH} = A/\bar{\{0_A\}}$, with the quotient topology. This is left adjoint to inclusion $R$-$\U Mod \to R$-$\K Mod$.

Given $A,B \in R$-$\U Mod$, write $\U_R(A,B)$ for the set of morphisms $A \to B$: we equip this with the structure of a(n abstract) $Q$-module in the usual way.

\begin{lem}
$R$-$\U Mod$ is additive and enriched over $Q$-$Mod$.
\end{lem}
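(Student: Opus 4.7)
The plan is to verify the two parts of the statement in turn: first, that each hom-set $\U_R(A,B)$ carries a natural $Q$-module structure with respect to which composition is $Q$-bilinear; second, that $R$-$\U Mod$ has a zero object and finite biproducts.

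For the enrichment, I would begin by observing that the $Q$-algebra structure map $Q \to R$ makes every $R$-$k$-module canonically into a $Q$-$k$-module, so in particular $B$ carries a continuous, $R$-linear addition $m_B: B \times B \to B$ and, for each $q \in Q$, a continuous $R$-linear scalar multiplication $\mu_q: B \to B$. Given $f, g \in \U_R(A,B)$, I define $f + g$ to be the composite
\[
A \xrightarrow{\Delta_A} A \times A \xrightarrow{f \times g} B \times B \xrightarrow{m_B} B,
\]
and $q \cdot f := \mu_q \circ f$. Continuity is automatic (these are composites in $\U$, using that products in $R$-$\U Mod$ are computed as in $\U$), and $R$-linearity follows from the $R$-linearity of the constituent maps. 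Checking that these operations make $\U_R(A,B)$ into a $Q$-module is then a pointwise verification, since the forgetful functor to $\U$ (and further to the underlying set) preserves the relevant limits, so the $Q$-module axioms on $\U_R(A,B)$ reduce to the corresponding pointwise identities in $B$.

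For $Q$-bilinearity of composition, given $f, f' : A \to B$, $g, g' : B \to C$ and $q \in Q$, the identities $g \circ (f + f') = g \circ f + g \circ f'$, $(g + g') \circ f = g \circ f + g' \circ f$, and $(q \cdot g) \circ f = q \cdot (g \circ f) = g \circ (q \cdot f)$ all reduce pointwise to statements about $R$-linear, hence $Q$-linear, maps between underlying $R$-modules, so they follow by Yoneda-style point-chasing in $\U$.

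For additivity, the one-point space $\{\ast\}$ is the terminal object in $\U$ and carries a unique $R$-$k$-module structure (with $0$ as its single element); the projection and zero-section maps exhibit it as both initial and terminal, giving a zero object. Finally, for finite biproducts, the product $A_1 \times A_2$ in $R$-$\U Mod$ is calculated in $\U$ with coordinatewise operations. To see it is also a coproduct, given $f_i : A_i \to B$ I take the map
\[
A_1 \times A_2 \xrightarrow{f_1 \times f_2} B \times B \xrightarrow{m_B} B,
\]
which is continuous and $R$-linear by construction, and verify against the canonical inclusions $A_i \hookrightarrow A_1 \times A_2$ (given by pairing $\id_{A_i}$ with the zero map) that the universal property of the coproduct is satisfied. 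I expect no serious obstacle: the main thing to be careful about is that all module-theoretic constructions genuinely land in $\U$, which is automatic since products, equalisers and the addition/scalar multiplication maps are continuous by assumption.
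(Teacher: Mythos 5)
Your proposal is correct and follows essentially the same route as the paper, which simply notes that after checking continuity of the relevant composites the situation is identical to abstract module categories, with the biproduct given by the underlying module biproduct equipped with the ($k$-)product topology. You have merely spelled out the pointwise verifications (addition via $m_B \circ (f\times g)\circ\Delta_A$, scalar action via post-composition, zero object, and the coproduct property of the product) that the paper leaves implicit.
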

\begin{proof}
After noting that the composition of two continuous maps is continuous, this is just the same situation as for categories of abstract modules. The biproduct $A \oplus B$ in $R$-$\U Mod$ is the biproduct of the underlying modules, with the ($k$-)product topology.
\end{proof}

\begin{lem}
$R$-$\U Mod$ is complete and cocomplete.
\end{lem}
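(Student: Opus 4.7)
The plan splits into two parts: limits are inherited from $\U$ along the forgetful functor $U: R\text{-}\U Mod \to \U$, and colimits are obtained by computing in $R\text{-}\K Mod$ and then applying the weak Hausdorffification $(-)_{WH}$ introduced just above.

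For completeness, given a diagram $F: J \to R\text{-}\U Mod$, set $L = \lim_J UF$ in $\U$, which exists since $\U$ is complete. The $R$-module operations on the objects $F(j)$ assemble into cones: for example, the addition maps $F(j) \times F(j) \to F(j)$ form a cone out of $L \times L$ (a limit in $\U$), and hence factor through a unique map $L \times L \to L$; similarly for scalar multiplication $R \times L \to L$ and the zero map $\{\ast\} \to L$. The module axioms for $L$ hold because they do so after post-composition with each limit projection, and uniqueness in the universal property forces the required diagrams to commute. A cone of $R$-$k$-module maps into $F$ is in particular a cone in $\U$ and so factors uniquely through $L$; the factorisation is automatically $R$-linear by another uniqueness argument. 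Hence $L$ is the limit in $R\text{-}\U Mod$.

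For cocompleteness, I would first argue that $R\text{-}\K Mod$ is cocomplete. Adapting the construction of free $k$-groups recalled in Section \ref{kspace}, one builds a free $R$-CG-module functor left adjoint to the forgetful functor $R\text{-}\K Mod \to \K$; this uses crucially that products in $\K$ commute with colimits (by cartesian closure). General colimits in $R\text{-}\K Mod$ are then presented as coequalisers of coproducts of free modules. The adjunction $(-)_{WH}: R\text{-}\K Mod \rightleftarrows R\text{-}\U Mod$ noted earlier in this section makes $(-)_{WH}$ a left adjoint to the inclusion, so it preserves colimits; therefore a colimit in $R\text{-}\U Mod$ can be computed by taking the colimit of the inclusion of the diagram in $R\text{-}\K Mod$ and then applying $(-)_{WH}$.

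The main obstacle is cocompleteness of $R\text{-}\K Mod$, specifically constructing free $R$-CG-modules and checking that the naive coequaliser-of-coproducts-of-frees presentation does in fact compute colimits in $R\text{-}\K Mod$ rather than merely in $\K$; this is the direct analogue of the free $k$-group construction, and ultimately rests on the cartesian closure of $\K$.
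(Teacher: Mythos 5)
Your argument is correct in outline, but it takes a different route from the paper's. The paper first notes that $R$-$\U Mod$ is additive, so completeness reduces to kernels and products and cocompleteness to cokernels and coproducts, and then constructs all four explicitly: kernels with the subspace topology, products with the product topology, cokernels as $B/\overline{f(A)}$ with the quotient topology (continuity of the operations coming from openness of the quotient map), and coproducts by topologising the abstract direct sum as a quotient of the free abelian CG-group on $\bigsqcup_I A_i$, invoking LaMartin's results to see that this space is already weakly Hausdorff, so no reflection is needed. You instead create all limits along the forgetful functor to $\U$ (fine, and essentially the same content as the paper's kernels-and-products step), and obtain colimits by reflecting from $R$-$\K Mod$ along $(-)_{WH}$, which is legitimate since that functor is left adjoint to the inclusion. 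The obstacle you flag --- cocompleteness of $R$-$\K Mod$ --- is precisely where the paper does its concrete work, and your sketch can be closed by standard means: the free $R$-CG-module functor exists (the paper later cites Seal for exactly this), coproducts of free modules are free modules on disjoint unions because the free functor is a left adjoint, and coequalisers in the additive category $R$-$\K Mod$ are cokernels of differences, which exist because $-\times X$ preserves colimits in $\K$ (cartesian closure), so quotient maps are productive and the operations descend to $B/\mathrm{im}(f-g)$; arbitrary coproducts are then coequalisers of pairs between free modules, and cocompleteness follows. What the paper's approach buys is an explicit description of $\bigoplus_I A_i$ together with the fact that it is already WH; what yours buys is a softer, more formal argument avoiding LaMartin's WH results, at the cost of explicitness.
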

\begin{proof}
By a standard argument, for completeness it is enough to show that $R$-$\U Mod$ has all kernels and products, and dually for cocompleteness it is enough to show that it has all cokernels and coproducts. We give the constructions, and leave it to the reader to check the details.

Given a morphism $f: A \to B$, $\ker(f)$ is just the kernel of the underlying map of modules, with the subspace topology; $\coker(f)$ is module $B/\bar{f(A)}$ with the quotient topology coming from $B$ (cokernels in $R$-$\K Mod$ are given by $B/f(A)$ with the quotient topology). Note that, with this topology, $\coker(f)$ is indeed an $R$-$k$-module -- the continuity of the addition and the $R$-action follow from the fact that the quotient map $B \to B/\bar{f(A)}$ is open.

Given a set $\{A_i : i \in I\}$ of $R$-$k$-modules, it is easy to check that the product of the underlying modules, with the product topology, is the product $\prod_I A_i$ in $R$-$\U Mod$.

The construction of $\bigoplus_I A_i$ is more interesting. We first construct the coproduct of the $A_i$ in $R$-$\K Mod$. Write $\bigsqcup_I A_i$ for the disjoint union of the spaces underlying the $A_i$, and $F_{Ab}(\bigsqcup_I A_i)$ for the free abelian CG-group on this space. The underlying group of $F_{Ab}(\bigsqcup_I A_i)$ is the abstract free abelian group on $\bigsqcup_I A_i$, so we have a canonical map to the abstract direct sum $\bigoplus_I A_i$: give $\bigoplus_I A_i$ the quotient topology coming from $F_{Ab}(\bigsqcup_I A_i)$ via this map. So $\bigoplus_I A_i$ is an abelian CG-group. The $R$-action descends from the diagonal action on $\bigsqcup_I A_i$, because products commute with colimits in $\K Space$, and one can then check that $\bigoplus_I A_i$ is indeed the coproduct in $R$-$\K Mod$.

By \cite[Corollary 2.15]{Lamartin}, $F_{Ab}(\bigsqcup_I A_i)$ is WH, and hence is the free abelian $k$-group on $\bigsqcup_I A_i$. It follows by the same argument as \cite[Theorem 2.38]{Lamartin} that the quotient topology on $\bigoplus_I A_i$ is then WH too, so that $\bigoplus_I A_i$ is actually the coproduct in $R$-$\U Mod$.
\end{proof}

So the forgetful functor $U: R$-$\U Mod \to R$-$Mod$ preserves limits and coproducts, while the forgetful functor $R$-$\K Mod \to R$-$Mod$ actually preserves limits and colimits.

\begin{lem}
\label{free}
$R$-$\U Mod$ has free modules. That is, the forgetful functor $R$-$\U Mod \to \U$ has a left adjoint.
\end{lem}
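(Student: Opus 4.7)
The plan is to construct the left adjoint $F \colon \U \to R\text{-}\U Mod$ explicitly, by starting with the free abelian $k$-group $F_{Ab}(R \times X)$ (whose existence was recalled in Section \ref{kspace}) and then cutting it down by the relations needed to make the $R$-coordinate behave linearly. Specifically, I would set $F(X) = F_{Ab}(R \times X)/N$, where $N$ is the closure of the subgroup of $F_{Ab}(R \times X)$ generated by the elements $(r_1 + r_2, x) - (r_1, x) - (r_2, x)$ and $(0_R, x)$ for $r_1, r_2 \in R$ and $x \in X$. Because $R \times X$ is in $\U$, so is $F_{Ab}(R \times X)$, and quotienting by the closed subgroup $N$ gives an abelian $k$-group $F(X) \in \U Ab$.

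Next I would equip $F(X)$ with the $R$-action coming, morally, from multiplication on the first factor: for each fixed $r \in R$, the continuous map $\mu_r \colon R \times X \to R \times X$, $(s, x) \mapsto (rs, x)$, extends uniquely to a continuous group endomorphism $F_{Ab}(\mu_r) \colon F_{Ab}(R \times X) \to F_{Ab}(R \times X)$ by the universal property of $F_{Ab}$, and this descends through $N$ to an endomorphism of $F(X)$. To promote this pointwise-in-$r$ family into a genuinely continuous action $R \times F(X) \to F(X)$, I would use cartesian closedness of $\U$: since $R \times (-)$ commutes with colimits in $\U$ (Corollary following Theorem \ref{currying}), both $R \times F_{Ab}(R \times X)$ and $R \times F(X)$ are themselves realised as colimits built from $R \times (R \times X)^n$, and the desired action is induced on these levels by the continuous multiplication $m \colon R \times R \to R$ (in the form $(r, s, x) \mapsto (rs, x)$). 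The usual $R$-module axioms then hold on $F(X)$ because they hold on generators.

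Finally I would verify the universal property. The unit is $\eta_X \colon X \to U F(X)$, $x \mapsto (1_R, x) + N$. Given any $A \in R\text{-}\U Mod$ and any continuous $f \colon X \to UA$, define $\tilde f \colon R \times X \to UA$ by $\tilde f(r, x) = r \cdot f(x)$; this is continuous because the $R$-action on $A$ is. By the universal property of $F_{Ab}$, $\tilde f$ extends uniquely to a continuous abelian group homomorphism $\hat f \colon F_{Ab}(R \times X) \to A$, and $\hat f$ annihilates the generators of $N$ by $R$-linearity on $A$, hence also their closure (the kernel of a morphism to a WH target is closed). So $\hat f$ factors through a continuous map $F(X) \to A$, which is automatically $R$-linear because it agrees with the $R$-action on the generating image of $R \times X$. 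Uniqueness follows because the image of $\eta_X$ generates $F(X)$ as an $R$-$k$-module.

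The main obstacle is the continuity of the $R$-action as a joint map $R \times F(X) \to F(X)$: the universal property of $F_{Ab}$ produces endomorphisms one $r$ at a time, and without the colimit-preservation of $R \times (-)$ provided by cartesian closedness it would not be clear why these assemble continuously. Everything else (closedness of $N$, WH-ness of the quotient, the universal property at the level of sets, and $R$-linearity of the extension) is essentially formal once continuity of the action has been secured.
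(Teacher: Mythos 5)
Your construction is correct, but it takes a genuinely different route from the paper. The paper's proof is essentially a two-line appeal to the literature: free $R$-CG-modules exist in $R$-$\K Mod$ by Seal's general theorem on free modules over cartesian closed topological categories, and the free $R$-$k$-module on $X \in \U$ is then obtained by applying $(\ )_{WH}$, since the forgetful functor $R$-$\U Mod \to \U$ factors through $R$-$\K Mod \to \K$ and left adjoints compose. Your argument instead builds the adjoint by hand inside $\U$: present $F(X)$ as $F_{Ab}(R \times X)/N$ with $N$ the closure of the subgroup of bilinearity relations, and use the fact that $R \times -$ preserves colimits to get joint continuity of the action. The key points all check out: $N$ is a closed subgroup, so the quotient is again an abelian $k$-group; the action descends because each $\mu_r$ preserves the algebraic relation subgroup and hence (by continuity) its closure, and because $R \times F_{Ab}(R\times X) \to R \times F(X)$ is still a quotient map; and in the verification of the universal property, the passage from the generators of $N$ to all of $N$ is legitimate since the kernel of a continuous homomorphism into a weakly Hausdorff group is closed, while $R$-linearity and uniqueness follow from the closed-subgroup/generation argument you indicate. (Note that the relation $(0_R,x)$ is redundant, and that it is irrelevant for the adjunction whether $N$ coincides with the algebraic relation subgroup, since only the universal property is needed.) What your approach buys is self-containedness and an explicit model of $F(X)$; what the paper's buys is brevity and the observation that the whole construction is an instance of composing known left adjoints through $\K$.
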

\begin{proof}
The free modules in $R$-$\K Mod$ are constructed in \cite{Seal}. By an adjunction argument, the free $R$-$k$-module on a $k$-space $X$ (that is, the image of $X$ under the left adjoint of the forgetful functor) is given by applying $( )_{WH}$ to the free $R$-CG-module on this space.

Alternatively, one can use an argument analogous to \cite[Corollary 2.15]{Lamartin} to show that the free $R$-CG-module on $X$ is already WH, so that this is the module we want. But we do not need this.
\end{proof}

We write $R[X]$ for the free $R$-$k$-module on a $k$-space $X$.

We can now introduce $k$-group rings: given a $k$-group $G$, we can form the free $Q$-$k$-module $Q[G]$ on the underlying space of $G$. Now write $1$ for the trivial $k$-group and $e$ for the unique morphism $1 \to G$. Consider the map of spaces $$G \sqcup G = G \times 1 \sqcup 1 \times G \xrightarrow{\id_G \times e \sqcup e \times \id_G} G \times G \xrightarrow{m} G.$$ The definition of free modules gives us a morphism $Q[G] \times Q[G] \to Q[G]$ in $Q$-$\U Mod$; taking this as multiplication makes the underlying $Q$-module $Q[G]$ into a ring as usual, and since the map is continuous $Q[G]$ is therefore a $k$-ring.

Suppose $A, B \in R$-$\U Mod$. We write $\underline{\U_R}(A,B)$ for the group of morphisms $A \to B$ with the $k$-subspace topology, thinking of $\U_R(A,B)$ as a subset of $\underline{\U}(A,B)$ -- note that the topology on $\underline{\U_R}(A,B)$ is automatically WH because $\underline{\U}(A,B)$ is.

We want to say that $R$-$\U Mod$ is enriched over $Q$-$\U Mod$, but that will have to wait until we have defined tensor products.

\begin{lem}
$R$-$\U Mod$ is enriched over $\U$, and $\underline{\U_R}(A,B)$ is a $Q$-$k$-module, naturally in $A$ and $B$.
\end{lem}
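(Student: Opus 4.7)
The plan is to build the structure on $\underline{\U_R}(A,B)$ in stages: first equip the ambient space $\underline{\U}(A,B)$ with a $Q$-$k$-module structure defined pointwise via $B$; then show that $\underline{\U_R}(A,B)$ is closed under these operations and inherits them as a $Q$-$k$-submodule; finally verify the enrichment and naturality axioms.

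For the first step, I would use the cartesian closure of $\U$. The pointwise addition $\alpha: \underline{\U}(A,B) \times \underline{\U}(A,B) \to \underline{\U}(A,B)$ is defined as the adjoint of the continuous composite
\[
\underline{\U}(A,B)^{\times 2} \times A \xrightarrow{\id^{\times 2} \times \Delta_A} \underline{\U}(A,B) \times A \times \underline{\U}(A,B) \times A \xrightarrow{ev \times ev} B \times B \xrightarrow{+} B,
\]
(after rearranging the middle factors). The $Q$-action $Q \times \underline{\U}(A,B) \to \underline{\U}(A,B)$ is built similarly from the $Q$-action on $B$ and evaluation. Each abelian-group and $Q$-module axiom asserts equality of two maps $\underline{\U}(A,B)^{\times n} \to \underline{\U}(A,B)$; under the cartesian-closed adjunction with $ev$ each such equality reduces to the equality of two maps $\underline{\U}(A,B)^{\times n} \times A \to B$ which agree pointwise by the corresponding axiom in $B$. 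Hence $\underline{\U}(A,B)$ is a $Q$-$k$-module.

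For the second step, $\underline{\U_R}(A,B) \subseteq \underline{\U}(A,B)$ is closed under the pointwise operations: the sum of two $R$-linear maps is $R$-linear, and a $Q$-scalar multiple of one is $R$-linear because $R$ is a $Q$-algebra (so the $Q$-action on $B$ factors through $R$ and commutes with the $R$-action). Since $\underline{\U_R}(A,B)$ carries the $k$-subspace topology from $\underline{\U}(A,B)$, continuity of the restricted addition and $Q$-action is automatic: a map into $\underline{\U_R}(A,B)$ is continuous iff its composite with the inclusion $\underline{\U_R}(A,B) \hookrightarrow \underline{\U}(A,B)$ is, and that composite is the restriction of the already-continuous addition/action on $\underline{\U}(A,B)$ to the product $\underline{\U_R}(A,B) \times \underline{\U_R}(A,B)$. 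Thus $\underline{\U_R}(A,B)$ inherits a $Q$-$k$-module structure.

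For the enrichment over $\U$, composition $\underline{\U_R}(B,C) \times \underline{\U_R}(A,B) \to \underline{\U_R}(A,C)$ is the adjoint of
\[
\underline{\U_R}(B,C) \times \underline{\U_R}(A,B) \times A \xrightarrow{\id \times ev} \underline{\U_R}(B,C) \times B \xrightarrow{ev} C,
\]
where each $ev$ is a restriction of the evaluation in $\U$; the adjoint lands in $\underline{\U_R}(A,C)$ because composites of $R$-linear maps are $R$-linear. The identity $\disc\{\ast\} \to \underline{\U_R}(A,A)$ picking out $\id_A$ is continuous, and associativity and unit axioms descend from the $\U$-enrichment of $\U$. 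For naturality, given $f: A' \to A$ and $g: B \to B'$ in $R$-$\U Mod$, the induced map $\underline{\U_R}(A,B) \to \underline{\U_R}(A',B')$, $\phi \mapsto g \circ \phi \circ f$, is continuous (as the adjoint of an obvious continuous map) and preserves addition and the $Q$-action pointwise, since $f$ and $g$ are additive and $Q$-equivariant; hence it is a morphism in $Q$-$\U Mod$. The main technical nuisance is bookkeeping the $k$-subspace topology so that continuity of each restricted operation is inherited from the ambient $\underline{\U}(-,-)$; once that characterisation is used systematically, every check reduces to a routine cartesian-closedness argument.
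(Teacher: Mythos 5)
Your proof is correct, and it reaches the statement by a slightly more hands-on route than the paper. You construct the addition and $Q$-action pointwise on the ambient mapping space $\underline{\U}(A,B)$ via the $ev$/$adj$ adjunction, verify the module axioms by currying each identity down to a pointwise identity in $B$, and then observe that $\underline{\U_R}(A,B)$, with the $k$-subspace topology, is closed under these operations and inherits their continuity (your use of the universal property of the $k$-subspace topology -- a map from a $k$-space into the subset is continuous iff it is continuous into the ambient space -- is exactly the right tool, and is what makes the restriction step painless). The paper instead gets the abelian $k$-group structure for free from functoriality of $\underline{\U_R}(A,-)$ applied to the group-object structure on $B$ (hom into a group object is a group object), and obtains continuity of the $Q$-action by currying the action $Q \times B \to B$ into a continuous ring homomorphism $Q \to \underline{\End_Q}(B)$ and composing with the continuity of composition $\underline{\End_Q}(B) \times \underline{\U_R}(A,B) \to \underline{\U_R}(A,B)$; the enrichment itself is, as in your argument, just the continuity of composition coming from cartesian closure together with the $k$-subspace topology. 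The two arguments rest on the same cartesian-closedness input; yours is more explicit and self-contained (you never need to know that $\underline{\U_R}(A,-)$ preserves finite products), while the paper's is shorter because it delegates the pointwise checks to general facts about group objects and the $adj$ homeomorphism of Theorem \ref{currying}. One small point to keep in mind in your scalar-multiplication step: the claim that a $Q$-multiple of an $R$-linear map is again $R$-linear uses that the image of $Q$ in $R$ is central, which is the standing convention for a $Q$-$k$-algebra here, so this is fine but worth saying explicitly.
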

\begin{proof}
It follows from the cartesian closure of $\U$ that the composition map $\underline{\U}(Y,Z) \times \underline{\U}(X,Y) \to \underline{\U}(X,Z)$ is continuous, for all $X,Y,Z \in \U$. Because $\underline{\U_R}(-,-)$ is given the $k$-subspace topology coming from $\underline{\U}(-,-)$, the first statement follows.

Then $\underline{\U_R}(-,-)$ is functorial in both variables, and it follows easily that $\underline{\U_R}(A,B)$ is an abelian $k$-group. Now $adj$ from Theorem \ref{currying}(iii) gives a correspondence between continuous $Q$-actions $Q \times B \to B$ and continuous ring homomorphisms $Q \to \underline{\End_Q}(B) = \underline{\U_Q}(B,B)$. Then the continuity of composition $$\circ: \underline{\End_Q}(B) \times \underline{\U_R}(A,B) \to \underline{\U_R}(A,B)$$ shows the $Q$-action induced on $\underline{\U_R}(A,B)$ is continuous.
\end{proof}

As for abstract modules, given a right $R$-$k$-module $A$, a left $R$-$k$-module $B$ and a $Q$-$k$-module $C$, we can define the set $\operatorname{Bilin}_Q(A,B,C)$ of continuous bilinear maps $A \times B \to C$ compatible with the $Q$-action. Then the \emph{tensor product} $A \otimes_R B$ of $A$ and $B$ is a $Q$-$k$-module, together with a continuous bilinear map $A \times B \to A \otimes_R B$, such that the induced map $$\operatorname{Bilin}_Q(A,B,C) \to \U_Q(A \otimes_R B,C)$$ is an isomorphism for all $C \in Q$-$\U Mod$. This defines $A \otimes_R B$, when it exists, up to unique isomorphism, by the Yoneda lemma.

We can also define categories of $R$-$S$-$k$-bimodules in the usual way, where $R$ and $S$ are $Q$-$k$-algebras; we write $R$-$\U Mod$-$S$ for this category. Then given $A \in R$-$\U Mod$-$S$ and $B \in S$-$\U Mod$-$T$ (for $T$ another $Q$-$k$-algebra), $A \otimes_S B$ (if it exists) becomes an $R$-$T$-$k$-bimodule. Similarly, for $A \in S$-$\U Mod$-$R$ and $B \in S$-$\U Mod$-$T$, we get $\underline{\U_S}(A,B) \in R$-$\U Mod$-$S$. Details are left to the reader.

\begin{lem}
\phantomsection
\label{tensor}
\begin{enumerate}
\item Tensor products of $R$-$k$-modules exist, giving a tensor product functor $- \otimes_R -: \U Mod$-$R \times R$-$\U Mod \to Q$-$\U Mod$.
\item Suppose $R$ and $S$ are $Q$-$k$-algebras. Suppose $A \in R$-$\U Mod$, $B \in S$-$\U Mod$-$R$, and $C \in S$-$\U Mod$. Then there is a natural isomorphism $\U_S(B \otimes_R A, C) \cong \U_R(A,\underline{\U_S}(B,C))$ in $Q$-$Mod$.
\end{enumerate}
\end{lem}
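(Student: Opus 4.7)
The plan for (1) is to construct $A \otimes_R B$ explicitly as a quotient of a free $Q$-$k$-module, in direct analogy with the classical construction. Using Lemma \ref{free}, form the free $Q$-$k$-module $Q[A \times B]$ on the underlying space of $A \times B$, together with its canonical continuous map $A \times B \to Q[A \times B]$. Let $N$ be the closure in $Q[A \times B]$ of the abstract $Q$-submodule generated by the usual tensor relations: $(a+a', b) - (a,b) - (a', b)$, $(a, b+b') - (a, b) - (a, b')$, and $(ar, b) - (a, rb)$ for $a, a' \in A$, $b, b' \in B$, $r \in R$. The closure of a submodule is again a submodule because the module operations are continuous, and $N$ is closed by construction, so $A \otimes_R B := Q[A \times B]/N$ is a $Q$-$k$-module (by the same argument used for cokernels in the preceding lemma: the quotient of a weakly Hausdorff module by a closed submodule is weakly Hausdorff). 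To verify the universal property, a continuous $Q$-bilinear $R$-balanced map $\phi : A \times B \to C$ extends uniquely to a continuous $Q$-module morphism $\tilde\phi : Q[A \times B] \to C$ by freeness; bilinearity and $R$-balance say exactly that $\tilde\phi$ annihilates the generating relations, and hence the abstract submodule they span. Since $C$ is weakly Hausdorff, $\ker \tilde\phi$ is closed, so $\tilde\phi$ annihilates $N$ and factors uniquely through $A \otimes_R B$. The converse direction and functoriality in both variables are formal.

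For (2), the plan is to chain (1) with the exponential adjunction in $\U$. By (1), a morphism $B \otimes_R A \to C$ in $S$-$\U Mod$ corresponds naturally to a continuous map $\psi : B \times A \to C$ that is $Q$-bilinear, $R$-balanced, and $S$-linear in the first variable. By Theorem \ref{currying}(iii) such $\psi$ correspond bijectively to continuous maps $\bar\psi : A \to \underline{\U}(B, C)$. I then unwind what the algebraic conditions on $\psi$ become under this correspondence: $S$-linearity of $\psi$ in the first variable is equivalent to $\bar\psi$ landing in the subspace $\underline{\U_S}(B, C)$; additivity and $Q$-linearity of $\psi$ in the second variable translate to $\bar\psi$ being a $Q$-module morphism; and the $R$-balance relation $\psi(br, a) = \psi(b, ra)$ translates to $R$-linearity of $\bar\psi$, once one notes that the left $R$-action on $\underline{\U_S}(B, C)$ induced by the right $R$-action on $B$ is given by $(r \cdot f)(b) = f(br)$. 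Naturality in $A$, $B$, $C$ follows because every step is natural.

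The main subtlety is purely topological and appears in (1): one must take the closure of the submodule generated by the tensor relations, so that the quotient is weakly Hausdorff and therefore lives in $R$-$\U Mod$ rather than only in $R$-$\K Mod$. The fact that this closure does not alter the universal property is exactly because every target module $C$ is weakly Hausdorff, so continuous maps that vanish on a set vanish on its closure; this parallels the role of the weak Hausdorffification in the construction of coproducts in the preceding lemma. The rest of both parts is a careful bookkeeping exercise in matching up the various module structures and unwinding the relevant adjunctions, but involves no further genuine difficulty.
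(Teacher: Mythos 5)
Your route is genuinely different from the paper's: the paper simply quotes Seal's construction of tensor products and the $\Hom$--$\otimes$ adjunction for CG-modules and transfers them to $R$-$\U Mod$ by applying $(\,)_{WH}$ and chasing universal properties, whereas you build $A \otimes_R B$ by hand inside $Q$-$\U Mod$ (free module modulo the closure of the relation submodule) and deduce (2) directly from cartesian closedness of $\U$. The topological bookkeeping in your argument is sound: the closure of the relation submodule is a submodule, the quotient by a closed submodule is weakly Hausdorff (exactly as for cokernels), and kernels of maps into a WH module are closed, so factoring through the closure costs nothing. Your derivation of (2) by currying and matching up the $S$-, $Q$- and $R$-conditions is also fine, granted that (1) is proved in the bimodule-aware form you invoke.

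There is, however, a genuine gap in (1): your generating relations omit the $Q$-scaling relations $(aq,b) - q\cdot(a,b)$ (equivalently $(a,qb) - q\cdot(a,b)$). Since you take the free $Q$-module rather than the free abelian $k$-group, these are not consequences of bi-additivity and $R$-balance: balance only identifies $(aq,b)$ with $(a,qb)$, never with $q\cdot(a,b)$. As a result, $Q$-module maps out of your quotient correspond to continuous maps $A \times B \to C$ that are bi-additive and $R$-balanced but need not be compatible with the $Q$-action, so the claimed bijection with $\operatorname{Bilin}_Q(A,B,C)$ fails in one direction; indeed the canonical map $A \times B \to Q[A\times B]/N$ is not itself $Q$-compatible. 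Concretely, for $Q=R=\mathbb{C}$ discrete and $A=B=C=\mathbb{C}$, the map $(a,b)\mapsto \overline{ab}$ kills every one of your listed generators (it is additive in each variable and balanced) but is not $\mathbb{C}$-bilinear, so it defines a $Q$-module map out of your quotient that corresponds to no element of $\operatorname{Bilin}_Q$; your object is strictly too big. The repair is immediate -- add the scaling relations to the generating set before taking the closure (or build on the free abelian $k$-group and note the induced $Q$-structure) -- and with that change the rest of your argument, including part (2), goes through.
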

\begin{proof}
\cite{Seal2} gives the construction of tensor products and the $\Hom$-$\otimes$ adjunction for CG-modules. We immediately get from the universal properties of tensor products and the $( )_{WH}$ functor that the tensor product of a right $R$-$k$-module $A$ and a left $R$-$k$-module $B$ in $Q$-$\U Mod$ is given by applying $( )_{WH}$ to the tensor product of $A$ and $B$ in $Q$-$\K Mod$. The case for bimodules is similar, and so the $\Hom$-$\otimes$ adjunction for $k$-modules follows.
\end{proof}

From the definition and this lemma, we get a lot of the usual properties of tensor products: $Q \otimes_Q -: Q$-$\U Mod \to Q$-$\U Mod$ is the identity functor; $\otimes_Q$ is additive, commutative and associative; $Q[X] \otimes_Q Q[Y] \cong Q[X \times Y]$, naturally in $X,Y \in \U$. Similarly, thinking of $R$ and $R[X]$ as $R$-$R$-$k$-bimodules in the obvious way, $R \otimes_R -: R$-$\U Mod \to R$-$\U Mod$ is the identity, $\otimes_R$ is additive, and $R[X] \otimes_R R[Y] \cong R[X \times Y]$.

We can now show:

\begin{thm}
\phantomsection
\label{kModenriched}
\begin{enumerate}[(i)]
\item $Q$-$\U Mod$ is a closed symmetric monoidal category.
\item There are strong monoidal functors $\U \to Q$-$\U Mod$ and $Q$-$Mod \to Q$-$\U Mod$, which are left adjoint to the forgetful functors.
\item $R$-$\U Mod$ is enriched, powered and copowered over $Q$-$\U Mod$.
\end{enumerate}
\end{thm}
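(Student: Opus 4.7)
The plan breaks into the three stated parts, each building on the structure assembled in Lemma~\ref{tensor}.

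For part (i), I take $\otimes_Q$ as the tensor bifunctor and $Q$ itself as the unit. The unitors, associators and braiding are obtained by a Yoneda argument: each pair of composite tensor products represents the same multilinear mapping functor on $Q$-$\U Mod$ (using commutativity of $Q$ for the braiding), so they are canonically isomorphic, uniquely characterised by the universal property of $\otimes_Q$. The pentagon, triangle and hexagon coherence diagrams then commute by the same multilinear-mapping uniqueness one degree higher. Closedness is the specialisation of Lemma~\ref{tensor}(2) to $R=S=Q$, which supplies the natural isomorphism $\U_Q(A \otimes_Q B, C) \cong \U_Q(A, \underline{\U_Q}(B, C))$; enriching this to an isomorphism in $Q$-$\U Mod$ proceeds again by identifying both sides with the same space of multilinear maps.

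For part (ii), the functor $\U \to Q$-$\U Mod$ is $X \mapsto Q[X]$ from Lemma~\ref{free}; the adjunction is part of that lemma. Strong monoidality reduces to $Q[\{\ast\}] \cong Q$ (both are freely generated by a single point) and to the already noted isomorphism $Q[X] \otimes_Q Q[Y] \cong Q[X \times Y]$; coherence follows once more by Yoneda. For the functor $Q$-$Mod \to Q$-$\U Mod$, the forgetful functor preserves all limits because they are computed on underlying modules and topologised afterwards, so the (special) adjoint functor theorem provides a left adjoint $F$; strong monoidality of $F$ is then formal, since the monoidal unit and tensor product are determined up to canonical isomorphism by universal properties that $F$ preserves as a left adjoint.

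For part (iii), the hom $\underline{\U_R}(A,B)$ is already a $Q$-$k$-module. The copower is $M \odot A := M \otimes_Q A$, which carries a left $R$-action inherited from $A$ (well-defined because $A$ is a $Q$-$R$-bimodule with commuting actions, since $Q$ is central in $R$). The power is $M \pitchfork B := \underline{\U_Q}(M, B)$, with $B$ restricted to a $Q$-$k$-module and $R$ acting pointwise by $(rf)(m) = r \cdot f(m)$; this action is $Q$-linear in $m$ precisely because $Q$ is central in $R$, and continuous because composition in $\underline{\U}(-,-)$ is continuous. The required two-variable adjunction
\[
\U_R(M \otimes_Q A, B) \;\cong\; \U_Q(M, \underline{\U_R}(A, B)) \;\cong\; \U_R(A, \underline{\U_Q}(M, B))
\]
is a direct specialisation of Lemma~\ref{tensor}(2) with $A$ viewed as an $R$-$Q$-bimodule and $B$ as a $Q$-$R$-bimodule; enrichment to a $Q$-$\U Mod$-isomorphism uses the same multilinear-maps trick as in part (i).

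The genuinely subtle step is the construction of the left adjoint $F: Q$-$Mod \to Q$-$\U Mod$ in part (ii). The naive candidate, namely equipping $M$ with the discrete topology, fails as soon as $Q$ is not itself discrete: for then the scalar action $Q \times M_{disc} \to M_{disc}$ need not be continuous. One must therefore argue existence of $F$ abstractly via adjoint functor theorems and derive its strong monoidality from the universal characterisation of the unit and of $\otimes_Q$, rather than from any explicit formula; the remaining parts of the theorem are then essentially Yoneda-theoretic bookkeeping.
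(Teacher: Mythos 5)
Parts (i) and (iii), and the free-module functor $\U \to Q$-$\U Mod$ in part (ii), are in line with the paper: the symmetric monoidal structure is transported from the abstract case with unit $Q$, closedness is Lemma \ref{tensor}, strong monoidality of $X \mapsto Q[X]$ comes from $Q[\{\ast\}] = Q$ and $Q[X] \otimes_Q Q[Y] \cong Q[X \times Y]$, and the power/copower $A \pitchfork B = \underline{\U_Q}(A,B)$, $A \odot B = B \otimes_Q A$ with the adjunction of Lemma \ref{tensor} is exactly the paper's choice. The gap is in the step you yourself single out as subtle: the left adjoint $F\colon Q$-$Mod \to Q$-$\U Mod$. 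First, existence: you appeal to the (special) adjoint functor theorem without verifying its hypotheses. SAFT needs a small cogenerating set in $Q$-$\U Mod$, which is not known (the paper elsewhere admits it does not even know whether $R$-$\U Mod$ has enough injectives), and GAFT needs a solution-set condition you do not check. Moreover the claim that one \emph{must} argue abstractly is false: the paper constructs $F$ explicitly, first into $Q$-$\K Mod$ by equipping $A$ with the quotient topology $\tau_A$ induced by the canonical surjection $Q[A_{disc}] \to A$ from the free module on the discrete space $A$ (this is the strongest CG topology making the $Q$-action continuous), and then composing with $(\;)_{WH}$.

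Second, and more seriously, your strong-monoidality argument rests on the assertion that the unit and tensor are ``determined by universal properties that $F$ preserves as a left adjoint.'' That is not a valid principle: the universal property of $\otimes_Q$ is stated in terms of continuous bilinear maps, not as a colimit, and in general the left adjoint of a lax monoidal forgetful functor acquires only an oplax comparison $F(M \otimes_Q N) \to FM \otimes_Q FN$; whether this map and $F(UQ) \to Q$ are isomorphisms is a substantive claim needing proof. It can be proved, as the paper does, by a Yoneda computation that genuinely uses the closed structure and the explicit adjunction: for all $C$,
\begin{align*}
\U_Q(F(M \otimes_Q N),C) &\cong \Hom_Q(M,\Hom_Q(N,UC)) \cong \Hom_Q(M,U\underline{\U_Q}(FN,C)) \\
&\cong \U_Q(FM \otimes_Q FN,C),
\end{align*}
together with the check that $\U_Q(Q,C) \cong UC$ (every element $c$ gives the continuous map $q \mapsto qc$), whence $F(UQ) \cong Q$. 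So your conclusion is correct, but as written the strong-monoidality step is a hand-wave resting on a false general principle, and the existence step is unverified; both are repaired either by the paper's explicit construction of $F$ or by supplying the closed-structure argument above.
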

\begin{proof}
\begin{enumerate}[(i)]
\item The proof that $\otimes_Q$ makes $Q$-$\U Mod$ into a symmetric monoidal category goes through as in the abstract case; by the remark above, the unit is given by $Q$. Then Lemma \ref{tensor} shows it is closed.
\item The functor $\U \to Q$-$\U Mod$ is the free module functor, which we have already seen is left adjoint to the forgetful functor. It is strong monoidal because $\{\ast\} \mapsto Q[\{\ast\}] = Q$ and $Q[X] \otimes_Q Q[Y] = Q[X \times Y]$, naturally in $X,Y \in \U$. All the required conditions go through easily.

We next construct a left adjoint to the forgetful functor $Q$-$\K Mod \to Q$-$Mod$. Given $A \in Q$-$Mod$, $B \in Q$-$\K Mod$ and a map $f: A \to B$ of the underlying modules, certainly if we give $A$ the discrete topology, and write $A_{disc}$ for this, the induced map $f: A_{disc} \to B$ is a morphism of abelian CG-groups; the obstruction is that the $Q$-action may not be continuous. So it is enough to show that there is a unique strongest topology $\tau$ on $A$ making the $Q$-action continuous: then the continuity of $f: (A,\tau) \to B$ is automatic. For such a $\tau$ the continuous map $A_{disc} \to (A,\tau)$ extends to a continuous surjection $\varepsilon: Q[A_{disc}] \to (A,\tau)$, so $\tau$ can be no stronger that the quotient topology $\tau_A$ coming from $\varepsilon$. But this quotient topology makes $(A,\tau_A)$ a $Q$-CG-module, so this is the topology we want. Thus the functor $A \mapsto (A,\tau_A)$ is the required left adjoint.

It now follows formally that the functor $A \mapsto (A,\tau_A)_{WH}$ is left adjoint to the forgetful functor $Q$-$\U Mod \to Q$-$Mod$. We check it is strong monoidal.

It sends the underlying ring of $Q$ to $Q$: by construction $\tau_Q$ is the strongest CG-group topology such that multiplication $Q \times (Q,\tau_Q) \to (Q,\tau_Q)$ is continuous. So it is at least as strong as the topology on $Q$ (and hence $\tau_Q$ is WH). We also have a continuous map $Q \to (Q,\tau_Q)$ given by restricting the domain of $Q[Q_{disc}] \rightarrow (Q,\tau_Q)$ to $Q[\{1_Q\}]$.

Given $A,B \in Q$-$Mod$, we check that $$(A \otimes_Q B, \tau_{A \otimes_Q B})_{WH} \cong (A,\tau_A)_{WH} \otimes_Q (B,\tau_B)_{WH},$$ naturally in $A,B$. This is just an adjoint functor/Yoneda lemma exercise, which we leave for the reader. The rest follows easily.
\item Enrichment: There are plenty of axioms to check, but after what we have already done there are no particular difficulties.

Powering and copowering: Suppose $A \in Q$-$\U Mod$, $B \in R$-$\U Mod$. Restricting the $R$-action on $B$ to $Q$, we think of $B$ as an $R$-$Q$-$k$-bimodule. Then by Lemma \ref{tensor}, letting $A \pitchfork B = \Hom_Q(A,B)$ and $A \odot B = B \otimes_Q A$ does the job.
\end{enumerate}
\end{proof}

In particular, by \cite[Theorem 3.7.11]{Riehl}, $R$-$\U Mod$ is enriched, powered and copowered over $\U$ and $Q$-$Mod$.

\begin{cor}
The isomorphism $\U_S(B \otimes_R A, C) \cong \U_R(A,\underline{\U_S}(B,C))$ of $Q$-modules in Lemma \ref{tensor} is actually an isomorphism $\underline{\U_S}(B \otimes_R A, C) \cong \underline{\U_R}(A,\underline{\U_S}(B,C))$ of $Q$-$k$-modules.
\end{cor}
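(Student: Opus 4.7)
The plan is to apply the Yoneda lemma in $\U$ to upgrade the $Q$-module isomorphism of Lemma \ref{tensor} to a homeomorphism. For each $K \in \U$, the $k$-space $\underline{\U}(K, C)$ carries a natural $S$-$k$-module structure with action $(s \cdot f)(k) = s \cdot f(k)$, so Lemma \ref{tensor}(2) applied with $C$ replaced by $\underline{\U}(K, C)$ supplies a natural bijection
\[
\U_S(B \otimes_R A, \underline{\U}(K, C)) \cong \U_R(A, \underline{\U_S}(B, \underline{\U}(K, C))).
\]

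The next step is to identify both sides with $\U(K, -)$ of the desired hom-objects. The key subclaim is that, for any $S$-$k$-modules $M, N$ (respectively $R$-$k$-modules) and any $K \in \U$, there is a homeomorphism of $k$-spaces
\[
\underline{\U_S}(M, \underline{\U}(K, N)) \cong \underline{\U}(K, \underline{\U_S}(M, N)),
\]
and likewise for $\U_R$; both sides sit as $k$-subspaces of the cartesian-closure homeomorphism $\underline{\U}(M, \underline{\U}(K, N)) \cong \underline{\U}(K, \underline{\U}(M, N))$ of Theorem \ref{currying}(iii), and correspond to one another because they are cut out by the same condition of $S$-linearity in the $M$-variable. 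The relevant subspaces are closed: $\underline{\U_S}(M, N)$ is the intersection in $\underline{\U}(M, N)$ of preimages of the closed point $0 \in N$ under the continuous evaluation maps encoding additivity and $S$-linearity, using that $N$ is weakly Hausdorff. Applying this subclaim, first to the inner $\underline{\U_S}(B, \underline{\U}(K, C))$ on the right, then to the outer homs on both sides, rewrites the displayed bijection as
\[
\U(K, \underline{\U_S}(B \otimes_R A, C)) \cong \U(K, \underline{\U_R}(A, \underline{\U_S}(B, C))),
\]
natural in $K \in \U$.

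The Yoneda lemma applied to the locally small category $\U$ now yields a unique isomorphism $\underline{\U_S}(B \otimes_R A, C) \cong \underline{\U_R}(A, \underline{\U_S}(B, C))$ in $\U$ inducing this family of bijections. Evaluating at $K = \{\ast\}$ identifies the underlying set map with the $Q$-module isomorphism $\phi$ of Lemma \ref{tensor}(2), so this homeomorphism is simultaneously a $Q$-module map, hence a $Q$-$k$-module isomorphism. The only real work is the subclaim about $\underline{\U_S}(M, \underline{\U}(K, N))$, which reduces to the routine fact that closed subspace inclusions are preserved by $\underline{\U}(K, -)$ in $\U$.
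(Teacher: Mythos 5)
Your argument is correct, but it takes a genuinely different route from the paper. The paper disposes of this corollary in one line, by citing the general enriched-category-theory fact (\cite[Remark 3.3.9]{Riehl}) that the hom-set adjunctions of Lemma \ref{tensor}, in the presence of the enrichment, powering and copowering established in Theorem \ref{kModenriched}, upgrade automatically to isomorphisms of hom-objects. You instead re-prove this special case by hand: you represent both candidate hom-objects via $\U(K,-)$, using the cotensor $\underline{\U}(K,C)$, the cartesian closedness of $\U$, the observation that $\U_S(M,N)$ is a closed subset of $\underline{\U}(M,N)$ (where weak Hausdorffness of $N$ enters, so that the $k$-subspace topology is the honest closed-subspace topology), the fact that $\underline{\U}(K,-)$ preserves closed inclusions, and then the Yoneda lemma in $\U$, finally identifying the underlying map at $K=\{\ast\}$ with the $Q$-linear isomorphism of Lemma \ref{tensor}. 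The citation is shorter and situates the statement in its natural generality, whereas your argument is self-contained and makes explicit exactly where the WH hypothesis and the $k$-compact-open topology are used. One small point you should record to make the substitution inside $\U_R(A,-)$ legitimate: the homeomorphism $\underline{\U_S}(B,\underline{\U}(K,C)) \cong \underline{\U}(K,\underline{\U_S}(B,C))$ must also be checked to be $R$-linear for the left $R$-structures coming from the right $R$-action on $B$ (pointwise on the right-hand side); this is a routine computation, $(r\phi)(b)(k)=\phi(br)(k)$ on both sides, and with it your proof is complete.
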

\begin{proof}
This is \cite[Remark 3.3.9]{Riehl}.
\end{proof}

So far, everything we have shown works just as well for $R$-$\K Mod$ as for $R$-$\U Mod$. We now justify the presence of the WH requirement.

Observe that, in an additive category $C$, for any two morphisms $f,g: A \to B$ the equaliser of $f$ and $g$ is $\ker(f-g)$, and the coequaliser of $f$ and $g$ is $\coker(f-g)$. So regular epimorphisms in $C$ are exactly the cokernels, and regular monomorphisms in $C$ are exactly the kernels.

In particular, we get in $R$-$\U Mod$ that regular epimorphisms are exactly morphisms of the underlying $R$-modules such that the underlying map of spaces is a quotient, and regular monomorphisms are exactly morphisms of the underlying $R$-modules such that the underlying map of spaces is a closed inclusion.

\begin{lem}
Pullbacks of regular epimorphisms in $R$-$\U Mod$ are regular epimorphisms.
\end{lem}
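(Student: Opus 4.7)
The plan is to reduce the claim to the already established fact that $\U$ is regular (Theorem \ref{currying}), by showing that pullbacks in $R$-$\U Mod$ are computed in the same way as in $\U$.

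First I would verify that the forgetful functor $U: R\text{-}\U Mod \to \U$ preserves pullbacks. From the earlier construction of limits in $R$-$\U Mod$, the product $A \oplus C$ is the underlying product of modules with the $k$-product topology, and kernels are taken on the underlying map of modules with the $k$-subspace topology. Since a pullback is built from a product and an equaliser (which in the additive setting is a kernel), the pullback of $f: A \to B$ along $g: C \to B$ in $R\text{-}\U Mod$ is precisely the submodule $A \times_B C = \{(a,c) \in A \oplus C : f(a) = g(c)\}$ with the $k$-subspace topology inherited from $A \oplus C$. But this is exactly the construction of the pullback in $\U$, so $U$ preserves pullbacks.

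Next I would recall the characterisation of regular epimorphisms in this additive setting: a morphism in $R\text{-}\U Mod$ is a regular epimorphism precisely when its underlying map of spaces is a quotient map, i.e.\ a regular epimorphism in $\U$. This was observed just above the statement, using the fact that in an additive category regular epimorphisms coincide with cokernels.

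Given these two ingredients, the proof is immediate: for a regular epimorphism $f: A \to B$ in $R\text{-}\U Mod$ and any $g: C \to B$, the underlying map $U(f)$ is a quotient map in $\U$, and by regularity of $\U$ its pullback $U(A \times_B C) \to U(C)$ is again a quotient map. Since this pullback coincides with $A \times_B C \to C$, the latter is a regular epimorphism in $R\text{-}\U Mod$. There is no serious obstacle here; the only point requiring minor care is ensuring that the subspace topology from $A \oplus C$ does indeed coincide with the pullback topology in $\U$, but this is built into the description of limits already given.
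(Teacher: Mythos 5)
Your proof is correct and follows essentially the same route as the paper: both arguments rest on the forgetful functor to $\U$ preserving pullbacks, the characterisation of regular epimorphisms in $R$-$\U Mod$ as morphisms whose underlying map of spaces is a quotient, and the regularity of $\U$ from Theorem \ref{currying}. The extra detail you give about pullbacks being built from products and kernels with the $k$-subspace topology is exactly the justification the paper leaves implicit.
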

\begin{proof}
Pullbacks and regular epics in $R$-$\U Mod$ are preserved by the forgetful functor to $\U$. So given a regular epimorphism $f$ in $R$-$\U Mod$ and a pullback $g$ of $f$, on the underlying spaces $f$ is regular epic, so $g$ is too. So $g$ is a quotient map of spaces, and it is certainly an $R$-module homomorphism, and we are done.
\end{proof}

\begin{prop}
Pushouts of regular monomorphisms in $R$-$\U Mod$ are regular monomorphisms.
\end{prop}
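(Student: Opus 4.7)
The strategy is to exploit the additive structure. In any additive category with cokernels, the pushout of $f : A \to B$ along $g : A \to C$ is the cokernel of $(f,-g) : A \to B \oplus C$, which in $R$-$\U Mod$ is $(B \oplus C)/\overline{(f,-g)(A)}$ with the quotient topology. The first step is to show that, when $f$ is a closed inclusion, the set $N := (f,-g)(A)$ is \emph{already} closed in $B \oplus C$, so no closure is required. Since $f$ is a homeomorphism onto the closed subspace $f(A) \subseteq B$, the inverse $f^{-1} : f(A) \to A$ is continuous and hence $h : f(A) \to C,\ b \mapsto -g(f^{-1}(b))$ is continuous. Inside $f(A) \times C$, the set $N$ is exactly the graph of $h$; because $C$ is WH its diagonal is closed in $C \times C$ (computed in $\U$), so $N$ is the preimage of $\Delta_C$ under $(b,c) \mapsto (h(b),c)$ and is closed in $f(A) \times C$. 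Since $f(A) \times C$ is closed in $B \oplus C$, so is $N$. Thus $P = (B \oplus C)/N$ with the quotient topology, and $f' : C \to P$ is $c \mapsto [(0,c)]$.

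Next I would show that $f'$ is a closed inclusion of $k$-spaces. Injectivity is immediate: $[(0,c)] = 0$ forces $(0,c) = (f(a),-g(a))$, whence $a=0$ by injectivity of $f$, and so $c = 0$. For the topological half, I would verify that $f'$ is a closed map: for $F \subseteq C$ closed, unwinding the equivalence relation defining $P$ gives
\[
\pi^{-1}(f'(F)) = \{(b,c) \in f(A) \times C : c + g(f^{-1}(b)) \in F\},
\]
which is the preimage of $F$ under the continuous map $f(A) \times C \to C,\ (b,c) \mapsto c + g(f^{-1}(b))$. This preimage is closed in $f(A) \times C$, hence closed in $B \oplus C$, so by the definition of the quotient topology $f'(F)$ is closed in $P$. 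Taking $F = C$ shows $f'(C)$ is closed; combined with continuity and injectivity, $f'$ is a closed embedding, i.e., a regular monomorphism in $R$-$\U Mod$.

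The main subtlety is conceptual rather than computational: one cannot simply invoke the coregularity of $\U$ from Theorem \ref{currying}, because the pushout in $R$-$\U Mod$ is not the pushout of the underlying $k$-spaces -- the forgetful functor to $\U$ has a left adjoint (the free-module functor of Lemma \ref{free}), so it preserves limits and coproducts but not coequalisers, and hence not general pushouts. The correct move is therefore to work entirely inside $R$-$\U Mod$ via the cokernel description, where the WH hypothesis on $C$ plays exactly the role that Hausdorffness plays in the classical argument: it forces the relevant graph-shaped subobject to be closed, which is what makes the whole argument go through without needing any weak Hausdorffisation.
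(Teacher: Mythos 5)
Your proof is correct, but it takes a genuinely different route from the paper's. The paper dualises Strickland's proof that $\U$ is regular and works with the pushout formed in $R$-$\K Mod$: it reduces to the case where the map being pushed along is a quotient (via the auxiliary square with vertical map $W \oplus Y \to Y$, which is split and hence a quotient, and horizontal map $f \oplus \id_Y$, a closed inclusion by \cite[Proposition 2.32]{Strickland}), uses the fact that the pushout along an injective map is a pullback in $Set$ to identify $q^{-1}g(F)$ with $fp^{-1}(F)$, and only at the end checks that the $\K$-pushout is weakly Hausdorff, so that it is also the pushout in $R$-$\U Mod$. You instead stay inside $R$-$\U Mod$ throughout, identifying the pushout with $\coker((f,-g)\colon A \to B \oplus C)$; your extra step --- showing that the image $N$ of $(f,-g)$ is already closed, via the graph of a continuous map into a WH space being closed --- is precisely what replaces the paper's final WH verification and lets you bypass $\K$ altogether. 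When unwound, your computation $\pi^{-1}(f'(F)) = \{(b,c) \in f(A) \times C : c + g(f^{-1}(b)) \in F\}$ is the same closed set the paper exhibits after its split-epimorphism reduction, so the topological core of the two arguments coincides; your packaging is arguably cleaner, while the paper's has the side benefit of recording along the way that pushouts preserve regular epimorphisms in $R$-$\K Mod$. One small point you elide: to pass from ``closed in $f(A) \times C$'' to ``closed in $B \oplus C$'' you need that $f(A) \times C \to B \times C$ is a closed inclusion of $k$-spaces (equivalently, that the $k$-product of the closed subspace with $C$ carries the subspace topology from the $k$-product $B \times C$); this is exactly \cite[Proposition 2.32]{Strickland}, the same fact the paper invokes, so it is a routine citation rather than a gap.
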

\begin{proof}
This takes more work. We argue dually to the proof in \cite{Strickland} that $\U$ is regular.

We will start by working with pushouts in $R$-$\K Mod$. So suppose we have a pushout diagram of the form
\[
\xymatrix{W \ar[r]^{f} \ar[d]^{p} & X \ar[d]^{q} \\ Y \ar[r]^{g} & Z,} \tag{$\ast$}
\]
where $W, X$ and $Y$ are $R$-$k$-modules. The proof proceeds in several stages.

\begin{claim}
If $p$ is a regular epimorphism, so is $q$.
\end{claim}
Observe, by the construction of colimits in $R$-$\K Mod$, that the underlying commutative square of $R$-modules is a pushout in $R$-$Mod$. So $q$ is epic in $R$-$Mod$ (this is standard in abelian categories) and in particular $q$ is surjective. Write $Z'$ for the underlying module of $Z$ endowed with the quotient topology coming from $q$: this makes $Z'$ into an $R$-CG-module. Now $gp=qf: W \to Z'$ is continuous, and $p$ is a quotient map, so by the usual argument (chase open sets around) $g: Y \to Z'$ is continuous. So $Z'$ is a cocone over ($\ast$), but by definition it has the strongest topology making $q$ continuous, so we get $Z' = Z$, and the claim follows.

\begin{claim}
If $f$ is injective, ($\ast$) is a pullback in $Set$.
\end{claim}
For this, we can forget the topologies and work with the pushout in $R$-$Mod$. We can construct $Z$ as $X \oplus Y/\{(f(w),p(w)): w \in W\}$. Then, from the usual construction of pullbacks in $Set$, the result follows.

\begin{claim}
If $f$ is a closed inclusion (of spaces), so is $g$.
\end{claim}
Consider first the case where $p$ is a quotient map, so $q$ is too. Since the pushout is created in the category of abelian groups, $g$ is injective, so for all subsets $F$ of $Y$ we have $q^{-1}g(F) = fp^{-1}(F)$ -- this is an easy check on the elements, using the fact that ($\ast$) is a pullback in $Set$. Suppose $F$ is closed. Then $p^{-1}(F)$ is closed in $W$, and as $f$ is a closed inclusion this means that $fp^{-1}(F) = q^{-1}g(F)$ is closed in $X$. But $q$ is a quotient map so $g(F)$ is closed in $Z$.

For the general case, it is formal that the square below is a pushout:
\[
\xymatrix{W \oplus Y \ar[r]^{f \oplus \id_Y} \ar[d]^{p+\id_Y} & X \oplus Y \ar[d]^{q+g} \\
Y \ar[r]^{g} & Z.}
\]
Here $p+\id_Y$ is a quotient map because it is split by $Y \xrightarrow{(0,\id_Y)} W \oplus Y$, and $f \oplus \id_Y$ is a closed inclusion by \cite[Proposition 2.32]{Strickland}. So the claim follows.

Finally, we observe that $Z$ is actually WH: its identity element $0$ is closed in $g(Y)$, which is closed in $Z$. The proposition follows.
\end{proof}

Therefore $R$-$\U Mod$ is regular and coregular. There is a word for additive categories with all kernels and cokernels which are regular and coregular: \emph{quasi-abelian}. See \cite{Prosmans} or \cite{Schneiders} for more details on such categories. We will also use \emph{left quasi-abelian} for an additive category with all kernels and cokernels which is regular.

\section{Additive categories}
\label{additive}

Suppose now that $\C$ is a complete and cocomplete additive category. As before, most of the results here require only certain limits and colimits, and the reader is invited to check which ones, but if we are working with model categories the assumption is a standard one.

\begin{thm}
\label{additivesc}
Suppose $(\C,\proj)$ has enough functorial projectives. Then we get a model structure on $s\C$ with the weak equivalences, cofibrations and fibrations defined in Section \ref{mct}.
\end{thm}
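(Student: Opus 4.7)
The plan is to reduce this to Theorem \ref{modelcat}(i) by verifying that, in the additive setting, condition (i) of that theorem is automatic: every object of $s\C$ is fibrant. All other hypotheses (completeness, cocompleteness, enough functorial projectives, the definitions of weak equivalences, fibrations and cofibrations) are either assumed or unchanged, so once we have this one extra observation, the model structure follows immediately by quoting Theorem \ref{modelcat}(i).

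The key claim is that for any simplicial object $X \in s\C$ and any $n,k$, the canonical map $X_n = \{\Delta^n, X\} \to \{\Lambda^n_k, X\}$ is a split epimorphism in $\C$, and hence $\proj$-split by Lemma \ref{regularproj}(i), so that $X$ is a Kan complex for \emph{any} class $\proj$. This is the additive-category version of Moore's classical theorem that every simplicial group is a Kan complex. The cleanest way to prove it is to test with all $Y \in \C$: the functor $\C(Y,-)$ sends $X$ to a simplicial abelian group $\C(Y,X_\bullet)$ (additivity is used here to give it an abelian group structure levelwise), which is a Kan complex by Moore's theorem, and moreover whose horn fillers are given by an explicit universal formula (an alternating sum of faces and degeneracies of the given horn data) which is natural in $Y$. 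By the Yoneda lemma, this natural section defines a section $\{\Lambda^n_k, X\} \to X_n$ in $\C$ itself.

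The main (very mild) obstacle is just checking that Moore's filler formula survives Yoneda: one has to note that the formulae only involve additions, subtractions, and the face and degeneracy operators of $X$, so they come from actual morphisms in $\C$ rather than merely set-theoretic operations. Once this is done, the three points are in place — every object is fibrant, so Theorem \ref{modelcat}(i) applies — and the model structure on $s\C$, with weak equivalences and fibrations as specified in Section \ref{mct} and cofibrations defined as retracts of maps in $I$-cell, exists and is right-proper simplicial. In particular, no smallness or generation hypothesis on $\bar{\proj}$ is needed in the additive case, in contrast to Theorem \ref{modelcat}(ii).
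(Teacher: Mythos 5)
Your proposal is correct and follows essentially the same route as the paper: reduce to condition (i) of Theorem \ref{modelcat} by showing every object of $s\C$ is fibrant, the point being that additivity plus Moore's theorem on simplicial groups does the work. The paper's proof is shorter because fibrancy is defined by testing against $\disc P$ for $P \in \bar{\proj}$ — additivity makes $\underline{s\C}_{sSet}(\disc P,X)$ a simplicial abelian group, hence a Kan complex, and one is done — so your Yoneda/naturality argument with Moore's explicit filler is not needed, although it is sound and yields the stronger conclusion (every object is a global Kan complex, i.e.\ the horn maps $X_n \to \{\Lambda^n_k,X\}$ are split epimorphisms) which the paper records as a remark immediately after the theorem.
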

\begin{proof}
We will show that $(\C,\proj)$ satisfies condition (i) of Theorem \ref{modelcat}. Indeed, the additive structure means that $\underline{s\C}_{sSet}(\disc P,X)$ is a simplicial abelian group for all $P \in \proj$, and it is well-known that simplicial abelian groups are fibrant.
\end{proof}

In view of Remark \ref{projectivesinnature}, this gives a lot of model structures.

Note that this theorem holds for $\proj = \ob \C$, where it says that objects in $s\C$ are global Kan complexes. If $\C$ is regular, it follows that objects in $s\C$ are internal Kan complexes, because split epimorphisms are regular.

The convention for structures allowing homological algebra on additive categories is to make the definitions self-dual for aesthetic reasons; so for example quasi-abelian is regular and coregular. This convention is also seen in the more general \emph{Quillen-exact} categories, where we are given classes of \emph{inflations} and \emph{deflations}, required to satisfy some extra properties. In fact only one of these classes is needed to make most of the tools of homological algebra work: an investigation into this is carried out in \cite{BC}.

Inspecting \cite[Definition 3.2]{BC} shows that $\C$, with the class of $\proj$-split maps as deflations, is a strongly left exact category in the terminology used there, except that deflations are assumed to be cokernels (see Lemma \ref{regularproj} and Remark \ref{notepic}). As we have seen already in this paper, requiring that $\proj$-split maps be regular epimorphisms is necessary for homotopy groups to be invariant under weak equivalences, but is not necessary for working with simplicial objects. As such (via the Dold-Kan correspondence, below), a lot of the results of \cite{BC} will apply to our situation without change. We will use \emph{left exact} to mean a category which satisfies the axioms of \cite[Definition 3.1, Definition 3.2]{BC} except that deflations need not be cokernels.

We also mention that in the particularly nice case $\proj = \ob\C$, $\C$ is both left exact and right exact: split monomorphisms are closed under pushout by dualising the argument that split epimorphisms are closed under pullback, and similarly for the other axioms. Also, split epimorphisms are clearly cokernels, and dually. So in this case we actually do get a Quillen-exact category.

It may be more intuitive to use chain complexes in $\C$ in non-negative degree, instead of simplicial objects. We write $c\C$ for this category. Most of what we say about $c\C$ could be done for bounded below chain complexes, say, but we prefer $c\C$ here thanks to Theorem \ref{chainmodelcat} below. We leave to the reader the task of showing that $c\C$ is enriched over $cAb$, and that if $\C$ is enriched, tensored and cotensored over an additive category $\V$, in a way that is compatible with the additive structure in an appropriate sense, then $c\C$ is enriched, tensored and cotensored over $c\V$, in the spirit of Section \ref{se}.

Explicitly, $\underline{c\C}_{cAb}(A,B)$ is given by the $0$th truncation $\tau_{\geq 0}$ (as defined in \cite[Truncations 1.2.7]{Weibel}) of the product total complex $\operatorname{Tot}^{\scriptstyle \prod}_{m,n} \underline{C}_{Ab}(A_{-m},B_n)$.

We say that a complex $A \in (c\C,\proj)$ is \emph{exact in the $\proj$-split structure} if, for each $A_{n+1} \xrightarrow{d_n} A_n \xrightarrow{d_{n-1}} A_{n-1}$, the induced map $A_{n+1} \to \ker(d_{n-1})$ is $\proj$-split. If $\C$ is regular, we say $A$ is \emph{exact in the regular structure} if $A_{n+1} \to \ker(d_{n-1})$ is a regular epimorphism.

For each $P \in \proj$, let $S^n(P)$ be the chain complex with $P$ in degree $n$ and $0$ otherwise; let $D^n(P)$ be the chain complex with $P$ in degrees $n$ and $n-1$ and $0$ otherwise, with the map $P \to P$ given by the identity. Let $I$ be the class of maps $S^{n-1}(P) \to D^n(P)$ given by $\id_P$ in degree $n$, and let $J$ be the class of maps $0 \to D^n (P)$. Call a map $f$ in $c\C$ a cofibration if it is in $I$-cof, a fibration if it is in $J$-inj, and a weak equivalence if $\underline{c\C}_{cAb}(S^0P,f)$ is a weak equivalence in $cAb$, with the standard model structure, for all $P \in \proj$.

\begin{thm}
\label{chainmodelcat}
Suppose $(\C,\proj)$ has enough functorial projectives.
\begin{enumerate}[(i)]
\item The weak equivalences on $c\C$ described above are exactly the chain maps whose mapping cone is exact.
\item We get a model structure on $c\C$ with the weak equivalences, cofibrations and fibrations defined above.
\item The cofibrations are exactly the chain maps which are levelwise split monomorphisms with projective cokernels. In particular, cofibrant objects are complexes of projectives.
\item The fibrations are exactly the chain maps which are levelwise $\proj$-split maps in positive degree. In particular, all objects are fibrant.
\item Abuse notation by writing $(c\C,\proj)$ for this model category. Then there is a functor $\Gamma: c\C \to s\C$, which gives an equivalence of categories and a left and right Quillen functor of model categories.
\end{enumerate}
\end{thm}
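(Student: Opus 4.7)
The plan is to use the Dold--Kan correspondence, which for additive $\C$ gives an adjoint equivalence $N : s\C \rightleftarrows c\C : \Gamma$ between the normalized chain complex functor $N$ and its inverse $\Gamma$, to transfer the model structure on $(s\C, \proj)$ from Theorem~\ref{additivesc} across to $c\C$ and then identify the transferred classes with those described in the theorem. Since $N$ and $\Gamma$ are mutually inverse equivalences, once I know how they act on the relevant classes the model axioms and the Quillen-functor claim in (v) follow automatically.

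First I would prove (i) directly. By the classical Dold--Kan equivalence between simplicial abelian groups and non-negatively graded chain complexes of abelian groups, the weak equivalences in $cAb$ are the quasi-isomorphisms. So $f$ is a weak equivalence in the sense of the definition iff $\underline{c\C}_{cAb}(S^0 P, f)$, which is simply $\C(P, -)$ applied to $f$ levelwise, is a quasi-isomorphism for every $P \in \proj$. The mapping cone construction is a finite direct sum with signed differentials and is hence preserved by $\C(P, -)$; combined with the standard fact that a chain map is a quasi-isomorphism iff its cone is acyclic, this is exactly the statement that $\mathrm{cone}(f)$ is exact in the $\proj$-split structure.

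Next I would establish the key compatibility $N \underline{s\C}_{sSet}(\disc P, Y) \cong \underline{c\C}_{cAb}(S^0 P, NY)$. Both sides are in degree $n$ just $\C(P, (NY)_n)$: the left side uses the Lemma identifying $\underline{s\C}(\disc P, Y)_n = \C(P, Y_n)$ together with the fact that $\C(P, -)$ preserves the kernels defining $N$; the right side unwinds directly from the definition of $\underline{c\C}_{cAb}$ since $S^0(P)$ is concentrated in degree $0$. Together with the fact that weak equivalences of simplicial abelian groups are precisely quasi-isomorphisms on normalizations, this shows $f$ is a weak equivalence in $c\C$ iff $\Gamma f$ is a weak equivalence in $s\C$. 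For (iv) I would use the classical characterization that a map of simplicial abelian groups is a Kan fibration iff its normalization is surjective in positive degrees, applied with $\underline{s\C}_{sSet}(\disc P, -)$ for every $P \in \proj$: this yields $\Gamma f$ is a fibration in $s\C$ iff $(Nf)_n$ is $\proj$-split for every $n \geq 1$, and in particular all objects are fibrant. A direct computation of $\Gamma$ on $S^{n-1}(P)$ and $D^n(P)$ shows that the generating cofibrations transport to cofibrations in $s\C$, and the additivity of $\C$ forces every pushout of such a map to be a levelwise split monomorphism with projective cokernel, giving (iii); transfinite compositions and retracts preserve this form, using that projectives are closed under coproducts and retracts. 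Part (ii) then follows because $\Gamma$ transports the three classes bijectively, so the model axioms on $s\C$ pull back to $c\C$; part (v) is immediate because $N$ and $\Gamma$ are mutually inverse equivalences preserving all three classes, hence each is simultaneously left and right Quillen.

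The main obstacle will be handling the Dold--Kan technology in the generality of a class-based model structure on an arbitrary additive category: the characterization of fibrations in terms of $(Nf)_n$ being $\proj$-split (rather than simply surjective, as in $Ab$) requires the classical Kan-fibration/surjection equivalence to be applied after $\underline{s\C}_{sSet}(\disc P, -)$ for every $P$, and the transport of generating (trivial) cofibrations across $\Gamma$ rests on explicit computations of $\Gamma$ on $S^n(P)$ and $D^n(P)$ whose intermediate formulas are unenlightening even though the final conclusion is clean.
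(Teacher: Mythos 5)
Your route is genuinely different from the paper's: you transport the model structure of Theorem \ref{additivesc} across the Dold--Kan equivalence and then identify the transported classes, whereas the paper constructs the model structure directly on $c\C$ (by rerunning the argument of Theorem \ref{modelcat} in the ``all objects fibrant'' case, or by citing \cite[Theorem 3.C.1.2.2]{Buhler}), and only afterwards uses Dold--Kan, via the ``apply $\underline{c\C}_{cAb}(S^0P,-)$ for all $P$'' trick and \cite[Section 4.1]{SS}, to prove (v). Your identifications of the weak equivalences and fibrations are correct and are essentially the same computations the paper uses for (i), (iv) and (v), and your appeal to Dold--Kan is legitimate here since $\C$, being complete and cocomplete, is idempotent complete (the paper cites \cite[Theorem 1.2.3.7]{Lurie} for exactly this point).

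The genuine gap is in the cofibration class, and it is what your assertion that ``$\Gamma$ transports the three classes bijectively'' quietly assumes. The transported structure has cofibrations \emph{defined} as the maps with the left lifting property against the trivial fibrations (equivalently, the maps $f$ with $\Gamma f$ a cofibration in $s\C$). You prove one inclusion: $\Gamma$ sends $S^{n-1}(P) \to D^n(P)$ to a cofibration of $s\C$, hence $I$-cof is contained in the transported cofibrations; and you prove $I$-cof maps are levelwise split monomorphisms with projective cokernel. But Theorem \ref{chainmodelcat}(ii) asserts a model structure whose cofibrations are $I$-cof, so you also need the reverse inclusion: every map with LLP against the trivial fibrations lies in $I$-cof. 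This does not come for free from the transfer, because $N$ applied to the generating cofibrations $\partial\Delta^n \cdot P \to \Delta^n \cdot P$ of $s\C$ does not give the maps $S^{n-1}(P) \to D^n(P)$, and the transported factorisations land in the (a priori larger) LLP class, not in $I$-cell; without the reverse inclusion the factorisation axiom for the structure with cofibrations $=I$-cof, and the ``exactly'' in (iii), are unproved. Two standard ways to close it: (a) show that every levelwise split monomorphism with projective cokernel $C$ is a relative $I$-cell complex, by attaching cells $S^{n-1}(C_n) \to D^n(C_n)$ along the differentials, and that transported cofibrations have this form (using that $N$ is levelwise a direct summand, so $\Gamma f$ being a retract of a cell complex forces $f$ to be levelwise a split monomorphism with cokernel in $\proj$); or (b) run the analogue of Proposition \ref{factorisation} on $c\C$ (this is where the hypothesis of enough \emph{functorial} projectives is used), show $I$-inj coincides with the trivial fibrations, and apply the retract argument. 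Either repair is routine but must be supplied; note the paper sidesteps the issue entirely because its direct construction produces the $I$-cell/$I$-inj factorisations from the start, at the cost of redoing the Theorem \ref{modelcat} argument in the chain-complex setting.
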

\begin{proof}
\begin{enumerate}[(i)]
\item $\underline{c\C}_{cAb}(S^0P,-)$ commutes with the formation of mapping cones, and by the definition of $\proj$-split, a complex $A$ is exact in our sense if and only if $\underline{c\C}_{cAb}(S^0P,A)$ is exact in $Ab$ for all $P \in \proj$.
\item Argue in the same way as Theorem \ref{modelcat}, with condition (i). Or observe that \cite[Theorem 3.C.1.2.2]{Buhler} applies here, thanks to (i): the proof goes through \textit{mutatis mutandis} in our situation.
\item It is clear that maps in $I$-cell have this form. Retracts of split monomorphisms with projective cokernel also have this form.
\item This is clear from the definition of $J$.
\item $\Gamma$ and the equivalence of categories are known as the Dold-Kan correspondence. This is usually stated for abelian categories; a statement for idempotent complete additive categories is given in \cite[Theorem 1.2.3.7]{Lurie}. Since $\C$ is assumed to be complete and cocomplete, it is idempotent complete. Now choose a fuctor $N$ giving an inverse for $\Gamma$ up to natural isomorphism (that is, $\Gamma N$ and $N \Gamma$ are naturally isomorphic to the identity). These natural isomorphisms make $N$ a right and left adjoint to $\Gamma$. Then, for $\Gamma$ to be a right Quillen functor, we must show it preserves fibrations and trivial fibrations, while for it to be a left Quillen functor we must show it reflects them. Both follow by using the `apply $\underline{c\C}_{cAb}(S^0P,-)$ for all $P \in \proj$' argument, and noting that the usual Dold-Kan functor $cAb \to sAb$ has these properties: this is in \cite[Section 4.1]{SS}.
\end{enumerate}
\end{proof}

So for additive categories with enough functorial projectives, we will often be able to just talk about chain complexes instead of simplicial objects. It is more common in an additive context to discuss homological algebra using the language of triangulated categories, and it is worth mentioning that our categories do fit into this framework. (The convention is not to require that the choice of projective be functorial here, but we do anyway for the sake of consistency.) Indeed, for any additive category $\C$, the category $K(\C)$ of chain complexes in $\C$ with homotopic maps identified is triangulated, by \cite[Theorem 2.1.3]{Buhler}. Similarly for the bounded above, bounded below and bounded subcategories. To localise to a triangulated derived category $D(\C)$, we want a thick (that is, closed under isomorphisms and summands), triangulated subcategory $T$, thanks to \cite[Theorem 1.2.8]{Buhler}. The weak equivalences can be defined to be maps whose mapping cone is in $T$. Here we take $T$ to be the exact complexes, which is triangulated by Theorem \ref{chainmodelcat}(i). It is easily seen to be thick by applying $\underline{c\C}_{cAb}(S^0P,-)$ for all $P \in \proj$. Then we will write $D_+(\C,\proj)$ for the resulting localisation of $c\C$.

We will also write $D_+(\C,reg)$ for the localisation of $c\C$ at maps whose mapping cone is exact in the regular structure. As required, complexes which are exact in the regular structure form a triangulated subcategory, by \cite[Lemma 7.2]{BC}. It is thick by (the dual of) \cite[Corollary 2.18]{Buhler2}: the proof there also holds in our situation.

These localisations give canonical functors $D_+(\C,\proj) \to D_+(\C,\Q)$ whenever we have two classes of enough functorial projectives for $\C$ with $\proj \subseteq \Q$. Similarly, if $\proj$-split maps are regular (see Lemma \ref{regularproj}), we get a canonical functor $D_+(\C,\proj) \to D_+(\C,reg)$. This is just the universal property of localisations.

We are particularly interested in the case of $c(R$-$\U Mod)$. This has at least three interesting model structures, given by three different choices of projective objects: summands of all free modules, written $F(\U)$, summands of free modules on spaces in $\sqcup CH$, written $F(\sqcup CH)$, and summands of free modules on discrete spaces, written $F(Set)$. All three choices give enough projectives, by Remark \ref{projectivesinnature}: all three come, via the free module functor, from classes of enough projectives in $\U$ which give model structures on $\U$ (if we think of the Hurewicz model structure as coming from $\ob \U$, in the sense of Remark \ref{structuresonsU}). So we get three model structures on $c(R$-$\U Mod)$. Compare this to the situation for $s\U$: as noted in Remark \ref{structuresonsU}, it is not clear that we get all three model structures there.

By the universal property of free modules, we see that $F(\U)$-split (respectively, $F(\sqcup CH)$-split, $F(Set)$-split) maps are exactly the maps which as maps of the underlying spaces are split epimorphisms (respectively, $CH$-split epimorphisms, surjections). By Lemma \ref{regularproj}, $\proj$-split maps in $(c(R$-$\U Mod),F(\U))$ and in $(c(R$-$\U Mod), F(\sqcup CH))$ are regular epimorphisms, so they are cokernels, and hence $R$-$\U Mod$ with either $F(\U)$-split maps or $F(\sqcup CH)$-split maps is a left exact category. On the other hand, this is not true in general for $F(Set)$-split maps.

Suppose $\C$ is a regular additive category (equivalently, left quasi-abelian). Then we have seen that $Kan\C_{reg}$ is a category of fibrant objects. Every object in $s\C$ is fibrant in $(s\C,\C)$ by Theorem \ref{additivesc}, and hence fibrant in $(s\C,reg)$ by Lemma \ref{regularproj}, so $Kan\C_{reg} = s\C$. Then the Dold-Kan correspondence tranfers to $c\C$, giving it the structure of a category of fibrant objects, which we write $(c\C,reg)$. It is not hard to check under this correspondence that weak equivalences in $(c\C,reg)$ are maps of chain complexes whose mapping cone is exact in the regular structure, and fibrations are levelwise regular epimorphisms.

For $\C$ a regular additive category, its exact completion $\C_{ex}$ is additive, and hence is an abelian category by \cite[Theorem 3.11]{Barr}. As noted above, $Kan\C_{reg} = s\C$, so given $A \in s\C$ we can take homotopy $\pi_n(A) \in \C_{ex}$. For $B \in c\C$, we can take homology $H_n(B)$ in $\C_{ex}$ as well: just think of $B$ as an object in $c\C_{ex}$ and apply the usual homology functor.

\begin{prop}
$\pi_n(\Gamma(B)) \cong H_n(B)$ as group objects in $\C$, naturally in $B$ and $n$.
\end{prop}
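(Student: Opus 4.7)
The plan is to reduce the statement to the classical Dold--Kan/Moore theorem inside the abelian category $\C_{ex}$. As noted just before the proposition, $\C_{ex}$ is abelian (being the exact completion of a regular additive category), and the embedding $\varepsilon:\C\to\C_{ex}$ is additive as well as exact. Moreover, since $\C$ is a regular additive category, $\Kan(\C,reg)=s\C$ (so $\Gamma(B)$ is legitimately an object to which $\pi_n$ applies).

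First I would verify that $\varepsilon$ commutes with $\Gamma$: since $\Gamma:c\C\to s\C$ is built degreewise from finite direct sums ($\Gamma(B)_n=\bigoplus_{[n]\twoheadrightarrow[k]}B_k$, with face and degeneracy maps given by sums and projections), and $\varepsilon$ preserves finite direct sums, there is a natural isomorphism $\varepsilon(\Gamma(B))\cong \Gamma(\varepsilon(B))$ in $s\C_{ex}$. By the definition of $\pi_n$ for an object of $\Kan(\C,reg)$, this yields
\[
\pi_n(\Gamma(B)) = \pi_n(\varepsilon\Gamma(B)) \cong \pi_n(\Gamma(\varepsilon(B))),
\]
and both sides live in $\C_{ex}$.

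Next I would invoke the classical Dold--Kan correspondence in the abelian category $\C_{ex}$. The homotopy groups of a simplicial object $A$ in an abelian category may be computed as the homology of its normalised Moore complex $NA$, and the Dold--Kan equivalence exhibits $N\circ\Gamma$ as naturally isomorphic to the identity functor on chain complexes. Hence
\[
\pi_n(\Gamma(\varepsilon(B))) \cong H_n(N\Gamma(\varepsilon(B))) \cong H_n(\varepsilon(B)) = H_n(B),
\]
where the last equality is the definition of $H_n(B)$ given just before the proposition. Naturality in $B$ is automatic because each step (application of $\varepsilon$, the natural isomorphism $\varepsilon\Gamma\cong\Gamma\varepsilon$, and the Moore-complex identification) is functorial, and the isomorphism respects the abelian group object structure because every functor involved is additive.

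The only real obstacle is bookkeeping: one must confirm that the natural isomorphism $\varepsilon\Gamma\cong\Gamma\varepsilon$ is compatible with the definition of $\pi_n$ on $\Kan(\C_{ex},reg)$ (which is built from the cotensors $\Delta^n\pitchfork(-)$ and hence from finite limits that $\varepsilon$ preserves), and that the abstract description of $\Gamma$ cited in Theorem \ref{chainmodelcat}(v) indeed matches the explicit direct-sum formula used in the classical Dold--Kan correspondence. Everything else follows from the standard abelian-category result.
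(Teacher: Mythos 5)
Your proposal is correct and follows essentially the same route as the paper, whose one-line proof is precisely to regard $B$ as a chain complex in the abelian category $\C_{ex}$ and invoke the classical Dold--Kan/Moore-complex identification (citing \cite[Corollary III.2.5]{GJ}). Your additional bookkeeping — the natural isomorphism $\varepsilon\Gamma\cong\Gamma\varepsilon$ and the compatibility of $\pi_n$ with $\varepsilon$ — simply makes explicit what the paper leaves implicit.
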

\begin{proof}
Think of $B$ as a chain complex in the abelian category $\C_{ex}$, for which see \cite[Corollary III.2.5]{GJ}.
\end{proof}

Write $F: \U \to R$-$\U Mod$ for the free module functor, left adjoint to the forgetful functor. This extends to a left adjoint to the forgetful functor $sR$-$\U Mod \to s\U$, which applies $F$ degreewise; we will write $F$ for this functor too. $F$ may not preserve weak equivalences in $(s\U,\sqcup CH)$, but it is easy to see that it does give a Quillen adjunction between $(s\U,\sqcup CH)$ and $(sR$-$\U Mod, F(\sqcup CH))$, so we will also use the total left derived functor $LF$ of $F$, given as the composite of $F$ with the cofibrant replacement functor $Q: s\U \to s\U$.

In constructing a homology theory on $\U$, we would like certain axioms to be satisfied: those of a \emph{generalised homology theory}. These axioms are usually listed for homology theories from spaces to abelian groups, but they make sense in our context. A generalised homology theory is a functor $E$ from pairs $(X,Y)$ of spaces $Y \subseteq X$ in $\U$ to chain complexes in $R$-$\U Mod$, with some left exact structure, satisfying:
\begin{enumerate}[(i)]
\item homotopy invariance;
\item exactness: associated naturally to a pair $(X,Y)$ is an exact triangle $E(Y,\emptyset) \to E(X,\emptyset) \to E(X,Y) \to$;
\item additivity: if $(X,A)$ is a disjoint union of pairs $(\bigsqcup X_i, \bigsqcup A_i)$, then the canonical map $\bigoplus E(X_i,A_i) \to E(X,A)$ is a weak equivalence;
\item dimension: $E(\ast,\emptyset)$ is exact in non-zero dimensions;
\item excision: for $U \subseteq A \subseteq X$ with $\bar{U}$ contained in the interior of $A$, the canonical map $E(X \setminus U, A \setminus U) \to E(X,A)$ is a weak equivalence.
\end{enumerate}

Write $N$ for a functor $sR$-$\U Mod \to cR$-$\U Mod$ which is inverse to $\Gamma$ up to natural isomorphism. A naive attempt to define a topological homology theory would use $N \circ F \circ \Sing$: the abstract singular chain complex with the free module topologies on the spaces of singular maps. This is seen quite easily to satisfy the first four axioms. But excision seems to break down here. At any rate, if we give $R$-$\U Mod$ the $F(\U)$ model structure, the usual proof of excision does not work. It seems worthwhile here to point out what goes wrong. For concreteness, we work with the proof given in \cite[Section 2.1]{Hatcher}. The only real obstacle lies in \cite[Proposition 2.21, part (4)]{Hatcher}: each $D_m$ defined there is degreewise continuous, but there is no reason why $D$ should be.

On the other hand, using the $(s\U,CH)$ model structure suggests that we should replace $F$ with $LF$. Here we get better behaviour; indeed, the following results shows that the $F(\sqcup CH)$ model structure and the quasi-abelian structure on $R$-$\U Mod$ are indispensable to its study.

Suppose $X \in \U$. Let $C$ be an open cover of $X \in \U$, and write $C'$ for the poset of finite intersections of sets in $C$, ordered by inclusion. Then we get the following continuous version of \cite[Proposition 2.21]{Hatcher}.

\begin{thm}
$LF(\underline{\Sing}(X))$ is weakly equivalent (in $(s\U Ab,reg)$) to the homotopy colimit of $\{LF(\underline{\Sing}(U))\}_{U \in C'}$ (in $(s\U Ab,F(\sqcup CH))$).
\end{thm}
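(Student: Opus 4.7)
The strategy is to deduce this from Theorem \ref{singhoco} by applying the total left derived free-module functor $LF$, exploiting its compatibility with homotopy colimits.

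First, I would invoke Theorem \ref{singhoco}, which provides a $(s\U,reg)$-weak equivalence between $\Sing(X)$ and the homotopy colimit of $\{\Sing(U)\}_{U \in C'}$ computed in $(s\U,CH)$. Next, I would observe that the free-module functor $F: s\U \to s\U Ab$ is a left Quillen functor from $(s\U,CH)$ to $(s\U Ab, F(\sqcup CH))$: by Lemma \ref{free} it is left adjoint to the forgetful functor, and since it commutes with the copower $\cdot$, it sends each generating (trivial) cofibration $\partial\Delta^n \cdot K \to \Delta^n \cdot K$ with $K \in CH$ to $\partial\Delta^n \cdot F(K) \to \Delta^n \cdot F(K)$, which is a generating (trivial) cofibration of $(s\U Ab, F(\sqcup CH))$ because $F(K) \in F(\sqcup CH)$. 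As a total left derived functor, $LF$ commutes with homotopy colimits up to weak equivalence, giving $LF(\mathrm{hocolim}_{C'}\Sing(U)) \simeq \mathrm{hocolim}_{C'}LF(\Sing(U))$ in $(s\U Ab, F(\sqcup CH))$, which is in particular a $(s\U Ab, reg)$-weak equivalence via the additive analog of Corollary \ref{wesareregular}.

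The main obstacle is showing that $LF$ also carries the $(s\U,reg)$-weak equivalence supplied by Theorem \ref{singhoco} to a $(s\U Ab, reg)$-weak equivalence; this is not automatic because $(s\U,reg)$-equivalences are strictly weaker than the $(s\U,CH)$-equivalences that Ken Brown's lemma allows $LF$ to preserve. My approach would be first to apply $\Ex^\infty$ to both sides, using the lemma preceding Theorem \ref{singhoco} to promote the equivalence to one in $\Kan(\U,reg)$, and then to invoke Lemma \ref{DIPsplit} to reduce the claim to showing that $F$ sends $CH$-split Dugger-Isaksen maps to $F(\sqcup CH)$-split Dugger-Isaksen maps in $s\U Ab$. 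Since $F$ is a left adjoint it preserves coproducts and, in particular, regular epimorphisms in each degree; after translating via Dold-Kan (Theorem \ref{chainmodelcat}), the relevant Dugger-Isaksen condition becomes an exactness condition on the normalised chain complex in the regular structure of $\U Ab$, which is preserved degreewise by $F$ together with a careful diagram chase on the pullbacks defining $DI_n$. Assembling these three steps yields the required chain of weak equivalences.
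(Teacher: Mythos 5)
The paper itself does not prove this theorem internally; the proof is a citation to \cite[Theorem 5.4]{Myself}, so there is nothing in-paper to compare with. Judged on its own merits, your first two steps are sound: $F$ is left Quillen from $(s\U,\sqcup CH)$ to $(s\U Ab,F(\sqcup CH))$ (the paper states this Quillen adjunction explicitly), so $LF$ preserves homotopy colimits, and $F(\sqcup CH)$-structure weak equivalences are regular-structure weak equivalences by Lemma \ref{DIPsplit} together with Lemma \ref{regularproj}. The problem is the third step, which you correctly identify as the main obstacle but do not actually overcome.

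The equivalence provided by Theorem \ref{singhoco} is only a weak equivalence in $(s\U,reg)$: after applying $\Ex^\infty$, the maps $DI_n$ are merely regular epimorphisms, not $CH$-split. (If they were $CH$-split, then by Lemma \ref{DIPsplit} the map would already be a weak equivalence in $(s\U,CH)$, Ken Brown's lemma for the left Quillen functor $F$ would finish the proof immediately, and Theorem \ref{singhoco} would not have needed the regular structure at all -- recall the paper shows $(s\U,CH)$-equivalences are strictly finer than $(s\U,reg)$-equivalences.) So your reduction to ``$F$ sends $CH$-split Dugger--Isaksen maps to $F(\sqcup CH)$-split ones'' starts from a hypothesis you do not have. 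Correcting the input to regular-epi $DI_n$, the sketch still fails: $DI_n$ is assembled from cotensors $\{-,-\}$ and pullbacks, i.e.\ limits, and the left adjoint $F$ commutes with colimits but not with these limits, so there is no natural comparison identifying $DI_n(LF(f))$ with the image under $F$ of $DI_n(f)$; degreewise preservation of regular epimorphisms by $F$ is beside the point. After Dold--Kan, what you need is precisely that the derived free-module functor carries $(s\U,reg)$-weak equivalences (between $(s\U,CH)$-cofibrant objects) to regular-structure homology equivalences -- in other words, that continuous singular homology is invariant under regular-structure weak equivalences. That is essentially the substantive content of the theorem (and of \cite[Theorem 5.4]{Myself}); it does not follow from the diagram chase you describe, and even in the discrete case the analogous fact (the free simplicial abelian group functor preserves weak homotopy equivalences) requires a genuine argument rather than formal adjoint nonsense. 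As it stands, your proposal therefore has a real gap at its decisive step.
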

\begin{proof}
See \cite[Theorem 5.4]{Myself}.
\end{proof}

Inspired by this, we define singular homology on $\U$ to be $H^{\Sing} = N \circ LF \circ \Sing$, and on $s\U$ to ge $H^{\Sing} = N \circ LF$. We also define $n$th homology groups objects in the exact completion of $\U$ via $H^{\Sing}_n = H_n \circ H^{\Sing}$. It is also possible to define homology for pairs $Y \subseteq X$ in $\U$: we just define $H^{\Sing}_n(X,Y)$ to be the mapping cone of $N(LF(\Sing(Y))) \to N(LF(\Sing(X)))$. Homology and cohomology with coefficients may be defined similarly (we are using the fact that $R$-$U Mod$ is coregular to take cohomology: everything works dually to the homological case; see \cite{Schneiders}).

The unit of the adjunction between $F$ and the forgetful functor gives a map $Q(X) \to F(Q(X)) = LF(X)$ for $X \in s\U$, which induces maps $$\pi_n(X) = \pi_n(Q(X)) \to H^{\Sing}_n(Q(X)) = H^{\Sing}_n(X),$$ which we may think of as Hurewicz maps.

As for homotopy groups, we may define several other versions of `topological homology groups', and compare them. The situation for homology is similar to that for homotopy, and we leave it to the interested reader.

From the previous theorem one can easily deduce a version of the Excision Theorem for our homology theory.

\begin{thm}
Given subspaces $A \subseteq B \subseteq X$ in $\U$ with $A$ closed and $B$ open, the inclusion $(X \setminus A, B \setminus A)֓ \to (X,B)$ induces isomorphisms of the homology group objects $H^{\Sing}_n(X \setminus A, B \setminus A)֓ \to H^{\Sing}_n(X,B)$ for all $n$. Equivalently, for open subspaces $A, B \subseteq X$ covering $X$, the inclusion $(B,A \cap B)֓ \to (X,A)$ induces isomorphisms of homology group objects $H^{\Sing}_n(B,A \cap B) \to H^{\Sing}_n(X,A)$ for all $n$.
\end{thm}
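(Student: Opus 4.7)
The plan is to deduce the Excision Theorem directly from the previous homotopy colimit result by specialising to a two-element open cover.

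First I would reduce the first formulation to the second. Given $A \subseteq B \subseteq X$ with $A$ closed and $B$ open, set $A' = B$ and $B' = X \setminus A$. Then $A'$ and $B'$ are open subspaces of $X$ whose union is $X$ (since $A \subseteq B$ forces $(X \setminus A) \cup B = X$), and $A' \cap B' = B \setminus A$. The inclusion $(X \setminus A, B \setminus A) \to (X, B)$ in the first formulation is exactly the inclusion $(B', A' \cap B') \to (X, A')$ in the second, so it suffices to prove the latter.

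For the second formulation, apply the previous theorem to $X$ with the open cover $C = \{A, B\}$. The poset $C'$ has three objects $A, B, A \cap B$, so the homotopy colimit of $\{LF(\underline{\Sing}(U))\}_{U \in C'}$ in $(s\U Ab, F(\sqcup CH))$ is simply the homotopy pushout of $LF(\underline{\Sing}(A)) \leftarrow LF(\underline{\Sing}(A \cap B)) \to LF(\underline{\Sing}(B))$, and the previous theorem yields a weak equivalence in $(s\U Ab, reg)$ from this homotopy pushout to $LF(\underline{\Sing}(X))$. Passing through the Dold-Kan correspondence of Theorem \ref{chainmodelcat}(v), which is a Quillen equivalence and therefore preserves homotopy pushouts and weak equivalences, the same statement holds for the chain complexes $N_Y := N(LF(\underline{\Sing}(Y)))$ in $(c\U Ab, reg)$.

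By definition, $H^{\Sing}(X, A)$ is the mapping cone of $N_A \to N_X$ and $H^{\Sing}(B, A \cap B)$ is the mapping cone of $N_{A \cap B} \to N_B$. In the triangulated derived category $D_+(\U Ab, reg)$ (whose existence is established after Theorem \ref{chainmodelcat}), the homotopy pushout square
\[
\xymatrix{N_{A \cap B} \ar[r] \ar[d] & N_B \ar[d] \\ N_A \ar[r] & N_X}
\]
has the property that the mapping cone of either vertical (or either horizontal) map is canonically isomorphic to the mapping cone of the parallel map; this is the standard fact that in a stable homotopy pushout the two cofibres agree. Applying this gives a weak equivalence $H^{\Sing}(B, A \cap B) \simeq H^{\Sing}(X, A)$ in $(c\U Ab, reg)$. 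Taking $H_n$ in the exact completion (which is invariant under weak equivalences in the regular structure) yields the required isomorphisms of homology group objects.

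The main obstacle is the step identifying parallel mapping cones in the homotopy pushout square, because $\U Ab$ is only quasi-abelian rather than abelian, so one must work with the triangulated structure on $D_+(\U Ab, reg)$ established earlier rather than with naive kernels and cokernels. Once this is in place, however, the statement is a formal consequence of the previous theorem applied to the cover $\{A,B\}$.
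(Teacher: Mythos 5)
Your argument is correct and follows the route the paper itself indicates: the paper derives this Excision Theorem from the preceding homotopy colimit theorem (deferring details to \cite[Theorem 5.6]{Myself}), exactly as you do by specialising to the two-element open cover and comparing parallel cofibres of the resulting homotopy pushout in $D_+(\U Ab,reg)$. The only points glossed over are routine: that $F(\sqcup CH)$-weak equivalences are reg-weak equivalences, and the triangulated five-lemma step replacing the homotopy pushout corner by $N_X$.
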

\begin{proof}
See \cite[Theorem 5.6]{Myself}.
\end{proof}

Given our axioms for a generalised homology theory, the excision axiom was the only difficult thing to check. We immediately get:

\begin{thm}
$H^{\Sing}$ is a generalised homology theory.
\end{thm}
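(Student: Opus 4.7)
The plan is to verify each of the five axioms (i)--(v) in turn. Excision, (v), is the content of the preceding theorem, so we only need to check homotopy invariance, exactness, additivity, and dimension. Throughout, we work with $H^{\Sing} = N \circ LF \circ \Sing$, where $N$ is a quasi-inverse of the Dold--Kan functor $\Gamma$ from Theorem \ref{chainmodelcat}(v).

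Exactness, (ii), is immediate from the construction: by definition $H^{\Sing}(X,Y)$ is the mapping cone of the chain map $H^{\Sing}(Y) \to H^{\Sing}(X)$ induced by $Y \hookrightarrow X$, and mapping cones produce distinguished triangles in the triangulated category $D_+(R\text{-}\U Mod, reg)$, giving naturally the required triangle $H^{\Sing}(Y) \to H^{\Sing}(X) \to H^{\Sing}(X,Y) \to$. For homotopy invariance, (i), it suffices to show $N \circ LF \circ \Sing$ preserves weak equivalences. The functor $\Sing : \U_{CH} \to (s\U, CH)$ is right Quillen, and every object of $\U_{CH}$ is fibrant, so by Ken Brown's lemma $\Sing$ preserves weak equivalences; in particular homotopy equivalences of pairs are sent to weak equivalences. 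The functor $LF$ preserves weak equivalences by construction as a total left derived functor, and $N$ preserves weak equivalences since $\Gamma$ is both a left and right Quillen functor by Theorem \ref{chainmodelcat}(v).

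For additivity, (iii), note first that since $\Delta_n$ is compact and connected, any continuous map $\Delta_n \to \bigsqcup X_i$ factors uniquely through a single $X_i$, whence $\Sing(\bigsqcup X_i) = \bigsqcup \Sing X_i$ in $s\U$. Next, coproducts of cofibrations in $(s\U, CH)$ are cofibrations (a standard closure property of $I$-cell), and coproducts of trivial fibrations in $(s\U, CH)$ are trivial fibrations: the point is that a map from a compact Hausdorff space into a topological disjoint union has image in only finitely many summands, so can be split along that decomposition to produce the required lifts. Consequently $\bigsqcup Q(\Sing X_i) \to \bigsqcup \Sing X_i$ is again a cofibrant replacement, and since the free module functor $F$ is a left adjoint and thus commutes with coproducts on the nose, $LF(\bigsqcup \Sing X_i) \simeq \bigsqcup LF(\Sing X_i)$. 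Under Dold--Kan, $N$ sends coproducts in $s(R\text{-}\U Mod)$ to coproducts in $c(R\text{-}\U Mod)$, which are the direct sums constructed in Section \ref{kmodules}. Relativisation to pairs is a formal diagram chase with mapping cones.

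For the dimension axiom, (iv), we compute $H^{\Sing}(\ast, \emptyset)$ directly. We have $\Sing(\ast) = \disc \ast$, which is cofibrant in $(s\U, CH)$ as the pushout of the generating cofibration $\partial \Delta^0 \cdot \ast = \disc \emptyset \to \Delta^0 \cdot \ast = \disc \ast$ from the initial object. Hence $LF(\disc \ast)$ is weakly equivalent to $F(\disc \ast) = \disc R$, and $N(\disc R)$ is the chain complex with $R$ concentrated in degree zero and $0$ elsewhere, in particular exact in all non-zero degrees. The main technical obstacle is the careful verification for additivity that $LF$ interacts well with infinite coproducts; this reduces to showing coproducts of trivial fibrations in $(s\U, CH)$ are trivial fibrations, which in turn depends on the behaviour of compact Hausdorff spaces mapping into topological disjoint unions. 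The other three axioms are direct combinations of the Dold--Kan correspondence with formal properties of Quillen adjunctions and total derived functors.
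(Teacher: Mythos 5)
Your proposal is correct and follows the paper's own route: the paper notes that excision (the preceding theorem) was the only difficult axiom, with the remaining axioms — homotopy invariance via Ken Brown's lemma and the derived functors, exactness via mapping cones, additivity via $\Sing$ and $F$ commuting with coproducts together with the compactness argument for generating cofibrations, and the dimension axiom by direct computation on $\disc\ast$ — following by the same routine verifications you give (the paper defers the written-out details to the companion paper \cite{Myself}).
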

\begin{proof}
See \cite[Theorem 5.8]{Myself}.
\end{proof}

A lot of the standard results that apply to the usual definition of generalised homology theory immediately follow here. For instance, we immediately get a Mayer--Vietoris sequence for $H^{\Sing}$, by \cite[Theorem 5.7]{Myself}.

%

It is beyond the scope of the current work to investigate to what extent our current axioms for a generalised homology theory characterise such a theory up to natural isomorphism, as they are well-known to do in the classical setting. However, we do have the following result.

Generalise the definition of $\Delta$-complexes given in \cite[p.103]{Hatcher} to allow totally path-disconnected spaces of cells, as follows: a generalised $\Delta$-complex is a space $X \in \U$ with a collection of maps $\sigma_\alpha: \Delta_n \times K_\alpha \to X$, with $n$ varying with $\alpha$ and $K_\alpha$ a totally path-disconnected space, satisfying the following conditions.
\begin{enumerate}[(i)]
\item The restriction of $\sigma_\alpha$ to $(\Delta_n \setminus \partial\Delta_n) \times K_\alpha$ is injective, and each point of $X$ is in the image of exactly one such restriction.
\item Each restriction of $\sigma_\alpha$ to a face $\Delta_{n-1} \times K_\alpha$ of $\Delta_n \times K_\alpha$ factors through a continuous map $$\Delta_{n-1} \times K_\alpha \xrightarrow{\id_{\Delta_{n-1} \times f}} \Delta_{n-1} \times K_\beta \xrightarrow{\sigma_\beta} X.$$
\item A set $A \subseteq X$ is open if and only if $\sigma^{-1}_\alpha(A)$ is open for all $\sigma_\alpha$.
\end{enumerate}

As in the classical case, the definition ensures that such a $\Delta$-complex $X \in \U$ is the geometric realisation of some $X' \in s\U$ such that every $X'_n$ is totally path-disconnected. Call such spaces generalised simplicial complexes. As in the classical case, we define the simplicial homology $H^\text{Simp}(X)$ of $X$ to be the singular homology of $X'$.

\begin{prop}
The singular and simplicial homology theories for generalised $\Delta$-complexes are naturally isomorphic.
\end{prop}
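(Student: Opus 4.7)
The strategy is to use the Dold-Kan correspondence of Theorem \ref{chainmodelcat}(v) to convert the comparison of chain complexes into a comparison of simplicial objects, then exhibit a natural weak equivalence $X' \to \Sing(|X'|)$ in $(s\U, reg)$. Under Dold-Kan one has $H^{\text{Simp}}_n(X) \cong \pi_n(LF(X'))$ and $H^{\Sing}_n(X) \cong \pi_n(LF(\Sing(|X'|)))$ in the exact completion, so it suffices to produce a natural map $X' \to \Sing(|X'|)$ that becomes a weak equivalence in $(s(R\text{-}\U Mod), reg)$ after applying $LF$. The natural candidate is the unit $\eta_{X'}$ of the adjunction $|-| \dashv \Sing$.

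The core input is Theorem \ref{tpd}: since every $X'_n$ is totally path-disconnected, there is a natural weak equivalence $X' \to \Sing(L|X'|)$ in $(s\U, reg)$. To upgrade this to $\eta_{X'}$, I need the cofibrant-replacement map $\kappa: L|X'| = |Q(X')| \to |X'|$ to induce a weak equivalence $\Sing(\kappa)$ in $(s\U, reg)$. Since every object of $\U_{CH}$ is fibrant and $\Sing$ is right Quillen, it suffices by Ken Brown's lemma to show $\kappa$ is a weak equivalence in $\U_{CH}$. I would argue this by exhibiting $|X'|$ as (a retract of) a KW-complex: the characteristic maps $\sigma_\alpha: \Delta_n \times K_\alpha \to X$, regrouped by dimension and further decomposed via the CH-subspaces functor into coproducts of compact Hausdorff pieces of the spaces of non-degenerate simplices, assemble into the iterated pushouts of the KW-complex definition. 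Both $L|X'|$ and $|X'|$ are then cofibrant in $\U_{CH}$, and $\kappa$ is a map in $\U_{CH}$ between two cofibrant replacements of $|X'|$, hence a weak equivalence. Naturality of the adjunction units identifies $\eta_{X'}$ with the composite $X' \to \Sing(L|X'|) \xrightarrow{\Sing(\kappa)} \Sing(|X'|)$ in the homotopy category of $(s\U, reg)$, and $\eta_{X'}$ is thus a weak equivalence.

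Finally, to apply $LF$ I verify that it preserves weak equivalences in $(s\U, reg)$. The functor $F$ is additive and preserves the finite limits used in defining $\Ex$, so $LF$ commutes with $\Ex^\infty$ up to natural weak equivalence; by the argument used in the proof of Theorem \ref{modelcat}(ii), a map is a weak equivalence in $(-, reg)$ precisely when its $\Ex^\infty$ is one in the stricter $(-, CH)$ structure, and $LF$ preserves weak equivalences in $(s\U, CH)$ as a total left derived functor. Applying $LF$ to $\eta_{X'}$ therefore yields a weak equivalence in $(s(R\text{-}\U Mod), reg)$, and passing through Dold-Kan and $H_n$ yields the desired natural isomorphism of homology group objects.

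The main obstacle is the cofibrancy of $|X'|$ in $\U_{CH}$ for a generalised $\Delta$-complex. The definition of the latter permits cell spaces $K_\alpha$ to be arbitrary totally path-disconnected objects in $\U$, whereas KW-complexes as defined earlier require cell spaces in $\proj = \sqcup CH$. Bridging this gap requires either a careful decomposition of each $K_\alpha$ into compact Hausdorff pieces compatible with the attaching data, or a modest extension of the KW-complex framework to accommodate totally path-disconnected cell spaces; once this is handled, the remaining steps are routine model-categorical bookkeeping.
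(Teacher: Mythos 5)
The paper gives no internal argument for this proposition --- it simply cites \cite[Proposition 6.6]{Myself} --- so your proposal has to stand on its own, and as written it has two genuine gaps. The first is the bridge from Theorem \ref{tpd}, which concerns $\Sing(L\lvert X'\rvert)$, to the unit $X' \to \Sing(\lvert X'\rvert)$. Your argument that $\kappa\colon \lvert Q(X')\rvert \to \lvert X'\rvert$ is a weak equivalence in $\U_{CH}$ because it is ``a map between two cofibrant replacements of $\lvert X'\rvert$'' is circular: to call $\lvert Q(X')\rvert$ a cofibrant replacement \emph{of} $\lvert X'\rvert$ you must already know that $\lvert-\rvert$ sends the trivial fibration $Q(X')\to X'$ to a weak equivalence, which is exactly the point the paper flags as unclear (whether $\lvert-\rvert$ preserves weak equivalences in $(s\U,CH)$). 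Ken Brown's lemma for the left Quillen functor $\lvert-\rvert$ would require $X'$ itself to be cofibrant in $(s\U,CH)$, and it need not be: levels of cofibrant objects lie in $\proj=\sqcup CH$, whereas the $X'_n$ are merely totally path-disconnected (e.g.\ $\mathbb{Q}$), and such spaces are not disjoint unions of compact Hausdorff spaces. For the same reason your proposed repair of the cofibrancy of $\lvert X'\rvert$ fails: the CH-subspaces functor gives a proper quotient $d(K_\alpha)\twoheadrightarrow K_\alpha$, not a decomposition of $K_\alpha$, so $\lvert X'\rvert$ need not be (a retract of) a KW-complex. And even granting cofibrancy of both $L\lvert X'\rvert$ and $\lvert X'\rvert$, a map between cofibrant--fibrant objects need not be a weak equivalence, so nothing in that argument forces $\kappa$ to be one.

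The second gap is the claim that $LF$ preserves weak equivalences in $(s\U,reg)$. Both parts of your justification are incorrect: $F$ is a left adjoint, so it does not preserve the finite limits (the cotensors $\{\sd\Delta^n,-\}$) that define $\Ex$; and a map is not a weak equivalence in $(s\U,reg)$ ``precisely when'' its $\Ex^\infty$ is one in the CH structure --- the paper's example with rational segments shows that weak equivalences in $(s\U,CH)$ are strictly finer than those in $(s\U,reg)$, so only one implication holds. That $LF$ carries reg-weak equivalences between arbitrary (non-cofibrant) simplicial spaces to reg-weak equivalences is a substantive assertion that would itself need proof. A route that avoids both problems is the classical one: use the fact, already established, that $H^{\Sing}$ is a generalised homology theory (exactness, excision, additivity) and induct over the skeletal filtration of the generalised $\Delta$-complex, identifying the homology group objects of the skeleton pairs with the simplicial chain data as in Hatcher's comparison theorem; this stays entirely on the side of $H^{\Sing}$ and never requires comparing $L\lvert X'\rvert$ with $\lvert X'\rvert$, and is presumably closer to the argument of \cite[Proposition 6.6]{Myself}, to which the paper defers.
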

\begin{proof}
See \cite[Proposition 6.6]{Myself}.
\end{proof}

\section{Derived functors}
\label{derived}

At this point we will say some more about derived functors. The convention when using the framework of triangulated categories is not to require the adjunction that is requested for model categories. So here we will drop the adjunction condition. Given $(\C,\proj)$ with enough projectives, a left exact category $\D$, and an additive functor $F: \C \to \D$, the total left derived functor $LF: D_+(\C,\proj) \to D_+(D)$ is given by taking projective resolutions and then applying $F$; this satisfies the expected universal property. Here $D_+(D)$ is the localisation of $K_+(D)$ defined at the end of \cite{BC}.

If $\D$ is regular and the left exact structure on $\D$ is at least as strong as the regular structure -- that is, if the deflations are regular epimorphisms -- then we can also define the $n$th left derived functor $L_nF: \C \to D_{ex}$ of $F$ to be the composite $$\C \to D_+(\C,\proj) \xrightarrow{LF} D_+(\D) \xrightarrow{H_n} \D_{ex},$$ and short exact sequences in $\C$ give long exact sequences in $\D_{ex}$. This gives an obvious notion of $F$-dimension of an object $A \in \C$ as the largest $n$ for which $L_nF(A) \neq 0$, and similarly a notion of dimension for $F$.

This is standard. But there seems to be a gap in the literature around bicomplexes beyond the abelian case, and we will make some more comments in this case. Obvious applications include spectral sequences and balanced bifunctors.

\begin{lem}
Suppose $(\C,\proj)$ has enough functorial projectives and $A$ is a right half-plane complex in $\C$. Suppose the rows of $A$ are exact chain complexes. Then the product total complex $\operatorname{Tot}^{\scriptstyle\prod} A$ is exact.
\end{lem}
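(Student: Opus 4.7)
The plan is to reduce the statement to the classical acyclic assembly lemma for bicomplexes of abelian groups, via the characterisation of $\proj$-split exactness from Theorem \ref{chainmodelcat}(i). That theorem tells us a complex $B$ in $c\C$ is exact in the $\proj$-split structure if and only if $\underline{c\C}_{cAb}(S^0P,B)$ is exact in $cAb$ for every $P \in \proj$, and in each degree this hom-complex is just $\C(P,B_n)$. So it will suffice to show $\C(P,\operatorname{Tot}^{\scriptstyle\prod} A)$ is an exact complex of abelian groups for every $P \in \proj$.

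For this to be useful I need two commutation statements. First, $\C(P,-)$ preserves products (being representable, it preserves all limits), so it commutes with the formation of the product total complex: $\C(P,\operatorname{Tot}^{\scriptstyle\prod} A)$ is naturally isomorphic to $\operatorname{Tot}^{\scriptstyle\prod}\C(P,A)$, where $\C(P,A)$ is the right half-plane bicomplex of abelian groups with $(p,q)$-entry $\C(P,A_{p,q})$. Second, $\C(P,-)$ preserves kernels and by definition turns $\proj$-split maps into surjections, so an exact row of $A$ in the $\proj$-split sense maps to an exact row of $\C(P,A)$ in the usual sense. Thus I am reduced to the purely abelian statement: the product total complex of a right half-plane bicomplex in $Ab$ with exact rows is exact.

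This last fact is the classical Acyclic Assembly Lemma (see e.g.\ Weibel, Lemma 2.7.3). Given a cycle $(x_{p,q})_{p+q=n}$ in $\operatorname{Tot}^{\scriptstyle\prod}\C(P,A)$, one constructs a preimage $(y_{p,q})_{p+q=n+1}$ column by column, starting at the boundary $p=0$ of the half-plane and working outward in $p$; exactness of the $q$-th row provides the $p$-th component $y_{p,q}$ once the previous components have been fixed, and the half-plane hypothesis guarantees the recursion has a well-defined starting point. Since we use the product total complex there is no finiteness constraint in the $q$-direction, so the resulting sequence $(y_{p,q})$ assembles into a genuine element of $(\operatorname{Tot}^{\scriptstyle\prod}\C(P,A))_{n+1}$.

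The conceptual content is essentially exhausted by the reduction to the abelian case; the only thing to be careful about is that both the product structure and the notion of exactness translate faithfully under $\C(P,-)$, and both are immediate from the fact that $\C(P,-)$ preserves limits and that $\proj$-split maps are, by definition, exactly those maps which become surjections after applying $\C(P,-)$ for all $P\in\proj$. The main ``obstacle'' is therefore not technical but bookkeeping: making sure the total complex one constructs in $\C$ really does coincide, after applying $\C(P,-)$, with the classical object to which the Acyclic Assembly Lemma applies.
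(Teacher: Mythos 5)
Your proposal is correct and is essentially the paper's own argument: apply $\C(P,-)$ for each $P \in \proj$, use that this representable functor commutes with the product total complex and converts $\proj$-split exact rows into exact rows of abelian groups, and then invoke the classical Acyclic Assembly Lemma \cite[Lemma 2.7.3]{Weibel} for right half-plane bicomplexes with exact rows. The only difference is that you also sketch the internal recursion of Weibel's lemma, which the paper simply cites; nothing further is needed.
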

\begin{proof}
The bicomplex $\C(P,A_{m,n})$ has exact rows for all $P \in \proj$, so $$\underline{c\C}_{cAb}(S^0P,\operatorname{Tot}^{\scriptstyle\prod} A) = \operatorname{Tot}^{\scriptstyle\prod} \C(P,A_{m,n})$$ is exact for all $P \in \proj$ by \cite[Lemma 2.7.3]{Weibel}, so $\operatorname{Tot}^{\scriptstyle\prod} A$ is exact.
\end{proof}

We call this lemma the Acyclic Assembly Lemma, after \cite[Lemma 2.7.3]{Weibel}.

We can get spectral sequences from a bicomplex $A$ when $\C$ is regular, by thinking of $A$ as a bicomplex in the abelian category $\C_{ex}$, where these things are exhaustively studied. A statement of this is given in \cite[Proposition 5.10]{Boggi}. This can be used to prove a Grothendieck spectral sequence, as stated in \cite[Theorem 5.12]{Boggi}.

As usual, the Acyclic Assembly Lemma shows that, given two composeable additive functors $(\C,\proj) \xrightarrow{F} (\D,\Q) \xrightarrow{G} \mathcal{E}$ where $\C$ and $\D$ have enough projectives, $\mathcal{E}$ is left exact, and $F(\proj) \subseteq \Q$, $L(GF) = LG \circ LF$. We will focus on some other applications.

Given a bifunctor $F: (\C,\proj) \times (\D,\Q) \to \mathcal{E}$, we define $$LF: D_+(\C,\proj) \times D_+(\D,\Q) \to D_+(\mathcal{E})$$ by $LF(A,B) = \operatorname{Tot} F(Q(A),Q(B))$, where $Q(A)$ and $Q(B)$ are projective resolutions of $A$ and $B$ respectively.

\begin{prop}
\label{balanced}
Suppose that $F(P,-)$ is exact for all $P \in \proj$. Then $LF(-,-)$ is naturally isomorphic to $\operatorname{Tot} F(Q(-),-)$ on the derived category.
\end{prop}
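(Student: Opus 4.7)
The plan is to show that the natural map
\begin{equation*}
\operatorname{Tot} F(Q(A), Q(B)) \to \operatorname{Tot} F(Q(A), B)
\end{equation*}
induced by the augmentation $\varepsilon_B \colon Q(B) \to B$ is an isomorphism in $D_+(\mathcal{E})$. Granting this, one has $LF(A,B) = \operatorname{Tot} F(Q(A), Q(B)) \cong \operatorname{Tot} F(Q(A), B)$, and naturality in both variables follows from the functoriality of cofibrant replacement and of the total complex construction. The existence of the comparison map is automatic: thinking of $B$ as a complex concentrated in degree zero, $\varepsilon_B$ is a chain map, so applying the bifunctor $F$ degreewise in the second variable gives a map of bicomplexes which then assembles into a map of total complexes.

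Next, I would identify the mapping cone of this comparison map. Since forming the mapping cone commutes with the total complex of a bicomplex, the cone is canonically isomorphic to $\operatorname{Tot} F(Q(A), C)$, where $C = \operatorname{cone}(\varepsilon_B)$ is the ordinary mapping cone of $\varepsilon_B$ in $c\D$. Because $\varepsilon_B$ is a weak equivalence in $(c\D, \Q)$ (it is the defining map of a projective resolution), Theorem \ref{chainmodelcat}(i) tells us that $C$ is exact in the $\Q$-split structure.

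For the key step, I would use Theorem \ref{chainmodelcat}(iii), which guarantees that each component $Q(A)_i$ of the cofibrant replacement is projective. By hypothesis, $F(Q(A)_i, -)$ is therefore exact, and hence sends the exact complex $C$ to an exact chain complex in $\mathcal{E}$. Consequently, every row of the bicomplex $F(Q(A), C)$ is exact. Since $Q(A)$ and $C$ are both bounded below, this bicomplex is supported in the right half-plane, so the Acyclic Assembly Lemma applies to yield that the product total complex $\operatorname{Tot}^{\scriptstyle\prod} F(Q(A), C)$ is exact. The mapping cone of our comparison map is thus exact, which is exactly the condition for the comparison map to be an isomorphism in $D_+(\mathcal{E})$.

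The main obstacle I anticipate is checking that the hypothesis ``$F(P,-)$ is exact'' is strong enough to preserve exactness of an arbitrary (unbounded) exact chain complex such as $C$, rather than merely short exact sequences; this should follow by writing $C$ locally as pieces of short exact sequences built from kernels and deflations which $F(P,-)$ is required to preserve. A secondary point to be careful about is the applicability of the Acyclic Assembly Lemma in $\mathcal{E}$: as stated in the excerpt it is proved for a category with enough functorial projectives, whereas $\mathcal{E}$ is only assumed left exact, so one should either embed $\mathcal{E}$ appropriately (for instance into its exact completion if $\mathcal{E}$ is regular) or rework the lemma's proof so that it depends only on the left exact structure, which the definition of exactness of a complex in $\mathcal{E}$ directly supports.
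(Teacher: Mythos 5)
Your proposal is correct and follows essentially the same route as the paper: your complex $C=\operatorname{cone}(\varepsilon_B)$ is, up to reindexing, exactly the augmented complex $Q'$ (with $B$ adjoined to $Q(B)$) that the paper feeds into the Acyclic Assembly Lemma, and the conclusion that the cone of $\operatorname{Tot}F(Q(A),Q(B))\to\operatorname{Tot}F(Q(A),B)$ is exact is identical. Your closing caveats are reasonable but not obstacles: in this paper ``exact functor'' means precisely ``preserves exact complexes'' (cf.\ Lemma \ref{Pexact}), and the Acyclic Assembly Lemma argument only needs exactness in $\mathcal{E}$ to be detected as in the left exact structure, which is how the paper itself applies it.
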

\begin{proof}
Consider the exact complex $Q'$ given by $Q(B)$ in non-negative degrees, and $B$ in degree $-1$. Apply the Acyclic Assembly Lemma to $F(Q(A),Q')$ to conclude that the mapping cone of $F(Q(A),Q(B)) \to F(Q(A),B)$ is exact, as required.
\end{proof}

Concrete examples we are interested in here are the bifunctors $\otimes_R$ and $\underline{\U_R}$ on $R$-$k$-modules. Before defining derived functors here, we have to know which model structure we are dealing with. Given $(\C,\proj)$ and $(\C,\Q)$ with enough projectives, if $\proj \subseteq \Q$ then $\Q$-split maps are not $\proj$-split maps in general. So a projective resolution in $(\C,\proj)$ is not necessarily a projective resolution in $(\C,\Q)$, or vice versa.

Recall that $Q$ is a commutative $k$-ring and $R$ is a $Q$-$k$-algebra. With respect to any of $F(\U)$, $F(\sqcup CH)$ or $F(Set)$, we can define the total left derived functor of $$\otimes_R: \U Mod\text{-}R \times R\text{-}\U Mod \to Q\text{-}\U Mod,$$ which is balanced by Proposition \ref{balanced}. Since $Q$-$\U Mod$ is regular too, we can take homology groups as well.

It would be nice to apply all this to $\underline{\U_R}(-,-)$ too, but as far as I know $R$-$\U Mod$ does not have enough injectives. There are some interesting subcategories that satisfy a form of Pontryagin duality: the category of locally compact modules over a locally compact ring is self-dual by \cite{Flood}, and the categories of ind-profinite and pro-discrete modules over a profinite ring are dual to each other by \cite{Boggi}. The ind-profinite modules have enough projectives, so the pro-discrete modules have enough injectives. But there seems to be no obvious way to extend this approach to all of $R$-$\U Mod$, so for now we will have to define the total derived functor of $\underline{\U_R}(-,-)$ to be calculated by taking a projective resolution in the first variable (where, again, the resulting derived functor depends on the choice of class of projectives). Then we can also take cohomology groups, because $Q$-$\U Mod$ is coregular.

What we have defined can be thought of as $\Ext$ and $\Tor$ groups, though we could just as easily have defined these as derived functors of $\otimes_R$ and $\U_R(-,-)$, thought of as functors to $Ab$. As for homology groups, we leave to the reader the job of comparing these to other possible definitions.

\begin{lem}
\label{Pexact}
Let $S$ be any of $\U$, $\sqcup$ or the class of discrete spaces. Consider $R$-$\U Mod$ and $Q$-$\U Mod$, both with the class of projectives given by summands of free $k$-modules on spaces in $S$. Then $\underline{\U_R}(P,-): R$-$\U Mod \to Q$-$\U Mod$ is an exact functor (that is, preserves exact complexes) for all projective $R$-$k$-modules $P$.
\end{lem}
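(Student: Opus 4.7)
My plan is to reduce to $P = R[X]$ for some $X \in S$ and then use a short chain of adjunctions to reformulate $F(S)$-exactness of $\underline{\U_R}(R[X], A)$ in $Q\text{-}\U Mod$ as $F(S)$-exactness of $A$ in $R\text{-}\U Mod$, tested against objects of the form $R[Y \times X]$ with $Y \in S$. For the reduction: since $P$ is a retract of $R[X]$ for some $X \in S$, the additive functor $\underline{\U_R}(-, A)$ realises $\underline{\U_R}(P, A)$ as a complex retract of $\underline{\U_R}(R[X], A)$ in $c(Q\text{-}\U Mod)$, and $F(S)$-split maps are stable under retracts (compose any lift into $\underline{\U_R}(R[X], A_{n+1})$ with the splitting projection back to $\underline{\U_R}(P, A_{n+1})$), so it suffices to treat $P = R[X]$.

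The core observation is the natural bijection
\[
\U_Q(Q[Y], \underline{\U_R}(R[X], B)) \;\cong\; \U_R(R[Y \times X], B)
\]
for $Y \in S$ and $B \in R\text{-}\U Mod$. I would obtain this by chaining the $Q$-free module adjunction of Lemma \ref{free} to trade $Q[Y]$ for $Y$, the $\U$-enriched form of the $R$-free module adjunction (via the $\U$-enrichment induced by Theorem \ref{kModenriched}) to identify $U_Q\underline{\U_R}(R[X], B) \cong \underline{\U}(X, U_R(B))$ as $k$-spaces, cartesian closure of $\U$ to collapse $\underline{\U}(Y, \underline{\U}(X, U_R(B))) \cong \underline{\U}(Y \times X, U_R(B))$, and finally the $R$-free module adjunction to produce the right-hand side. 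Because $\underline{\U_R}(R[X], -): R\text{-}\U Mod \to Q\text{-}\U Mod$ preserves limits (limits in either module category are computed on underlying $k$-spaces, and $\underline{\U}(X, -)$ is right adjoint to $- \times X$), one has $\ker \underline{\U_R}(R[X], d_{n-1}) \cong \underline{\U_R}(R[X], \ker d_{n-1})$; so the lifting problem expressing $F(S)$-exactness of $\underline{\U_R}(R[X], A)$ at $Q[Y]$ converts, via the displayed bijection, into the lifting problem expressing $F(S)$-exactness of $A$ at $R[Y \times X]$, which is solvable by hypothesis on $A$ as soon as $Y \times X \in S$.

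The argument thus closes provided $S$ is closed under finite products in $\U$, which is the only place the specific choice of $S$ really enters. This is trivial when $S = \U$ and when $S$ is the class of discrete spaces. For $S = \sqcup CH$, cartesian closure implies $- \times X$ commutes with coproducts in $\U$, so $Y \times X$ decomposes as a disjoint union of products of compact Hausdorff spaces; each such product is itself compact Hausdorff (and agrees with its $Top$-product, because compact Hausdorff spaces are locally compact Hausdorff, so Tychonoff applies in the usual form), placing $Y \times X$ in $\sqcup CH$. This final case check, together with the care needed to thread the two adjunctions through the two distinct module categories, is the only real subtlety; everything else is bookkeeping.
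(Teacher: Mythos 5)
Your proof is correct and is essentially the paper's argument: the paper proves the lemma abstractly by noting that if $\C$ is enriched and tensored over $\V$ with $Q \odot P \in \proj$ for all projectives $Q$ of $\V$ and $P$ of $\C$, then $\underline{\C}_\V(P,-)$ is exact, and your adjunction chain $\U_Q(Q[Y],\underline{\U_R}(R[X],B)) \cong \U_R(R[Y\times X],B)$ is precisely that tensor--hom adjunction made concrete, with your check that $S$ is closed under finite products being exactly the verification of the hypothesis $Q[Y]\odot R[X]\cong R[Y\times X]\in\proj$ via the tensoring of Theorem \ref{kModenriched}. You have simply carried out in detail (retract reduction, kernel preservation, case check on $S$) the ``exercise in using enrichment and tensors'' that the paper leaves to the reader.
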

\begin{proof}
The proof actually holds very generally. Given categories $(\C,\proj), (\V,\Q)$ with enough functorial projectives such that $\C$ is enriched and tensored over $\V$, suppose that $Q \odot P \in \proj$ for all $P \in \proj$ and all $Q \in \Q$. Then $\underline{\C}_\V(P,-)$ is exact. This is an exercise in using enrichment and tensors and we leave it to the reader; our specific result follows thanks to the tensoring defined in Theorem \ref{kModenriched}.
\end{proof}

\begin{prop}
Let $\proj$ be any of $F(\U)$, $F(\sqcup CH)$ or $F(Set)$. With $\proj$ as the class of projectives, the total derived functor $$R\underline{\U_R}(-,-): D_+(R\text{-}\U Mod) \times K_+(R\text{-}\U Mod) \to D_+(Q\text{-}\U Mod)$$ preserves weak equivalences in $K_+(R$-$\U Mod)$. That is, we actually get a functor on the derived categories $$D_+(R\text{-}\U Mod) \times D_+(R\text{-}\U Mod) \to D_+(Q\text{-}\U Mod).$$
\end{prop}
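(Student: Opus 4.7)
The plan is to use Theorem \ref{chainmodelcat}(i), which characterises weak equivalences in $K_+(R\text{-}\U Mod)$ as the chain maps whose mapping cone is exact, and to reduce the statement to exactness of a product total complex via the Acyclic Assembly Lemma and Lemma \ref{Pexact}.

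Concretely, suppose $f: B \to B'$ is a weak equivalence in $K_+(R\text{-}\U Mod)$ and fix $A \in D_+(R\text{-}\U Mod)$ with projective resolution $Q(A)$, so that $R\underline{\U_R}(A, -)$ is computed as $\operatorname{Tot}^{\scriptstyle\prod} \underline{\U_R}(Q(A), -)$. Let $C = \operatorname{cone}(f)$, which is exact by hypothesis. The first step I would carry out is to identify the mapping cone of $R\underline{\U_R}(A, f)$ with $\operatorname{Tot}^{\scriptstyle\prod} \underline{\U_R}(Q(A), C)$: each $\underline{\U_R}(Q(A)_m, -)$ is additive and so commutes with the formation of mapping cones, and totalisation commutes with mapping cones of morphisms of double complexes.

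It then remains to show $\operatorname{Tot}^{\scriptstyle\prod} \underline{\U_R}(Q(A), C)$ is exact. The next step is to apply Lemma \ref{Pexact} row-by-row: for each $m \geq 0$, the module $Q(A)_m$ is projective, so $\underline{\U_R}(Q(A)_m, -)$ preserves exact complexes, and hence the row $\underline{\U_R}(Q(A)_m, C_\bullet)$ is exact. Since both $Q(A)$ and $C$ are bounded below (the former lies in $c(R\text{-}\U Mod)$ and the latter in $K_+(R\text{-}\U Mod)$), the bicomplex $\underline{\U_R}(Q(A)_m, C_n)$ is supported in (the contravariantly-reindexed version of) the first quadrant, so in particular is a right half-plane complex with exact rows. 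The Acyclic Assembly Lemma then delivers the exactness we need, so $R\underline{\U_R}(A, f)$ has exact mapping cone and is therefore a weak equivalence.

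I do not foresee a serious obstacle here; the argument is almost entirely formal once Lemma \ref{Pexact} and the Acyclic Assembly Lemma are in hand. The only points demanding care are the standard verifications that $\operatorname{Tot}^{\scriptstyle\prod}$ commutes with the formation of mapping cones for morphisms of double complexes, and that the boundedness of $Q(A)$ and $C$ places the bicomplex in the right-half-plane shape demanded by the statement of the Acyclic Assembly Lemma.
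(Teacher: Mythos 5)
Your proposal is correct and follows essentially the same route as the paper's proof: identify the cone of $R\underline{\U_R}(A,f)$ with the total Hom-complex of $Q(A)$ against $\operatorname{cone}(f)$, note that fixing a projective term $Q(A)_m$ gives an exact complex by Lemma \ref{Pexact}, and conclude by the Acyclic Assembly Lemma after the reindexing/rotation that puts the bicomplex in right-half-plane position. The only differences are cosmetic (your ``rows'' are the paper's ``columns'' before its rotation, and you spell out that totalisation commutes with mapping cones, which the paper asserts implicitly).
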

\begin{proof}
Given a weak equivalence $f$ in $K_+(R$-$\U Mod)$, $R\underline{\U_R}(A,\operatorname{cone}(f)) = \underline{c\U_R}(P,\operatorname{cone}(f))$, where $P$ is a projective resolution of $A$ and $\operatorname{cone}(f)$ is the mapping cone of $f$. Now consider the double complex $\underline{\U_R}(P_{-m},\operatorname{cone}(f)_n)$: this has exact columns by Lemma \ref{Pexact}, so apply the Acyclic Assembly Lemma (after rotating, to turn the columns into rows and put the double complex in the right half-plane) to get that $\underline{c\U_R}(P,\operatorname{cone}(f) = \operatorname{cone}(\underline{c\U_R}(P,f))$ is exact, as required.
\end{proof}

We will not give here an exhaustive account of the properties of these derived functors, which seem to be harder to calculate explicitly than the usual $\Ext$ and $\Tor$ groups. For a good summary of other attempts to work with (generally less categorical) derived functors for topological modules, see \cite{Stasheff}.

Recall that, for a $k$-group $G$, we get a group ring $Q[G]$; $Q$, with the trivial $G$-action, becomes a $Q[G]$-module. We can define group homology and cohomology for $G$ in the usual way, by considering the derived functors of $Q \otimes_{Q[G]} -$ and $\underline{\U_{Q[G]}}(Q,-)$. In this context it may be easier to take $F(\U)$ as our class of projectives, if only because the bar construction is a projective resolution in this case.

With the generating class $F(CH)$ of projectives for $Q[G]$-$\U Mod$, this also suggests definitions for $G$ to be of type $F(CH)-\FP_n$, $F(CH)-\FP_\infty$ and $F(CH)-\FP$, according to whether $Q$ has a projective resolution by $Q[G]$-modules generated by compact Hausdorff spaces for the first $n$ steps, and so on. Note that Schanuel's lemma still holds in this context, by the usual proof, so these definitions are well-behaved.

This property will be explored further in future research. In particular, there is a definition of type $\FP_n$ for totally disconnected, locally compact groups given in \cite{Ilaria}, using a category of discrete rational modules. In \cite{CCKS} we prove the following results:

\begin{thm}
Let $G$ be a totally disconnected, locally compact group. Then $G$ has type $\FP_n$ in the sense of \cite{Ilaria} if and only if it has type $F(CH)\FP_n$ over $\mathbb{Q}$, for $n \leq \infty$.
\end{thm}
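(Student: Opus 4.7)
The plan is to use the structural theorem of van Dantzig, already invoked in the introduction to show that tdlc groups are KW-complexes: every compact open subgroup $U$ of a tdlc group $G$ is profinite, hence in particular compact Hausdorff, and every compact subset of $G$ lies in a finite union of cosets of any such $U$. This is the bridge between the two finiteness conditions, as the definition in \cite{Ilaria} uses projective resolutions in discrete rational $G$-modules (built from modules of the form $\mathbb{Q}[G/U]$ for $U$ compact open), while $F(CH)\FP_n$ uses projective resolutions in $\mathbb{Q}[G]$-$\U Mod$ built from the free $k$-modules $\mathbb{Q}[G][K]$ for $K\in CH$. Both conditions are well-behaved under Schanuel's lemma (Schanuel holds in the quasi-abelian setting, as remarked after Lemma \ref{tensor} and in Section \ref{derived}), so it suffices to pass back and forth between presentations at each stage, and then iterate $n$ times.

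For the direction $\FP_n\Rightarrow F(CH)\FP_n$, I would take a resolution $\cdots\to P_i\to\cdots\to P_0\to\mathbb{Q}$ with each $P_i$ a summand of a finite sum $\bigoplus_j \mathbb{Q}[G/U_{ij}]$ with $U_{ij}$ compact open. For each compact open $U$ the bar-style complex
\[
\cdots\to\mathbb{Q}[G][U^k]\to\cdots\to\mathbb{Q}[G][U]\to\mathbb{Q}[G]\to\mathbb{Q}[G/U]\to 0
\]
has all terms in $F(CH)$, since $U^k$ is compact Hausdorff, and it is exact by the usual contracting homotopy (which is continuous because it is built from the multiplication on $U$, and $U$ is locally compact Hausdorff). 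Then the horseshoe lemma, applied in the left exact category $(\mathbb{Q}[G]\text{-}\U Mod, F(CH))$, allows these bar resolutions to be spliced into an $F(CH)$-resolution of $\mathbb{Q}$ which remains finitely generated in each degree $\le n$. Retracts pose no difficulty because $F(CH)$ is closed under them.

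For the direction $F(CH)\FP_n\Rightarrow \FP_n$, the key point is that a continuous map $K\to G/U$ from a compact space to the discrete space $G/U$ has finite image, so that any $\mathbb{Q}[G]$-module map $\mathbb{Q}[G][K]\to M$ into a discrete smooth module $M$ factors through $\mathbb{Q}[G][F]$ for some finite $F\subseteq K$, i.e.\ through $\mathbb{Q}[G]^{|F|}$. I would exploit this to construct, given an $F(CH)$-resolution $\cdots\to\mathbb{Q}[G][K_i]\to\cdots\to\mathbb{Q}[G]\to\mathbb{Q}$, a compatible family of discrete projective resolutions by applying, for each compact open $U$, the functor $\mathrm{Hom}_{\mathbb{Q}[G]}(-,\mathbb{Q}[G/U])$ or equivalently tensoring against a discrete coefficient module. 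The finiteness of $K_i$'s image modulo $U$ then produces finitely generated discrete projective approximations of each $\mathbb{Q}[G][K_i]$, which by Schanuel can be assembled into a genuine discrete $\FP_n$ resolution.

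The hard part will be the reverse direction, and in particular making the approximation argument functorial enough to be iterated $n$ times while preserving exactness. Unlike in the forward direction, where the bar resolution provides a canonical functorial device, here one must choose compatible compact open subgroups across degrees, and verify that the resulting discrete complex remains exact (not just levelwise surjective). I would expect the cleanest way to do this is to reinterpret both notions cohomologically: show that both $\FP_n$ in \cite{Ilaria} and $F(CH)\FP_n$ are equivalent to the statement that $\Ext^i_{\mathbb{Q}[G]}(\mathbb{Q},-)$ (computed in the appropriate category) commutes with filtered colimits of discrete modules for $i<n$, and then observe that the two $\Ext$ computations agree on discrete coefficients by a comparison between the discrete and $F(CH)$ resolutions built in the forward direction. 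This converts the structural comparison into an equivalence between two instances of the Bieri--Eckmann criterion, which is where I would expect the work of \cite{CCKS} to concentrate.
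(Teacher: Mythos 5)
The paper itself does not prove this statement: it is quoted from \cite{CCKS}, so there is no argument here to measure you against, and your proposal has to stand on its own. Your ``only if'' direction is essentially sound. For a compact open $U\leq G$ the quotient $G/U$ is discrete, so $G\to G/U$ has a continuous section and $\mathbb{Q}[G]\cong\mathbb{Q}[U][G/U]$ is free as a $\mathbb{Q}[U]$-module; hence the induced bar complex with terms $\mathbb{Q}[G][U^k]$, $U^k$ compact Hausdorff, is split exact as a complex of $k$-modules and so exact in the $F(CH)$-split structure. Moreover an exact complex of \emph{discrete} modules is automatically exact in that structure, because a continuous map from a compact Hausdorff space to a discrete module has finite image and so lifts along any surjection of discrete modules; this is what legitimises splicing the bar resolutions onto the resolution coming from \cite{Ilaria}, and finite sums and retracts are handled by the standard manipulations you indicate.

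The genuine gap is the ``if'' direction, which you yourself flag but do not close. The concrete mechanism you sketch cannot work as stated: $\mathbb{Q}[G]$ and the kernels of the differentials of an $F(CH)$-resolution are not discrete modules, $\mathbb{Q}[G]^{|F|}$ is not an object (let alone a projective) of the category of discrete rational $G$-modules unless $G$ is discrete, and applying $\Hom_{\mathbb{Q}[G]}(-,\mathbb{Q}[G/U])$ to a resolution produces cochain complexes of vector spaces, not discrete projective resolutions; nothing in the proposal produces an exact complex of finitely generated discrete projectives, which is what Schanuel needs as input. Your fallback -- a Bieri--Eckmann criterion on both sides together with agreement of the two $\Ext$ theories on discrete coefficients -- is the right shape of argument, but it is exactly where the content lies and it is only asserted. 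You would need (a) a Bieri--Eckmann-type characterisation of $F(CH)$-$\FP_n$ inside the left exact structure on $\mathbb{Q}[G]$-$\U Mod$: the classical step-by-step construction of a finitely generated resolution from colimit-commutation does not transfer verbatim, since the kernel of a map $\mathbb{Q}[G][K]\to\mathbb{Q}[G][K']$ is a genuinely topological module and is not a filtered colimit of finitely generated discrete submodules, so producing a $CH$-split epimorphism onto it from some $\mathbb{Q}[G][K'']$ requires a new argument; and (b) a comparison of discrete $\Ext$ with the $F(\sqcup CH)$-derived $\Ext$ on discrete coefficients valid for arbitrary resolutions (for instance via the full bar resolution $\mathbb{Q}[G][G^\bullet]$, using that $G^k\in\sqcup CH$ by van Dantzig and that rational discrete cohomology of profinite groups vanishes in positive degrees), not merely for the resolutions you built in the forward direction. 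Until (a) and (b) are carried out, the ``if'' half remains a programme rather than a proof.
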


We immediately get, for free, some classes of totally disconnected, locally compact groups of type $F(CH)-\FP_\infty$:
\begin{enumerate}[(i)]
\item Any discrete group of type $\FP_\infty$ over $\mathbb{Q}$;
\item Any profinite group;
\item Any tdlc group acting with compact, open stabilisers on a finite type, contractible cell complex;
\item Any hyperbolic tdlc group;
\item The Neretin group;
\item Any simply-connected semi-simple algebraic group defined over a non-discrete non-archimedean local field.
\end{enumerate}

On the other hand we do not know what can be deduced in our context from a totally disconnected, locally compact group having finite cohomological dimension in the sense of \cite{Ilaria}.

\begin{cor}
A totally disconnected, locally compact group has type $F(CH)-\FP_1$ if and only if it is \emph{compactly generated}, in the sense of being abstractly generated as a group by a compact subspace. If it is \emph{compactly presented} in the sense of \cite[Section 5.8]{Ilaria}, it has type $F(CH)-\FP_2$.
\end{cor}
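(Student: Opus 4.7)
The plan is to invoke the preceding Theorem to pass to Castellano's category of discrete rational $G$-modules, where the tdlc-analogues of the classical group-theoretic finiteness conditions are well-understood. That Theorem identifies type $F(CH)\text{-}\FP_n$ over $\mathbb{Q}$ with type $\FP_n$ in the sense of \cite{Ilaria}, so the task reduces to showing, in Castellano's setting, that (i) $\FP_1$ is equivalent to compact generation of $G$, and (ii) compact presentation implies $\FP_2$.

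First I would fix a compact open subgroup $U \leq G$, which exists by van Dantzig. The augmentation $\varepsilon : \mathbb{Q}[G/U] \twoheadrightarrow \mathbb{Q}$ is then a compactly-generated projective cover of $\mathbb{Q}$ in discrete rational $G$-modules. For the forward direction of (i), if $G$ is abstractly generated by a compact symmetric set $K \supseteq U$, then $KU/U = \{g_1U,\dots,g_rU\}$ is finite, and the kernel of $\varepsilon$ is generated as a $G$-module by the finite family $\{g_iU - U\}_{i=1}^r$: an induction on word length in $K$, using the telescoping identity $g_1\cdots g_mU - U = \sum_{j=1}^m g_1\cdots g_{j-1}(g_jU - U)$, reduces the general element $gU - U$ to a $G$-combination of these generators. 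Thus $\ker \varepsilon$ is compactly generated, giving $\FP_1$. For the converse, a type-$\FP_1$ resolution $P_1 \to \mathbb{Q}[G/U] \twoheadrightarrow \mathbb{Q}$ with $P_1$ compactly generated exhibits $\ker \varepsilon$ as a quotient of some $\bigoplus_{i=1}^n \mathbb{Q}[G/U_i]$; the images of the generators are supported on finitely many cosets, and the union of representatives of these cosets, together with $U$, is a compact abstract generating set for $G$.

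For (ii), suppose $G$ is compactly presented in the sense of \cite{Ilaria}: $G \cong F(K)/\langle\langle R \rangle\rangle$ where $F(K)$ is the free group on a compact set $K$ and $R$ is a compact set of relators. Building on the resolution from (i), the set $R$ determines a compact family of elements of $\ker\varepsilon$ whose $G$-translates normally generate the second syzygy module, so the module of relations is itself a quotient of a compactly generated projective $\bigoplus_{j=1}^m \mathbb{Q}[G/V_j]$. Splicing produces a partial projective resolution of length two by compactly generated projectives, giving $\FP_2$.

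The main obstacle is the careful identification, under the equivalence of categories underlying the preceding Theorem, between the intrinsically compactly-generated discrete rational modules in Castellano's sense and those $F(CH)$-generated objects of $\mathbb{Q}[G]$-$\U Mod$ arising as quotients of finite direct sums $\bigoplus_i \mathbb{Q}[G/U_i]$. Once this dictionary is in place, the two arguments above are the tdlc adaptation of the classical proofs that a discrete group is finitely generated (resp.\ finitely presented) iff it has type $\FP_1$ (resp.\ $\FP_2$) over $\mathbb{Q}$; the full technical verifications are carried out in \cite{CCKS}.
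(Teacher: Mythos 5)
Your proposal is correct in its overall shape and starts exactly where the paper does: apply the preceding Theorem to translate $F(CH)\text{-}\FP_n$ over $\mathbb{Q}$ into type $\FP_n$ in the rational discrete category of \cite{Ilaria}. The difference is what happens next. The paper treats the statement as a genuine corollary: once the translation is made, the facts that a tdlc group has type $\FP_1$ if and only if it is compactly generated, and that compact presentation implies type $\FP_2$, are already established in \cite{Ilaria} for rational discrete modules, so nothing further is proved. You instead re-derive those two facts from scratch inside Castellano's category. Your forward argument for (i) (van Dantzig, the augmentation $\mathbb{Q}[G/U]\twoheadrightarrow\mathbb{Q}$, finiteness of $KU/U$, and the telescoping identity) is the standard adaptation of the classical discrete-group proof and is fine; but the converse of (i) is stated rather than proved -- you still need the argument that the subgroup $H$ generated by $U$ and the finitely many coset representatives is all of $G$ (e.g.\ by decomposing $\mathbb{Q}[G/U]$ over $H$-orbits and showing the augmentation ideal cannot be generated inside the $H$-component unless $H=G$) -- and the $\FP_2$ step is only a gesture at the relation-module argument. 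These are exactly the verifications that \cite{Ilaria} supplies, which is why the paper can simply cite it; your re-proof buys self-containedness at the cost of gaps that would need to be filled. One attribution should also be corrected: the ``full technical verifications'' you defer at the end belong to \cite{Ilaria}, not to \cite{CCKS}, whose role is only the equivalence $F(CH)\text{-}\FP_n \Leftrightarrow \FP_n$ used in the first step.
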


\begin{rem}
This conflicting use of `compactly generated' is unfortunate, but fixed in the literature. Elsewhere in this paper we will only ever use the topological space definition.
\end{rem}

A useful tool in the homology and cohomology of abstract groups is Shapiro's lemma. The only obstruction to the corresponding result in our context is the requirement that, for $H$ a closed subgroup of $G$, $Q[G]$ be projective (for coinduction) or flat (for induction) as a $Q[H]$-module. Note that in the $k$-group world $Q[G]$ is not automatically free, as it is for abstract groups: for that argument to work we need $G \cong H \times G/H$ as $k$-spaces, which is not always the case. An easy counter-example is given by $G = \mathbb{R}$ with the Euclidean topology, $H = \mathbb{Z}$, so there is no continuous section $\mathbb{R}/\mathbb{Z} \to \mathbb{R}$.

Now suppose $S$ is $\ob\U, \sqcup CH$ or the class of discrete spaces.

\begin{lem}
Suppose that the quotient map $G \to G/H$ has a section in $\U$, and $G/H \in S$. Consider $Q[H]$-$\U Mod$ with $F(S)$ as the class of projectives. Then $Q[G]$ is projective as a $Q[H]$-module.
\end{lem}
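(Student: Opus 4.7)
The plan is to use the section to trivialise $G$ as an $H$-space and then apply the free-module machinery to convert this into a statement about $Q[H]$-modules.

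First, I would use the section $s: G/H \to G$ to construct an $H$-equivariant homeomorphism
\[
\varphi: H \times G/H \to G, \qquad (h,x) \mapsto h \cdot s(x),
\]
where $H$ acts on the domain by left multiplication on the first factor and on the codomain by the inclusion $H \hookrightarrow G$. Continuity is immediate from continuity of multiplication, and a continuous inverse is given by $g \mapsto \bigl(g \cdot s(gH)^{-1},\, gH\bigr)$, using that $s$ is a section. Equivariance is clear from the formula. The bijectivity at the level of underlying sets reduces to the uniqueness of the decomposition $g = h \cdot s(gH)$, which is standard.

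Next, I would apply the free $Q$-$k$-module functor $Q[-]$. By the strong monoidality in Theorem \ref{kModenriched}(ii) (and the identity $Q[X]\otimes_Q Q[Y]\cong Q[X\times Y]$ noted after Lemma \ref{tensor}), we obtain
\[
Q[G] \;\cong\; Q[H\times G/H] \;\cong\; Q[H]\otimes_Q Q[G/H]
\]
as $Q$-$k$-modules, and $\varphi$ being $H$-equivariant ensures that the $H$-action transported to $Q[H]\otimes_Q Q[G/H]$ is left multiplication on the first tensor factor. This promotes the above to an isomorphism of left $Q[H]$-$k$-modules.

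Finally, I would identify $Q[H]\otimes_Q Q[G/H]$ with the free $Q[H]$-$k$-module $Q[H][G/H]$ on the $k$-space $G/H$. This is a short Yoneda argument: for any $M \in Q[H]$-$\U Mod$, the tensor-hom adjunction of Lemma \ref{tensor} and the adjunction of Lemma \ref{free} yield natural isomorphisms
\[
\underline{\U_{Q[H]}}\bigl(Q[H]\otimes_Q Q[G/H],\,M\bigr) \;\cong\; \underline{\U_Q}\bigl(Q[G/H],\,M\bigr) \;\cong\; \underline{\U}(G/H,\,M),
\]
which is the defining adjunction of $Q[H][G/H]$. Since $G/H \in S$, the module $Q[H][G/H]$ lies in $F(S)$, hence is projective in $Q[H]$-$\U Mod$ with this choice of projectives, completing the proof.

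The main obstacle is really just careful bookkeeping of the $H$-action through the chain of isomorphisms — nothing is hard, but one must be sure that the homeomorphism $\varphi$ intertwines the correct actions so that the tensor factorisation at the end does yield the \emph{left} $Q[H]$-structure we need, rather than a twisted version of it.
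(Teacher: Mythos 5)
Your argument is correct and is essentially the paper's own proof, which simply states that as $Q[H]$-modules $Q[G] \cong Q[H \times G/H]$ (via the section) and that this is projective when $G/H \in S$; you have just made explicit the $H$-equivariant homeomorphism $H \times G/H \cong G$ and the identification $Q[H \times G/H] \cong Q[H] \otimes_Q Q[G/H] \cong Q[H][G/H]$ via the tensor--hom and free-module adjunctions. No gaps.
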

\begin{proof}
As $Q[H]$-modules, $Q[G] \cong Q[H \times G/H]$, which is projective when $G/H \in S$.
\end{proof}

For example, when $S = \ob\U$, we only require that $G \to G/H$ has a section; when $S$ is discrete spaces, we must have $H$ open. As remarked earlier, a sufficient condition for $G/H$ to be in $\sqcup CH$ is that it be locally compact and totally disconnected, by van Dantzig's theorem. I known of no interesting cases where $Q[G]$ is projective or flat but not free.

Finally, we prove a Lyndon--Hochschild--Serre spectral sequence, in the following sense.

\begin{thm}
Suppose $S$ is $\ob\U, \sqcup CH$ or the class of discrete spaces. Suppose that $H \lhd G$, $G \to G/H$ has a section and $G/H \in S$. Consider $Q[G]$-$\U Mod$, $Q[H]$-$\U Mod$ and $Q[G/H]$-$\U Mod$ with $F(S)$ as the class of projectives. Then $$R\underline{\U_{Q[G]}}(Q,-) = R\underline{\U_{Q[G/H]}}(Q,R\underline{\U_{Q[H]}}(Q,-))$$ as functors on $D_+(Q[G]$-$\U Mod)$, and for $A \in Q[G]$-$\U Mod$ we get a spectral sequence $$H^p(G/H,H^q(H,A) \Rightarrow H^{p+q}(G,A).$$ Similarly for the total left derived functors $$Q (L\otimes_{Q[G]}) - = Q (L\otimes_{Q[G/H]}) (Q (L\otimes_{Q[H]}) -),$$ and we get a corresponding homology spectral sequence.
\end{thm}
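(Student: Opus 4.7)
The plan is to model the proof on the classical derivation of the Lyndon--Hochschild--Serre spectral sequence via Grothendieck's composition-of-derived-functors machinery, transported to the derived-category framework of Section \ref{derived}. I will outline the cohomology case; the homology case is dual throughout.

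The first step will be the functorial underived identity $\underline{\U_{Q[G]}}(Q, A) \cong \underline{\U_{Q[G/H]}}(Q, \underline{\U_{Q[H]}}(Q, A))$. The right-hand side makes sense because $A^H := \underline{\U_{Q[H]}}(Q, A)$ carries a natural $Q[G/H]$-action inherited from the $G$-action on $A$ (using normality of $H$), with $H$ acting trivially by definition; the identity itself is elementwise. More generally, for any $Q[G]$-module $P$ the space $\underline{\U_{Q[H]}}(P, A)$ is naturally a $Q[G/H]$-module (via $(g\cdot f)(p) = gf(g^{-1}p)$, which is trivial on $H$ by the $Q[H]$-equivariance of $f$), and the tensor-hom adjunction yields
$$\underline{\U_{Q[G/H]}}(S, \underline{\U_{Q[H]}}(P, A)) \cong \underline{\U_{Q[G]}}(\mathrm{Inf}(S) \otimes_Q P, A),$$
where $\mathrm{Inf}(S) \otimes_Q P$ carries the diagonal $G$-action and $\mathrm{Inf}$ denotes inflation along $G \to G/H$.

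Next, I would use the section $\sigma: G/H \to G$ to obtain two key structural facts. The $H$-equivariant space-level isomorphism $G \cong H \times G/H$ gives $Q[G][X] \cong Q[H][G/H \times X]$ as $Q[H]$-modules; since $G/H \times X \in S$ in each of the three cases, an $F(S)$-projective $Q[G]$-module restricts to an $F(S)$-projective $Q[H]$-module. Because $F(S)$-splitness of a morphism depends only on the underlying $k$-space map, an $F(S)$-projective resolution $P_\bullet \to Q$ over $Q[G]$ restricts to an $F(S)$-projective resolution over $Q[H]$. Similarly, the classical ``untwist'' $\mathrm{Inf}(R) \otimes_Q Q[G] \cong Q[G] \otimes_Q R$, given by $r \otimes g \mapsto g \otimes [g]^{-1}r$ and converting the diagonal action into an action on the $Q[G]$ factor alone, shows that $\mathrm{Inf}(S_p) \otimes_Q P_q$ is $F(S)$-projective over $Q[G]$ whenever $S_p, P_q$ are $F(S)$-projective over $Q[G/H], Q[G]$ respectively.

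Finally, I would pick $F(S)$-projective resolutions $P_\bullet \to Q$ over $Q[G]$ and $S_\bullet \to Q$ over $Q[G/H]$, and form the bicomplex
$$D^{p,q} = \underline{\U_{Q[G]}}(\mathrm{Inf}(S_p) \otimes_Q P_q, A) \cong \underline{\U_{Q[G/H]}}(S_p, \underline{\U_{Q[H]}}(P_q, A)).$$
Applying the Acyclic Assembly Lemma, I would show that $\operatorname{Tot}(\mathrm{Inf}(S_\bullet) \otimes_Q P_\bullet) \to Q$ is an $F(S)$-projective resolution of $Q$ over $Q[G]$, so that one filtration of $D^{\bullet,\bullet}$ collapses to $H^{p+q}(G, A) = R\underline{\U_{Q[G]}}(Q, A)$, while the other has $E_2^{p,q} = H^p(G/H, H^q(H, A))$ by the adjunction above. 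Passing to $D_+$ yields the derived-functor identity. The homology version proceeds dually, replacing $\underline{\U_{Q[G]}}(-, A)$ with the derived tensor product and invoking Proposition \ref{balanced} to guarantee balanced behaviour.

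The main obstacle will be the exactness verification: showing that $\operatorname{Tot}(\mathrm{Inf}(S_\bullet) \otimes_Q P_\bullet) \to Q$ is $F(S)$-split exact over $Q[G]$, which is what legitimises the application of the Acyclic Assembly Lemma. Via the untwist this reduces to showing that the functor $- \otimes_Q Q[Z]$ for $Z \in S$ preserves $F(S)$-split exactness of complexes of $Q[G]$-modules, and then further to the observation that the endofunctor $- \times Z$ on $\U$ preserves $S$-splittings: a lift of $Y' \to B$ with $Y' \in S$ gives a pointwise lift $Y' \times Z \to B \times Z$, and $Y' \times Z$ remains in $S$ because $S$ is closed under products in all three cases.
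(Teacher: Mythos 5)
Your strategy is the classical double-complex proof that the paper itself delegates to \cite[Section 3.5]{Benson}: your restriction step (an $F(S)$-projective resolution of $Q$ over $Q[G]$ restricts, via the section and $G/H\in S$, to one over $Q[H]$) is exactly the one step the paper makes explicit, and the adjunction $\underline{\U_{Q[G/H]}}(S,\underline{\U_{Q[H]}}(P,A))\cong\underline{\U_{Q[G]}}(\mathrm{Inf}(S)\otimes_Q P,A)$, the untwisting isomorphism, and the identification of the first filtration (which also uses Lemma \ref{Pexact}) are all available from Sections \ref{kmodules} and \ref{derived}. So the architecture is fine; the problem is in the step you yourself single out as the main obstacle.

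Your discharge of that obstacle does not work as stated. You reduce the $F(S)$-split exactness of $\operatorname{Tot}(\mathrm{Inf}(S_\bullet)\otimes_Q P_\bullet)\to Q$ to the observation that $-\times Z$ preserves $S$-splittings, but this conflates the product of spaces with the tensor product of modules: a map $Y'\to B\otimes_Q Q[Z]$ with $Y'\in S$ need not factor through $B\times Z$ (elements of the tensor product are sums of elementary tensors, further collapsed by weak Hausdorffisation), so a pointwise lift $Y'\times Z\to B\times Z$ gives no lifting against $A\otimes_Q Q[Z]\to B\otimes_Q Q[Z]$. Moreover, even if $-\otimes_Q Q[Z]$ did preserve $F(S)$-splitness of the differentials, that would not yield exactness of the tensored complex, since exactness in the $\proj$-split structure is a condition on the induced maps to kernels, and $-\otimes_Q Q[Z]$ has no reason to commute with those kernels. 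The correct repair runs through contractibility and uses the hypothesis $G/H\in S$ a second time: the topological bar resolution of $Q$ over $Q[G/H]$ has terms that are free $Q[G/H]$-modules on the spaces $(G/H)^n$, which lie in $S$ because $S$ is closed under finite products, so it is an $F(S)$-projective resolution, and its augmentation admits a continuous $Q$-linear contracting homotopy. Taking $S_\bullet$ to be this resolution (or comparing an arbitrary $S_\bullet$ to it over $Q[G/H]$), the augmented rows $\mathrm{Inf}(S_\bullet)\otimes_Q P_q\to P_q$ become contractible complexes of $Q$-$k$-modules after untwisting, hence exact, and the Acyclic Assembly Lemma then gives the exactness you need; note that the analogous column-wise argument would require $G\in S$, which is not assumed, so the reduction must be made row-wise as here. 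With this repair your proof goes through; the paper's own proof is the shorter route of restricting a $Q[G]$-resolution to $Q[H]$ and quoting \cite[Section 3.5]{Benson}, which is legitimised by exactly these facts.
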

\begin{proof}
Thanks to our hypotheses, a projective resolution of $Q$ by $Q[G]$-modules becomes a projective resolution of $Q$ by $Q[H]$-modules under restriction of the $G$-action. Then, thanks to what we have already shown, the proof of \cite[Section 3.5]{Benson} goes through unchanged.
\end{proof}

\end{document}